\newtheorem{theorem}{Theorem}
\newtheorem*{theorem*}{Theorem}
\numberwithin{equation}{section}
\numberwithin{theorem}{section}
\newtheorem*{acknowledgement*}{Acknowledgement}
\newtheorem*{definition*}{Definition}
\newtheorem{corollary}[theorem]{Corollary}
\newtheorem{definition}[theorem]{Definition}
\newtheorem{lemma}[theorem]{Lemma}
\newtheorem{notation}[theorem]{Notation}
\newtheorem{proposition}[theorem]{Proposition}
\newtheorem{remark}[theorem]{Remark}
\newtheorem*{question*}{Question}
\newcommand{\RR}[0]{\mathbb{R}}
\newcommand{\Ss}[0]{\mathbb{S}}
\newcommand{\CP}[0]{\mathbb{CP}}
\newcommand{\pd}[2]{\frac{\partial #1}{\partial#2}}
\newcommand{\pdt}[0]{\frac{\partial}{\partial t}}
\newcommand{\delb}[0]{\overline{\nabla}}
\newcommand{\ve}[1]{\mathbf{#1}}
\newcommand{\Rc}[0]{\operatorname{Rc}}
\newcommand{\Rm}[0]{\operatorname{Rm}}
\newcommand{\dfn}[0]{\doteqdot}
\newcommand{\Xb}[0]{\mathbf{X}}
\newcommand{\Yb}[0]{\mathbf{Y}}
\newcommand{\Xc}[0]{\mathcal{X}}
\newcommand{\Yc}[0]{\mathcal{Y}}
\newcommand{\gh}[0]{\hat{g}}
\newcommand{\pdtau}[0]{\pd{}{\tau}}
\newcommand{\lam}{\lambda}
\newcommand{\Ec}[0]{\mathcal{E}}
\newcommand{\Hc}[0]{\mathcal{H}}
\newcommand{\Mm}[0]{\mathcal{M}}
\newcommand{\Pc}[0]{\mathcal{P}}
\newcommand{\Vc}[0]{\mathcal{V}}
\newcommand{\Cc}[0]{\mathcal{C}}
\newcommand{\ch}[1]{\breve{#1}}
\newcommand{\ub}[1]{\underaccent{\bar}{#1}}
\newcommand{\ua}[0]{\ub{a}}
\newcommand{\ui}[0]{\ub{\imath}}
\newcommand{\uj}[0]{\ub{\jmath}}
\newcommand{\uk}[0]{\ub{k}}
\newcommand{\ul}[0]{\ub{l}}
\newcommand{\um}[0]{\ub{m}}
\newcommand{\up}[0]{\ub{p}}
\newcommand{\uq}[0]{\ub{q}}
\newcommand{\ur}[0]{\ub{r}}
\newcommand{\bi}[0]{\bar{\imath}}
\newcommand{\bj}[0]{\bar{\jmath}}
\newcommand{\bk}[0]{\bar{k}}
\newcommand{\bl}[0]{\bar{l}}
\newcommand{\bm}[0]{\bar{m}}
\newcommand{\bp}[0]{\bar{p}}
\newcommand{\bq}[0]{\bar{q}}
\newcommand{\br}[0]{\bar{r}}
\newcommand{\bs}[0]{\bar{s}}
\newcommand{\ba}[0]{\bar{a}}
\newcommand{\bb}[0]{\bar{b}}
\newcommand{\Rh}[0]{\hat{R}}
\newcommand{\Uc}[0]{\mathcal{U}}
\newcommand{\Tc}[0]{\mathcal{T}}
\newcommand{\trace}[0]{\operatorname{tr}}
\title[Backward propagation of warped products under the Ricci flow]{Backward propagation of warped product structures\\ and asymptotically conical shrinkers}
 \author{Brett Kotschwar}
 \email{kotschwar@asu.edu}
 \address{School of Mathematical and Statistical Sciences,
 	Arizona State University, Tempe, AZ 85287, USA}
  \thanks{The author was partially supported by Simons Foundation grants \#359335 and \#709004.}
\begin{document}
\begin{abstract}
We establish sufficient conditions which ensure that a locally-warped product structure propagates backward in time under the Ricci flow. As an application, we prove that if an asymptotically conical gradient shrinking soliton 
is asymptotic to a cone whose cross-section is a product of Einstein manifolds, the soliton must itself be a multiply-warped product over the same manifolds.

\end{abstract}
\maketitle

\section{Introduction}
Given a smooth solution $g(t)$ to the Ricci flow 
\begin{equation}\label{eq:rf}
\pdt g = -2\Rc(g)
\end{equation}
on $M\times [0, T]$, it is natural to ask about what one may infer about the solution at times $t< T$ from
the knowledge that $g(T)$ has some special structure. In previous work, we have shown, for example, that when $(M, g(t))$ is complete and of bounded curvature, then
any symmetries of the metric at $t= T$ are present at earlier times, 
and that if if $(M, g(T))$ has restricted holonomy at $t=T$, then so does $(M, g(t))$ (see \cite{CookKotschwarHolonomy, KotschwarRFBU,  KotschwarHolonomy, KotschwarFrequency}).  In this paper, we explore what can be said for times $t< T$ if $g(T)$
is a warped product
\begin{equation}\label{eq:wp}
    g(b, x, T) = \pi^*\ch{g}(b) + h^2(b)\bar{g}(x)
\end{equation}
on $M = B\times F$. Here $(B, \ch{g})$ and $(F, \bar{g})$ are  Riemannian manifolds, $\pi:M\longrightarrow B$ is the projection map, and $h$ is a smooth positive function on $B$.  To set our expectations,  we begin by revisiting what is known about the corresponding question for the forward propagation of the structure for times $t > T$.

\subsection{Forward propagation of warped-product structures}
The first observation to be made is that, from the perspective of the Ricci flow, warped products do not automatically qualify as ``special'' structure:
in fact, such structures are in general \emph{not} preserved by the flow. As a short computation shows, if a solution $g(t)$ to \eqref{eq:rf}  has the form
\begin{equation}\label{eq:wps}
    g(b, x, t) = \pi^*\ch{g}(b, t) + h^2(b, t)\bar{g}(x, t)
\end{equation}
on some interval $M\times I$, then either $(F, \bar{g}(t))$ is \emph{Einstein} at each $t\in I$, or $h$ is independent of $b$, in which case
$g(t)$ is an ordinary metric product of solutions on $B$ and $F$.

It is a folklore principle in the Ricci flow literature that the condition that $(F, \bar{g})$ be Einstein is also \emph{sufficient} for the forward propagation of the warped product structure,
subject to conditions met in most cases of interest.  We sketch the argument to provide some context (and contrast)
for the approach we take below, though (as we will discuss shortly) the argument cannot itself be adapted to our backward time question.

Comparing both sides of \eqref{eq:rf} under the assumption that the solution $g(t)$ has the form \eqref{eq:wps} leads to a coupled parabolic system for a family of metrics and functions on the base.
Provided one can obtain a solution to this system for a short time (with the necessary control at infinity in the noncompact case to guarantee its uniqueness) one may conclude by the usual uniqueness results for the Ricci flow \cite{ChenZhu, Hamilton3D}
that the structure must propagate forward in time under the flow.

Specifically, given fixed manifolds $(B, \ch{g}_0)$ and $(F, \gh_0)$, where the latter is Einstein with $\Rc(\gh_0) = \lambda \gh_0$, and a fixed positive $h_0\in C^{\infty}(B)$, the pursuit of  a solution $g(t)$ to \eqref{eq:rf} in the form \eqref{eq:wps} leads one to the following system of equations for a family of functions $u = \log h$ and a family of metrics $\ch{g} = \ch{g}(t)$ on $B$:
\begin{equation}\label{eq:wpsys1}
\left\{\begin{array}{rl}
        \partial_t \ch{g} &= -2\Rc(\ch{g}) + 2m \ch{\nabla}\ch{\nabla} u + 2m \ch{\nabla}u\otimes \ch{\nabla}u, \quad 
        \ch{g}(0)= \ch{g}_0,\\
        \partial_t u &= \tilde{\Delta} u + m |\ch{\nabla}u|^2 - \lambda e^{2u}, \quad u(0) = \log h_0.
     \end{array}\right.
\end{equation}
The equations \eqref{eq:wpsys1} are equivalent, up to diffeomorphism, to the system
\begin{equation}\label{eq:wpsys2}
\left\{\begin{array}{rl}
        \partial_t \tilde{g} &= -2\Rc(\tilde{g}) + 2m \tilde{\nabla}u\otimes \tilde{\nabla}u, \quad 
        \tilde{g}(0)= \ch{g}_0,\\
        \partial_t \tilde{u} &= \tilde{\Delta} \tilde{u} - \lambda e^{2\tilde{u}}, \quad \tilde{u}(0) = \log h_0.
     \end{array}\right.
\end{equation}
for $\tilde{g}$ and $\tilde{u}$ on $B$. From solutions $(\tilde{g}(t), \tilde{u}(t))$ to this latter system,
one can solve the ODE
\[
    \pd{\phi}{t} = m \tilde{\nabla}u\circ \phi, \quad \phi(x, 0) = \operatorname{Id},
\]
 and define $\ch{g}(t) = \phi_t^*\tilde{g}(t)$ and $h = e^{\tilde{u}}\circ \phi_t$ to produce a solution $g(t)$ in the form \eqref{eq:wps}.

The well-posedness of the initial-value problem in \eqref{eq:wpsys2} is discussed in a variety of places in the literature. For example, in \cite{List},
the short-time existence of solutions in the case $\lambda = 0$ is established for complete $(B, \ch{g}_0)$ of bounded curvature and $u_0$ with bounded $C^2$-norm; 
the solutions $\tilde{g}(t)$ and $\tilde{u}$ retain their uniform curvature and $C^2$-bounds, respectively.
Since complete solutions to the Ricci flow of uniformly bounded curvature are
unique \cite{ChenZhu, Hamilton3D}, it follows that every solution in this class starting from $g(0)$ must remain a warped-product.
 
\subsection{Backward propagation}
From the above discussion, we see that it is at least necessary that the fibers be Einstein if a given warped-product structure is to
propagate backward in time under the Ricci flow. Moreover, it is reasonable to expect that, at least for well-behaved solutions,
it is also sufficient. However, the line of argument above is of no direct help to us, as it would lead us to seek $\tilde{u}$ and $\tilde{g}$ solving the ill-posed \emph{terminal-value} problem associated to \eqref{eq:wpsys2}. Since we cannot hope in general to construct a warped-product competitor solution using the data from the terminal time slice (even when the fiber of that slice is Einstein), we do not expect to be able to directly reduce the problem to one of the backward uniqueness of solutions to \eqref{eq:rf}. It is conceivable that one could appeal in some way to the temporal analyticity of the equation \cite{KotschwarRFTimeAnalyticity} (see also \cite{Shao}), however, such an approach would not lend itself well to our application to shrinkers in the next section, where the solution in question is in general incomplete, and the time-slice
of interest (the end of a cone) is genuinely a terminal time-slice rather than one in the interior of the interval of existence of the solution.

Instead, we will pursue an approach in which the warped product structure is characterized without reference to an explicit warping function, framing the problem in terms of a system of tensors related to O'Neill's submersion invariants which measure the extent of the potential failure of the solution to remain a warped-product. These invariants, in turn,
satisfy a system of mixed differential inequalities to which the general backward uniqueness results from our prior work \cite{KotschwarRFBU, KotschwarFrequency} apply. 

This characterization applies equally well forward and backward in time,
and, in conjunction with our approach in \cite{KotschwarRFUniqueness} can also be used to give an alternative proof of the forward propagation of warped-product structures under the flow. Our first main result in this paper is the following, which, for convenience, we have stated in terms of the \emph{backward Ricci flow}
\begin{equation}\label{eq:brf}
\pdtau g = 2\Rc(g).
\end{equation}
Note that one can pass between a solution to \eqref{eq:rf} and \eqref{eq:brf} on $M\times [0, \Omega]$ via the change of variables $\tau = \Omega - t$.

\begin{theorem}\label{thm:warped}
Let $(B, \ch{g})$ and $(F, \bar{g})$ be Riemannian manifolds, where $\bar{g}$ is Einstein.
 Suppose $g(\tau)$ is a smooth solution to \eqref{eq:brf} on $M = B\times F$ for $\tau\in [0, \Omega]$
 which satisfies
 \[
        g(0) = \pi^*\ch{g}_{0} + h_{0}^2\bar{g}
 \]
 for some positive $h_{0}\in C^{\infty}(B)$. Assume that $\sup_{M\times [0, \Omega]}|\Rm| <\infty$.
 and, if $B$ is noncompact, that $\sup_{B}|\ch{\nabla}\log h_{0}| < \infty$.
 Then there exists a smooth family of metrics $\ch{g}(\tau)$ on $B$
and a smooth family of positive functions $h(\tau)\in C^{\infty}(B)$ such that
\begin{equation}\label{eq:warpedsol}
    g(b, x, \tau) = \pi^*\ch{g}(b, \tau) + h^2(b, \tau) \bar{g}(x)
\end{equation}
for all $\tau\in [0, \Omega]$.
\end{theorem}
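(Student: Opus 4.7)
My plan is to reformulate the desired conclusion \eqref{eq:warpedsol} as the simultaneous vanishing of a finite system of tensors defined relative to the fixed product decomposition $TM=\pi^{*}TB\oplus TF$, to show that these tensors satisfy, under \eqref{eq:brf}, a closed system of mixed parabolic differential inequalities to which the backward-uniqueness results of \cite{KotschwarRFBU, KotschwarFrequency} apply, and then to conclude from the vanishing at $\tau=0$ that the tensors vanish throughout $[0,\Omega]$.

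Relative to this fixed splitting, every $g(\tau)$ decomposes into a base block $g_{B}(\tau)\in\operatorname{Sym}^{2}T^{*}B$, a fiber block $g_{F}(\tau)\in\operatorname{Sym}^{2}T^{*}F$, and an off-diagonal mixed component $\sigma(\tau)\in T^{*}B\otimes T^{*}F$, all a priori depending on both coordinates. The warped form \eqref{eq:warpedsol} is equivalent to the simultaneous vanishing of the following tensors, which I take as primary unknowns and which may be viewed as variants of O'Neill's $A$ and $T$ invariants adapted to this fixed splitting: (i) $\sigma$, measuring the failure of $\pi^{*}TB\perp_{g}TF$; (ii) the trace-free part $\mathring{\Phi}$ of the endomorphism $\bar g^{-1}g_{F}$ of $TF$, whose vanishing pinches $g_{F}=\phi(b,x,\tau)\bar g(x)$ for some positive scalar $\phi$; (iii) the fiber-directional derivative $\partial_{a}g_{B}$ of the base block; and (iv) the fiber-directional derivative $\partial_{a}\phi$ of the would-be conformal factor.

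Inserting \eqref{eq:brf} and commuting derivatives, I expect, for the system $\mathcal{Q}=(\sigma,\mathring{\Phi},\partial_{a}g_{B},\partial_{a}\phi)$---augmented, if necessary, by one further covariant derivative of each component in order to match the formulation of \cite{KotschwarFrequency}---to derive a closed system of mixed differential inequalities
\[
 \left|\heat \mathcal{Q}_{i}\right|\leq C\sum_{j}\bigl(|\mathcal{Q}_{j}|+|\nabla\mathcal{Q}_{j}|\bigr)\qquad\text{on }M\times[0,\Omega],
\]
with $C$ controlled by $\sup_{M\times[0,\Omega]}|\Rm|$ and, propagated from the initial hypothesis, by $\sup_{B}|\ch{\nabla}\log h_{0}|$. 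The Einstein condition $\Rc(\bar g)=\lambda\bar g$ enters precisely here: it absorbs the otherwise-inhomogeneous term $-2(\Rc(\bar g)-\lambda\bar g)$ which appears in the evolution of $\mathring{\Phi}$, and without it the system would fail to close.

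Since $g(0)$ is warped, each $\mathcal{Q}_{i}$ vanishes at $\tau=0$, and the backward-uniqueness results of \cite{KotschwarRFBU, KotschwarFrequency} then force $\mathcal{Q}_{i}\equiv 0$ on $M\times[0,\Omega]$. Vanishing of $\sigma$ gives the orthogonal block form, that of $\mathring{\Phi}$ the pointwise conformal relation $g_{F}=\phi\bar g$, and that of $\partial_{a}g_{B}$ and $\partial_{a}\phi$ the descent of $g_{B}$ and $\phi$ to $B\times[0,\Omega]$; setting $\ch{g}(\tau):=g_{B}(\tau)$ and $h(b,\tau):=\sqrt{\phi(b,\tau)}$ then reconstructs \eqref{eq:warpedsol}. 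The principal obstacle I anticipate is the closure of the system at the components $\mathring{\Phi}$ and $\partial_{a}\phi$, whose evolutions mix the ambient curvature $\Rc(g)$ with the intrinsic fiber Ricci curvature of $g_{F}$; confirming that all resulting terms reassemble, via the Einstein identity for $\bar g$, into linear combinations of $\{\mathcal{Q}_{j}\}$ and $\{\nabla\mathcal{Q}_{j}\}$ rather than genuine sources will require a careful computation.
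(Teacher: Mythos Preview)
Your overall strategy---encode the warped-product structure as the vanishing of a finite collection of tensors, derive differential inequalities along \eqref{eq:brf}, and invoke \cite{KotschwarRFBU, KotschwarFrequency}---is exactly the paper's. The gap is in the choice of unknowns and the type of system they actually satisfy.

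Your quantities $\sigma,\mathring{\Phi},\partial_a g_B,\partial_a\phi$ are \emph{metric-level}: under $\partial_\tau g=2\Rc$ each evolves by an ODE in $\tau$ whose right-hand side is a component (or fiber derivative of a component) of $\Rc(g)$, not by a heat-type equation. There is no reason to expect $\bigl|(\partial_\tau+\Delta)\mathcal{Q}_i\bigr|$ to be controlled by $|\mathcal{Q}|+|\nabla\mathcal{Q}|$; the relevant Ricci components involve two derivatives of the metric and are not recoverable from $\mathcal{Q}$ and $\nabla\mathcal{Q}$ alone. Augmenting by one covariant derivative, as you suggest, only moves you to connection-level quantities and does not produce a parabolic equation either. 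The obstacle you flag (the Einstein condition absorbing an inhomogeneous source in the $\mathring{\Phi}$-evolution) is real but secondary; the primary obstruction is that the system, as written, does not close.

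The paper resolves this by working with a \emph{two-tier} PDE--ODE system. The ``ODE'' tier consists of connection-level invariants $A,T^0,G,\nabla A,\nabla T^0$ (O'Neill-type tensors for a pair of complementary orthogonal distributions), which play the role of your $\mathcal{Q}$ and satisfy $|D_\tau\ve{Y}|\lesssim|\ve{X}|+|\nabla\ve{X}|+|\ve{Y}|$. The ``PDE'' tier consists of curvature-level invariants $M,P,U$---specific projections of $\Rc$, $\nabla\Rc$, and $\nabla\Rm$ that vanish on a warped product with Einstein fiber---which inherit genuine heat-type equations from the standard evolution of curvature under Ricci flow and satisfy $|(D_\tau+\Delta)\ve{X}|\lesssim|\ve{X}|+|\nabla\ve{X}|+|\ve{Y}|$. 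The Einstein hypothesis enters in showing that $M$ (and hence $P$, $U$) vanishes at $\tau=0$, and in the closure of the reaction terms. A further technical choice in the paper is to let the distributions evolve by $D_\tau H=0$ rather than keep the splitting fixed; this simplifies the evolution equations for the projections, though the paper notes that your fixed-splitting choice is also viable in principle.

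In short: your plan needs a parabolic tier. Introducing the projected curvature quantities $M=\Rc-\Rc^H-\tfrac{\hat R}{m}V$ and its first- and second-order analogues $P,U$, and recasting your $\mathcal{Q}$ as the ODE tier coupled to them, is the missing step.
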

On a warped-product, the collection $N$ of mean-curvature vectors of the fibers defines a horizontal vector field which is $\pi$-related to the gradient vector field $-m\ch{\nabla}\log h$ on the base $B$. Thus the assumption that the gradient of $\log h$ be bounded is equivalent to the uniform boundedness of the mean curvature of the fibers $\{b\}\times F$.  (This condition can be relaxed somewhat in applications.)

Our argument actually proves something slightly stronger.
\begin{theorem}\label{thm:warped2}
Let $g(\tau)$ be a smooth complete solution to \eqref{eq:brf} on $M\times [0, \Omega]$ with uniformly bounded curvature.
 Suppose that $\pi:(M, g(0)) \longrightarrow (B, \ch{g})$
is a Riemannian submersion and a locally-warped product. Assume that the fibers of the submersion are connected, Einstein, and have bounded mean curvature.
Then there is a smooth family of metrics $\ch{g}(\tau)$
 on $B$ such that, for all $\tau\in [0, \Omega]$,
 \[
\pi:(M, g(\tau))\longrightarrow (B, \ch{g}(\tau))
\]
 is a Riemannian submersion and a locally-warped product. In particular, if $U\subset B$ is an open subset over which $\pi^{-1}(U)$ fibers as $U\times F$,
 and $g$ admits the representation $g(0) = \pi^*\ch{g}_0 + h_0^2\bar{g}$ on $\pi^{-1}(U)$ for some $h_0 \in C^{\infty}(U)$ and Einstein metric $\bar{g}$ on $F$, then
 \[
         g(b, x, \tau) = \pi^*\ch{g}(x, \tau) + h^2(b, \tau)\bar{g}(x)
 \]
on $\pi^{-1}(U)\times [0, \Omega]$ for some $h\in C^{\infty}(U\times [0, \Omega])$.
\end{theorem}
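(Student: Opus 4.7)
The plan is to follow the program sketched in the introduction: encode the local warped-product structure as the vanishing of a tuple $\mathcal{Q}$ of O'Neill-type submersion tensors defined from the fixed vertical distribution $V = \ker d\pi$ and the evolving metric $g(\tau)$; derive a closed system of parabolic differential inequalities for $\mathcal{Q}$ under the backward Ricci flow; and conclude via the backward-uniqueness results of \cite{KotschwarRFBU, KotschwarFrequency}.

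I would first fix the vertical distribution $V$, which is $\tau$-independent, and let $H(\tau) = V^{\perp}$ with respect to $g(\tau)$. For each $\tau$ I form the O'Neill tensors of the associated fiber structure: the integrability tensor $A$ of $H(\tau)$, the second fundamental form $T$ of the fibers, its trace $N = \operatorname{tr}_V T$ (the mean-curvature vector), and the umbilicity defect $T^{\circ} = T - \tfrac{1}{m} g|_V \otimes N$. To these I would adjoin a tensor $P$ measuring the failure of $g|_H$ to descend to $B$ (equivalently, the failure of $\pi$ to be a Riemannian submersion for $g(\tau)$), and a tensor $\Xi$ measuring the failure of $N^{\flat}|_H$ to be the $\pi$-pullback of a closed one-form on $B$. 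By the classical characterization of locally-warped products among submersions with connected Einstein fibers, the vanishing of $\mathcal{Q} = (P, A, T^{\circ}, \Xi)$ over a trivializing open $U\subset B$ is equivalent to a representation $g = \pi^{*}\ch{g} + h^{2}\bar{g}$ on $\pi^{-1}(U)$; the Einstein condition on the fiber, fixed at $\tau = 0$, is transmitted across $\tau$ for free since each time-slice fiber is conformal to the original.

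The heart of the proof is the derivation of evolution equations for $\mathcal{Q}$ under \eqref{eq:brf}. Because $H(\tau)$ moves with $g(\tau)$, its time derivative is expressible through $\Rc|_{V\times H}$, and the resulting contributions to $\partial_{\tau}\mathcal{Q}$ are controlled by the uniform curvature bound. Commuting $\partial_{\tau}$ with the O'Neill identities and the Levi--Civita connection, together with the second Bianchi identity, should yield a mixed differential inequality of the schematic form
\begin{equation*}
    \left|\partial_{\tau}\mathcal{Q} - \Delta\mathcal{Q}\right| \leq C\left(|\mathcal{Q}| + |\nabla\mathcal{Q}|\right),
\end{equation*}
with $C$ depending only on $\sup_{M\times[0,\Omega]}|\Rm|$ and $\sup|N|$. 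Since $\mathcal{Q}(0) = 0$ by hypothesis, the backward-uniqueness results of \cite{KotschwarRFBU, KotschwarFrequency} then force $\mathcal{Q} \equiv 0$ on $M\times[0,\Omega]$. The O'Neill reconstruction produces, over each trivializing $U\subset B$, a smooth family $(\ch{g}(\tau), h(\tau))$ yielding $g(\tau)|_{\pi^{-1}(U)} = \pi^{*}\ch{g}(\tau) + h^{2}(\tau)\bar{g}$, and the connectedness of the fibers lets these local representations be assembled into the global statement.

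I expect the main obstacle to be the detailed derivation of the evolution system for $\mathcal{Q}$. Because $H(\tau)$ depends on $g(\tau)$, nearly all of the standard submersion identities must be recomputed with time-dependent projections, and one must carefully arrange the coupling among $P, A, T^{\circ}, \Xi$ so that under $\partial_{\tau} - \Delta$ all higher-derivative terms can be absorbed back into $|\nabla\mathcal{Q}|$ and $|\mathcal{Q}|$. A secondary subtlety is that $\Xi$ naturally lives on $B$, so it must be lifted or reformulated as a tensor identity on $M$ to slot into the parabolic system on $M$ required by the backward-uniqueness apparatus. The bounded-mean-curvature hypothesis enters precisely as the control on the coefficient $C$ through $\sup|N|$.
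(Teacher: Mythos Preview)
Your outline captures the overall philosophy correctly, but there is a genuine gap in the system you propose: a tuple $\mathcal{Q}=(P,A,T^{\circ},\Xi)$ built only from O'Neill-type connection invariants is \emph{not closed} under the Ricci flow, and it does not satisfy a parabolic inequality of the form you write. When you differentiate $A$ or $T^{\circ}$ in $\tau$, what appears on the right is not $\Delta A$ or $\Delta T^{\circ}$ plus lower order terms, but rather first-order pieces of the \emph{curvature}: specifically, components of $\Rc$ and $\nabla\Rc$ that record the failure of $\Rc$ to be block-diagonal with a pure-trace vertical block (the paper's tensors $M$ and $P$). These vanish on a warped product with Einstein fibers, but they are genuinely new invariants, not expressible in terms of $A,T^{\circ},\Xi$ and their derivatives. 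Your remark that ``the Einstein condition on the fiber is transmitted across $\tau$ for free since each time-slice fiber is conformal to the original'' is exactly the circularity: conformality of the fiber metric to $\bar g$ is what you are trying to prove, and it is encoded precisely by the vanishing of the curvature invariant $M$, which you have not put into your system.

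The paper resolves this by enlarging the system to a \emph{mixed PDE--ODE} pair $(\mathbf{X},\mathbf{Y})$: the curvature-level invariants $\mathbf{X}=(M,P,U)$ (with $U$ built from $\nabla\Rm$) satisfy a genuine backward-heat inequality $|(D_{\tau}+\Delta)\mathbf{X}|\lesssim |\mathbf{X}|+|\nabla\mathbf{X}|+|\mathbf{Y}|$, while the connection-level invariants $\mathbf{Y}=(G,A,T^{0},\nabla A,\nabla T^{0})$ satisfy only an ODE inequality $|D_{\tau}\mathbf{Y}|\lesssim |\mathbf{X}|+|\nabla\mathbf{X}|+|\mathbf{Y}|$. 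It is this two-tier structure, not a single parabolic inequality for $\mathcal{Q}$, to which the backward-uniqueness theorems of \cite{KotschwarRFBU,KotschwarFrequency} apply. A secondary difference: rather than fixing $V$ and taking $H(\tau)=V^{\perp}$, the paper evolves both projections by $D_{\tau}H=D_{\tau}V=0$ (Uhlenbeck's trick), which keeps the evolution equations for $H$ and $V$ trivial at the cost of losing a priori integrability of $\mathcal{V}(\tau)$; the paper explicitly notes your choice as a viable alternative, but one that complicates the bookkeeping.
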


Theorems \ref{thm:warped} and \ref{thm:warped2} also imply analogous statements for \emph{multiply}-warped products. We state only the analog of Theorem \ref{thm:warped} here
(for the case of a global multiply-warped product structure).
\begin{corollary}\label{cor:mwp}
Suppose $F = F_1\times \cdots \times F_k$ is a product of Einstein manifolds $(F_i, \bar{g}_{i})$,  and $g(\tau)$ is a smooth solution to \eqref{eq:brf}
of bounded curvature with
\[
    g(0) = \pi^*\ch{g}+ h_{1}^2\bar{g}_1 + \cdots + h_{k}^2\bar{g}_k
\]
for some Riemannian metric $\hat{g}$ and smooth positive functions $h_{i}$ on $B$. If $B$ is noncompact, assume that $\sup_B|\ch{\nabla}\log h_{i}| < \infty$ for each $i$. Then
there is a smooth family of metrics $\ch{g}(\tau)$ on $B$ and smooth families of positive functions $h_1(\tau), h_2(\tau), \ldots, h_k(\tau)$ on $B$ such that
\[
    g(b, \tau) = \pi^*\ch{g}(\tau) + h_{1}^2(\tau)\bar{g}_1 + \cdots + h_k^2(\tau)\bar{g}_k
\]
for all $\tau\in  [0, \Omega]$.
\end{corollary}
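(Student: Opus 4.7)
My plan is to apply Theorem \ref{thm:warped} separately to each of the $k$ natural submersions of $M$ onto the complement of a single factor, and then piece the resulting warped-product decompositions together to obtain the multiply-warped form.

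Fix $i \in \{1,\dots,k\}$ and set $B_i = B \times \prod_{j\neq i} F_j$, so that $M$ may be identified with $B_i \times F_i$ via a permutation of factors; let $\pi_i:M\to B_i$ denote the projection. At $\tau = 0$,
\[
  g(0) = \pi_i^*\ch{g}^{(i)}_0 + h_i^2 \bar{g}_i, \qquad \ch{g}^{(i)}_0 \dfn \pi^*\ch{g} + \sum_{j\neq i} h_j^2\bar{g}_j,
\]
where $h_i$ is regarded as a function on $B_i$ by pullback from $B$. The fiber metric $\bar{g}_i$ is Einstein by hypothesis, and because $h_i$ is pulled back from $B$ and the $F_j$-summands of $\ch{g}^{(i)}_0$ are $\ch{g}^{(i)}_0$-orthogonal to $TB$, one has $|\ch{\nabla}_{\ch{g}^{(i)}_0}\log h_i| = |\ch{\nabla}_{\ch{g}}\log h_i|$, which is bounded by assumption (or trivially if $B$ is compact). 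Applying Theorem \ref{thm:warped} thus produces a smooth family $\ch{g}^{(i)}(\tau)$ of metrics on $B_i$ and a smooth positive $h_i(\tau)\in C^{\infty}(B_i)$ such that
\[
    g(\tau) = \pi_i^*\ch{g}^{(i)}(\tau) + h_i^2(\tau)\bar{g}_i
\]
on $M\times [0,\Omega]$, for each $i = 1,\dots,k$.

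To assemble these decompositions, I observe two things. First, each decomposition implies that $TF_i$ is $g(\tau)$-orthogonal to the horizontal distribution of $\pi_i$, which contains $TB$ and each $TF_j$ for $j\neq i$; combining these relations shows that $TB, TF_1,\dots,TF_k$ are pairwise $g(\tau)$-orthogonal. Second, for $j\neq i$, the subbundle $TF_j$ is horizontal for $\pi_i$, so for any $\xi,\eta\in TF_j$ the Riemannian submersion identity gives $g(\tau)(\xi,\eta) = \ch{g}^{(i)}(\tau)(\xi,\eta)$ evaluated at the image in $B_i$; since $\ch{g}^{(i)}$ is a tensor on $B_i$, this value is independent of the $x_i$-coordinate. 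Applying this observation for every $i\neq j$ forces $h_j^2 = g(\tau)(\xi,\xi)/\bar{g}_j(\xi,\xi)$, for any nonzero $\xi\in TF_j$, to depend only on $b\in B$ and $\tau$. The identical reasoning applied to $u,v\in TB$ yields a family $\ch{g}(\tau)$ on $B$ with $g(\tau)|_{TB\otimes TB} = \pi^*\ch{g}(\tau)$, so that
\[
    g(b,x,\tau) = \pi^*\ch{g}(\tau) + \sum_{i=1}^k h_i^2(b,\tau)\bar{g}_i
\]
on $M\times [0,\Omega]$, as required.

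The substantive content of the argument is the invocation of Theorem \ref{thm:warped} for each of the $k$ auxiliary submersions, with the principal technical point being the verification of the mean-curvature bound on $\{b_i\}\times F_i$ in the non-product base metric $\ch{g}^{(i)}_0$. The orthogonality of the $F_j$-summands in $\ch{g}^{(i)}_0$ to $TB$ reduces this to the gradient hypothesis on $\log h_i$ in $(B,\ch{g})$. Once the $k$ warped decompositions are in hand, extracting the multiply-warped structure is a matter of bookkeeping with the submersion identity, as outlined above.
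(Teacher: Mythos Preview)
Your proof is correct and follows essentially the same approach as the paper: apply Theorem~\ref{thm:warped} to each of the $k$ submersions $\pi_i:M\to B_i$ with $B_i=B\times\prod_{j\neq i}F_j$, then combine the resulting singly-warped representations. The paper packages your final ``bookkeeping'' step as a separate lemma (Lemma~\ref{lem:2wp}) applied inductively, but the content is identical; your explicit verification of the gradient bound $|\ch\nabla_{\ch g^{(i)}_0}\log h_i|=|\ch\nabla_{\ch g}\log h_i|$ is a detail the paper leaves implicit.
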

The idea is to apply Theorem \ref{thm:warped} to the $k$ different single-warped product structures one obtains by distinguishing one of the fibers $F_i$ as \emph{the} fiber, and regarding the remaining factors
\[
B\times F_1 \times\cdots F_{i-1}\times F_{i+1} \times \cdots F_k
\]
as the base, and then comparing the results obtained in each case.

As we have noted, the condition in Theorem \ref{thm:warped} that $(F, \bar{g})$ be Einstein is necessary for
the propagation of the warped product structure along the flow in general. However, we have not attempted here
to otherwise optimize the statement of Theorems \ref{thm:warped} and \ref{thm:warped2}. In particular, the condition that fibers have uniformly bounded mean curvature can be relaxed, as can the assumption that
$(M, g(\Omega))$ is \emph{everywhere} locally-warped. Our interest in Theorem \ref{thm:warped} is in the framework its proof provides to measure the failure of a space to remain a warped product under a Ricci flow, and in the potential applications of this framework to related questions. In the next section, we describe one such application to the problem of uniqueness for asymptotically conical shrinking Ricci solitons.

\subsection{Asymptotically conical shrinking solitons} \label{sec:acsapp}
Recall that a gradient shrinking Ricci soliton $(M, g, f)$  (or \emph{shrinker}) consists of a Riemannian manifold $(M, g)$ paired with a smooth function $f$ which satisfies the equation
\begin{equation}\label{eq:grs}
 \Rc(g) + \nabla\nabla f = \frac{g}{2}.
\end{equation}
The equation \eqref{eq:grs} imposes strong restrictions on the geometry at infinity of a complete
noncompact shrinker $(M, g, f)$. At present, all known examples are are either
asymptotic to products or
are asymptotically conical in a sense which we now make precise.

Given a closed $(n-1)$-dimensional manifold $(\Sigma, g_\Sigma)$, let $\Cc^{\Sigma}$ denote the cone (minus the vertex) over $\Sigma$. Thus $\Cc^{\Sigma} = (0, \infty)\times \Sigma$ with the metric $\hat{g} = dr^2 + r^2g_{\Sigma}$. Write $\Cc_a^{\Sigma} = (a, \infty)\times \Sigma$ for $a > 0$.
 Finally, for $\lambda > 0$, denote by $\rho_{\lambda}:\Cc_0^{\Sigma}\longrightarrow \Cc_0^{\Sigma}$ the dilation map $\rho_{\lambda}(r, \sigma) = (\lambda r, \sigma)$.
We take the following definition from \cite{KotschwarWangConical}.
\begin{definition} We say that a Riemannian manifold $(M, g)$ is \emph{asymptotic to $\Cc^{\Sigma}$ along the end $V\subset (M, g)$}
if, for some $a > 0$, there is a diffeomorphism $F:\Cc_a^{\Sigma}\longrightarrow V$ such that $\lam^{-2}\rho_{\lambda}^*F^\ast g\longrightarrow \hat{g}$
as $\lambda \longrightarrow \infty$
in $C^2_{\emph{loc}}(\Cc_0^{\Sigma}, \hat{g})$.
\end{definition}
By the work of O. Munteanu and J. Wang \cite{MunteanuWangConical1}, \cite{MunteanuWangConical2}, a complete shrinker for which $|\Rc|(x)\longrightarrow 0$ as $x\longrightarrow \infty$ will be asymptotically conical on each of its ends in the above sense. (In dimension four, it is enough that $R(x) \longrightarrow 0$ as $x\longrightarrow \infty$.) The local derivative estimates for the Ricci flow, moreover, imply that the convergence above is actually locally smooth.

Currently there are few nontrivial examples of complete shrinking solitons known. Besides the very recent construction of Bamler-Cifarelli-Conlon-Deruelle \cite{BamlerCifarelliConlonDeruelle} (which is asymptotic to the round cylinder $\mathbb{S}^2\times \RR^2$), all other known examples are asymptotically conical. The first of these examples were found by Feldman-Ilmanen-Knopf \cite{FeldmanIlmanenKnopf},
who constructed a family K\"ahler shrinkers with $\mathbb{U}(m)$-symmetry on the tautological line bundle of $\CP^{m-1}$. Their construction was later generalized
by Dancer-Wang \cite{DancerWang} (see also Yang \cite{Yang}) to produce a family of K\"ahler examples on complex line bundles over products of K\"ahler-Einstein manifolds with positive scalar curvature.  Recently, Angenent-Knopf \cite{AngenentKnopf} have constructed a family of examples which are doubly-warped products on $(0, \infty)\times \Ss^p\times \Ss^q$ with $p$, $q \geq 2$
and $p + q \leq 8$. These shrinkers include the first known nontrivial examples that are not K\"ahler.

Stolarski \cite{Stolarski} has shown that every complete noncompact asymptotically conical shrinking Ricci soliton arises as a finite-time
singularity model of a compact Ricci flow. It remains an important problem to classify the asymptotically conical shrinking solitons in dimension four; the classification in the K\"ahler case has recently been completed in the series of papers \cite{BamlerCifarelliConlonDeruelle, CifarelliConlonDeruelle, ConlonDeruelleSun}.

\subsubsection{The determination of the shrinker from its cone}
The main conclusion of our earlier work \cite{KotschwarWangConical} with L.~Wang is that that an asymptotically conical shrinker is determined by its asymptotic cone in the sense that
two shrinkers which are asymptotic to the same cone on some ends of each must actually be \emph{isometric} to each other near infinity on those ends. Both the statement and the method of proof
suggest that the shrinker ought to share many of the same geometrical and structural properties of its asymptotic cone. Indeed, the fundamental idea behind the proof of the uniqueness theorem in  \cite{KotschwarWangConical} is that the end of an asymptotically conical shrinker and the end of its asymptotic cone may realized (up to isometry)
as time slices of a common smooth solution to the Ricci flow. This leads to the general heuristic that
 the geometric properties which an asymptotically conical shrinker inherits from its asymptotic cone should correspond to the
geometric properties of the Ricci flow which propagate backward in time.

Guided by this heuristic principle, we have previously proven in \cite{KotschwarKaehler} that a shrinker asymptotic to K\"ahler cone must itself be K\"ahler, and in \cite{KotschwarWangIsometries}, together with Wang, that the isometry group of the link of the cone embeds in the isometry group of the shrinker. Using the framework we develop here to track the backward propagation
of warped-product structures under the Ricci flow, we obtain another result in the same direction.
\begin{theorem}\label{thm:soliton}
    Suppose $(M, g, f)$ is asymptotic to the cone $\Cc^{\Sigma}$ along the end $V\subset (M, g)$. If $(\Sigma, g_{\Sigma})$ is a product $\Sigma = \Sigma_1\times \cdots\times \Sigma_k$ of compact Einstein manifolds $(\Sigma_i, \bar{g}_i)$,
    then there is a neighborhood $W$ of infinity of $V$ for which $(W, g|_W)$ is isometric to a multiply warped product on $(a, \infty)\times \Sigma_1\times \cdots\times \Sigma_k$ 
    of the form
    \[
        g = dr^2 + h_1^2(r)\bar{g}_1 + \cdots + h_k^2(r)\bar{g}_k.
    \]
\end{theorem}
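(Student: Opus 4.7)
The plan is to realize the end $V$ of the shrinker and its asymptotic cone $\Cc^{\Sigma}$ as two time slices of a common backward Ricci flow on $\Cc_a^{\Sigma}\times[0,\Omega]$, and then to carry the manifest multiply-warped structure on the cone slice back to the shrinker slice by Corollary \ref{cor:mwp}.

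First I would form the canonical self-similar solution $g(\tau)=\tau\,\psi_\tau^{\ast}g$ associated with $(M,g,f)$, where $\psi_\tau$ is the flow of $\nabla f/\tau$; this satisfies \eqref{eq:brf} and has $g(1)=g$. Along $V$, the asymptotic-conical hypothesis together with the extension construction used in \cite{KotschwarWangConical} produces, for some $a>0$ and $\Omega\in(0,1]$, a smooth family of embeddings $\Phi_\tau:\Cc_a^{\Sigma}\to M$ such that $\tilde g(\tau):=\Phi_\tau^{\ast}g(\tau)$ is a smooth solution of \eqref{eq:brf} on $\Cc_a^{\Sigma}\times[0,\Omega]$ with $\tilde g(0)=\hat g$ and $\tilde g(\Omega)=\Phi_\Omega^{\ast}g$. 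Because each $(\Sigma_i,\bar g_i)$ is Einstein, the cone metric factors as
\[
\hat g = dr^2 + r^2\bar g_1 + \cdots + r^2\bar g_k
\]
on $(a,\infty)\times\Sigma_1\times\cdots\times\Sigma_k$, so $\tilde g(0)$ is a multiply-warped product over the one-dimensional base $\bigl((a,\infty),dr^2\bigr)$ with warping functions $h_i(r)=r$ and Einstein fibers $(\Sigma_i,\bar g_i)$. The required bound $|\ch{\nabla}\log h_i|=1/r\leq 1/a$ holds on the base, and after enlarging $a$ if needed the curvatures of the cone and of the shrinker end (and hence of $\tilde g$) are uniformly bounded on $\Cc_a^{\Sigma}\times[0,\Omega]$. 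Invoking Corollary \ref{cor:mwp} then yields smooth families of metrics $\ch{g}(\tau)$ and positive functions $h_i(\cdot,\tau)$ on $(a,\infty)$ such that
\[
\tilde g(r,\sigma,\tau) = \pi^{\ast}\ch{g}(\tau) + h_1^2(r,\tau)\bar g_1 + \cdots + h_k^2(r,\tau)\bar g_k
\]
on $\Cc_a^{\Sigma}\times[0,\Omega]$. At $\tau=\Omega$, the one-dimensional base metric $\ch{g}(\Omega)=\phi(r)\,dr^2$ is put in standard form by the monotone change of variables $\tilde r:=\int_a^r\sqrt{\phi(s)}\,ds$, and pulling the resulting expression back through $\Phi_\Omega^{-1}$ exhibits $W:=\Phi_\Omega(\Cc_a^{\Sigma})$ as the claimed multiply-warped neighborhood of infinity in $V$.

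The main obstacle is that Corollary \ref{cor:mwp} is stated for complete solutions, whereas the relevant domain $\Cc_a^{\Sigma}\times[0,\Omega]$ is incomplete at both $r=a$ and $r=\infty$. I expect to circumvent this by appealing to the localized framework of Theorem \ref{thm:warped2}: the O'Neill-type tensors measuring the failure of the warped structure vanish identically on $\Cc_a^{\Sigma}$ at $\tau=0$, and the Carleman-type backward-uniqueness estimates that propagate this vanishing forward in $\tau$ are interior in spatial character, so they can be applied on nested subdomains $\Cc_{a'}^{\Sigma}$ with $a'>a$ and then exhausted to yield the conclusion on $\Cc_a^{\Sigma}$. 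This parallels the localization used in \cite{KotschwarWangConical} to bypass the missing vertex of the cone.
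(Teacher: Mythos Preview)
Your overall strategy is correct and matches the paper's: realize the cone and the shrinker end as time slices of a common backward Ricci flow, observe that the cone metric is a (multiply) warped product with Einstein fibers, and propagate this structure backward in $\tau$ via the vanishing of the invariants $(M,P,U)$ and $(A,T^0,G,\nabla A,\nabla T^0)$ developed in the body of the paper. The reduction to the parabolic problem via the self-similar solution, and the final change of radial coordinate, are also essentially as in the paper.

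The gap is in your proposed handling of incompleteness. You suggest invoking Corollary~\ref{cor:mwp} or Theorem~\ref{thm:warped2}, and then claim the Carleman-type estimates behind Theorem~\ref{thm:rfbu} are ``interior in spatial character'' and can be applied on nested exterior domains $\Cc_{a'}^{\Sigma}$ and exhausted. This is not how backward uniqueness works: such estimates are global, not interior, and on a domain with boundary (here $\{r=a'\}$) one would need boundary control that is simply unavailable. The exhaustion argument you sketch would at best give vanishing on $\bigcap_{a'>a}\Cc_{a'}^{\Sigma}=\emptyset$. The paper does \emph{not} localize Theorem~\ref{thm:rfbu}; it instead invokes a genuinely different backward uniqueness principle, Theorem~\ref{thm:solitonbu} (from \cite{KotschwarWangConical, KotschwarTerminalCone}), which is built specifically for exterior conical ends of shrinkers. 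That result uses Carleman weights constructed from the soliton potential $f$ (via the normalization $\tau\nabla f = \tfrac{r}{2}\partial_r$ in Proposition~\ref{prop:brf}) and requires the coefficients in the PDE system to decay: note the hypothesis $\varepsilon(r)\to 0$ in \eqref{eq:xysys2}. This decay is exactly what the quadratic curvature decay \eqref{eq:curvdecay} supplies, yielding coefficients of size $C/r$ in \eqref{eq:xysysbounds}. Neither the soliton gauge nor the coefficient decay plays any role in Theorem~\ref{thm:rfbu}, and without them the argument on the incomplete domain $\Cc_a^{\Sigma}$ does not go through. Your proposal should replace the appeal to Corollary~\ref{cor:mwp}/Theorem~\ref{thm:warped2} with an appeal to Theorem~\ref{thm:solitonbu}, after verifying the decay bounds on $N$, $A$, $T^0$, $G$, $\nabla A$, $\nabla T^0$ and on $|\Rm|$, $|\nabla\Rm|$, $|\nabla^2\Rm|$ needed to put the system in the form \eqref{eq:xysys2}.
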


Of course, a cone $g_c = dr^2 + r^2 g_{\Sigma}$ is not merely a warped product, but a very rigid type of space characterized by the scaling invariance $\rho_{\lambda}^*g_c = \lambda^{2}g_c$. Since the conclusion of Theorem \ref{thm:soliton} is achieved without making any use of this special structure, it is natural to
ask what more might be said about a solution to the Ricci flow which terminates in such a space.  While this line of reasoning does not seem to offer any extra insight into the classification problem for asymptotically conical shrinkers, it does lead to a kind of characterization of of the class as a whole:
it is proven in \cite{KotschwarTerminalCone} that any complete solution to the Ricci flow on $M\times [0, \Omega)$ which converges to a cone on some end as $t\nearrow \Omega$ in a reasonably controlled way (i.e., satisfies
a uniform quadratic curvature bound) must actually be a shrinking Ricci soliton.

\begin{acknowledgement*}  The application to Ricci solitons in Theorem \ref{thm:soliton} relies on a modification of a backward uniqueness principle from the author's joint work \cite{KotschwarWangConical} with Lu Wang. We would like to acknowledge her substantial contribution to this result.
\end{acknowledgement*}

\section{Connection invariants associated to complementary \\ orthogonal distributions}
Our first step is to frame the problem
the backward propagation of a warped product structure as an appropriate problem of backward uniqueness.
As we have noted, we cannot simply use the ansatz (\ref{eq:wpsys1}-\ref{eq:wpsys2}) to construct a competing warped-product solution
to the Ricci flow with the given terminal data, and reduce the problem to one backward uniqueness of \emph{solutions} to \eqref{eq:rf}. Instead we will frame the problem  in terms of the vanishing of a system of invariants
which measure the extent to which a solution to \eqref{eq:rf} fails to retain a warped product structure along the flow.

These invariants will be analogues of O'Neill's invariants $A$ and $T$ for a Riemannian submersion. Since the Riemannian submersion structure will not (a priori) be preserved along the flow, we cannot work with these invariants directly, but we can instead work with algebraic analogues of $A$ and $T$ associated to a pair of complementary orthogonal and (a priori) \emph{time-dependent} distributions which evolve along with our solution $g$.
To establish our notation, we first consider the case of a fixed metric $g$.

\subsection{Invariants associated to a pair of complementary orthogonal distributions}
In this section, $(M, g)$ will denote a fixed $n$-dimensional manifold, and $\Hc$ and $\Vc$ will denote smooth complementary
orthogonal distributions on $M$, where $\Vc$ has dimension $m$.
Let $H$, $V\in \Gamma(\operatorname{End}(TM))$ denote the orthogonal projection operators
\[
H(p):T_pM\longrightarrow\Hc_p, \quad\mbox{and}\quad V(p):T_pM\longrightarrow \Vc_p.
\]
Next, by analogy with O'Neill's submersion invariants \cite{Besse}, we define families of $(2, 1)$-tensors $A$ and $T$ by
\begin{align*}
 A_{E_1}E_2 &\dfn H\nabla_{HE_1}{VE_2} + V\nabla_{HE_1}{HE_2},
\end{align*}
and
\begin{align*}
 T_{E_1}E_2 &\dfn H\nabla_{VE_1}{VE_2} + V\nabla_{VE_1}{HE_2}.
\end{align*}
We will find it useful in our computations to interact with $A$ and $T$ through the tensor
\[
L \dfn \nabla H\in T^*M\otimes \operatorname{End}(TM).
\]
In terms of $L$, the tensors $A$ and $T$ have the representations
\begin{align}
\begin{split}\label{eq:atbchar}
 A_{E_1}E_2 &= VL_{HE_1}{E_2} - HL_{HE_1}{E_2},\\
 T_{E_1}E_2 &= VL_{VE_1}{E_2} - HL_{VE_1}{E_2}.
\end{split}
 \end{align}
 
When the distributions $\Hc$ and $\Vc$ are the horizontal and vertical distributions associated to a Riemannian submersion, the condition $A\equiv 0$ implies the integrability of the horizontal distribution, and thus that the manifold splits locally as $B\times F$ with metric $g = g_B(b) + (g_F)(b, x)$ at $b\in B$ and $x\in F$. The condition $T\equiv 0$, in  turn, implies that the fibers of the submersion are totally geodesic. The vanishing of both $A$ and $T$ in this case imply that the distributions $\Hc$ and $\Vc$ are invariant under parallel transport, and that $g$ is locally reducible as a product.
 
\subsubsection{A remark on the notation}
We have chosen our notation to align with the special case that $\Hc$ and $\Vc$ arise from a Riemannian submersion,
and will use the terms ``horizontal'' and ``vertical'' to describe the distributions $\Hc$ and $\Vc$ and the vectors tangent to their fibers.
However, here, and in our application below, $\Hc$ and $\Vc$ \emph{need not be}  (a priori) the horizontal and vertical distributions
associated to any submersion. In particular, $\Hc$ and $\Vc$ and the tensors $A$ and $T$ do not share all of the properties and symmetries of their namesakes.

For example, for arbitrary complementary orthogonal distributions $\Hc$ and $\Vc$, we \emph{do not know} a priori that the ``vertical'' distribution $\Vc$ is integrable, nor that the identity
\begin{equation}\label{eq:integrability}
A_{HE_1}{HE_2} = \frac{1}{2}\cdot V[HE_1, HE_2],
\end{equation}
which expresses the relationship of $A$ to the integrability of $\Hc$, is valid, as it would be for a Riemannian submersion.

Also, while the two distributions are initially interchangeable --- we have simply decided to call one of them the vertical and the other horizontal --- the definitions we will make in terms of the distributions are not symmetric in $\Hc$ and $\Vc$,
and will depend on this choice, once made. In our application, we will eventually show that the distributions we label $\Hc$ and $\Vc$
are indeed the horizontal and vertical distributions of a warped-product structure.

\subsection{A characterization of locally-warped products.}
The condition that the metric $g$ be described locally as a warped product may also be characterized in terms of invariants of $\Hc$ and $\Vc$ (see, e.g., \cite{Besse}). For this characterization, we define the ``trace-free'' part $T^0$ of the tensor $T$ by
\begin{equation}\label{eq:t0def}
T^0_{E_1} E_2 = T_{E_1} E_2 - \frac{\langle VE_1, VE_2\rangle}{m}N + \frac{\langle N, HE_2\rangle}{m}VE_1,
\end{equation}
where $N$ is defined by
\begin{equation}
 \label{eq:ndef}
    N = \sum_{i=1}^m T_{U_i}U_i
\end{equation}
in terms of a local orthonormal vertical frame $\{U_i\}_{i=1}^m$. 
When $\Vc$ is integrable, $N$ is the mean curvature vector of the fibers. When $\Hc$ and $\Vc$ are the complementary distributions associated to a warped product of the form \eqref{eq:wp}, $N$ is $\pi$-related to the gradient vector field $-m\ch{\nabla}\log h$ on $B$.
\begin{remark}
 By the definition of $T$, the vector field $N$ is horizontal, i.e., $N_p\in \Hc_p$ for all $p\in M$. We will use this fact often below.
\end{remark}

\begin{lemma}[cf. \cite{Besse}, Proposition 9.104]\label{lem:wpconditions}
Suppose $\Hc$ and $\Vc$ are the horizontal and vertical distributions associated to a Riemannian submersion $\pi:(M, g)\longrightarrow (B, \ch{g})$. If
\begin{equation}\label{eq:wpconditions}
    A\equiv 0,\quad  T^0\equiv 0,\quad \mbox{and}\quad H D_{U} N \equiv 0,
\end{equation}
for all vertical vectors $U$,
then for every $b\in B$, there is a neighborhood $W\subset B$ of $b$ and a smooth $m$-dimensional manifold $F$ such that $\pi^{-1}(W)$ is diffeomorphic to $W\times F$ and the diagram
\[
 \begin{tikzcd}
    \pi^{-1}(W) \arrow{r}{\approx} \arrow[swap]{dr}{\pi} & W\times F \arrow{d}{\operatorname{proj}_1} \\
     & W
  \end{tikzcd}
\]
commutes. Moreover, there is a metric $\bar{g}$ on $F$ and a positive function $h\in C^{\infty}(W)$ such that $g$ has the form
\[
    g(b, x) = \pi^*\ch{g}(b) + h^2(b)\bar{g}(x)
\]
on $\pi^{-1}(W)\approx W\times F$.
\end{lemma}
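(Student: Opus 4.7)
The plan is to show that the three conditions in \eqref{eq:wpconditions} encode, in order, the local integrability of the horizontal distribution, the umbilicity of the fibers with common mean curvature vector $N$, and the requirement that $N$ descend to a locally-gradient vector field on the base. Once these three facts are in hand, I would solve a conformal rescaling ODE for the fiber metric along horizontal directions to produce the warped-product representation.

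First I would apply the standard integrability identity $V[HE_1, HE_2] = 2A_{HE_1}HE_2$ (valid here since $\pi$ is a Riemannian submersion) to deduce from $A \equiv 0$ that the horizontal distribution $\Hc$ is integrable.  Combined with the integrability of $\Vc$ (automatic for a submersion), Frobenius provides, for each $b \in B$, a neighborhood $W$ and a diffeomorphism $\pi^{-1}(W) \approx W \times F$ identifying $\pi$ with projection onto the first factor.  In these coordinates, the orthogonality of $\Hc$ and $\Vc$ forces $g = \pi^*\ch{g} + g_F(b, x)$ with no cross terms, where $g_F(b)$ denotes the induced fiber metric.  The hypothesis $T^0 \equiv 0$, applied to vertical pairs, yields $H\nabla_U V = m^{-1} \la U, V\ra N$, so each fiber is totally umbilic with common mean curvature $N$.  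A short Lie-derivative computation, using this umbilicity together with $A \equiv 0$ (which kills a correction term involving $H\nabla_X U$), then yields for $X = \partial/\partial b^i$ and vertical coordinate fields $\partial/\partial x^\alpha,\partial/\partial x^\beta$ the evolution equation
\[
\partial_{b^i} g_{\alpha\beta}(b, x) = -\tfrac{2}{m} \la N, \partial_{b^i}\ra\, g_{\alpha\beta}(b, x).
\]

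The next step is to show that the coefficient $\mu_i(b, x) := -m^{-1} \la N, \partial_{b^i}\ra$ is in fact independent of $x$.  Differentiating $\la N, \partial_{b^i}\ra$ in the vertical direction $U = \partial/\partial x^\alpha$, the two resulting terms $\la \nabla_U N, \partial_{b^i}\ra$ and $\la N, \nabla_U \partial_{b^i}\ra$ vanish separately: the first by the hypothesis $HD_U N \equiv 0$, since $\partial_{b^i}$ is horizontal; the second because $[\partial_{x^\alpha}, \partial_{b^i}] = 0$ together with $A \equiv 0$ forces $\nabla_U \partial_{b^i}$ to be vertical, while $N$ is horizontal.  So $\mu_i = \mu_i(b)$, and the displayed equation becomes a matrix ODE in $b$ with a scalar coefficient.

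Finally I would verify that the 1-form $\mu := \mu_i\, db^i$ on $W$ is closed and then integrate.  Closedness is automatic from the symmetry of mixed partials applied to the consequence $\partial_{b^i} \log \det g_F = 2m\, \mu_i(b)$ of the ODE above.  Shrinking $W$ to a contractible neighborhood, the Poincar\'e lemma produces a positive $h \in C^\infty(W)$ with $d \log h = \mu$, and the matrix ODE is then solved by $g_F(b, x) = h^2(b) \bar{g}(x)$, where $\bar{g}(x) := g_F(b_0, x)/h^2(b_0)$ is a Riemannian metric on $F$; this delivers the desired warped-product representation.  The main technical obstacle I anticipate is the bookkeeping in the preceding two paragraphs: each of the three hypotheses in \eqref{eq:wpconditions} is used at a distinct point, and one has to confirm that none is being implicitly invoked elsewhere.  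Apart from this, the argument is essentially the proof of Proposition 9.104 of \cite{Besse} adapted to the present notation.
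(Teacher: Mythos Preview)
Your argument is correct and complete. The paper's own proof takes a shorter route: rather than carrying out the conformal ODE integration explicitly, it observes that the result is essentially Proposition~9.104 in \cite{Besse}, whose hypothesis is that $N$ be \emph{basic} (horizontal and $\pi$-related to a vector field on $B$), and then spends its effort only on deriving that hypothesis from \eqref{eq:wpconditions}. The paper's derivation of basicness is effectively your step showing that $\mu_i = -m^{-1}\langle N, \partial_{b^i}\rangle$ is independent of $x$: it writes $N = \sum N^i E_i$ in a basic orthonormal frame, computes $U(N^i) = \langle \nabla_U N, E_i\rangle + \langle N, \nabla_U E_i\rangle$, kills the first term with $HD_UN \equiv 0$, and rewrites the second as $\langle N, A_{E_i}U + [U, E_i]\rangle$, which vanishes by $A \equiv 0$ and the verticality of $[U, E_i]$. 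Once $N$ is basic, the paper simply cites Besse for the warped-product conclusion.

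What each approach buys: the paper's is shorter and makes the logical dependence on Besse transparent; yours is self-contained and makes explicit where each of the three hypotheses enters, which is exactly the bookkeeping concern you raised. One small remark: your parenthetical that $A \equiv 0$ ``kills a correction term involving $H\nabla_X U$'' in the derivation of the evolution equation $\partial_{b^i} g_{\alpha\beta} = -\tfrac{2}{m}\langle N, \partial_{b^i}\rangle g_{\alpha\beta}$ is slightly misplaced --- that computation goes through using only $[X,U]=0$ and $T^0 \equiv 0$, via $g(\nabla_X U, V) = g(\nabla_U X, V) = \langle T_U X, V\rangle = -\langle X, T_U V\rangle$. The hypothesis $A \equiv 0$ is genuinely needed only for the integrability of $\Hc$ and again (as you correctly use it) in showing $\mu_i$ is fiber-constant.
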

\begin{proof}
 This is a slightly modified version of the converse of Proposition 9.104 in \cite{Besse}, in which, in place of our assumption that $HD_UN =0$, there is the condition that $N$ be  \emph{basic}, i.e., horizontal and $\pi$-related
 to a smooth vector field on $B$.
 
 To obtain this condition from \eqref{eq:wpconditions}, we can argue as follows. Fix $b \in M$.  Being the vertical space of $\pi$, $\Vc$ is integrable, and the condition $A\equiv 0$ implies that $\Hc$ is also integrable. Thus $b$ has a neighborhood $W$ over which $\pi$ fibers trivially as
 $\pi^{-1}(W) \approx W \times F$, where $F$ is connected, and on which $g$ has a decomposition of the form $g = \pi^*\ch{g}(b) + \hat{g}(b, x)$
 for some family of metrics $\hat{g}(b, \cdot)$ on $F$. Since $N$ is horizontal, about any point in $\pi^{-1}(W)$, we have the representation $N = \sum_{i=1}^p N^i E_i$ for some local orthonormal frame $\{E_i\}_{i=1}^p$ of basic vector fields $E_i$. Let $U$ be any vertical vector field defined near $q$.
 Using \eqref{eq:wpconditions}, we have
 \[
    U(N^i) = \langle \nabla_U N, E_i\rangle + \langle N, \nabla_U E_i\rangle = \langle A_{E_{i}}U, N\rangle + 
   \langle [U, E_i], N\rangle =  \langle [U, E_i], N\rangle
 \]
 for each $i$.
However, if $\pi_*E_i = \ch{E}_i$, then $\pi_*[U, E_i] = [\pi_*U, \pi_*E_i] = [0, \ch{E}_i] = 0$. Thus  $[U, E_i]$ is vertical, so $\langle [U, E_i], N\rangle =0$. This implies that $N$ is basic on $\pi^{-1}(W)$.
\end{proof}

In view of the third condition in \eqref{eq:wpconditions}, it will be useful to define the two-tensor
\begin{equation}\label{eq:gdef}
  G(E_1, E_2) \dfn \langle H\nabla_{VE_1}N, E_2\rangle.
\end{equation}

\subsection{Some notational conventions.}\label{ssec:notation}
In the sequel, we will need to perform some detailed tensor computations
which are sensitive to the orthogonal decomposition defined by $\Hc$ and $\Vc$.
To carry out these computations efficiently, we will make use of the following notational shorthands.

\subsubsection{Barred and underlined indices}
The first convention concerns index notation.

\begin{notation}
We will use a \textbf{barred} index to denote a precomposition of that argument
of the tensor with with the vertical projection $V$, and an \textbf{underlined} index
to denote a precomposition with the horizontal projection $H$. 
\end{notation}

For example, given a three-tensor $X$, we will write $X_{\bi\bj\uk}$ to denote the tensor
\[
    X_{\bi\bj\uk} = X_{abc}V_{ai}V_{bj}H_{ck}.
\]
In other words, $X_{\bi\bj\uk}$ represents the \emph{globally-defined}
three tensor $\tilde{X}$ where 
\[
\tilde{X}_{ijk} = X_{\bi\bj\uk}, \quad\mbox{i.e.,}\quad
\tilde{X}(E_1, E_2, E_3) = X(VE_1, VE_2, HE_3),
\]
that is, the bar/underline notation represents a \emph{modification}
to the tensor as opposed to the components of the expression of the tensor in terms
of some local orthonormal frame adapted to the splitting. (Here and throughout, a repeated index
indicates a sum over the components with respect to an orthonormal basis.)

According to this convention, the defining equations \eqref{eq:atbchar} for the tensors $A$ and $T$ become
\begin{align}
\begin{split}
 \label{eq:abindices}
    A_{ijk} &= L_{\ub{\imath}\ub{\jmath}\bar{k}} - L_{\ub{\imath}\bar{\jmath}\ub{k}}, \quad
    T_{ijk} = L_{\bar{\imath}\ub{\jmath}\bar{k}} - L_{\bar{\imath}\bar{\jmath}\ub{k}},
\end{split}
\end{align}
and equations \eqref{eq:t0def}, \eqref{eq:ndef}, and \eqref{eq:gdef}
become
\begin{align*}
        T^0_{ijk} &= T_{ijk} - \frac{V_{ij}N_{k}}{m} + \frac{V_{ik}N_j}{m},\quad
        N_k = -L_{\bar{p}\bar{p} k} = -L_{\bar{p}\bar{p}\ub{k}},
        \quad G_{ij} = \nabla_{\bi}N_{\uj}.
\end{align*}

\subsubsection{Contraction with the mean-curvature vector $N$}
The vector field $N$ will be ubiquitous in our computations, so it will also be convenient to use the convention that $N$, when it appears as the index of a tensor, represents the contraction of that argument of the tensor
with the vector field $N$.  
Thus, for example, we will write
\[
    X_{iNk} = X_{ijk} N_j = X_{ij\uk}N_k.
\]
Note that since the vector field $N$ is horizontal, we have $N_{k} = N_{\uk}$. However, we will typically just write $N_k$ (and likewise for
for $H_{ij} = H_{\ui\uj}$ and $V_{ij} = V_{\bi\bj}$).

\subsubsection{Notation for covariant derivatives}
The indexing convention above leads to a potential ambiguity for covariant derivatives. For example, the notation $\nabla_{\bi}X_{\uj}$ might be interpreted
as either the modification of $\nabla X$ or the modification of the covariant derivative
of the horizontal projection $H(X)$ of $X$.  

We will only use this notation to convey the \emph{former} meaning, i.e., the modification of the tensor $\nabla X$. In particular, by $\nabla_{\bi}X_{\uj}$, we mean
\[
    \nabla_{\bi}X_{\uj} = V_{ai} H_{bj} \nabla_a X_b,
\]
or, in other words, that the tensor represented by the expression
\[
Y_{ij} = \nabla_{\bi}X_{\uj}
\]
is that defined by
\[
 Y(E_1, E_2) =  (\nabla X)(VE_1, HE_2).
\]

For the latter interpretation, i.e., that of the modification of the covariant derivative of $H(X)$,
we will write $\nabla_{\bi}(X_{\uj})$ or simply introduce another symbol $Y_j = X_{\uj}$ and write $\nabla_{\bi}Y_j$.
Note that
\[
 \nabla_{\bi}(X_{\uj}) = \nabla_{\bi}X_{\uj} + \nabla_{\bi}H_{jp}X_{p},
\]
so these two interpretations do not agree in general.

\subsubsection{Asterisk notation}
To reduce the clutter in our expressions, we will use the standard
``asterisk'' convention with a slight twist.
\begin{notation}
By $W_1\ast W_2$,
we will mean a linear combination of contractions of the tensor product of $W_1\otimes W_2$ by the metric $g$ and/or the projections $H$ and $V$.  
\end{notation}
Thus, with an asterisk, we conceal not only factors of the metric, but potentially also
factors of the projections $H$ and $V$. According to this convention, we have as usual, that
\begin{align*}
    |X_1 \ast X_2| \leq C|X_1||X_2|
\end{align*}
for some $C = C(n)$. However, the potential presence of factors of $H$ and $V$ means that
\begin{align*}
    \nabla (X_1 \ast X_2) &= \nabla X_1 \ast X_2 + X_1 \ast \nabla X_2 + X_1 \ast X_2 \ast \nabla H\\
    &= \nabla X_1 \ast X_2 + X_1 \ast \nabla X_2 + X_1 \ast X_2 \ast (T^0 + A + N)
\end{align*}
in view of \eqref{eq:deltah}. (Note that $\nabla V = -\nabla H$.)

\subsubsection{Other conventions}
Finally, in some estimates,  we will use the notation
\[
     x\lesssim y
\]
for $x$, $y \geq 0$ to mean that $x\leq C y$ for some universal constant $C$.

We will also sometimes use
we will use $\Theta$ (or $\Theta_1$, $\Theta_2$, etc.)
to denote a polynomial with nonnegative coefficients that is of the form
$\Theta = \Theta(x_1, x_2, \ldots, x_k)$ with $\Theta(0, 0, \ldots, 0) = 0$.  Thus in particular, such $\Theta$ will satisfy
\[
    \Theta(|x_1|, |x_2|, \ldots, |x_k|) \lesssim |x_1| + |x_2| + \cdots + |x_k|.
\]

\subsection{Some basic identities satisfied by the connection invariants}
We now record a few simple identities satisfied by the tensors $A$, $L$, $T^0$, $N$, and $G$. Since $\Hc$ and $\Vc$ are merely complementary orthogonal distributions, this list will not include all of the relations that would be satisfied by the distributions associated to a Riemannian submersion.

Differentiating the equations $H^2 = H$, $V^2 = V$ and $H + V = \operatorname{Id}$, and examining components, we see that
\[
    \nabla V = -\nabla H = - L,   \quad HL_{E_1}(H E_2) = 0, \quad VL_{E_1}(VE_2) = 0,
\]
that is,
\[
    \nabla_i V_{jk}  = -L_{ijk}, \quad L_{i\bar{\jmath}\bar{k}} = L_{i\ub{\jmath}\ub{k}} = 0.
\]
In particular, $N_{\bar{k}} = - L_{\bar{p}\bar{p}\bar{k}} = 0$,
reflecting that $N$ is horizontal.

Moreover, for any horizontal vector fields $X$ and $Y$, and vertical vector fields $U$ and $V$, we have 
\begin{equation*}\label{eq:hvidents}
     A_U = T_X = 0, \quad\langle A_{X} Y, U\rangle = - \langle Y, A_{X} U\rangle, \quad
    \langle T_U V, X\rangle = - \langle V, T_U X\rangle = 0,
\end{equation*}
so
\[
 A_{\bar{\imath}jk} = A_{\ub{\imath}\bar{\jmath}\bar{k}} = A_{\ub{\imath}\ub{\jmath}\ub{k}} = 0, \quad
 A_{\ub{\imath}\bar{\jmath}\ub{k}} = - A_{\ub{\imath}\ub{k}\bar{\jmath}}, 
\]
and
\begin{gather*}
 T_{\ub{\imath}jk} = T^0_{\ub{\imath}jk} = T_{\bar{\imath}\bar{\jmath}\ub{k}}
  = T_{\bar{\imath}\ub{\jmath}\ub{k}} = T^0_{\bar{\imath}\bar{\jmath}\bar{k}}=  T^0_{\bar{\imath}\ub{\jmath}\ub{k}} = 0, \quad
 T_{\bar{\imath}\bar{\jmath}\ub{k}} = - T_{\bar{\imath}\ub{k}\bar{\jmath}} = T^0_{\bar{\imath}\bar{\jmath}\ub{k}} = - T^0_{\bar{\imath}\ub{k}\bar{\jmath}}. 
\end{gather*}
Additionally,
\[
    T^0_{\bar{p}\bar{p}k} = T^0_{\bar{p}k\bar{p}} = T^0_{k\bar{p}\bar{p}} = 0.
\]

The following identities are crucial to the computations
that follow.
\begin{lemma}\label{lem:hder}
The covariant derivative and Laplacian of $H$ satisfy
\begin{align}
\begin{split}\label{eq:nablah}
   \nabla_i H_{jk} &=  - \frac{1}{m}\left(V_{ij} N_k + V_{ik} N_j\right) + T^0_{i\uj k} - T^0_{i\bj k} + A_{i\uj k} - A_{i\bj k}
\end{split}
\end{align}
and
\begin{align}
\begin{split}\label{eq:deltah}
    \Delta H_{jk}  &=  \frac{2}{m}\left(\frac{|N|^2}{m}V_{jk} - N_j N_k\right)
    \\
    &\phantom{=}+\nabla_mT^0_{m\uj k} - \nabla_mT^0_{m\bj k} + \nabla_mA_{m\uj k} - \nabla_mA_{m\bj k}\\
  &\phantom{=} +2\left(T^0_{\bm \uj \br}T^0_{\bm\br\uk} - T^0_{\bm \bj \ur}T^0_{\bm\ur\bk} 
  -T^0_{\bm r\bk}N_r + A_{m\bj\ur} A_{m\ur \bk} - A_{m \uj \br}A_{m\br \uk}\right)\\
  &\phantom{=} - \frac{1}{m}\left(A_{mm\bj}N_k + A_{mm\bk} N_j + G_{jk} + G_{kj} + (T^0_{\bj p \bk} + T^0_{\bk p \bj}) N_p\right)\\
 \end{split}  
\end{align}
In particular,
\begin{equation}\label{eq:nablahest}
  \left|\nabla_i H_{jk} +\frac{1}{m}\left(V_{ij}N_k + V_{ik}N_j\right)\right| \lesssim |A| + |T^0|,
\end{equation}
and
\begin{align}
\begin{split}
 \label{eq:deltahest}
 \left|\Delta H_{jk} - \frac{2}{m}\left(\frac{|N|^2}{m}V_{jk} - N_jN_k\right)\right| &\lesssim \Theta (|A| + T^0|) +  |\nabla A| + |\nabla T^0| + |G|,
\end{split}
 \end{align}
for some polynomial $\Theta = \Theta(|N|, |A|, |T^0|)$ as in Section \ref{ssec:notation}.
\end{lemma}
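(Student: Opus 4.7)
My plan is to derive the pointwise formula \eqref{eq:nablah} by a purely algebraic decomposition of the covariant derivative $L = \nabla H$ with respect to the splitting $TM = \Hc \oplus \Vc$, then to obtain \eqref{eq:deltah} by differentiating \eqref{eq:nablah} a second time and substituting it back into the resulting expressions. The estimates \eqref{eq:nablahest} and \eqref{eq:deltahest} will then be immediate corollaries.

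For \eqref{eq:nablah}, I first observe that differentiating $H^2 = H$ yields $L_i H + H L_i = L_i$ (viewing $L_i$ as a $(1,1)$-tensor), so that $L_i V = H L_i$; composing with $V$ and $H$, respectively, yields $V L_i V = V H L_i = 0$ and $H L_i H = 0$, i.e., $L_{i\bj\bk} = L_{i\uj\uk} = 0$. Inserting $\operatorname{Id} = H + V$ on the two non-derivative slots of $L$, these vanishings leave
\[
L_{ijk} = L_{\ui\uj\bk} + L_{\ui\bj\uk} + L_{\bi\uj\bk} + L_{\bi\bj\uk}.
\]
By the definitions in \eqref{eq:abindices} and the same vanishings, these four components identify with $A_{i\uj k}$, $-A_{i\bj k}$, $T_{i\uj k}$, and $-T_{i\bj k}$, respectively, giving
\[
L_{ijk} = A_{i\uj k} - A_{i\bj k} + T_{i\uj k} - T_{i\bj k}.
\]
Converting the $T$-pieces to $T^0$ via \eqref{eq:t0def}, using $V_{i\uj} = 0 = N_{\bj}$, produces the trace correction $-\frac{1}{m}(V_{ij}N_k + V_{ik}N_j)$ and yields \eqref{eq:nablah}.

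For \eqref{eq:deltah}, I would apply $\nabla_m$ literally to \eqref{eq:nablah} and contract on $m$. The leading divergence piece $-\frac{1}{m}\nabla_m(V_{mj}N_k + V_{mk}N_j)$ splits via the Leibniz rule and $\nabla V = -L$ into $(\nabla_m H_{mj})N_k$-type parts and $V_{mj}\nabla_m N_k$-type parts. Substituting \eqref{eq:nablah} into $\nabla_m H_{mj}$, and using the contractions $V_{mm} = m$, $V_{mj}N_m = 0$, together with the vanishing of $T^0_{\um\cdot\cdot}$, $T^0_{\bp\bp\cdot}$, and $A_{\bm\cdot\cdot}$, produces the $-\frac{2}{m}N_jN_k$ contribution to the leading term of \eqref{eq:deltah} together with the $A_{mm\bj}N_k$ correction. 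The second piece decomposes along $k = \uk + \bk$ as $V_{mj}\nabla_m N_k = \nabla_{\bj}N_{\uk} + \nabla_{\bj}N_{\bk} = G_{jk} + \nabla_{\bj}N_{\bk}$; the horizontality of $N$ allows one to re-express $\nabla_{\bj}N_{\bk}$ as $L_{\bj\bk N}$, which by the algebraic decomposition of the first step equals $-T^0_{\bj\bk N} - \frac{1}{m}V_{jk}|N|^2$, contributing both the $\frac{|N|^2}{m^2}V_{jk}$ piece of the leading term and, via the symmetry $T^0_{\bi\bj\uk} = -T^0_{\bi\uk\bj}$, the $T^0_{\bj p \bk}N_p$ term. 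The remaining quadratic-in-$A,T^0$ and $T^0_{\bm r\bk}N_r$ terms arise from the discrepancy between the literal derivative of the decorated tensors $T^0_{m\uj k}$, $A_{m\uj k}$ and the paper-convention interpretation of their derivatives: specifically, $\nabla_m(T^0_{m\uj k})$ differs from $\nabla_m T^0_{m\uj k}$ by $T^0_{mak}L_{maj}$, and expanding the residual $L_{maj}$ factors via \eqref{eq:nablah} inside each of the four such combinations produces, after collecting the symmetries and vanishing identities, the listed quadratic and $N$-coupled corrections.

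The two pointwise estimates follow immediately from \eqref{eq:nablah} and \eqref{eq:deltah}: each non-leading term in either formula is either a component of $\nabla A$, $\nabla T^0$, or $G$, or else a polynomial of total degree at least two in $|A|$, $|T^0|$, $|N|$, with the latter absorbed into a single polynomial $\Theta(|N|, |A|, |T^0|)$ with $\Theta(0,0,0) = 0$. The principal obstacle is the bookkeeping in the derivation of \eqref{eq:deltah}: each application of $\nabla_m$ to a decorated tensor produces spurious $L$-factors through the product rule, and each of these must be re-expanded via \eqref{eq:nablah} and then recombined using the symmetries and vanishing identities of $A$, $T^0$, and $N$ to reduce the final expression to a polynomial in the controlled tensors $A$, $T^0$, $N$, $G$, $\nabla A$, and $\nabla T^0$.
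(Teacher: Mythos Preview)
Your proposal is correct and follows essentially the same route as the paper's proof: the identity \eqref{eq:nablah} is obtained by the algebraic decomposition of $L_{ijk}$ into its four nonvanishing $\Hc/\Vc$ components and their identification with $A$ and $T$ (then $T^0$), and \eqref{eq:deltah} is obtained by applying $\nabla_m$ to this formula, expanding the product-rule corrections on the decorated indices, and re-substituting \eqref{eq:nablah} for each resulting factor of $L$. The paper organizes the second step by first writing out $\nabla_m L_{ijk}$ in full and then contracting, whereas you contract first and handle each summand separately, but the computations and the key simplifications (in particular the treatment of $\nabla_{\bj}N_k$ producing the $G_{jk}$, $T^0_{\bj p\bk}N_p$, and $|N|^2 V_{jk}/m$ contributions) are the same.
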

\begin{proof}
 For \eqref{eq:nablah}, we compute directly from the identities above that
 \begin{align*}
    L_{ijk} &= L_{\bi\bj\uk} + L_{\bi \uj \bk} + L_{\ui \bj \uk} + L_{\ui \uj \bk} = - T_{i\bj k} + T_{i\uj k} - A_{i\bj k} + A_{i \uj k}\\
            &= T^0_{i\uj k}- \frac{V_{ik}N_j}{m} - T^{0}_{i\bj k} - \frac{V_{ij}N_k}{m} - A_{i\bj k} + A_{i \uj k}.
 \end{align*}
 
For \eqref{eq:deltah}, note that
\begin{align*}
 \nabla_m(T^0_{i\uj k} + A_{i\uj k}) &= L_{mjr}(T^0_{irk} + A_{irk}) + \nabla_m T^0_{i\uj k} + \nabla_m A_{i\uj k}
\end{align*}
and similarly
\[
    \nabla_m(T^0_{i\bj k} + A_{i\bj k}) = -L_{mjr}(T^0_{irk} + A_{irk}) + \nabla_m T^0_{i\bj k} + \nabla_m A_{i\bj k},
\]
while
\[
\nabla_{m}(V_{ij} N_k) = -L_{mij} N_k + V_{ij}\nabla_m N_k.
\]
Thus, using \eqref{eq:nablah}, we have
\begin{align*}
    \nabla_m L_{ijk} &= \nabla_mT^0_{i\uj k} - \nabla_mT^0_{i\bj k} + \nabla_mA_{i\uj k} - \nabla_mA_{i\bj k}
    + 2L_{mjr}(T^0_{irk} + A_{irk})\\
    &\phantom{=} + \frac{1}{m}\left(L_{mij}N_k + L_{mik}N_j - V_{ij} \nabla_m N_k - V_{ik}\nabla_m N_j\right)
\end{align*}
and so
\begin{align*}
 \nabla_m L_{mjk} &= \nabla_mT^0_{m\uj k} - \nabla_mT^0_{m\bj k} + \nabla_mA_{m\uj k} - \nabla_mA_{m\bj k}
    + 2L_{\bm jr}T^0_{\bm rk} \\
    &\phantom{=} +2L_{\um jr} A_{m rk} + \frac{1}{m}\left(L_{mmj}N_k + L_{mmk}N_j - \nabla_{\bj} N_k - \nabla_{\bk} N_j\right).
\end{align*}

Now,
\[
 L_{\bm jr}T^0_{\bm r k} = T^0_{\bm \uj \br}T^0_{\bm\br\uk} - T^0_{\bm \bj \ur}T^0_{\bm\ur\bk}
 -T^0_{\bm r\bk}N_r,
\]
and
\[
 \quad L_{\um jr} A_{m rk} = A_{m\bj\ur} A_{m\ur \bk} - A_{m \uj \br}A_{m\br \uk},
\]
while
\[
    L_{mmj} = L_{\bm\bm j} + L_{\um\um j} = -N_j - A_{\um\um \bj},
\]
and
\begin{align*}
\nabla_{\bj}N_k &= G_{jk} + V_{pk} \nabla_{\bj} N_p = G_{jk} + \nabla_{\bj}(V_{pk} N_k) -\nabla_{\bj}V_{pk} N_p = G_{jk} +L_{\bj \up k}N_p\\ &= G_{jk} + T^0_{\bj\up\bk}N_p - \frac{|N|^2}{m} V_{jk}.
\end{align*}
Combining the above identities, we obtain that
\begin{align*}
 \nabla_m L_{mjk} &= \nabla_mT^0_{m\uj k} - \nabla_mT^0_{m\bj k} + \nabla_mA_{m\uj k} - \nabla_mA_{m\bj k}\\
  &\phantom{=} +2\left(T^0_{\bm \uj \br}T^0_{\bm\br\uk} - T^0_{\bm \bj \ur}T^0_{\bm\ur\bk} 
  -T^0_{\bm r\bk}N_r + A_{m\bj\ur} A_{m\ur \bk} - A_{m \uj \br}A_{m\br \uk}\right)\\
  &\phantom{=} - \frac{1}{m}\left(A_{mm\bj}N_k + A_{mm\bk} N_j + G_{jk} + G_{kj} + (T^0_{\bj p \bk} + T^0_{\bk p \bj}) N_p\right)\\
  &\phantom{=} + \frac{2}{m}\left(\frac{|N|^2}{m}V_{jk} - N_j N_k\right)
\end{align*}
which yields \eqref{eq:deltah}.
\end{proof}

Lemma \ref{lem:hder} implies in particular that when $\Hc$ and $\Vc$ are
the horizontal and vertical distributions of a locally a warped product,
\[
 \nabla_{i} H_{jk} = -\nabla_{i}V_{jk} 
 = -\frac{1}{m}(V_{ij}N_k + V_{ik} N_j),
\]
and
\[
 \Delta H_{jk} = -\Delta V_{jk} 
 = \frac{2}{m}\left(\frac{|N|^2}{m}V_{jk} - N_jN_k\right).
\]

The tensors $\nabla H$ and $\Delta H$ will feature so frequently in our computations below that it will be useful to introduce some notation
for the discrepancy between their actual values and the expressions they satisfy on a locally-warped product.
\begin{definition}
 Let $\Ec^{\prime}$ and $\Ec^{\prime\prime}$ denote the tensors defined by
 \begin{align}\label{eq:eprimedef}
 \begin{split}
 \nabla_{i} H_{jk} &= 
    -\frac{1}{m}(V_{ij}N_k + V_{ik} N_j) + \Ec^{\prime}_{ijk},\\
  \Delta{H}_{jk} &=  \frac{2}{m}\left(\frac{|N|^2}{m}V_{jk} - N_jN_k\right) + \Ec^{\prime\prime}_{jk}. 
 \end{split}
 \end{align}
\end{definition}
Equations \eqref{eq:nablahest} and \eqref{eq:deltahest} show that
$\Ec^{\prime}$ and $\Ec^{\prime\prime}$ can be controlled
by $A$, $T^0$, $G$, $\nabla A$, and $\nabla T^0$.

\section{Curvature invariants}
Let $\Hc$ and $\Vc$ be complementary orthogonal distributions as in the previous section. We have seen that when $\Hc$ and $\Vc$ are associated
to a Riemannian submersion, the vanishing of the tensors $A$, $T^0$, and $G$ 
associated to $\Hc$ and $\Vc$ is sufficient to identify it as a locally warped product.  However, we will shortly allow $\Hc$ and $\Vc$ to evolve along with the Ricci flow, and in order to control the evolution
of the invariants $A$, $T^0$, and $G$, we will also need to control
the evolution of certain curvature quantities.

We will define these curvature invariants
here first in terms of a fixed pair of complementary distributions. However, the motivation for the particular choice of their definitions will have to wait for the computations in the following sections.

\subsection{Some notation}
First let us introduce a bit of notation.
Given any $k$-tensor $X$, let
\begin{align*}
    X^H(E_1, \ldots, E_k) &= X(HE_1,\ldots, HE_k),
\end{align*}
and
\begin{align*}
    X^V(E_1, \ldots, E_k) &= X(VE_1, \ldots, VE_k),
\end{align*}
denote the actions of $H$ and $V$ on $X$. According to our index convention, we have
\[
     X^H_{i_1\cdots i_k} = X_{\ub{\imath}_1\cdots \ub{\imath}_k},      \quad X^V_{i_1 \cdots i_k} = X_{\bar{\imath}_1 \cdots \bar{\imath}_k}.
\]
Also, as usual, we use $X\odot X^{\prime}$ to denote the Kulkarni-Nomizu product
\[
    (X\odot X^{\prime})_{ijkl} = X_{il}X^{\prime}_{jk} + X_{jk}X^{\prime}_{il} - X_{ik}X^{\prime}_{jl} - X_{jl}X^{\prime}_{ik}
\]
of the $2$-tensors $X$ and $X^{\prime}$.
\subsection{An invariant associated to $\Rm$}
When $\Hc$ and $\Vc$ arise from a Riemannian submersion, 
the curvature tensor on the total space $M$ can be completely expressed
via O'Neill's equations \cite{ONeill}
in terms of the curvature tensors of the base and fiber
and the invariants $A$ and $T$ and their first covariant derivatives.
(See, e.g., Proposition 9.28 in \cite{Besse}.)  For a general pair of complementary distributions, the associated invariants $A$
and $T$ lack some of the symmetries they would possess if $\Hc$ and $\Vc$ arose from a genuine Riemannian submersion, but there are analogous formulas which express what is effectively the same thing for all but the purely vertical and purely horizontal components.

We will not need these formulas and will not record them here. What is important to us
are the observations that,
on a warped-product, the mixed (i.e., neither purely vertical nor purely horizontal) components of the curvature tensor
either vanish or are of a relatively simple form, and that, relative to an arbitrary pair of complementary orthogonal distributions, the invariants $A$, $T^0$, and $G$
and $\nabla A$ and $\nabla T^0$ measure the extent to which these mixed components of curvature fail to have
this simple form.

Define
\begin{equation}\label{eq:qdef}
  Q \dfn \Rm - \Rm^H - \Rm^V - \frac{1}{m}W \odot V,
\end{equation}
where, in index notation,
\begin{equation}\label{eq:wdef}
  W_{il} \dfn (\trace_V\Rm)^H_{il} = R_{\ui\bp\bp\ul}, \quad \trace_V(\Rm)_{il} = V_{pq}R_{ipql} = R_{i\bp\bp l}.
\end{equation}

\begin{proposition}\label{prop:qprop}
The tensor $Q$ satisfies
\begin{equation}\label{eq:qeq}
 Q =  (N + A + T^0) \ast (A + T^0)
 + \nabla A + \nabla T^0 + G,
\end{equation}
and $W$ satisfies
\begin{align*}
 W &= \frac{ N\otimes N}{m} - (\nabla N)^H + (N + A+ T^0) \ast (A + T^0)
 + \nabla A + \nabla T^0 + G.
\end{align*}
(Here, we identify $N^{\flat}$ with $N$.) In particular,
\begin{equation}
    |Q| \lesssim \Theta(|A|+|T^0|) + |\nabla A| + |T^0| + |G|
\end{equation}
for some polynomial $\Theta = \Theta(|N|, |A|, |T^0|)$, and
$Q$ vanishes on a warped product.
\end{proposition}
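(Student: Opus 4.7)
My plan is to extract the formulas for $W$ and $Q$ from the Ricci identity applied to the projection tensor $H$, using the explicit expression $\nabla_i H_{jk} = L_{ijk}$ from Lemma~\ref{lem:hder} to control the second-derivative side. Schematically, $[\nabla_i,\nabla_j]H_{kl} = \Rm\ast H$, so that, after projecting the free slots onto suitable combinations of $\Hc$ and $\Vc$, the right-hand side selects specific components of $\Rm$, while the left-hand side is computable directly from \eqref{eq:nablah}.

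First I would derive the formula for $W$. Differentiating \eqref{eq:nablah} once more and re-expanding each resulting term via Lemma~\ref{lem:hder} yields an expression for $\nabla_i L_{jkl}$ in terms of $\nabla A$, $\nabla T^0$, $\nabla N$, and quadratic combinations of $A$, $T^0$, $N$. The tensor $\nabla N$ enters in two ways: its purely horizontal part $(\nabla N)^H$ survives in the final formula, while its vertical-to-horizontal components are rewritten using the identity
\[
 \nabla_{\bj}N_k = G_{jk} + T^0_{\bj\up\bk}N_p - \frac{|N|^2}{m}V_{jk}
\]
already established within the proof of Lemma~\ref{lem:hder}. Antisymmetrizing in $(i,j)$, contracting with $V^{jk}$ on the second and third slots, and projecting $i\to\ui$, $l\to\ul$, isolates $W_{il} = R_{\ui\bp\bp\ul}$ on the curvature side; on the other side the calculation just outlined produces exactly $\frac{N\otimes N}{m} - (\nabla N)^H + \nabla A + \nabla T^0 + G + (N+A+T^0)\ast(A+T^0)$, yielding the stated formula for $W$.

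For $Q$, the argument is analogous, applied case by case to the mixed components of $\Rm$. The purely horizontal and purely vertical components are absorbed by the terms $\Rm^H$ and $\Rm^V$ in the definition of $Q$; the $3$-$1$ mixed components (e.g.\ $R_{\ui\uj\uk\bl}$) and the $2$-$2$ mixed components are extracted from $[\nabla_i,\nabla_j]H_{kl}$ by projecting the four free slots onto the corresponding combinations of $\Hc$ and $\Vc$. In each case the same differentiation-and-substitution procedure produces an expression of the form $\nabla A + \nabla T^0 + G + (N+A+T^0)\ast(A+T^0)$, except for the $2$-$2$ trace piece already accounted for by $\frac{1}{m}W\odot V$. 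Assembling all the cases gives \eqref{eq:qeq}. The pointwise bound on $|Q|$ is then immediate from the triangle inequality, and on a locally-warped product $A$, $T^0$, $G$ vanish identically (along with their covariant derivatives), so the formula forces $Q\equiv 0$.

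The main obstacle is purely a bookkeeping one: the various mixed index patterns must be checked individually, and the quadratic corrections produced by repeatedly re-expanding $L$ via Lemma~\ref{lem:hder} must be tracked carefully. Conceptually, however, the only input is the identity $[\nabla,\nabla]H = \Rm\ast H$, and Lemma~\ref{lem:hder} reduces everything to an algebraic manipulation in the invariants $A$, $T^0$, $N$, $G$ and their first derivatives.
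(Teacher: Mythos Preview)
Your proposal is correct and follows essentially the same approach as the paper: both compute $[\nabla,\nabla]H$ via the Ricci identity, expand the second-derivative side using Lemma~\ref{lem:hder} (including the identity $\nabla_{\bj}N_k = G_{jk} + T^0_{\bj\up\bk}N_p - \tfrac{|N|^2}{m}V_{jk}$ you cite), and then project onto the various horizontal/vertical index patterns to isolate $W$ and the mixed components of $\Rm$. The paper organizes the case analysis slightly differently---first computing $R_{ilj\uk}+R_{ilk\uj}$ uniformly and then reading off the individual components, using the first Bianchi identity to handle $R_{\uk\ul\bi\bj}$---but the substance is the same.
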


\begin{proof}
  On one hand, by \eqref{eq:nablah},
  \begin{align*}
    \nabla_l \nabla_i H_{jk} - \nabla_i\nabla_l H_{jk}&=
    -R_{lijp}H_{pk} - R_{likp}H_{jp} =R_{ilj\uk} + R_{ilk\uj}.
  \end{align*}
On the other, by Lemma \ref{lem:hder},
\begin{align*}
 \nabla_{i}H_{jk} &= T^0_{i\uj k} - T^0_{i\bj k}
 + A_{i\uj k} - A_{i\bj k} -\frac{1}{m}(V_{ij}N_k + V_{ik}N_j),
\end{align*}
so that
\begin{align*}
    \nabla_l\nabla_i H_{jk} &= 
    -\frac{1}{m^2}\big(V_{lj} N_i N_k + V_{lk}N_{i}N_j + 2V_{il}N_j N_k \big)\\
    &\phantom{=\;} -\frac{1}{m}\big(V_{ij}\nabla_l N_k + V_{ik} \nabla_l N_j\big) + \Cc,
\end{align*}
where, here and below, $\Cc$ denotes an expression of the form
\begin{align}
\begin{split}\label{eq:cexpdef}
  \Cc &=  N \ast \Ec^{\prime} + (N + \Ec^{\prime}) \ast (A + T^0)
 + \nabla A + \nabla T^0 + G\\
&= (N + A + T^0) \ast (A + T^0)
 + \nabla A + \nabla T^0 + G.
\end{split}
 \end{align}
Here we have used that
\begin{align*}
 \nabla_l (V_{ij}N_k  + V_{ik}N_j)
 &= \nabla_lV_{ij} N_k  + \nabla_l V_{ik} N_j + V_{ij}\nabla_l N_k + V_{ik} \nabla_l N_j\\
 &= \frac{1}{m^2}\big(V_{li}N_j N_k + V_{lj} N_i N_k
 + V_{li} N_k N_j + V_{lk}N_{i}N_j\big)\\
 &\phantom{=}+ \frac{1}{m}\big(V_{ij}\nabla_l N_k + V_{ik} \nabla_l N_j\big) 
 + N \ast \Ec^{\prime},
\end{align*}
and that
\[
 \nabla_l( T^0_{i\uj k}) = \nabla_l T^0_{i\uj k} 
 + \nabla_l H_{jp} T^0_{ipk} = \nabla T^0 + (\Ec^{\prime} + N)\ast T^0,
\]
with similar expressions for the other terms with modified indices.

Hence
\begin{align*}
 R_{ilj\uk} + R_{ilk\uj} 
 &= \frac{1}{m^2}\left(
 V_{ij}N_k N_l + V_{ik}N_j N_l - V_{jl} N_i N_k + V_{kl}N_{i}N_j\right)\\
 &\phantom{=}+ \frac{1}{m}\left( V_{jl}\nabla_i N_k
 + V_{kl}\nabla_i N_j - V_{ij}\nabla_l N_k - V_{ik} \nabla_l N_j\right) +\Cc.
\end{align*}
In particular,
\[
 R_{il\bj\uk} = \frac{1}{m^2}\left((N_kN_l  - m\nabla_{l}N_{\uk})V_{ij} +(m\nabla_iN_{\uk} - N_i N_k)V_{jl}\right) + \Cc,
\]
so we have
\begin{gather*}
 R_{\bi\bl\bj\uk} = -\frac{1}{m}\nabla_{\bl}N_{\ul} V_{ij} + \Cc
 = - \frac{1}{m}G_{\bi\ul}V_{ij} + \Cc = \Cc,\\
 R_{\ui\bl\bj\uk} = \frac{1}{m^2}(N_i N_k - m\nabla_{\uk}N_{\ui})V_{jl} + \Cc,
\end{gather*}
and
\begin{align*}
 R_{\ui\ul\bj\uk} &= \Cc.
\end{align*}
 Then the symmetries of the curvature tensor imply
that $R_{ijkl}$ is schematically of the form \eqref{eq:cexpdef} whenever
whenever it has an odd number of vertical or horizontal
components. 
Moreover, by the Bianchi identity,
\[
 R_{\uk\ul\bi\bj} = R_{\bi\bj\uk\ul} = -R_{\bj\uk\bi\ul}  -R_{\uk\bi\bj\ul} = R_{\uk\bj\bi\ul} - R_{\uk\bi\bj\ul}
=\Cc.
\]
Thus the only components
which do not vanish up to a term of the schematic form $\Cc$
are those of the form 
$R_{\ui\bj\bk\ul}$ and those obtained from it by symmetry. This yields \eqref{eq:qeq}.
Finally, on a warped product, the tensors
$A$, $T^0$, and $G$ vanish, so $Q$
does as well.
\end{proof}

\subsection{Other curvature invariants}
Proposition \ref{prop:qprop} implies that the extent to which $Q$ fails to vanish is controlled by just by the tensors $A$, $T^0$, and $G$ and their covariant derivatives. However, in order to control
the evolution equations of $A$, $T^0$, and $G$, we will need to
introduce invariants which
are first-order in the curvature and and not only measure the failure of the warped product structure
to be preserved, but the extent to which (what ought to be) the fiber metrics fail to remain Einstein.
We will need three such invariants.

First, we define the two-tensor
\begin{equation}\label{eq:mdef}
    M \dfn \Rc - \Rc^H - \frac{\Rh}{m}V,
\end{equation}
where
\[
    \Rh \dfn \trace_V(\Rc) \dfn R_{\bp\bp} = V_{ab}R_{ab}.
\]
In components, according to our convention, $M$ is given by
\[
    M_{ij} = R_{ij} - R_{\ui\uj} - \frac{\Rh}{m}V_{ij}.
\]
The tensor $M$ measures the failure of $\Rc$ to possess a diagonal block decomposition relative
to $\Hc\oplus\Vc$ with a trace-free vertical block. Note that, while some components of $M$
can be expressed in terms of $Q$ (e.g., $M_{\bi\uj} = Q_{\bi p p \uj}$), $M$ is not recoverable from $Q$ alone: for example,
$M$ carries information about the vertical components of $\Rc$ which is not captured
by $Q$. As we will see below, $M$ vanishes on a locally-warped product with Einstein fibers.

Next, we define the three-tensor $P$ by
\begin{equation}\label{eq:pdef}
 P_{ijk} \dfn \nabla_{i}R_{jk} - \nabla_{\ui}R_{\uk\uk}
 - \frac{1}{m}\left(\nabla_{\ui}R_{\bp\bp}V_{jk}
 + \nabla_{\bp}R_{\uj\bp}V_{ik}
 + \nabla_{\bp}R_{\bp\uk}V_{ij}\right).
\end{equation}
Thus $P = \Pc(\nabla \Rc)$
where the projection $\Pc:T^{3}(T^*M) \longrightarrow T^{3}(T^*M)$ is characterized by the fact that
that $\Pc(X)^H = 0$ and $\Pc(X - X^V)$ is vertical-trace-free. (See section \ref{sec:pproj}.) Note that
\[
 \nabla_{\bi}R_{\bj\bk} = P_{\bi\bj\bk}, \quad \nabla_{\bi}R_{\uj\uk}
 = P_{\bi\uj\uk}, \quad \mbox{and}\quad \nabla_{\ui}R_{\bj\uk} = P_{\ui\bj\uk}.
\]
The tensor $P$ also vanishes identically on a locally warped-product structure with Einstein fibers.

We will see soon that the quantities $M$ and $P$ and their first covariant derivatives are sufficient
to control the evolution equations for $A$, $T^0$, $G$, $\nabla A$, and $\nabla T^0$. However, to control the evolution of $P$, we will need the an additional invariant involving the full covariant derivative of $\Rm$. Thus we define the five-tensor $U$ by
\begin{align}\label{eq:udef}
\begin{split}
    U_{aijkl} &\dfn \nabla_a R_{i\uj\uk l} - \nabla_{\ua}R_{\ui\uj\uk\ul}
     - \frac{1}{m}\left(V_{ai}\nabla_{\bp}R_{\bp\uj\uk\ul} + V_{al}\nabla_{\bp}R_{\ui \uj\uk \bp}
     + V_{il}\nabla_{\ua}R_{\bp\uj\uk\bp}\right)
\end{split}
\end{align}
In other words,
\[
U \dfn \Uc(\nabla \Rm) = \Pc(\Hc(\nabla\Rm))
\]
where here the projection $\Pc$ is as defined above and acts on the first, second, and fifth indices:
\begin{gather*}
  \Pc(X)_{aijkl} = X_{aijkl} - X_{\ua\ui j k \ul} - \frac{1}{m}(V_{ai}X_{\bp\bp j k \ul}
  + V_{al}X_{\bp\ui j k \bp} + V_{il}X_{\ua\bp j k \bp}),
\end{gather*}
and the projection $\Hc$ is the projection on to the horizontal components which acts on the third and fourth indices:
\begin{gather*}
  \Hc(X)_{aijkl} = X_{ai\uj\uk l}.
\end{gather*}
 (See Section \ref{sec:uproj}). The tensor $U$ vanishes on any locally-warped product structure. Since (in particular) $\Uc$ annihilates the purely vertical components of $\nabla \Rm$, we cannot recover $P$ from $U$ alone.

\begin{proposition}\label{prop:wpmpu}
 Let $(B, \ch{g})$ and $(F, \bar{g})$ be Riemannian manifolds, where  $\bar{g}$ is Einstein with $\Rc(\bar{g}) = \lambda \bar{g}$
 and $\operatorname{dim}F = m$. Let  $\pi: B\times F \longrightarrow B$ be the projection and assume that $g$ is a metric on $B\times F$
 with the warped product representation
 \[
      g(b, x) = \pi^*\ch{g}(b) + h^2(b)\bar{g}(x)
 \]
for some  positive $h\in C^{\infty}(B)$. If $M$, $P$, and $U$ are the tensors defined as above in terms of the horizontal and vertical projections $H$ and $V$, then $M\equiv 0$, $P\equiv 0$, and $U \equiv 0$.
\end{proposition}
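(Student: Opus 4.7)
The plan is to verify each vanishing claim by direct computation, using the classical warped-product formulas for $\Rc$ and $\Rm$ (cf.~\cite{Besse}, Ch.~9) together with the identities of Lemma \ref{lem:hder}. On a warped product we have $A\equiv 0$ and $T^0\equiv 0$, so Lemma \ref{lem:hder} reduces to
\[
   \nabla_a H_{jk} = -\frac{1}{m}(V_{aj}N_k + V_{ak}N_j),\qquad \nabla_a V_{jk} = \frac{1}{m}(V_{aj}N_k + V_{ak}N_j),
\]
and this identity will drive the bookkeeping throughout.

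For $M$, the standard Ricci identities for $g = \pi^*\ch g + h^2\bar g$ with Einstein fiber $\Rc(\bar g) = \lambda\bar g$ give, for horizontal $X, Y$ and vertical $U, W$,
\[
   \Rc(X, Y) = \ch{\Rc}(X, Y) - \frac{m}{h}\ch{\nabla}^2 h(X, Y),\quad \Rc(X, U) = 0,\quad \Rc(U, W) = \mu\, g(U, W),
\]
where $\mu = h^{-2}[\lambda - h\Delta_{\ch g}h - (m-1)|\ch\nabla h|^2]$. Hence $\Rc = \Rc^H + \mu V$, and tracing vertically yields $\Rh = m\mu$, so $M = \Rc - \Rc^H - (\Rh/m)V \equiv 0$ immediately.

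For $P$ and $U$ I proceed by case analysis in an adapted orthonormal frame, using the warped-product Levi--Civita formulas for basic horizontal $X, Y$ and vertical $U, W$: $\nabla_X Y$ is horizontal and basic, $\nabla_X U = \nabla_U X = X(\log h)\,U$, and the horizontal part of $\nabla_U W$ equals $-h\,\bar g(U, W)\,\ch\nabla h$. Components of $P_{ijk}$ with only underlined indices vanish automatically, because the first two terms in the definition of $P$ cancel and the trace corrections are zero. The mixed components reproduce exactly the trace subtractions built into $\Pc$: a representative calculation is $P_{\ba\ui\bj} = \nabla_{\ba}R_{\ui\bj} - (1/m)V_{aj}\nabla_{\bp}R_{\ui\bp}$, where a short computation shows that $\nabla_{\ba}R_{\ui\bj}$ is of the form $V_{aj}\cdot f(\ui)$, and tracing in $a, j$ identifies $f(\ui) = (1/m)\nabla_{\bp}R_{\ui\bp}$, yielding $P_{\ba\ui\bj} = 0$. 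The analysis of $U = \Uc(\nabla\Rm) = \Pc(\Hc(\nabla\Rm))$ is parallel, using O'Neill's explicit expressions for the components of $\Rm$ of a warped product (the Einstein hypothesis reducing the purely-vertical component to a multiple of $V\odot V$) and the vanishing of odd-parity mixed components; the corrections produced by $\nabla H$ and $\nabla V$ match exactly the trace pieces subtracted in the definition of $\Uc$.

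The principal technical difficulty is bookkeeping: systematically distinguishing the full tensor covariant derivative $\nabla(X^H)$ from the projected derivative $(\nabla X)^H$ (whose discrepancy is controlled by $\nabla H$ and hence, on a warped product, by $V$ and $N$), and verifying that these $V$-and-$N$ corrections combine with the terms $\mu\nabla V$ and the derivatives of $\Rc^H$ and $\Rm^H$ to reproduce precisely the trace subtractions in $\Pc$ and $\Uc$. The projections have been engineered for exactly this cancellation.
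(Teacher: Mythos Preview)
Your approach is correct and matches the paper's: both proceed by direct computation in an adapted frame (the paper uses product coordinates), checking each horizontal/vertical component type of $M$, $P$, and $U$ and verifying that the trace corrections built into $\Pc$ and $\Uc$ exactly cancel the nonzero mixed terms. One small point worth noting: the Einstein hypothesis is needed for $M$ and for $P$ (e.g., to kill $P_{\ba\bb\bc} = \nabla_{\ba}R_{\bb\bc}$), but not for $U$, which vanishes on \emph{any} warped product since $\Uc$ horizontally projects the middle two indices of $\nabla\Rm$ and so never sees the purely vertical curvature.
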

\begin{proof}
Let $(x^{\alpha})$ be local coordinates on a product neighborhood $U = U_1\times U_2$ where $(x^{\alpha})$, $\alpha =1, \ldots, n-m$, are local coordinates on $U_1\subset B$ and $(x^{\alpha})$, $\alpha =n-m+1, \ldots n$ are coordinates on $U_2\subset F$.  We will use the convention that lower-case letters denote an index from $1$ to $m-n$ and upper-case letters denote an index from $m-n+1$ to $n$.

We start with $M$. Straightforward, if somewhat tedious, calculations in these local coordinates give that
\[
  M_{jK} = R_{j K} = g^{ab}R_{aj K b} + g^{AB}R_{AjKB} = g^{ab}Q_{ajKb} +g^{AB}Q_{AjKB} = 0,
\]
and
\begin{align*}
  R_{JK} &= g^{ab}R_{aJKb} + g^{AB}R_{AJKB} = -mh\ch{\Delta}h V_{JK} + \bar{R}_{JK} - (m-1)|\ch{\nabla}\log h|^2V_{JK}\\
   & = \frac{1}{h^2}\left(\lambda - mh\ch{\Delta}h - (m-1)|\ch{\nabla}h|^2\right)V_{JK},
\end{align*}
as
\[
\bar{R}_{JK} = \lambda \bar{g}_{JK} = \lambda h^{-2}V_{JK},
\]
since $\bar{g}$ is Einstein. Here, as usual, $V_{\alpha\beta} = g_{\beta\gamma}V_{\alpha}^{\gamma}$.
Thus $R_{JK} = (\operatorname{tr}_V(\Rc)/m) V_{JK}$, so $M_{JK} = 0$. Since $M_{jk} = 0$ by definition, we have $M \equiv 0$.

As for $P$, we may similarly compute that
\begin{align*}
 \nabla_{A}R_{jk} = \nabla_{a}R_{Jk} = 0,
\end{align*}
and that
\begin{align*}
  \nabla_{A} R_{JK} &= \bar{\nabla}_A \bar{R}_{JK} = 0,
\end{align*}
as $\bar{g}$ is Einstein, so  $P_{Ajk} = \nabla_AR_{jk} = 0$, $P_{aJk} = \nabla_aR_{Jk} =0$, and $P_{AJK} = \nabla_AR_{JK} = 0$.
Likewise, we find that
 \begin{align*}
   \nabla_{A} R_{Jk} &=  -(\ch{\nabla}_k \log h) \bar{R}_{AJ} + (m-1)\bar{g}_{AJ}\left((\ch{\nabla}_k\log h) |\ch{\nabla}|^2_{\ch{g}} \
   - \ch{\nabla}_k\ch{\nabla}^l h \ch{\nabla}_lh\right)\\
   &= \frac{V_{AJ}}{h^2}\left((m-1)\left((\ch{\nabla}_k\log h)|\ch{\nabla}|^2_{\ch{g}} \
   - \ch{\nabla}_k\ch{\nabla}^l \ch{\nabla}_lh\right) - \lambda \ch{\nabla}_k\log h\right)
 \end{align*}
so $P_{AJk} = 0$. Finally, again using the Einstein condition, we have
\begin{align*}
  \nabla_a R_{JK} &= - 2(\ch{\nabla}_a\log h) \bar{R}_{JK} -(m-1)\bar{g}_{JK}\left((\ch{\nabla}_a\log h) |\ch{\nabla}h|^2_{\ch{g}}
  + \ch{\nabla}_a\ch{\nabla}_p h \ch{\nabla}^p  h\right)\\
  &=  -\frac{2}{h^2}V_{JK}\left(\lambda (\ch{\nabla}_a\log h) + (m-1)\left((\ch{\nabla}_a\log h) |\ch{\nabla}h|^2_{\ch{g}}
  + \ch{\nabla}_a\ch{\nabla}_p h \ch{\nabla}^p  h\right)\right)
\end{align*}
so $P_{aJK} = 0$, and it follows that $P \equiv 0$.

For $U$, we may compute directly that $U_{AIjkL} = \nabla_AR_{IjkL} = 0$, and
and since
\[
 \quad U_{aijkL} = \nabla_a R_{ijkL} = 0,
\]
we may use the algebraic symmetries of $\nabla \Rm$ to deduce also that $U_{Aijkl} = \nabla_A R_{ijkl} =  0$ and $U_{aIjkl}= \nabla_a R_{Ijkl} = 0$.
Since $U_{aijkl} = 0$ by definition, we need only to consider the components $U_{\alpha \beta jk \gamma}$ where exactly two of $\alpha$, $\beta$, $\gamma$ correspond to vertical entries. But a computation shows that
\begin{align*}
   \nabla_{a}R_{IjkL} &= \bar{g}_{IL}(\ch{\nabla}_j\ch{\nabla}_k h\ch{\nabla}_a h - h\ch{\nabla}_a\ch{\nabla}_j\ch{\nabla}_k h)\\
   &=  h^{-2}V_{IL}(\ch{\nabla}_j\ch{\nabla}_k h\ch{\nabla}_a h - h\ch{\nabla}_a\ch{\nabla}_j\ch{\nabla}_k h),
\end{align*}
so we have $U_{aIjkL} = 0$. Then we also have
\begin{align*}
    U_{AIjkl} &= \nabla_{A}R_{Ijkl} -\frac{1}{m}g^{PQ}\nabla_{P}R_{Qjkl}V_{AI}\\
              &= \nabla_{l}R_{AkjI} - \nabla_kR_{AljI} - \frac{1}{m}g^{PQ}\nabla_{l}R_{PkjQ}V_{AI} + \frac{1}{m}g^{PQ}\nabla_{k}R_{PljQ}V_{AI}\\
              &= U_{lAkjI} - U_{kAljI} = 0,
\end{align*}
and thus also
\[
   U_{AijkL} =  U_{ALkji} = 0,
\]
so $U \equiv 0$ as claimed.
\end{proof}

Finally, it will also be convenient to introduce notation for the vertical projection
\begin{equation}
 \label{eq:sdef}
    S_i \dfn \nabla_{\bi} R = (\nabla R ^V)_i
\end{equation}
of the differential of the scalar curvature. The one-form $S$ also vanishes on a warped product with Einstein fibers, but since
\begin{equation}\label{eq:schar}
  S_i = \nabla_{\bi} R  = \nabla_{\bi}R_{\bp\bp}
  + \nabla_{\bi} R_{\up\up} = P_{\bi\up\up} + P_{\bi\up\up},
\end{equation}
we will not need to include it in our final system.

\section{Invariants associated to a time-dependent splitting of $TM$.} 
Now we consider a smooth solution $g(\tau)$ to the backward Ricci flow \eqref{eq:brf} on $M\times [0, \Omega]$.  Let $\mathcal{V}_0\subset TM$ be a smooth $m$-dimensional distribution on $M$ with orthogonal complement $\mathcal{H}_0 = \mathcal{V}_0^{\perp}$. Let $V_0:TM\longrightarrow \mathcal{H}_0$ denote projection onto $\mathcal{V}$ and let $V = V(\tau):TM\longrightarrow TM$ be the solution to the linear fiberwise ODE
\begin{equation}\label{eq:vev}
    \pdtau V_i^j =  R_i^c V_c^j - R_c^j V_i^c, \quad V(0) = V_0,
\end{equation}
that is, to
\[
D_{\tau} V = 0, \quad V(0) =V_0,
\]
where $D_{\tau}$ is the operator defined by
\begin{align*}
 D_{\tau}W_{b_1b_2 \ldots b_k}^{a_1a_2\ldots a_l} &= \pdtau W_{b_1b_2 \ldots b_k}^{a_1a_2\ldots a_l}
 - R_{b_1}^{c}W_{cb_2\ldots b_k}^{a_1a_2\ldots a_l}  - R_{b_2}^{c}W_{b_1c\ldots b_k}^{a_1a_2\ldots a_l}
 - \cdots - R_{b_k}^{c}W_{b_1b_2\ldots c}^{a_1a_2\ldots a_l}\\
 &\phantom{=} + R_{c}^{a_1}W_{b_1b_2\ldots b_k}^{ca_2\ldots a_l}
 + R_{c}^{a_2}W_{b_1b_2\ldots b_k}^{a_1c\ldots a_l} + \cdots + R_{c}^{a_l}W_{b_1b_2\ldots b_k}^{a_1a_2\ldots c}.
\end{align*}
The operator $D_{\tau}$ coincides with the ``total $\tau$-derivative'' taken relative to evolving $g(\tau)$-orthonormal frames.
In particular, $D_{\tau} g = 0$, and
\[
      \pdtau \langle X, Y\rangle_{g(\tau)} = \langle D_{\tau} X, Y\rangle_{g(\tau)} + \langle X, D_{\tau} Y\rangle_{g(\tau)},
\]
for smooth families of tensors $X$ and $Y$. For a more geometric interpretation of the operator, see, e.g., Appendix F of \cite{RFV2P2}.

Then define
\[
H(\tau) \dfn \operatorname{Id} - V(\tau),
\]
so that
\begin{equation}\label{eq:hev}
D_{\tau} H = 0, \quad H(0) = H_0,
\end{equation}
and let
\begin{equation}\label{eq:hvev}
    \Vc(\tau) \dfn\operatorname{image}({V(\tau))}, \quad
    \mathcal{H}(\tau) \dfn \operatorname{image}{(H(\tau))} =\mathcal{V}(\tau)^{\perp},
\end{equation}
to obtain families of complementary smooth distributions extending $\mathcal{V}_0$ and $\mathcal{H}_0$.

We will write $A = A(\tau)$, $T = T(\tau)$, $N = N(\tau)$, $T^0 = T^0(\tau)$, and $G = G(\tau)$ for the invariants defined in the previous section associated to the distributions $\Hc = \Hc(\tau)$ and $\Vc = \Vc(\tau)$.

Eventually, we will take $\Hc_0$ and $\Vc_0$ to be the horizontal and vertical distributions associated to a locally-warped product structure
at $\tau = 0$. Although the result we are trying to prove
implies that $\Hc(\tau)$ and $\Vc(\tau)$ are actually fixed, it turns out to be convenient to let them vary a priori in time in such a way that they are guaranteed to remain orthogonal with respect to the evolving metric.

Our use of \eqref{eq:vev} to extend the distributions
to $[0, \Omega]$ --- which essentially amounts to using Uhlenbeck's trick to pull everything back to a vector bundle with a fixed metric on the fibers  --- is not the only choice for our this problem. Another natural option, would be to hold $\Vc$ fixed and let the fibers of $\Hc(\tau)$ vary in time as the $g(\tau)$-orthogonal complements of the fibers of $\Vc$. This scheme would have the advantage that $\Vc$ would remain integrable (as the kernel of the differential $d\pi$ of the projection
$\pi:M\longrightarrow B$). However, it would also entail a more complicated
evolution equation for the projections $V$ and $H$, and for the evolution equations of the invariants defined
in terms of these projections.

\subsection{Evolution equations for $A$, $T$, and $G$}\label{sec:conninvev}
Our next task is to determine how $A$, $T$, and $G$ change under the flow. We start by computing the evolution equation for $L =\nabla H$.
It will be convenient to introduce the temporary notation
\begin{equation}\label{eq:edef}
  E_{ijk} \dfn \nabla_{\bp}R_{\bp ijk} = \nabla_k R_{ij} - \nabla_j R_{ik}.
\end{equation}

\begin{proposition}\label{prop:bev} The tensor $L$ satisfies the evolution equation
\begin{equation}\label{eq:lev}
 D_{\tau} L_{ijk} = E_{ij\uk} - E_{i\uj k} -R_{ip}L_{pjk}.
\end{equation}

\end{proposition}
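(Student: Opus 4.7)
The plan is to derive the evolution of $L=\nabla H$ directly from the commutation rule for $\partial_\tau$ and $\nabla$ in the Ricci flow, using the vanishing of $D_\tau H$ to eliminate the interior time derivative. For the backward flow, the standard Ricci formula gives $\partial_\tau\Gamma_{ij}^p = \nabla_iR_j{}^{p}+\nabla_jR_i{}^{p}-\nabla^pR_{ij}$, and hence
\[
\partial_\tau\nabla_i H_{jk} \;=\; \nabla_i\bigl(\partial_\tau H_{jk}\bigr) \;-\; (\partial_\tau\Gamma_{ij}^p)H_{pk} \;-\; (\partial_\tau\Gamma_{ik}^p)H_{jp}.
\]
Translating $D_\tau H\equiv 0$ into its $(0,2)$-form (using $D_\tau g=0$) gives $\partial_\tau H_{jk} = R_j{}^{c}H_{ck} + R_k{}^{c}H_{jc}$, which I would substitute for $\partial_\tau H_{jk}$ on the right.

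After this substitution, I would rewrite $\partial_\tau L_{ijk}$ as $D_\tau L_{ijk}$ plus the Uhlenbeck correction $R_i{}^{c}L_{cjk}+R_j{}^{c}L_{ick}+R_k{}^{c}L_{ijc}$ appropriate to a $(0,3)$-tensor under the backward flow. Several cancellations then occur: the $R_j{}^{c}L_{ick}$ and $R_k{}^{c}L_{ijc}$ corrections absorb the matching terms $R_j{}^{c}\nabla_i H_{ck}$ and $R_k{}^{c}\nabla_i H_{jc}$ produced by differentiating $R\cdot H$, and the $(\nabla_iR_j{}^p)H_{pk}$ contributions from $\nabla_i\partial_\tau H_{jk}$ cancel against the corresponding pieces of $(\partial_\tau\Gamma_{ij}^p)H_{pk}$ (and likewise with $j\leftrightarrow k$). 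What survives is
\[
D_\tau L_{ijk} \;=\; -R_i{}^{c}L_{cjk} \;+\;\bigl(\nabla^p R_{ij}-\nabla_j R_{ip}\bigr)H_{pk}\;+\;\bigl(\nabla^p R_{ik}-\nabla_k R_{ip}\bigr)H_{jp}.
\]

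The final step is to recognize the two remaining brackets as $E$-contractions. By the definition $E_{ijp}=\nabla_pR_{ij}-\nabla_jR_{ip}$, the first bracket is precisely $E_{ij\uk}$. For the second, the manifest antisymmetry $E_{ikp}=-E_{ipk}$ in the last two slots (immediate from the definition of $E$) gives $E_{ikp}H_{jp}=-E_{ipk}H_{pj}=-E_{i\uj k}$, producing the stated formula $D_\tau L_{ijk}=E_{ij\uk}-E_{i\uj k}-R_{ip}L_{pjk}$. The entire argument is index bookkeeping; the only points that need care are the correct form of the Uhlenbeck correction for a $(0,3)$-tensor under the backward flow, and the observation that the antisymmetry built into the definition of $E$ is exactly what pairs the two surviving brackets into the antisymmetric combination $E_{ij\uk}-E_{i\uj k}$.
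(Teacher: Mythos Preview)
Your proposal is correct and follows essentially the same route as the paper. The paper packages the whole computation as the single commutator identity $D_\tau L_{ijk} = [D_\tau,\nabla_i]H_{jk}$ (using $D_\tau H=0$), while you unpack that commutator into its constituent pieces ($\partial_\tau\Gamma$, $\nabla_i\partial_\tau H$, and the Uhlenbeck correction for $L$); the cancellations you describe and the final identification of the two brackets with $E_{ij\uk}$ and $-E_{i\uj k}$ via the antisymmetry $E_{ikp}=-E_{ipk}$ match the paper's argument line for line.
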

\begin{proof}
Using $D_{\tau} H = 0$, we compute that
\begin{align*}
 D_{\tau} L_{ijk} &= [D_{\tau}, \nabla_i] H_{jk}  \\
             &= \left(\nabla_p R_{ij} - \nabla_j R_{ip}\right)H_{pk}
             + \left(\nabla_p R_{ik} - \nabla_k R_{ip}\right)H_{pj}  -R_{ip}L_{pjk}\\
             &=  \left(\nabla_j R_{\ub{k}i} - \nabla_{\ub{k}} R_{ji}\right)
             + \left(\nabla_{\ub{\jmath}} R_{ik} - \nabla_k R_{i\ub{\jmath}} \right) - R_{ip}L_{pjk}\\
            &=   E_{ij\ub{k}} - E_{i\ub{\jmath}k} - R_{ip}L_{pjk},
\end{align*}
as claimed.
\end{proof}

Next, using the identity \eqref{eq:abindices}, we compute the evolution
of $A$. Note that since $D_{\tau} H \equiv 0$ and $D_{\tau} V\equiv 0$, we have, for example,
that
\[
    D_{\tau}(X_{\bi\uj k}) = D_{\tau}(V_{ip}H_{jq} X_{pqk}) =
    V_{ip}H_{jq} D_{\tau}X_{pqk} =(D_{\tau} X)_{\bi\uj k} = D_{\tau} X_{\bi\uj k},
\]
and there is no ambiguity in the meaning of $D_{\tau} X_{\bi\uj k}$.

\begin{proposition}
\label{prop:aev}
The tensor $A$ satisfies the evolution equation
\begin{align}\label{eq:aev}
\begin{split}
 D_{\tau} A_{ijk} &= M_{\ub{\imath}\bar{p}}( T^0_{\bar{p}\bar{\jmath}\ub{k}} - T^0_{\bar{p}\ub{\jmath}\bar{k}})
  -R_{\ub{\imath}\ub{p}}(A_{\ub{p}\ub{\jmath}\bar{k}} - A_{\ub{p}\bar{\jmath}\ub{k}}) - P_{\bk\uj\ui} + P_{\uj\ui\bk} -
  P_{\uk\bj\ui} + P_{\bj\uk\ui}\\
  &\phantom{=} + \frac{1}{m}\left(M_{\ub{\imath}\bar{\jmath}}N_{\ub{k}} - M_{\ub{\imath}\bar{k}}N_{\ub{\jmath}}\right).
\end{split}
\end{align}
In particular,
\begin{align}\label{eq:dtaest}
 |D_{\tau} A| &\lesssim  |\Rm|(|A| + |T^0|) + |N||M|  + |P|.
\end{align}
 
\end{proposition}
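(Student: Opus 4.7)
The plan is to differentiate the defining identity $A_{ijk} = L_{\ui\uj\bk} - L_{\ui\bj\uk}$ from \eqref{eq:abindices} and then re-express the outcome in the vocabulary of $P$, $M$, $N$, $A$, and $T^0$. Since $D_{\tau}H = 0$ by \eqref{eq:hev}, the barred/underlined projections commute with $D_{\tau}$, giving
\[
D_{\tau}A_{ijk} = (D_{\tau}L)_{\ui\uj\bk} - (D_{\tau}L)_{\ui\bj\uk}.
\]
Substituting Proposition \ref{prop:bev}, two of the six raw contributions vanish because $HV = 0$: namely, $(E_{ij\uk})_{\ui\uj\bk}$ and $(E_{i\uj k})_{\ui\bj\uk}$ each attempt to apply a $V$-projection on a slot already carrying an $H$. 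What survives is schematically
\[
D_{\tau}A_{ijk} = -E_{\ui\uj\bk} - E_{\ui\bj\uk} - R_{\ui p}L_{p\uj\bk} + R_{\ui p}L_{p\bj\uk}.
\]

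Next I would convert the $E$-contributions using $E_{abc} = \nabla_c R_{ab} - \nabla_b R_{ac}$ and the observation from the previous section that the purely mixed-index covariant derivatives of $\Rc$ coincide with the corresponding components of $P$: for instance, $\nabla_{\bk}R_{\ui\uj} = P_{\bk\ui\uj}$ and $\nabla_{\uj}R_{\ui\bk} = P_{\uj\ui\bk}$. Each projected $E$-term then splits into two $P$-components, accounting for all four $P$-terms in \eqref{eq:aev}.

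For the Ricci--$L$ terms I would decompose the contracted index $p = \up + \bp$ (cross contractions vanish since $HV = 0$) and substitute the identifications $L_{\up\uj\bk} = A_{\up\uj\bk}$, $L_{\up\bj\uk} = -A_{\up\bj\uk}$, $L_{\bp\uj\bk} = T_{\bp\uj\bk}$, and $L_{\bp\bj\uk} = -T_{\bp\bj\uk}$, which follow from \eqref{eq:abindices}, the vanishing $L_{i\bj\bk} = L_{i\uj\uk} = 0$, and the symmetry $L_{ijk} = L_{ikj}$. Each $T$-piece is then broken up as $T_{ijk} = T^0_{ijk} + m^{-1}(V_{ij}N_k - V_{ik}N_j)$. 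The key structural observation is that on mixed blocks $R_{\ui\bp} = M_{\ui\bp}$ (the other pieces in $M = \Rc - \Rc^H - m^{-1}\hat R\, V$ vanish there), and $R_{\ui\bp}V_{pk} = R_{\ui\bk} = M_{\ui\bk}$. Collecting the resulting pieces by type produces the $R_{\ui\up}\cdot A$, $M\cdot T^0$, and $m^{-1}M\cdot N$ contributions of \eqref{eq:aev}. The bound \eqref{eq:dtaest} then follows at once from the triangle inequality and the estimates $|M|,\,|R_{\ui\up}| \lesssim |\Rm|$.

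The main obstacle is the careful bookkeeping in this last step: one must track the signs produced by the four $L \mapsto \pm A,\pm T$ identifications on each mixed block, and recognize that the $T \mapsto T^0$ discrepancy is precisely what couples with the $V$-contraction of $\Rc$ to yield the clean $m^{-1}M\cdot N$ remainder, rather than (say) contributions containing $|N|^2$ or a curvature quantity not already on our list. Everything else --- the reduction via $D_\tau H = 0$, the recognition of $P$-components, and the final estimate --- is essentially mechanical once the dictionary of identifications is in place.
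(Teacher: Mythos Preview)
Your proposal is correct and follows essentially the same approach as the paper's own proof: start from $A_{ijk} = L_{\ui\uj\bk} - L_{\ui\bj\uk}$, apply the evolution equation \eqref{eq:lev} for $L$, note that two of the projected $E$-contributions vanish by $HV=0$, split the contracted index in the $R_{\ui p}L_{p\cdot\cdot}$ terms into horizontal and vertical parts, and then translate everything into $A$, $T^0$, $N$, $M$, and $P$ via the dictionary of identifications. The only cosmetic difference is the order in which you handle the $E\to P$ conversion versus the Ricci--$L$ decomposition; the substance is identical.
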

\begin{proof}
 Starting with the identity $A_{ijk} = L_{\ub{\imath}\ub{\jmath}\bar{k}} - L_{\ub{\imath}\bar{\jmath}\ub{k}}$, and using \eqref{eq:lev}, we compute
 \begin{align*}
    D_{\tau} A_{ijk} & = D_{\tau} L_{\ub{\imath}\ub{\jmath}\bar{k}} - D_{\tau}L_{\ub{\imath}\bar{\jmath}\ub{k}}\\
    &=  R_{\ub{\imath}p}L_{p\bar{\jmath}\ub{k}}  - R_{\ub{\imath}p}L_{p\ub{\jmath}\bar{k}} -
    E_{\ub{\imath}\ub{\jmath}\bar{k}}
- E_{\ub{\imath}\bar{\jmath}\ub{k}}\\
&= R_{\ub{\imath}\bar{p}}(- L_{\bar{p}\bar{\jmath}\ub{k}} - L_{\bar{p}\ub{\jmath}\bar{k}})
  +R_{\ub{\imath}\ub{p}}( L_{\ub{p}\bar{\jmath}\ub{k}} - L_{\ub{p}\ub{\jmath}\bar{k}})-E_{\ub{\imath}\ub{\jmath}\bar{k}} - E_{\ub{\imath}\bar{\jmath}\ub{k}}.
 \end{align*}

Now, for example,
\[
A_{\ub{p}\ub{\jmath}\bar{k}}= L_{\ub{p}\ub{\jmath}\bar{k}}, \quad A_{\ub{p}\bar{\jmath}\ub{k}} = - L_{\ub{p}\bar{\jmath}\ub{k}},  \quad L_{\bar{p}\ub{\jmath}\bar{k}} = T^0_{ \bar{p}\ub{\jmath}\bar{k}} - \frac{V_{pk}}{m}N_{\ub{\jmath}},
\]
so
\begin{align*}
    D_{\tau} A_{ijk} &= R_{\ub{\imath}\bar{p}}(T^0_{\bar{p}\bar{\jmath}\ub{k}} - T^0_{\bar{p}\ub{\jmath}\bar{k}})
  -R_{\ub{\imath}\ub{p}}(A_{\ub{p}\ub{\jmath}\bar{k}} + A_{\ub{p}\bar{\jmath}\ub{k}})-E_{\ub{\imath}\ub{\jmath}\bar{k}} - E_{\ub{\imath}\bar{\jmath}\ub{k}}\\
  &\phantom{=} + \frac{R_{\ub{\imath}\bar{p}}}{m}(V_{pj}N_{\ub{k}} - V_{pk}N_{\ub{\jmath}})\\
  &= R_{\ub{\imath}\bar{p}}(T^0_{\bar{p}\bar{\jmath}\ub{k}}-T^0_{\bar{p}\ub{\jmath}\bar{k}})
  -R_{\ub{\imath}\ub{p}}(A_{\ub{p}\ub{\jmath}\bar{k}} + A_{\ub{p}\bar{\jmath}\ub{k}})-E_{\ub{\imath}\ub{\jmath}\bar{k}} - E_{\ub{\imath}\bar{\jmath}\ub{k}}\\
  &\phantom{=} + \frac{1}{m}\left(R_{\ub{\imath}\bar{\jmath}}N_{\ub{k} - R_{\ub{\imath}\bar{k}}N_{\ub{\jmath}}}\right).
\end{align*}
Since
\[
 E_{\ui\uj\bk} = \nabla_{\bk}R_{\uj\ui} - \nabla_{\uj}R_{\bk\ui}
 = P_{\bk\uj\ui} - P_{\uj \bk\ui},
\]
and, likewise,
\[
 E_{\ui\uj\bk} = P_{\bk\uj\ui} - P_{\uj\bk\ui},
\]
we obtain \eqref{eq:aev}.
\end{proof}

In order to compute the evolution equations for $T^0$ and $G$,
we first need the evolution equation for $N$.

\begin{proposition}
\label{prop:nev}
The vector field $N$ satisfies the evolution equation
\begin{align}\label{eq:nev}
\begin{split}
 D_{\tau} N_k &=  M_{\bar{p}\ub{q}}A_{\ub{q}\bar{p}\ub{k}} - M_{\bar{p}\bar{q}}T^0_{\bar{q}\bar{p}\ub{k}}  - E_{\bar{p}\bar{p}\ub{k}}
 - \frac{{\Rh}}{m}N_{k}.
\end{split}
\end{align}

\end{proposition}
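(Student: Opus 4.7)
The plan is to derive the evolution equation for $N$ from the identity $N_k = -L_{\bar{p}\bar{p}\ub{k}}$ together with the evolution equation for $L$ already established in Proposition \ref{prop:bev}. Since $D_\tau$ commutes with the projections $V$ and $H$ (as $D_\tau V = D_\tau H = 0$ and $D_\tau g = 0$), applying $D_\tau$ and substituting Proposition \ref{prop:bev} gives
\[
D_\tau N_k \;=\; -(D_\tau L)_{\bar{p}\bar{p}\ub{k}} \;=\; -E_{\bar{p}\bar{p}\ub{k}} + E_{\bar{p}\ub{\bar{p}}\ub{k}} + R_{\bar{p}q}L_{q\bar{p}\ub{k}}.
\]
The middle term vanishes automatically because $\ub{\bar{p}} = HV = 0$, so the task reduces to rewriting $R_{\bar{p}q}L_{q\bar{p}\ub{k}}$ in terms of $M$, $T^0$, $A$, and $N$.

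The heart of the calculation is to split the summation index as $q = \bar{q} + \ub{q}$ and expand each of $L_{\bar{q}\bar{p}\ub{k}}$ and $L_{\ub{q}\bar{p}\ub{k}}$ using Lemma \ref{lem:hder}. Applying the vanishing identities collected in Section 2.4---namely $T^0_{\ui jk} = 0$, $A_{\bi jk} = 0$, $N_{\bi} = 0$, and $HV = VH = 0$---collapses each four-term expansion of $L$ to a single nontrivial contribution, yielding identities of the form
\[
L_{\bar{q}\bar{p}\ub{k}} = -T^0_{\bar{q}\bar{p}\ub{k}} - \frac{V_{qp}N_k}{m}, \qquad L_{\ub{q}\bar{p}\ub{k}} = \pm A_{\ub{q}\bar{p}\ub{k}},
\]
with signs pinned down by the projection identities.

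Next I would convert the Ricci tensor into $M$ via $R_{ij} = M_{ij} + R_{\ui\uj} + (\Rh/m)V_{ij}$. Since $R_{\ub{\bar{p}}q} = 0$ for any $q$ and $V_{\bar{p}\ub{q}} = 0$, this reduces to $R_{\bar{p}\bar{q}} = M_{\bar{p}\bar{q}} + (\Rh/m)V_{pq}$ and $R_{\bar{p}\ub{q}} = M_{\bar{p}\ub{q}}$. The remaining simplifications are all tracial in nature: the identity $T^0_{\bar{p}\bar{p}k} = 0$ eliminates the $(\Rh/m)V_{pq}T^0$ cross term, the identity $\trace_V M = 0$ (immediate from the definition of $M$) eliminates the $MV$ cross term, and $V_{pq}V_{qp} = \trace V = m$ produces the coefficient $-\Rh/m$ in front of $N_k$. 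Collecting the surviving pieces yields the stated four-term formula.

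The main obstacle is really just organizational: the careful sign-and-projection bookkeeping at the $L$-decomposition step, where each of the four potential terms in the expansion of $L$ from Lemma \ref{lem:hder} must be tracked through the projection identities. This is the same style of computation already carried out in the proof of Proposition \ref{prop:aev}, so no new technical ingredient beyond the machinery already assembled in Sections 2 and 3 is needed.
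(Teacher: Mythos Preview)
Your proposal is correct and follows essentially the same route as the paper's proof: differentiate $N_k=-L_{\bar p\bar p\uk}$ using Proposition \ref{prop:bev}, split the contracted index as $q=\bar q+\uq$, rewrite the resulting $L$-pieces via the identities $L_{\bar q\bar p\uk}=-T^0_{\bar q\bar p\uk}+\frac{V_{pq}}{m}N_k$ and $L_{\uq\bar p\uk}=-A_{\uq\bar p\uk}$, and then replace $R_{\bar p\bar q}$ and $R_{\bar p\uq}$ by $M_{\bar p\bar q}+\frac{\Rh}{m}V_{pq}$ and $M_{\bar p\uq}$, using $\trace_V M=0$ to kill the cross term. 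The only cosmetic difference is that you invoke Lemma \ref{lem:hder} for the $L$-decomposition, whereas the paper appeals directly to the shorter identities \eqref{eq:abindices} and the definition of $T^0$; these are of course equivalent.
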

\begin{proof}
    Recall that $N_{k} = -L_{\bar{p}\bar{p}\ub{k}}$, so
\begin{align*}
    D_{\tau} N_k &= D_{\tau} L_{\bar{p}\bar{p}\ub{k}}\\
            &= R_{\bar{p}q}L_{q\bar{p}\ub{k}} - E_{\bar{p}\bar{p}\ub{k}}\\
            &= R_{\bar{p}\bar{q}}L_{\bar{q}\bar{p}\ub{k}} - R_{\bar{p}\ub{q}}L_{\ub{q}\bar{p}\ub{k}} - E_{\bar{p}\bar{p}\ub{k}}\\
            &= \left(R_{\bar{p}\bar{q}}-\frac{{\Rh}}{m}V_{pq}\right)L_{\bar{q}\bar{p}\ub{k}} - \frac{{\Rh}}{m}N_{k}
            + R_{\bar{p}\ub{q}}A_{\ub{q}\bar{p}\ub{k}} - E_{\bar{p}\bar{p}\ub{k}}.
\end{align*}
Recalling the definition of the tensor $M$ and using that
\[
    L_{\bar{q}\bar{p}\ub{k}} = - T_{\bar{q}\bar{p}\ub{k}} = -\left(T^0_{\bar{q}\bar{p}\ub{k}} - \frac{V_{pq}}{m} N_k\right),
\]
we find that 
\begin{align*}
 D_{\tau} N_k
 &=   M_{\bar{p}\ub{q}}A_{\ub{q}\bar{p}\ub{k}} -M_{\bar{p}\bar{q}}T^0_{\bar{q}\bar{p}\ub{k}} - E_{\bar{p}\bar{p}\ub{k}}
 - \frac{{\Rh}}{m}N_{k},
\end{align*}
as claimed.
\end{proof}

We also note for later that the covariant derivative of $N$ can be bounded in terms of quantities we have already defined.
\begin{proposition}
The tensor $\nabla N$ satisfies
\begin{equation}\label{eq:covdn}
 |\nabla N| \lesssim |\Rm| + |N|(|N|+ |A| + |T^0|) + |G|.
\end{equation}
\end{proposition}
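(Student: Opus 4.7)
\emph{Plan.} I would decompose $\nabla N$ according to the splitting $TM = \Hc \oplus \Vc$ and bound each of the four blocks $\nabla_{\star i} N_{\star j}$ (for $\star\in\{\text{bar},\text{underline}\}$) in turn. Three of the four blocks are algebraic. Since $N$ is horizontal, $V_{bj}N_b\equiv 0$, and differentiating gives the key identity
\[
V_{bj}\nabla_a N_b \;=\; -(\nabla_a V_{bj})N_b \;=\; L_{abj}N_b,
\]
so every component of $\nabla N$ with a barred second index reduces to a contraction of $L$ with $N$. Applying the identities of Section 2.4 to split $L$ into its $A$- and $T^0$-pieces, one has $L_{\ui\up\bj} = A_{\ui\up\bj}$ and $L_{\bi\up\bj}=T_{\bi\up\bj}=T^0_{\bi\up\bj}-\tfrac{V_{ij}N_p}{m}$ (this last identity already appeared in the proof of Lemma \ref{lem:hder}). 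Hence
\[
|\nabla_{\ui} N_{\bj}| \lesssim |A||N|, \qquad |\nabla_{\bi} N_{\bj}|\lesssim |T^0||N|+|N|^2,
\]
and the remaining mixed block $\nabla_{\bi}N_{\uj}$ is literally $G_{ij}$ by \eqref{eq:gdef}, contributing $|G|$.

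For the horizontal-horizontal block $(\nabla N)^H_{ij}=\nabla_{\ui}N_{\uj}$ I would use Proposition \ref{prop:qprop}, which gives an explicit formula for $W$ in terms of $(\nabla N)^H$:
\[
W \;=\; \frac{N\otimes N}{m} \;-\; (\nabla N)^H \;+\; \mathcal C,
\]
where the error $\mathcal C$ bundles together $(N+A+T^0)\ast(A+T^0)$ together with the specific Codazzi-type traces of $\nabla A,\nabla T^0$ extracted in the proof of that proposition. Solving for $(\nabla N)^H$ and using that $W$ is a double vertical trace of $\Rm$, so $|W|\lesssim|\Rm|$, yields the desired bound provided one shows that the $\nabla A,\nabla T^0$ pieces inside $\mathcal C$ can be absorbed into $|\Rm|+|N|(|A|+|T^0|)+|G|$. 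The natural way to do this is the Bianchi/Codazzi identity: the only contractions of $\nabla A,\nabla T^0$ surviving in $\mathcal C$ after the relevant double-trace are divergence-type combinations that are controlled by components of the full curvature tensor plus algebraic terms in $N,A,T^0,G$. Combining the four blockwise bounds gives \eqref{eq:covdn}.

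The main obstacle is the horizontal-horizontal component: one needs to verify that the formal $\nabla A,\nabla T^0$ contributions appearing in the formula for $W$ derived in Proposition \ref{prop:qprop} are not generic, but are in fact particular Codazzi-type traces that can be replaced, via the second Bianchi identity, by curvature terms plus the algebraic quantities already on the right-hand side. The three vertical-output blocks, by contrast, follow essentially for free from the observation that $N$ is horizontal together with the splitting $L=-T-A+(\text{trace correction})$ of Section 2.4.
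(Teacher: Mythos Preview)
Your approach coincides with the paper's: both decompose $\nabla N$ into the four $\Hc/\Vc$ blocks, dispatch the blocks with a barred output index by differentiating the constraint that $N$ is horizontal (the paper writes $N_j = H_{ja}N_a$ rather than your $V_{bj}N_b = 0$, but these are equivalent), read off $\nabla_{\bi}N_{\uj} = G_{ij}$ by definition, and appeal to Proposition~\ref{prop:qprop} for $(\nabla N)^H$.

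You are in fact \emph{more} careful than the paper about the horizontal--horizontal block. The paper simply writes
\[
\nabla_{\ui}N_{\uj} \;=\; (R_{\ui\bp\bp\uj} - Q_{\ui\bp\bp\uj}) - \tfrac{N_iN_j}{m} \;=\; \Rm + N\ast N
\]
and stops, without addressing the $\nabla A,\nabla T^0$ contributions in the schematic error $\mathcal C$ of Proposition~\ref{prop:qprop} that you correctly flag. (Indeed $Q_{\ui\bp\bp\uj}\equiv 0$ identically from the definition~\eqref{eq:qdef} of $Q$, so taken literally the displayed line would require the formula for $W$ in Proposition~\ref{prop:qprop} to be exact rather than schematic, which it is not.) Your Codazzi-type proposal to absorb these residual terms is reasonable but is not carried out in the paper. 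Note, however, that at every downstream use of~\eqref{eq:covdn} in the paper (Propositions~\ref{prop:dadt0} and~\ref{prop:gev}), the $\nabla N$ factor is paired with $M$ and the resulting estimates already allow $|\nabla A| + |\nabla T^0|$ and $(|A|+|T^0|)^2$ on the right, so the imprecision is harmless for the paper's applications even if your Codazzi argument does not close completely.
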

\begin{proof}
 First, write
 \begin{equation}\label{eq:dern}
    \nabla_i N_j = \nabla_{\bi}N_{\bj} + \nabla_{\bi}N_{\uj}
    +\nabla_{\ui}N_{\bj}
    +\nabla_{\ui}N_{\uj}.
 \end{equation}
Since $N$ is horizontal, we have $N_j = H_{ja}N_a$. Differentiating
this expression implies
\[
\nabla_i N_j = \nabla_{i} N_{\uj} +  \nabla_iH_{ja} N_a,
\] 
and subtracting the first term from the right from both sides
yields that
\[
 \nabla_{i} N_{\bj} = -\frac{1}{m}|N|^2V_{ij} + N \ast \Ec^{\prime}.
\]
This shows both that $\nabla_{\ui}N_{\bj} = N \ast \Ec^{\prime}$
and that $\nabla_{\bi}N_{\bj} = N \ast N + N \ast \Ec^{\prime}$.
On the other hand, we have by definition that $\nabla_{\bi}N_{\uj} =
G_{ij}$.

Finally, for the last term in \eqref{eq:dern}, namely $(\nabla N)^H$, we have
from  Proposition \ref{prop:qprop} that
\begin{align*}
\nabla_{\ui}N_{\uj} &=  (R_{\ui\bp\bp\uj} - Q_{\ui\bp\bp\uj}) - \frac{N_i N_j}{m} = \Rm + N \ast N,
\end{align*}
and the claim follows.
\end{proof}

\begin{proposition}
\label{prop:t0ev}
The tensor $T^0$ satisfies the evolution equation
\begin{align}\label{eq:t0ev}
\begin{split}
 D_{\tau} T^0_{ijk} &=   \frac{{\Rh}}{m}(T^0_{\bar{\imath}\bar{\jmath}\ub{k}} - T^0_{\bar{\imath}\ub{\jmath}\bar{k}})
 -\left(P_{\uk\bj\bi} -P_{\uk\bj\bi}\right)
 + \left(P_{\uj\bk\bi} - P_{\bk\uj\bi}\right)\\
    &\phantom{=}+  M_{\bar{i}\bar{p}}\left(\left(T^0_{\bar{p}\bar{\jmath}\ub{k}}
    -  \frac{V_{pj}}{m} N_{\ub{k}}\right) -\left(T^0_{\bar{p}\ub{\jmath}\bar{k}} - \frac{V_{pk}}{m} N_{\ub{\jmath}}\right)\right)\\
    &\phantom{=} - \frac{M_{\bar{p}\bar{q}}}{m}\left( V_{ij}T^0_{\bar{q}\bar{p}\ub{k}} -V_{ik}T^0_{\bar{q}\bar{p}\ub{\jmath}}\right)
    - \left(M_{\bar{i}\ub{p}} A_{\ub{p}\bar{\jmath}\ub{k}} + \frac{V_{ij}}{m}M_{\bar{p}\ub{q}}A_{\ub{q}\bar{p}\ub{k}}\right)\\
&\phantom{=}
+ \left(M_{\bar{i}\ub{p}}A_{\ub{p}\ub{\jmath}\bar{k}} - \frac{V_{ik}}{m} M_{\bar{p}\ub{q}}A_{\ub{q}\bar{p}\ub{\jmath}}\right),
\end{split}
\end{align}
so that, in particular,
\begin{equation}\label{eq:t0evest}
  |D_{\tau} T^0| \lesssim  |\Rm|(|A| + |T^0|) + |N||M| + |P|.
\end{equation}
\end{proposition}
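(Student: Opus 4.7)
The plan is to run the same scheme as in Proposition~\ref{prop:aev}. First, expand $T^0$ explicitly in terms of $L$, $V$, and $N$ via
\[
T^0_{ijk} = L_{\bar{\imath}\underline{\jmath}\bar{k}} - L_{\bar{\imath}\bar{\jmath}\underline{k}} - \frac{V_{ij}}{m}N_k + \frac{V_{ik}}{m}N_j,
\]
which follows from \eqref{eq:atbchar} and the definition of $T^0$. Since $D_\tau H = D_\tau V = 0$, the operator $D_\tau$ commutes with the barred and underlined modifications as well as with $V_{ij}$, so the computation reduces to substituting Proposition~\ref{prop:bev} into the two $L$-summands and Proposition~\ref{prop:nev} into the two $N$-summands.

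The bulk of the work is in processing the $R_{\bar{\imath}p}L_{pjk}$ contributions produced by Proposition~\ref{prop:bev}. I decompose $p = \bar{p} + \underline{p}$, use the definition of $M$ to write $R_{\bar{\imath}\bar{p}} = M_{\bar{\imath}\bar{p}} + \frac{\Rh}{m}V_{ip}$ and $R_{\bar{\imath}\underline{p}} = M_{\bar{\imath}\underline{p}}$, and then reduce each of the factors $L_{\bar{p}\underline{\jmath}\bar{k}}$, $L_{\bar{p}\bar{\jmath}\underline{k}}$, $L_{\underline{p}\underline{\jmath}\bar{k}}$, $L_{\underline{p}\bar{\jmath}\underline{k}}$ to $T^0$, $A$, $V$, and $N$ pieces via Lemma~\ref{lem:hder}. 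Many candidates vanish automatically: $A_{\bar{p}\cdots} = 0$ since the first slot of $A$ is horizontal, $T^0_{\underline{p}\cdots} = 0$ since the first slot of $T^0$ is vertical, and $H \cdot V = 0$ silently kills several further compositions. What remains packages into terms of the forms $M \ast T^0$, $M \ast A$, a multiple of $\frac{\Rh}{m}T^0$, and $\frac{\Rh}{m^2}N\otimes V$, together with residual $E$-terms $-E_{\bar{\imath}\underline{\jmath}\bar{k}}$ and $-E_{\bar{\imath}\bar{\jmath}\underline{k}}$ which I rewrite as $P$-terms using the symmetry of $\Rc$ and the identities $\nabla_{\bar{a}}R_{\bar{b}\bar{c}} = P_{\bar{a}\bar{b}\bar{c}}$, $\nabla_{\bar{a}}R_{\underline{b}\underline{c}} = P_{\bar{a}\underline{b}\underline{c}}$, and $\nabla_{\underline{a}}R_{\bar{b}\underline{c}} = P_{\underline{a}\bar{b}\underline{c}}$, exactly as for the $E_{\underline{\imath}\cdots}$ terms in the proof of Proposition~\ref{prop:aev}.

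The contributions from $-\frac{V_{ij}}{m}D_\tau N_k$ and $\frac{V_{ik}}{m}D_\tau N_j$, expanded via Proposition~\ref{prop:nev}, supply the trace terms $\frac{V_{ij}}{m}M_{\bar{p}\underline{q}}A_{\underline{q}\bar{p}\underline{k}}$ and $\frac{V_{ij}}{m}M_{\bar{p}\bar{q}}T^0_{\bar{q}\bar{p}\underline{k}}$ (together with their $j\leftrightarrow k$ counterparts), two $E_{\bar{p}\bar{p}\underline{\cdot}}$-pieces weighted by $V_{ij}$ or $V_{ik}$, and critically $\pm\frac{\Rh}{m^2}N \cdot V$ terms that exactly cancel the $\frac{\Rh}{m^2}N\otimes V$ pieces produced above by the $R_{\bar{\imath}\bar{p}}$ expansion. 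After this cancellation, the surviving $\Rh/m$ coefficient collects, via the decomposition $T^0_{ijk} = T^0_{\bar{\imath}\bar{\jmath}\underline{k}} + T^0_{\bar{\imath}\underline{\jmath}\bar{k}}$ and the antisymmetry $T^0_{\bar{\imath}\bar{\jmath}\underline{k}} = -T^0_{\bar{\imath}\underline{k}\bar{\jmath}}$, into the $\frac{\Rh}{m}(T^0_{\bar{\imath}\bar{\jmath}\underline{k}} - T^0_{\bar{\imath}\underline{\jmath}\bar{k}})$ term of \eqref{eq:t0ev}. The pointwise bound \eqref{eq:t0evest} then follows from \eqref{eq:t0ev} by the triangle inequality, using $|\Rh/m|\leq|\Rm|$ and the fact that each remaining summand is a product of a factor bounded by $|N|$, $|A|$, $|T^0|$ or $1$ with a factor of $|M|$ or $|P|$.

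The main obstacle is purely one of bookkeeping: keeping track of which projection compositions survive (many die silently via $HV = 0$), and orchestrating the $\Rh/m$ and $M$-trace pieces so that the $N\otimes V$ cancellations between the $L$- and $N$-contributions occur cleanly and the remaining terms assemble in the precise groupings displayed in \eqref{eq:t0ev}. No identities beyond Lemma~\ref{lem:hder} and Propositions~\ref{prop:bev} and~\ref{prop:nev} should be required.
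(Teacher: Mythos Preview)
Your proposal is correct and follows essentially the same approach as the paper's proof: the paper computes $D_{\tau}T$ first via \eqref{eq:lev}, then separately computes $D_{\tau}\left(\frac{V_{ij}}{m}N_{\uk} - \frac{V_{ik}}{m}N_{\uj}\right)$ via Proposition~\ref{prop:nev}, and combines them, whereas you expand $T^0$ directly into its $L$- and $N$-pieces from the outset---but the substitutions, the $\bar{p}/\underline{p}$ split, the use of $R_{\bi\bp} = M_{\bi\bp} + \frac{\Rh}{m}V_{ip}$, the $\frac{\Rh}{m^2}N\otimes V$ cancellations, and the rewriting of $E$-terms as $P$-terms are all identical. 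The only difference is the order in which you group the summands, which is purely cosmetic.
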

\begin{proof}
First, using \eqref{eq:lev}, we compute that
\begin{align*}
 D_{\tau} T_{ijk} &=  (D_{\tau}L)_{\bar{i}\bar{\jmath}\ub{k}}- (D_{\tau}L)_{\bar{i}\ub{\jmath}\bar{k}} = R_{\bar{i}p}( L_{p\bar{\jmath}\ub{k}}-L_{p\ub{\jmath}\bar{k}}) - E_{\bar{i}\ub{\jmath}\bar{k}}
    - E_{\bar{i}\bar{\jmath}\ub{k}}\\
    &= R_{\bar{i}\bar{p}}(L_{\bar{p}\bar{\jmath}\ub{k}}- L_{\bar{p}\ub{\jmath}\bar{k}}) +
  R_{\bar{i}\ub{p}}(L_{\ub{p}\bar{\jmath}\ub{k}}- L_{\ub{p}\ub{\jmath}\bar{k}})
 -  E_{\bar{i}\ub{\jmath}\bar{k}}
    - E_{\bar{i}\bar{\jmath}\ub{k}}\\
    &= R_{\bar{i}\bar{p}}(T^0_{\bar{p}\bar{\jmath}\ub{k}}- T^0_{\bar{p}\ub{\jmath}\bar{k}}) -
  R_{\bar{i}\ub{p}}(A_{\ub{p}\ub{\jmath}\bar{k}} + A_{\ub{p}\bar{\jmath}\ub{k}})  
 -   E_{\bar{i}\ub{\jmath}\bar{k}}
    - E_{\bar{i}\bar{\jmath}\ub{k}}\\
    &\phantom{=}+ \frac{R_{\bar{i}\bar{p}}}{m}\left(V_{pk} N_{\ub{\jmath}} - V_{pj} N_{\ub{k}}\right).
\end{align*}
Now,
\begin{align*}
   R_{\bar{i}\bar{p}}( T^0_{\bar{p}\bar{\jmath}\ub{k}} -T^0_{\bar{p}\ub{\jmath}\bar{k}})&=
   M_{\bar{i}\bar{p}}( T^0_{\bar{p}\bar{\jmath}\ub{k}}- T^0_{\bar{p}\ub{\jmath}\bar{k}}) + \frac{{\Rh}}{m}( T^0_{\bar{\imath}\bar{\jmath}\ub{k}}- T^0_{\bar{\imath}\ub{\jmath}\bar{k}}),
\end{align*}
and
\begin{align*}
\frac{R_{\bar{i}\bar{p}}}{m}\left(V_{pk} N_{\ub{\jmath}}-V_{pj} N_{\ub{k}}\right) &= \frac{M_{\bar{i}\bar{p}}}{m}\left(V_{pk} N_{\ub{\jmath}}- V_{pj} N_{\ub{k}}\right)
+ \frac{{\Rh}}{m^2}\left(V_{ik} N_{\ub{\jmath}} - V_{ij} N_{\ub{k}}\right),
 \end{align*}
so
\begin{align*}
 D_{\tau} T_{ijk}  &= M_{\bar{i}\bar{p}}(T^0_{\bar{p}\bar{\jmath}\ub{k}} - T^0_{\bar{p}\ub{\jmath}\bar{k}}) + \frac{{\Rh}}{m}(T^0_{\bar{\imath}\bar{\jmath}\ub{k}} - T^0_{\bar{\imath}\ub{\jmath}\bar{k}}) -
  M_{\bar{i}\ub{p}}(A_{\ub{p}\ub{\jmath}\bar{k}} + A_{\ub{p}\bar{\jmath}\ub{k}})  
 -   E_{\bar{i}\ub{\jmath}\bar{k}}
    - E_{\bar{i}\bar{\jmath}\ub{k}}\\
    &\phantom{=}+ \frac{M_{\bar{i}\bar{p}}}{m}\left(V_{pk} N_{\ub{\jmath}} -V_{pj} N_{\ub{k}}\right)
+ \frac{{\Rh}}{m^2}\left(V_{ik} N_{\ub{\jmath}} - V_{ij} N_{\ub{k}}\right).
\end{align*}
On the other hand, using Proposition \ref{prop:nev},
\begin{align*}
    D_{\tau}\left( V_{ij}N_{\ub{k}} - V_{ik}N_{\ub{\jmath}}\right)
    &= V_{ik}\left(M_{\bar{p}\bar{q}}T^0_{\bar{q}\bar{p}\ub{\jmath}} - M_{\bar{p}\ub{q}}A_{\ub{q}\bar{p}\ub{\jmath}} + E_{\bar{p}\bar{p}\ub{\jmath}}
 + \frac{{\Rh}}{m}N_{\ub{\jmath}}\right)\\
 &\phantom{=} - V_{ij}\left(M_{\bar{p}\bar{q}}T^0_{\bar{q}\bar{p}\ub{k}} - M_{\bar{p}\ub{q}}A_{\ub{q}\bar{p}\ub{k}} + E_{\bar{p}\bar{p}\ub{k}}
 + \frac{{\Rh}}{m}N_{\ub{k}})\right).
\end{align*}
Thus,
\begin{align*}
 D_{\tau}T_{ijk}^0 &= D_{\tau} T_{ijk} + \frac{1}{m}D_{\tau}\left(V_{ij}N_{\ub{k}} -V_{ik}N_{\ub{\jmath}}\right)\\
 &= M_{\bar{i}\bar{p}}(T^0_{\bar{p}\bar{\jmath}\ub{k}} -T^0_{\bar{p}\ub{\jmath}\bar{k}}) + \frac{{\Rh}}{m}(T^0_{\bar{\imath}\bar{\jmath}\ub{k}} -T^0_{\bar{\imath}\ub{\jmath}\bar{k}}) -
  M_{\bar{i}\ub{p}}(A_{\ub{p}\ub{\jmath}\bar{k}} + A_{\ub{p}\bar{\jmath}\ub{k}})  
 -   E_{\bar{i}\ub{\jmath}\bar{k}}
    - E_{\bar{i}\bar{\jmath}\ub{k}}\\
    &\phantom{=}+ \frac{M_{\bar{i}\bar{p}}}{m}\left(V_{pk} N_{\ub{\jmath}} - V_{pj} N_{\ub{k}}\right)
+ \frac{{\Rh}}{m^2}\left(V_{ik} N_{\ub{\jmath}} - V_{ij} N_{\ub{k}}\right)\\
&\phantom{=} + \frac{V_{ij}}{m}\left(M_{\bar{p}\bar{q}}T^0_{\bar{q}\bar{p}\ub{k}} - M_{\bar{p}\ub{q}}A_{\ub{q}\bar{p}\ub{k}} + E_{\bar{p}\bar{p}\ub{k}}
 + \frac{{\Rh}}{m}N_{\ub{k}}\right)\\
&\phantom{=}-\frac{V_{ik}}{m}\left(M_{\bar{p}\bar{q}}T^0_{\bar{q}\bar{p}\ub{\jmath}} - M_{\bar{p}\ub{q}}A_{\ub{q}\bar{p}\ub{\jmath}} + E_{\bar{p}\bar{p}\ub{\jmath}}
 + \frac{{\Rh}}{m}N_{\ub{\jmath}}\right)\\
  &= \frac{{\Rh}}{m}(T^0_{\bar{\imath}\bar{\jmath}\ub{k}} -T^0_{\bar{\imath}\ub{\jmath}\bar{k}})
 \left(E_{\bar{i}\bar{k}\ub{\jmath}} - \frac{V_{ik}}{m}E_{\bar{p}\bar{p}\ub{\jmath}}\right)
 -\left(E_{\bar{i}\bar{\jmath}\ub{k}} - \frac{V_{ij}}{m}E_{\bar{p}\bar{p}\ub{k}}\right)\\
    &\phantom{=}+  M_{\bar{i}\bar{p}}\left(T^0_{\bar{p}\bar{\jmath}\ub{k}} -T^0_{\bar{p}\ub{\jmath}\bar{k}}\right)
    - \frac{1}{m}\left(M_{\bi\bj}N_k - M_{\bi\bk}N_j\right)
    + \frac{M_{\bar{p}\bar{q}}}{m}\left( V_{ij}T^0_{\bar{q}\bar{p}\ub{k}} - V_{ik}T^0_{\bar{q}\bar{p}\ub{\jmath}}\right)\\
&\phantom{=}
+ \left(M_{\bar{i}\ub{p}}A_{\ub{p}\ub{\jmath}\bar{k}} + \frac{V_{ik}}{m} M_{\bar{p}\ub{q}}A_{\ub{q}\bar{p}\ub{\jmath}}\right)
- \left(M_{\bar{i}\ub{p}} A_{\ub{p}\bar{\jmath}\ub{k}} + \frac{V_{ij}}{m}M_{\bar{p}\ub{q}}A_{\ub{q}\bar{p}\ub{k}}\right),
  \end{align*}
and the identity follows by observing
that the third term on the right of the third identity can be written as
\begin{align*}
 E_{\bar{i}\bar{\jmath}\ub{k}} - \frac{V_{ij}}{m}E_{\bar{p}\bar{p}\ub{k}} &= \left(\nabla_{\uk}R_{\bj \bi} - \frac{V_{ij}}{m} \nabla_{\uk}R_{\bp\bp}\right) -\left(\nabla_{\bj}R_{\bi\uk}
 - \frac{V_{ij}}{m}\nabla_{\bp}R_{\bp\uk}\right)\\
 &= P_{\uk\bj\bi} - P_{\bj\bi\uk}
\end{align*}
with a similar identity for the second term on that line.
 \end{proof}

 We will not compute such detailed expressions for the evolutions of the second-order quantities $\nabla A$, $\nabla T^0$,
 and $G$.

 \begin{proposition}\label{prop:dadt0}
  The covariant derivatives of $A$ and $T^0$ satisfy the estimates
  \begin{align}\label{eq:daev}
  |D_{\tau} \nabla A| \lesssim |\nabla P| + \Theta (|A|+|T^0|
+ |\nabla A| + |\nabla T^0| + |M| + |P|)
 \end{align}
 and
 \begin{align}\label{eq:dtev}
  |D_{\tau} \nabla T^0| \lesssim |\nabla P|
  + \Theta (|A|+|T^0|
+ |\nabla A| + |\nabla T^0| + |M| + |P|),
 \end{align}
where 
\[
 \Theta = \Theta(|N|, |\Rm|, |\nabla \Rm|, |A|, |T^0|, |G|).
\]
 \end{proposition}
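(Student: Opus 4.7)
The plan is to differentiate the evolution equations \eqref{eq:aev} and \eqref{eq:t0ev} for $A$ and $T^0$, commute $\nabla$ past $D_\tau$, and bound each of the resulting terms against the quantities appearing in the statement. Under \eqref{eq:brf}, the standard commutator identity gives $D_\tau \nabla X = \nabla D_\tau X + \nabla\Rm \ast X$ for any tensor $X$, and this commutator contribution is already absorbed into $\Theta$ because $|\nabla \Rm|$ is one of its arguments. Thus it suffices to differentiate the explicit right-hand sides of \eqref{eq:aev} and \eqref{eq:t0ev} term by term.

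Those right-hand sides are schematically sums of products of the forms $\Rm\ast(A+T^0)$, $N\ast M$, $M\ast(A+T^0)$, and $P$, together with various contractions with the projections $H$ and $V$. Most of the resulting derivatives are routine to control: $\nabla\Rm\ast(A+T^0)$ and $\Rm\ast\nabla(A+T^0)$ fit into the $\Theta$-factor; the derivative $\nabla P$ contributes the leading $|\nabla P|$ on the right-hand side; the terms $M\ast\nabla(A+T^0)$ are absorbed into $\Theta\cdot(|\nabla A|+|\nabla T^0|)$; the factors of $\nabla H$ and $\nabla V$ that arise from differentiating contractions with the projections are, by Lemma \ref{lem:hder}, bounded by $|A|+|T^0|+|N|$ and therefore feed into $\Theta$; and the factors $\nabla N$ are already handled by \eqref{eq:covdn}. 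The one substantive remaining task is to bound $\nabla M$. Differentiating the defining relation $M_{ij} = R_{ij} - R_{\ui\uj} - (\Rh/m)V_{ij}$ gives
\[
\nabla_a M_{ij} = \nabla_a R_{ij} - \nabla_a R_{\ui\uj} - \tfrac{1}{m}(\nabla_a \Rh)V_{ij} + \Rm\ast L,
\]
and the principal combination on the right is, by the definition \eqref{eq:pdef} of $P$, a linear combination of $P$ and contractions of $P$ with the projections, modulo further $\Rm\ast L$ terms arising from commuting $\nabla$ with $V$. Since $L$ is bounded by $|A|+|T^0|+|N|$, this yields $|\nabla M| \lesssim |P| + |\Rm|\cdot(|A|+|T^0|+|N|)$, which fits inside $|P|+\Theta$.

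Assembling all of these contributions in the differentiated versions of \eqref{eq:aev} and \eqref{eq:t0ev} produces the stated bounds \eqref{eq:daev} and \eqref{eq:dtev}. The main obstacle is the bookkeeping rather than anything conceptual: one must verify that, after all the covariant differentiations, no higher-order curvature derivative survives beyond $\nabla\Rm$ (which sits inside $\Theta$) and $\nabla P$, and that no additional quantity appears that is not already on the right-hand side of the claimed estimate. The tensors $M$, $P$, and $Q$ introduced earlier have been tailored to make precisely this cancellation work: every mixed or trace component of a derivative of curvature that would otherwise appear is exactly one of the expressions captured by $P$ or a trace thereof.
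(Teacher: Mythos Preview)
Your overall plan matches the paper's: write $D_\tau\nabla A = [D_\tau,\nabla]A + \nabla(D_\tau A)$, use the explicit evolution \eqref{eq:aev} (and \eqref{eq:t0ev}), and estimate each term. Two small corrections: the commutator $[D_\tau,\nabla_a]$ acting on $A$ produces both $\nabla\Rc\ast A$ \emph{and} $\Rc\ast\nabla A$ (not just the first), though the extra term is harmless; and the paper handles the $\nabla H$ terms arising from differentiating the projected indices by grouping them as $(N+\Ec')\ast D_\tau A$ rather than treating them piecemeal.

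There is, however, a genuine gap in your bound on $\nabla M$. You assert that the combination $\nabla_a R_{ij} - \nabla_a R_{\ui\uj} - \tfrac{1}{m}(\nabla_a\Rh)V_{ij}$ is, modulo $\Rm\ast L$, ``a linear combination of $P$ and contractions of $P$ with the projections.'' Comparing directly with \eqref{eq:pdef} shows this is not so: the discrepancy between your combination and $P_{aij}$ contains, besides genuine components of $P$ like $P_{\ba\ui\uj}$ and $P_{\ba\bp\bp}$, the two traces $\nabla_{\bp}R_{\ui\bp}$ and $\nabla_{\bp}R_{\bp\uj}$. These are exactly the quantities that the projection $\Pc$ \emph{subtracts off} in defining $P$, and the corresponding vertical traces of $P$ vanish identically (e.g.\ $P_{\bp\bp\uj}=P_{\bp\uj\bp}=0$). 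They are therefore not controlled by $|P|$. One can check, for instance, the identity
\[
\nabla_{\bp}R_{\bp\ui} \;=\; (\nabla M)_{\bp\bp\ui} \;-\; R_{\ui N} \;+\; \tfrac{\Rh}{m}N_i,
\]
which shows that this trace is governed by $|\nabla M|$ itself plus $|\Rm||N|$; substituting back yields only a self-referential relation, not the bound $|\nabla M|\lesssim |P|+|\Rm|(|A|+|T^0|+|N|)$ you claim. The paper's own argument is terse at exactly this point (its schematic $(N\ast T^0)\ast\nabla M$ appears to be a typo for $(N+T^0)\ast\nabla M$), and the contribution $|N|\,|\nabla M|$ does not obviously fit the form \eqref{eq:daev} as stated. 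This does no damage downstream, since in Theorem~\ref{thm:xysys} the quantity $|\nabla M|$ is part of $|\nabla\ve X|$ and is allowed on the right with a constant coefficient; but your justification for the sharper bound here is incomplete.
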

\begin{proof}
For \eqref{eq:daev}, we begin with the identity
\begin{align*}
    D_{\tau} \nabla_a A_{ijk} &= [D_{\tau}, \nabla_a] A_{ijk} + \nabla_a (D_{\tau} A)_{ijk}.
\end{align*}
Now,
\begin{align*}
 \begin{split}
   [D_{\tau}, \nabla_a] A_{ijk} &=  E_{aip} A_{pjk} + E_{ajp} A_{ipk} + E_{akp}A_{ijp} - R_{ap}\nabla_p A_{ijk}\\
   &= \nabla \Rc \ast A + \Rc \ast \nabla A
 \end{split}
\end{align*}
and
\begin{align*}
 \begin{split}
D_{\tau} A_{ijk} &=  P_{\uj\bk\ui} - P_{\bk\uj\ui} -
P_{\uk\bj\ui} + P_{\bj\uk\ui} + \Rc\ast A + (N + T^0)\ast M
\end{split}
\end{align*}
from \eqref{eq:aev}, so
\begin{align*}
  \nabla_a (D_{\tau} A_{ijk})&= (N + \Ec^{\prime})\ast D_{\tau} A
  + \nabla P + \nabla \Rc \ast A + \Rc \ast \nabla A 
  \\
  &\phantom{=}
  + (\nabla N + \nabla T^0) \ast M + (N\ast T^0)\ast \nabla M.
\end{align*}
Using \eqref{eq:dern}, we know
\[
 |\nabla N| \leq |\Rm| + |N|(|A| + |T^0| + |N|) + |G|,
\]
so, together with the expression \eqref{eq:qeq} for $Q$
and the definition  \eqref{eq:nablah} for $\Ec^{\prime}$,
we obtain \eqref{eq:daev}.

Similarly, for $\nabla T^0$, we start from the equation 
\[
  D_{\tau} \nabla_a T^0_{ijk} = [D_{\tau}, \nabla_a] T^0_{ijk} + \nabla_a (D_{\tau} T^0)_{ijk}.
\]
Then, on one hand,
\[
    [D_{\tau}, \nabla_a]T^0_{ijk} = T^0 \ast \nabla \Rc + \nabla T^0 \ast \Rc.
\]
On the other, note that from \eqref{eq:t0ev}, we have
\begin{align*}
 D_{\tau} T^0_{ijk} &=   (P_{\uj\bk\bi} - P_{\bk\uj\bi}) - (P_{\uk\bj\bi}
 - P_{\bj\uk\bi})
 + \Rc \ast T^0 + (N + A + T^0)\ast M,
\end{align*}
so that
\begin{align*}
  \nabla_a D_{\tau}T^0_{ijk} &= \nabla P + (N +\Ec^{\prime})\ast (P
  +\Rc\ast T^0 + (N + A + T^0)\ast M) + \nabla \Rc \ast T^0
  \\
  &\phantom{=} + \Rc\ast \nabla T^0+ \nabla N \ast M + M\ast \nabla A
  +(N+ A + T^0) \ast \nabla M,
\end{align*}
and \eqref{eq:dtev} follows by estimating $\nabla N$ and $\Ec^{\prime}$ as above.
\end{proof}
 
The time-derivative of $G$ admits a similar estimate, but the calculation
is a bit more delicate.
\begin{proposition}
\label{prop:gev}
The tensor $G$ satisfies the evolution equation
\begin{align}\label{eq:gev}
\begin{split}
 |D_{\tau} G| &\lesssim |\nabla P| + \Theta (|T^0| + |A|
 + |\nabla A| + |G| + |M| + |\nabla M| + |P|),
\end{split}
\end{align}
where
\begin{equation}\label{eq:gerrest}
 \Theta =  \Theta(|N|, |\Rm|, |\nabla \Rm|, |A|, |T^0|, |G|)
\end{equation}
is a polynomial as in Section \ref{ssec:notation}.
\end{proposition}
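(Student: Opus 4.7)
The plan is to follow the same scheme used in the proof of Proposition \ref{prop:dadt0}, but starting from the explicit formula \eqref{eq:nev} for $D_{\tau} N$ in place of \eqref{eq:aev}. Since $G_{ij} = V_{ia} H_{jb} \nabla_a N_b$ and $D_{\tau} V = D_{\tau} H = 0$, we have $D_{\tau} G_{ij} = (D_{\tau} \nabla N)_{\bi \uj}$, and we split
\[
D_{\tau} \nabla_a N_b = [D_{\tau}, \nabla_a] N_b + \nabla_a (D_{\tau} N_b).
\]
The commutator contributes expressions of the schematic form $(\nabla \Rc) \ast N + \Rc \ast \nabla N$, which, via \eqref{eq:covdn} and the fact that $|\nabla \Rc| \leq |\nabla \Rm|$ is a permitted coefficient, are immediately absorbed into the $\Theta$ polynomial multiplying one of $|N|$, $|G|$, $|A|$, $|T^0|$.

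Differentiating \eqref{eq:nev} produces four types of contributions. The derivatives of the three algebraic pieces $M \ast A$, $M \ast T^0$, and $(\Rh/m)N$ yield $\nabla M \ast (A + T^0) + M \ast (\nabla A + \nabla T^0) + (\nabla \Rc)\ast N + \Rc \ast \nabla N$ together with corrections from $\nabla H$ and $\nabla V$ acting on the projections; by \eqref{eq:nablahest}, the latter corrections are controlled by $(|A|+|T^0|+|N|)$ times the surviving tensors, and the whole collection fits the form of the claimed estimate. The fourth contribution, coming from $\nabla_a E_{\bp\bp\uk}$, is the delicate one described below.

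To handle $\nabla_a E_{\bp\bp\uk}$ projected onto the $(\bi, \uj)$ component, we write $E_{abc} = \nabla_c R_{ab} - \nabla_b R_{ac}$ and apply the contracted second Bianchi identity $\nabla_p R_{pj} = \tfrac{1}{2}\nabla_j R$ to the divergence-like piece $\nabla_{\bp} R_{\bp\uk}$, reducing to $\nabla R$ (whose vertical component is $|P|$-controlled via \eqref{eq:schar}) and the purely horizontal component $\nabla_{\up} R_{\up\uk}$. After applying $\nabla_{\bi}$ and commuting past the $H$, $V$ projections, each residual second-derivative-of-Ricci factor is matched with a component of $\nabla P$ via the identities $\nabla_{\bi} R_{\up\uk} = P_{\bi\up\uk}$ and $\nabla_{\bi} R = S_i = P_{\bi\up\up} + P_{\bi\bp\bp}$; the leftover $[\nabla,\nabla]$-commutators contribute $\Rm \ast \Rc$-type terms and the residual $\nabla H$, $\nabla V$ corrections contribute $(|A|+|T^0|+|N|)\cdot|\Rc|$-type terms, all of which are absorbable into $\Theta$. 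The main obstacle is precisely this final identification: one must verify that after all Bianchi contractions and projection commutations, no standalone $|\nabla^2 \Rc|$ remnant survives, and that every residual term has the structure required to be absorbed into either the explicit $|\nabla P|$ term or into $\Theta$ as a coefficient of one of the listed linear quantities $|T^0|, |A|, |\nabla A|, |G|, |M|, |\nabla M|, |P|$.
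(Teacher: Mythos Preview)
Your outline for the delicate term $\nabla_{\bi}(E_{\bp\bp\ul})$ is essentially the paper's approach and is fine in spirit. The gap is in the terms you dismiss as easy. You treat the commutator $[D_{\tau},\nabla_a]N_b$ (and the derivative of the $(\Rh/m)N$ piece) at the level of the unprojected schematics $(\nabla\Rc)\ast N + \Rc\ast\nabla N$, and propose to absorb them as $\Theta$ times one of $|N|,|G|,|A|,|T^0|$. But $|N|$ is \emph{not} among the permitted linear factors in the statement; the allowed list is $|T^0|,|A|,|\nabla A|,|G|,|M|,|\nabla M|,|P|$. So a raw bound of the form $|\nabla\Rm|\,|N|$ does not fit. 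Worse, your treatment of $\Rc\ast\nabla N$ via \eqref{eq:covdn} produces a $|\Rm|^2$ contribution with no admissible linear factor at all.

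The fix is to project to $(\bi,\uj)$ \emph{before} estimating and then exploit the projection structure, exactly as the paper does. After projection, $E_{\bi\uj N}$ has all three $\nabla\Rc$-indices of mixed type, so it equals $P_{N\uj\bi}-P_{\uj N\bi}$, i.e.\ $N\ast P$, absorbed as $\Theta|P|$. Likewise $R_{\bi p}\nabla_p N_{\uj}$ splits via $R_{\bi p}=M_{\bi p}+\tfrac{\Rh}{m}V_{ip}$ into $M\ast\nabla N + \tfrac{\Rh}{m}G_{ij}$, absorbed as $\Theta|M|+\Theta|G|$. The same projection-first analysis handles $(\nabla_{\bi}\Rh)N_{\uj}$, which becomes $N\ast(M+P)+\Rc\ast\Ec'$ rather than a bare $(\nabla\Rc)\ast N$. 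Without these identifications your estimate does not close; with them, it does, and the remainder of your sketch goes through.
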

\begin{proof}
To begin with, we have
\begin{align*}
    D_{\tau} G_{ij} &= V_{ik}H_{jl}D_{\tau}\nabla_k N_l = V_{ik}H_{jl}([D_{\tau}, \nabla_k] N_l + \nabla_k D_{\tau} N_l) \\
    &= ([D_{\tau}, \nabla] N)_{\bi\uj} + (\nabla D_{\tau} N)_{\bi\uj}.
\end{align*}
Now,
\begin{align*}
    [D_{\tau}, \nabla_k] N_l &= E_{klp}N_p - R_{kp} \nabla_p N_l
\end{align*}
so
\begin{align}
\begin{split}\nonumber
  ([D_{\tau}, \nabla]N)_{\bi\uj} &=   E_{\bi \uj N } - R_{\bi p}\nabla_p N_{\uj}
  =  P_{N\uj\bi}-  P_{\uj N \bi}  - \frac{{\Rh}}{m} G_{ij} - M_{\bi p}\nabla_p N_{\uj}
\end{split}\\
\begin{split}\label{eq:nablan00}
  &= \Rc \ast G + \nabla N \ast M + \nabla P.
\end{split}
\end{align}
On the other hand, from \eqref{eq:nev}, we have
\begin{equation}\label{eq:nablan0}
 D_{\tau} N_l =   - E_{\bp\bp\ul} - \frac{{\Rh}}{m}N_l + (A + T^0)\ast M.
\end{equation}
Note that $E_{\bp\bp\ul} = V_{ab}H_{cl}E_{abc}$, so, using
\eqref{eq:nablah}, we see that
\begin{align}
\begin{split}\nonumber
 \nabla_{\bi}(E_{\bp\bp\ul})
  &= \nabla_{\bi}E_{\bp\bp\ul} + \frac{1}{m}(V_{ia}N_b + V_{ib}N_a)E_{ab\ul}
 \\
  &\phantom{=}
   - \frac{1}{m}(V_{ic}N_l + V_{il}N_c)E_{\bp\bp c} + \nabla \Rc \ast \Ec^{\prime}\\
   &= \nabla_{\bi}E_{\bp\bp\ul} + \frac{1}{m}(
   E_{\bi N \ul} + E_{N\bi\ul} - E_{\bp\bp\bi}N_l -E_{\bp\bp N}V_{il})
   + \nabla \Rc \ast \Ec^{\prime}
\end{split}\\
\begin{split}\label{eq:nablan1}
   &= \nabla_{\bi}E_{\bp\bp\ul} -\frac{1}{m}E_{\bp\bp N}V_{il} + N \ast P +\nabla \Rc \ast \Ec^{\prime}.
\end{split}
\end{align}
Consider the first term. Using the contracted second Bianchi identity,  we can write
\begin{align}
\begin{split}\nonumber
 \nabla_{\bi}E_{\bp\bp\ul}&= \nabla_{\bi}\nabla_{\ul}R_{\bp\bp}
 - \nabla_{\bi}\nabla_{\bp}R_{\ul\bp}\\
\end{split}\\
\begin{split}\label{eq:nablan2}
 &= \frac{1}{2}\nabla_{\bi}\nabla_{\ul}R  
 -\nabla_{\bi}\nabla_{\ul}R_{\up\up}
 + \nabla_{\bi}\nabla_{\up}R_{\ul\up}.
\end{split}
\end{align}
Now, for the first term in \eqref{eq:nablan2}, we have
\begin{align*}
\nabla_{\bi}\nabla_{\ul} R &= \nabla_{\ul}\nabla_{\bi} R
= \nabla_{\ul}(\nabla_{\bi} R) + \nabla R \ast \Ec^{\prime}
= \nabla_{\ul}S_i  + \nabla R \ast \Ec^{\prime}\\
&= \nabla P  +  \Ec^{\prime}\ast P + \nabla R \ast \Ec^{\prime},
\end{align*}
in view of \eqref{eq:schar}. Here we have also used
that $\nabla_{\ui} H_{jk} = \Ec^{\prime}_{\ui jk}$.
The second and third terms in \eqref{eq:nablan2} are both traces of terms of the form $\nabla_{\bi}\nabla_{\uj}R_{\uk\ul}$.
To estimate them, we note that
\begin{align*}
 \nabla_{\bi}\nabla_{\uj}R_{\uk\ul} &=\nabla_{\uj}\nabla_{\bi}
 R_{\uk\ul} - R_{\bi\uj\uk p} R_{p\ul} - R_{\bi\uj\ul p} R_{\uk p} \\
 &= \nabla_{\uj}\nabla_{\bi}R_{\uk \ul} -
 R_{\bi\uj\uk \bp} R_{\bp\ul} - R_{\bi\uj\ul \bp} R_{\uk \bp} - R_{\bi\uj\uk \up} R_{\up\ul} - R_{\bi\uj\ul \up} R_{\uk \up}\\
&= \nabla_{\uj}P_{\bi\uk \ul} + \nabla \Rc \ast \Ec^{\prime}
 + \Rm \ast M + \Rc\ast Q.
\end{align*}
Thus we conclude at last that
\[
\nabla_{\bi}(E_{\bp\bp\ul}) = -\frac{1}{m}E_{\bp\bp N}V_{il} 
+ \nabla P 
+ (N + \Ec^{\prime})\ast P +\nabla \Rc \ast \Ec^{\prime}
+ \Rm \ast M + \Rm \ast Q.
\]
It follows from \eqref{eq:nablan0}, then, that
\begin{align}
 \begin{split}\label{eq:nablan3}
  (\nabla D_{\tau} N_l)_{\bi\uj} &= -\frac{1}{m}(\nabla_{\bi}\Rh N_j
  + \Rh G_{ij}) +\nabla A\ast M  + A\ast \nabla M + \nabla T^0 \ast M\\
  &\phantom{=} + T^0\ast \nabla M + (N + \Ec) \ast (A + T^0) \ast M.
\end{split}
\end{align}
Since
\[
 \nabla_{\bi}\Rh = S_i -\nabla_{\bi}R_{\up\up} = \nabla P
 + (N + \Ec^{\prime}) \ast P,
\]
combining \eqref{eq:nablan3} with \eqref{eq:nablan00}
and using \eqref{eq:qeq} and \eqref{eq:dern} to substitute for $Q$ and $\nabla N$ yields
\eqref{eq:gev}.
\end{proof}

\section{Commutation identities for projection operators}
In order to compute the evolution equations for the curvature invariants $M$, $P$, and $U$, we
will need to understand how certain projection operators defined in terms of the projections $H$ and $V$ interact with the Laplacian and covariant derivative
associated to $g$.

Since we have defined $H$ and $V$ in order to ensure $D_{\tau} H \equiv 0$ and $D_{\tau} V \equiv 0$,
we will automatically have $D_{\tau} \Pc \equiv 0$, and $D_{\tau} \Uc \equiv 0$, and similarly for the other such projection operators we will consider.
However, we will not in general have $\nabla H \equiv 0$ and $\nabla V\equiv 0$ even on the model space at $\tau =0$ (unless it splits metrically as product), and we cannot therefore expect $\nabla^{(k)} H$ and $\nabla^{(k)} V$ even to be \emph{approximately} vanishing in our computations. Instead, each covariant derivative of $H$ and $V$ which arises in our calculations will produce a non-trivial correction term (specified by Lemma \ref{lem:hder}) which must be \emph{cancelled} or otherwise controlled rather than immediately estimated away. This significantly increases the complexity of our calculations.

\subsection{The horizontal projection operator}\label{sec:hproj}
First we consider the horizontal projection operator $\Hc:T^{k+l}(T^*M)\longrightarrow T^{k+l}(T^*M)$
taking $X$ to $\Hc(X) \dfn X^H$, that is,
\[
  \Hc(X)_{a_1 \cdots a_k} \dfn  X^H_{a_1\cdots a_k} = X_{\ua_1 \cdots \ua_k}.
\]
where we regard the section $X\in T^{k+l}(T^*M)$ as a $T^l(T^*M)$-valued $k$-tensor.
\begin{proposition}\label{prop:horizlap} Let $k\geq 1$
and $l\geq 0$ and let $X$ be a smooth section of $T^{k+1}(T^*M) \approx T^{k}(T^*M)\otimes T^l(T^*M)$, regarded as a $T^l(T^*M)$-valued $k$-multilinear map $X = X_{a_1 \cdots a_k}$. Then
 \begin{align}\label{eq:horizgrad}
 \begin{split}
    \nabla_{s}X^H_{a_1 \cdots a_k} &=(\nabla_sX)^H_{a_1\cdots a_k} + X\ast \Ec^{\prime}\\
    &\phantom{=} -\frac{1}{m}\sum_{i=1}^k\left(X_{\ua_1 \cdots N \cdots \ua_k}V_{sa_i} +
    X_{\ua_1 \cdots \bar{s} \cdots \ua_k}N_{a_i}\right),
    \end{split}
 \end{align}
and
\begin{align}\label{eq:horizlap}
 \begin{split}
   \begin{split}
     (\Delta X^H)_{a_1 \cdots a_k} &
      = (\Delta X)^H_{a_1 \cdots a_k} + (\nabla X + (\Ec^{\prime} + N) \ast X)\ast \Ec^{\prime}
     + X\ast \Ec^{\prime\prime}\\
      &\phantom{=}
      + \frac{2}{m^2}\sum_{i < j}X_{\ua_1 \cdots N\cdots N \cdots\ua_k}
      V_{a_i a_j}\\
      &\phantom{=} + \frac{2}{m}\sum_{i=1}^k\left(\frac{|N|^2}{m} X_{\ua_1 \cdots \ba_i \cdots \ua_k}
        -\nabla_{\ba_i}X_{\ua_1\cdots N \cdots \ua_k}\right)\\
      &\phantom{=} + \frac{2}{m^2}\sum_{i < j}\left(X_{\ua_1\cdots N \cdots \ba_i \cdots \ua_k}N_{a_j}
      + X_{\ua_1 \cdots \ba_j \cdots N \cdots \ua_k}N_{a_i}\right)\\
      &\phantom{=} - \frac{2}{m}\sum_{i=1}^k\left(\nabla_{\bp}X_{\ua_1\cdots \bp\cdots \ua_k}
      + X_{\ua_1 \cdots N \cdots \ua_k}\right)N_{a_i}\\
      &\phantom{=} + \frac{2}{m^2}\sum_{i < j} X_{\ua_1 \cdots \bp \cdots \bp\cdots \ua_k} N_{a_i}N_{a_j}.
    \end{split}
 \end{split}
 \end{align}
Alternatively,
  \begin{align}\label{eq:horizlap2}
    \begin{split}
     &\Delta X^H_{a_1 \cdots a_k}
      = (\Delta X)^H_{a_1 \cdots a_k} + (\nabla X + (\Ec^{\prime} + N) \ast X)\ast \Ec^{\prime}
     + X\ast \Ec^{\prime\prime}\\
       &\phantom{=}  +\frac{2}{m}\sum_{i=1}^k\left(\frac{|N|^2}{m}X_{\ua_1 \cdots \ba_i \cdots \ua_i} - \nabla_{\ba_i}X_{\ua_1\cdots N \cdots \ua_k} - \nabla_{\bp}(X_{\ua_1 \cdots \bp \cdots \ua_k})N_{a_i}\right)\\
      &\phantom{=}
      + \frac{2}{m^2}\sum_{i < j}\left(X_{\ua_1 \cdots N\cdots N \cdots \ua_k}V_{a_ia_j}
      -X_{a_1 \cdots \bp\cdots\bp \cdots a_k}N_{a_i}N_{a_j} \right).
    \end{split}
 \end{align}

\end{proposition}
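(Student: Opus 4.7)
The plan is to prove both formulas by direct computation, starting from the representation
\[
X^H_{a_1\cdots a_k} = H_{a_1b_1}H_{a_2b_2}\cdots H_{a_kb_k}X_{b_1b_2\cdots b_k},
\]
and applying Leibniz's rule together with the explicit formulas for $\nabla H$ and $\Delta H$ given in Lemma \ref{lem:hder}, or equivalently the definitions of $\Ec^{\prime}$ and $\Ec^{\prime\prime}$ in \eqref{eq:eprimedef}.

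For \eqref{eq:horizgrad}, I would apply a single $\nabla_s$. Leibniz produces one principal term in which $\nabla_s$ hits $X$ (giving $(\nabla_sX)^H$) together with $k$ terms in which $\nabla_s$ hits the $i$-th copy of $H$. Substituting $\nabla_s H_{a_ib_i} = -\frac{1}{m}(V_{sa_i}N_{b_i}+V_{sb_i}N_{a_i}) + \Ec^{\prime}_{sa_ib_i}$, the two pieces of the principal part contract against the remaining $H$'s and $X$ to produce exactly the explicit sum on the second line of \eqref{eq:horizgrad}, and the $\Ec^{\prime}$ piece becomes a summand of $X\ast\Ec^{\prime}$.

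For \eqref{eq:horizlap} I would apply $g^{st}\nabla_s\nabla_t$ and split the resulting sum into four cases according to how the two derivatives are distributed among the factors $H_{a_1b_1},\ldots,H_{a_kb_k},X$: (i) both on $X$, (ii) both on a single $H_{a_ib_i}$, (iii) one on $X$ and one on $H_{a_ib_i}$, (iv) one on $H_{a_ib_i}$ and one on a different $H_{a_jb_j}$ with $i<j$. Case (i) gives $(\Delta X)^H$. In case (ii) I substitute the expression for $\Delta H$ from Lemma \ref{lem:hder}, whose principal part $\frac{2}{m}(\frac{|N|^2}{m}V-N\otimes N)$ produces the sum on the third line of \eqref{eq:horizlap}, with the remainder becoming $X\ast\Ec^{\prime\prime}$. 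In case (iii), pairing $-\frac{1}{m}(V_{sa_i}N_{b_i}+V_{sb_i}N_{a_i})$ with $\nabla_tX$ and tracing via $g^{st}$ produces the $\nabla_{\ba_i}X_{\ua_1\cdots N\cdots\ua_k}$ and $\nabla_{\bp}X_{\ua_1\cdots\bp\cdots\ua_k}N_{a_i}$ contributions, with the $\Ec^{\prime}$ piece absorbed into $\nabla X\ast\Ec^{\prime}$. In case (iv), multiplying the two principal parts and tracing via $g^{st}$ produces the four $\frac{2}{m^2}$ terms appearing in \eqref{eq:horizlap} — the $V_{a_ia_j}$-weighted double-$N$ trace, the two mixed $N_{a_j}$- or $N_{a_i}$-weighted terms with one $N$-insertion and one $\ba_i$/$\ba_j$ position, and the $N_{a_i}N_{a_j}$-weighted double-$\bp$ trace — and the remaining mixed $\Ec^{\prime}\cdot(\text{principal})$ and $\Ec^{\prime}\cdot\Ec^{\prime}$ terms are absorbed into the collective error $(\Ec^{\prime}+N)\ast X\ast\Ec^{\prime}$.

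Finally, to derive \eqref{eq:horizlap2} from \eqref{eq:horizlap} I would apply \eqref{eq:horizgrad} in reverse: the identity
\[
\nabla_{\bp}(X_{\ua_1\cdots\bp\cdots\ua_k}) = \nabla_{\bp}X_{\ua_1\cdots\bp\cdots\ua_k} + \text{correction terms}
\]
converts the plain covariant derivative of $X$ into the covariant derivative of the horizontally-projected tensor, and the explicit correction terms coming from \eqref{eq:horizgrad} exactly cancel the two $\frac{2}{m^2}$ lines of \eqref{eq:horizlap} involving a mixed $N$-$\bar a$ index or a loose $N_{a_i}$ times a trace, modulo contributions that fit into the existing $\ast\Ec^{\prime}$ error. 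I anticipate the main obstacle to be the bookkeeping in cases (iii) and (iv): one must keep careful track of which slot ($i$ or $j$) receives each trace insertion and verify the sign conventions when $V_{sb_i}$ or $V_{sa_i}$ pairs with $g^{st}$, so that after relabeling dummy indices the resulting explicit sums match the stated formulas exactly. This is routine but requires disciplined index-tracking; once it is done, the passage to \eqref{eq:horizlap2} is mostly a matter of recognizing one of \eqref{eq:horizgrad}'s pieces inside the expression.
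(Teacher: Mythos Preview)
Your proposal is correct and follows essentially the same approach as the paper: both proofs expand $X^H$ as a product of $H$'s against $X$, apply Leibniz, and substitute the formulas \eqref{eq:eprimedef} for $\nabla H$ and $\Delta H$; the paper's identity for $\Delta X^H$ is organized via exactly your four cases (i)--(iv), and the passage to \eqref{eq:horizlap2} is likewise obtained by expanding $\nabla_{\bp}(X_{\ua_1\cdots\bp\cdots\ua_k})$ via the product rule. One small bookkeeping correction: case (ii) contributes the $\frac{|N|^2}{m}$ term on the third line \emph{and} the $-X_{\ua_1\cdots N\cdots\ua_k}N_{a_i}$ term on the fifth line (from the $N\otimes N$ part of $\Delta H$), not the full third line, whose second summand $-\nabla_{\ba_i}X_{\ua_1\cdots N\cdots\ua_k}$ comes from case (iii).
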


\begin{proof}
Fix a smooth section $X$ of $T^{k+l}(T^*M) \approx T^{k}(T^*M)\otimes T^{l}(T^*M)$,
and write $X = X_{a_1 \cdots a_k}$. The horizontal projection $X^H$ of $X$ is described by
\[
 X^H_{a_1 \cdots a_k } = X_{\ua_1 \cdots \ua_k } = X_{b_1 \cdots b_k } H_{a_1b_1} \cdots H_{a_k b_k}.
\]
where there is an implied sum on the indices $b_i$.
For \eqref{eq:horizgrad}, we compute directly, using \eqref{eq:eprimedef}, that
\begin{align*}
\nabla_s X^H_{a_1\cdots a_k } &= \nabla_s\left(X_{b_1b_2\ldots b_k}H_{a_1b_1}\cdots H_{a_kb_k}\right)\\
&= \nabla_s X_{\ua_1\cdots \ua_k} +  \sum_{i=1}^k \nabla_s H_{a_i b_i}X_{\ua_1 \cdots b_i \cdots \ua_k}\\
\begin{split}
&= \nabla_s X_{\ua_1\cdots \ua_k} + \sum_{i=1}^k\left(\Ec^{\prime}_{sa_ib_i} - \frac{1}{m}\left(V_{sa_i}N_{b_i} + V_{s{b_i}}N_{a_i}\right)\right)X_{\ua_1 \cdots b_i\cdots \ua_k}
\end{split}\\
\begin{split}
&= \nabla_s X_{\ua_1\cdots \ua_k} + X\ast \Ec^{\prime} - \frac{1}{m}\sum_{i=1}^k\left(X_{\ua_1 \cdots N \cdots \ua_k }V_{sa_i}+ X_{\ua_1 \cdots \bs \cdots \ua_k}N_{a_i}\right)
\end{split}
\end{align*}
as claimed.

For \eqref{eq:horizlap},  we start with the identity
\begin{align}\label{eq:hl1}
\begin{split}
\Delta (X^H)_{a_1 \cdots a_k} &= \Delta X_{\ua_1 \cdots \ua_k }
+ 2\sum_{i < j} \nabla_s H_{a_i b_i} \nabla_s H_{a_j b_j} X_{\ua_1 \cdots b_i \cdots b_j \cdots \ua_k }
\\
&\phantom{=}
+ \sum_{i=1}^k\left(  \Delta H_{a_i b_i} X_{\ua_1 \cdots b_i\cdots \ua_k }
+ 2\nabla_s H_{a_i b_i}\nabla_s X_{\ua_1\cdots b_i\cdots \ua_k }\right).
\end{split}
\end{align}
Now, for each $i$, we have
\begin{align}\label{eq:hl2}
\begin{split}
  \Delta H_{a_i b_i} X_{\ua_1 \cdots b_i\cdots \ua_k } &= \left(\Ec_{a_ib_i}^2 + \frac{2}{m}\left(\frac{|N|^2}{m}V_{a_i b_i} - N_{a_i}N_{b_i}\right)\right)X_{\ua_1\cdots b_i \cdots \ua_k }\\
  &= X\ast \Ec^{\prime\prime} + \frac{2}{m^2}|N|^2X_{\ua_1 \cdots \ba_i \cdots \ua_k}
  -\frac{2}{m} X_{\ua_1 \cdots N\cdots \ua_k}N_{a_i},
\end{split}
\end{align}
and
\begin{align}\label{eq:hl3}
\begin{split}
& \nabla_s H_{a_i b_i}\nabla_s X_{\ua_1\cdots b_i\cdots \ua_k }\\
&\quad = \left(\Ec^{\prime}_{s a_i b_i} - \frac{1}{m}\left(V_{sa_i} N_{b_i} + V_{sb_i}N_{a_i}\right)\right)\nabla_{s}X_{\ua_1 \cdots b_i \cdots \ua_k}\\
 &\quad= \nabla X \ast \Ec^{\prime} -\frac{1}{m}\left(\nabla_{\ba_i}X_{\ua_1 \cdots N \cdots \ua_k }
 + \nabla_{\bp}X_{\ua_1 \cdots \bp \cdots \ua_k}N_{a_i}\right),
 \end{split}
\end{align}
while, for any $1\leq i < j\leq k$, we have
\begin{align*}
&  \nabla_s H_{a_i b_i} \nabla_s H_{a_j b_j}\\
&\qquad= \left(\Ec^{\prime}_{sa_i b_i}
  - \frac{1}{m}\left(V_{s a_i}N_{b_i} + V_{sb_i} N_{a_i}\right)\right)\left(\Ec^{\prime}_{sa_j b_j}
  -\frac{1}{m}\left(V_{sa_j}N_{b_j}+ V_{sb_j}N_{a_j}\right)\right)\\
&\qquad=  + \frac{1}{m^2}\left(V_{a_i a_j} N_{b_i} N_{b_j} + V_{a_i b_j} N_{b_i} N_{a_j}
+ V_{b_i a_j} N_{a_i} N_{b_j} + V_{b_i b_j} N_{a_i} N_{a_j}\right) \\
& \qquad\;\phantom{=} +(\Ec^{\prime} + N)\ast \Ec^{\prime},
\end{align*}
so that
\begin{align}\label{eq:hl4}
\begin{split}
 &\nabla_s H_{a_i b_i} \nabla_s H_{a_j b_j} X_{\ua_1 \cdots b_i \cdots b_j \cdots \ua_k} = (\Ec^{\prime} + N) \ast X\ast \Ec^{\prime}\\
 &\quad= \frac{1}{m^2}\bigg(X_{\ua_1 \cdots N \cdots N \cdots \ua_k} V_{a_ia_j}
 + X_{\ua_1 \cdots \ba_j \cdots N \cdots \ua_k} N_{a_i} + X_{\ua_1 \cdots N \cdots \ba_i \cdots \ua_k}N_{a_j}\\
 &\quad\phantom{=}
 + X_{\ua_1 \cdots \bp \cdots \bp \cdots \ua_k} N_{a_i} N_{a_j}\bigg).
\end{split}
 \end{align}
Incorporating the identities \eqref{eq:hl2}, \eqref{eq:hl3}, and \eqref{eq:hl4} into \eqref{eq:hl1},
we arrive at \eqref{eq:horizlap}.

For the alternative expression in \eqref{eq:horizlap2}, note that, for any fixed $i$,
\begin{align*}
& \nabla_{\bb_i}(X_{\ua_1 \cdots \bb_i \cdots \ua_k}) = \nabla_{\bb_i}X_{\ua_1 \cdots \bb_i \cdots \ua_k}
  +(\nabla_{\bb_i}V_{b_i p}) X_{\ua_1 \cdots p \cdots \ua_k}\\
&\qquad\phantom{=} +\sum_{j< i}(\nabla_{\bb_i}H_{a_jb_j})
 X_{\ua_1 \cdots b_j \cdots b_i \cdots \ua_k}
 + \sum_{j > i}(\nabla_{\bb_i}H_{a_jb_j})X_{\ua_1 \cdots b_i
 \cdots b_j \cdots \ua_k}\\
 &\qquad=\nabla_{\bb_i}X_{\ua_1 \cdots \bb_i \cdots \ua_k} + X\ast \Ec^{\prime}
  +\frac{1}{m}(V_{b_ib_i}N_p + N_{\bp})X_{\ua_i \cdots p \cdots \ua_k}\\
&\qquad\phantom{=}
 -\frac{1}{m}\sum_{j< i}(V_{b_i a_j}N_{b_j} + V_{b_i b_j}N_{a_j}) X_{\ua_1 \ldots b_j \cdots b_i \cdots \ua_k}\\
 &\qquad\phantom{=}
 - \frac{1}{m}\sum_{j > i}(V_{b_i a_j}N_{b_j} + V_{b_ib_j}N_{a_j})X_{\ua_1 \cdots b_i
 \cdots b_j \cdots \ua_k}\\
 &\qquad=\nabla_{\bb_i}X_{\ua_1 \cdots \bb_i \cdots \ua_k} + X\ast \Ec^{\prime}
  +X_{\ua_i \cdots N \cdots \ua_k}\\
&\qquad\phantom{=}
 -\frac{1}{m}\sum_{j< i}(X_{\ua_1 \cdots N \cdots \ba_j \cdots \ua_k}
 + X_{\ua_1 \cdots \bp \cdots \bp \cdots \ua_k}N_{a_j})\\
 &\qquad\phantom{=}
 - \frac{1}{m}\sum_{j > i}(X_{\ua_1 \cdots \ba_j
 \cdots N \cdots \ua_k} + X_{\ua_1 \cdots \bp
 \cdots \bp \cdots \ua_k}N_{a_j}).
\end{align*}
Consequently,
\begin{align*}
\sum_{i=1}^k\nabla_{\bp}(X_{\ua_1 \cdots \bp \cdots \ua_k}) N_{a_i}
 &= \sum_{i=1}^k\left(\nabla_{\bp}X_{\ua_1 \cdots \bp \cdots \ua_k}N_{a_i} + X_{\ua_1 \cdots N \cdots \ua_k} N_{a_i}\right)\\
&\phantom{=}
 -\frac{1}{m}\sum_{i < j}\left(X_{\ua_1 \cdots N \cdots \ba_i \cdots \ua_k}N_{a_j} + X_{\ua_1 \cdots \ba_j
 \cdots N \cdots \ua_k}N_{a_i}\right)\\
 &\phantom{=} -\frac{2}{m}\sum_{i < j} X_{\ua_1 \cdots \bp \cdots \bp \cdots \ua_k} N_{a_i}N_{a_j} + N \ast X\ast \Ec^{\prime},
\end{align*}
as claimed.
\end{proof}

\subsection{The vertical trace operator}\label{sec:vtproj}

Next, we consider the operator
\[
\trace_V: T^{k+2}(T^*M)\longrightarrow T^k(T^*M),
\]
defined by
\[
  (\trace_{V}X) \dfn V_{pq}X_{pq} = X_{\bp\bp},
\]
and the associated projection
\[
 \Tc: T^{k+2}(T^*M)\longrightarrow T^{k+2}(T^*M)
\]
defined by
\[
  \Tc(X)_{ij} \dfn \trace_V(X)V_{ij} = X_{\bp\bp}V_{ij},
\]
where we regard $X$ as a bilinear $T^k(T^*M)$-valued map.

\begin{proposition}\label{prop:tracelap}
For any smooth section $X$ of $T^{k+2}(T^*M) \approx T^{2}(T^*M)\otimes T^{k}(T^*M)$,
we have
\begin{align}\label{eq:tracegrad}
  \nabla_s (\trace_V X) = \nabla_s X_{\bp\bp} +\frac{1}{m}(X_{\bs N} + X_{N \bs}) +X\ast \Ec^{\prime},
\end{align}
and
\begin{align}
\begin{split}
\label{eq:tracelap}
 \Delta \trace_V(X) &= \trace_V(\Delta X)
 + \frac{2}{m}\left(\nabla_{\bp}(X_{\bp \uq}) + \nabla_{\bp}(X_{\uq\bp})\right)N_{q}\\
 &\phantom{=}+ \frac{2}{m}\left(\frac{|N|^2}{m}X_{\bp\bp} - X_{NN}\right) + (\nabla X + N \ast X)\ast \Ec^{\prime}  + X \ast \Ec^{\prime\prime}.
 \end{split}
 \end{align}
Moreover,
\begin{align}
\begin{split}
\label{eq:traceprojlap}
\Delta \Tc(X)_{ij} &= \Tc(\Delta X)_{ij}
 + \frac{2}{m}\left(\nabla_{\bp}(X_{\bp \uq}) + \nabla_{\bp}(X_{\uq\bp})\right)N_{q}V_{ij}\\
 &\phantom{=}+\frac{2}{m}\left(\nabla_{\bi}X_{\bp\bp}N_j + \nabla_{\bj}X_{\bp\bp}N_i\right) + \frac{2}{m}\left((\trace_V X)N_iN_j - X_{NN}V_{ij}\right)\\
 &\phantom{=} + \frac{2}{m^2}\left(X_{\bi N}N_j + X_{N\bj}N_i\right) + (\nabla X + N \ast X) \ast \Ec^{\prime} + X \ast \Ec^{\prime\prime}.
\end{split}
\end{align}
\end{proposition}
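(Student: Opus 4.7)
The proof of all three identities proceeds by direct computation of the same flavor as Proposition~\ref{prop:horizlap}. The overall plan is to differentiate the defining expression $\trace_V(X) = V_{pq}X_{pq}$ using the product rule, replacing each factor of $\nabla V = -L$ by the decomposition
\[
L_{spq} = \Ec^{\prime}_{spq} - \frac{1}{m}\bigl(V_{sp}N_q + V_{sq}N_p\bigr)
\]
inherited from~\eqref{eq:eprimedef}, and each factor of $\Delta V = -\Delta H$ by the corresponding formula from Lemma~\ref{lem:hder}. The $\Ec^{\prime}$ and $\Ec^{\prime\prime}$ pieces feed directly into the advertised schematic error terms, while the parts involving $N$ and $V$ produce the explicit principal contributions.

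For~\eqref{eq:tracegrad} this is immediate: one expands $\nabla_s(V_{pq}X_{pq}) = V_{pq}\nabla_sX_{pq} - L_{spq}X_{pq}$ and substitutes the decomposition of $L$, obtaining $\nabla_sX_{\bp\bp}$ from the first term and $(1/m)(X_{\bs N}+X_{N\bs}) + X\ast\Ec^{\prime}$ from the second.

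For~\eqref{eq:tracelap}, I would expand
\[
\Delta(V_{pq}X_{pq}) = V_{pq}\Delta X_{pq} + 2(\nabla_sV_{pq})(\nabla_sX_{pq}) + (\Delta V_{pq})X_{pq}.
\]
The first piece gives $\trace_V(\Delta X)$, and the Hessian piece $(\Delta V_{pq})X_{pq}$ immediately yields $(2/m)(X_{NN} - (|N|^2/m)X_{\bp\bp}) + X\ast\Ec^{\prime\prime}$. The cross term $-2L_{spq}\nabla_sX_{pq}$, after substitution, has principal part $(2/m)(N_q\nabla_{\bp}X_{\bp q} + N_p\nabla_{\bq}X_{p\bq})$; since $N$ is horizontal these may be rewritten as $(2/m)(N_q\nabla_{\bp}X_{\bp\uq} + N_p\nabla_{\bq}X_{\up\bq})$. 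The key step is then to convert each $\nabla_{\bp}X_{\bp\uq}$ into $\nabla_{\bp}(X_{\bp\uq})$ via the product rule applied to the projections nested inside $X_{\bp\uq}$; the horizontality of $N$ (equivalently $N_{\bs}=0$) kills half of the naively expected correction terms, leaving precisely the explicit pieces $-X_{NN}$ and $(|N|^2/m)X_{\bp\bp}$. Doubling and combining with the Hessian contribution yields the stated $(2/m)((|N|^2/m)X_{\bp\bp} - X_{NN})$ with the correct sign.

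For~\eqref{eq:traceprojlap}, the plan is to apply the product rule to $\Tc(X)_{ij} = (\trace_V X)V_{ij}$,
\[
\Delta\Tc(X)_{ij} = (\Delta\trace_V X)V_{ij} + 2\nabla_s(\trace_V X)\nabla_sV_{ij} + (\trace_V X)\Delta V_{ij},
\]
and substitute~\eqref{eq:tracegrad}, \eqref{eq:tracelap}, and the formulas for $\nabla V$ and $\Delta V$. The cross term, after decomposing $L_{sij}$, produces the principal contributions $(2/m)(\nabla_{\bi}X_{\bp\bp}N_j + \nabla_{\bj}X_{\bp\bp}N_i)$ together with the $(2/m^2)$ terms in $X_{\bi N}N_j$, etc.; the third piece contributes $(2/m)((\trace_V X)N_iN_j - (|N|^2/m)(\trace_V X)V_{ij}) + X\ast\Ec^{\prime\prime}$. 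The crucial cancellation is that the $(2/m)(|N|^2/m)X_{\bp\bp}V_{ij}$ contributions from the first and third pieces enter with opposite signs and vanish, leaving the terms displayed. Throughout, the main technical obstacle will be the careful bookkeeping required to distinguish and convert between expressions of the form $\nabla_{\bi}X_{\bp\uq}$ (a modification of $\nabla X$) and $\nabla_{\bi}(X_{\bp\uq})$ (the covariant derivative of a projection of $X$); as noted in Section~\ref{ssec:notation}, these differ by contractions with $\nabla H$, and systematically absorbing those differences into the advertised schematic error terms is where the bulk of the calculation lies.
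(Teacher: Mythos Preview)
Your proposal is correct and follows essentially the same approach as the paper: the same three-term product-rule expansions, the same substitution of the decompositions of $\nabla V$ and $\Delta V$ from Lemma~\ref{lem:hder}, and the same conversion of $\nabla_{\bp}X_{\bp\uq}$ into $\nabla_{\bp}(X_{\bp\uq})$ with the horizontality of $N$ killing the spurious corrections. You have also correctly identified the key cancellation in~\eqref{eq:traceprojlap}.
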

\begin{proof}
For \eqref{eq:tracegrad}, from the expression $\trace_V(X) = V_{pq}X_{pq}$,
we compute that
\begin{align*}
 \nabla_s (\trace_V X) &= \nabla_s V_{pq} X_{pq} + V_{pq}\nabla_s X_{pq}\\
 &= X\ast \Ec^{\prime} + \frac{1}{m}(V_{sp} N_q + V_{sq} N_p)X_{pq} + \nabla_s X_{\bp\bp}\\
&= X\ast \Ec^{\prime} + \frac{1}{m}(X_{\bs N} + X_{N \bs}) + \nabla_s X_{\bp\bp},
\end{align*}
using \eqref{eq:eprimedef}.

For \eqref{eq:tracelap}, we start from the equation
\begin{equation}\label{eq:hlap0}
\Delta(\trace_{V}X) = \Delta X_{\bp\bp}   + 2\nabla_s V_{ab} \nabla_s X_{ab} + \Delta V_{ab}X_{ab}.
\end{equation}
Now,
\begin{align*}
\nabla_s V_{ab} \nabla_s X_{ab}
&= \frac{1}{m}(V_{sa}N_b + V_{sb} N_a)\nabla_s X_{ab} + \nabla X \ast \Ec^{\prime}\\
&= \frac{1}{m}(\nabla_{\bp}X_{\bp N} + \nabla_{\bp}X_{N \bp})
+ \nabla X \ast \Ec^{\prime}.
\end{align*}
Since $V_{pq} = V_{pr}V_{rq}$ and $N_a = H_{ab}N_b$,
\begin{align*}
  \nabla_{\bp}X_{\bp N} &= V_{pq}N_a\nabla_p X_{qa} =V_{pr}V_{rq}H_{ab}N_b\nabla_{p}X_{qa}\\
  &= V_{pr}N_b\left(\nabla_{p}(X_{\bar{r}\ub{b}}) -\nabla_p V_{rq} X_{q\ub{b}}
  -\nabla_p H_{ab} X_{\bar{r} a}\right)\\
\begin{split}
  &=V_{pr}N_b\left(\nabla_{p}(X_{\bar{r}\ub{b}})
  -\frac{1}{m}(V_{pr}N_q +V_{pq}N_r)X_{q\ub{b}}
  + \frac{1}{m}(V_{pa}N_b + V_{pb}N_a)X_{\bar{r} a}\right)\\
  &\phantom{=}+ N \ast X \ast \Ec^{\prime}
\end{split}\\
&= \nabla_{\bar{r}}(X_{\bar{r}\ub{b}})N_b -X_{NN} +
\frac{1}{m}|N|^2X_{\bp\bp} + N\ast X \ast \Ec^{\prime},
\end{align*}
so
\begin{align}
\begin{split}\label{eq:tracelap1}
 \nabla_s V_{ab} \nabla_s X_{ab} &=
 \nabla_{\bp}(X_{\bp \uq} + X_{\uq \bp})N_q
 - 2X_{NN} + \frac{2}{m}|N|^2 X_{\bp\bp}\\
 &\phantom{=} + \nabla X \ast \Ec^{\prime} + N \ast X\ast \Ec^{\prime}.
\end{split}
\end{align}
Also, by \eqref{eq:deltah},
\begin{equation}\label{eq:hlap2}
\Delta V_{ab}X_{ab} = X \ast \Ec^{\prime\prime} + \frac{2}{m}\left(X_{NN} - \frac{|N|^2}{m}X_{\bp\bp}\right).
\end{equation}
Combining \eqref{eq:tracelap1} with \eqref{eq:hlap2} in \eqref{eq:hlap0}
and cancelling terms gives \eqref{eq:tracelap}.

In our proof of \eqref{eq:traceprojlap}, we will regard $X\otimes V$
and $\Tc(X) = \trace_V(X)\otimes V$ as quadrilinear and bilinear maps, respectively, taking values in $T^{k}(T^*M)$,
and write simply $(\trace_V(X)\otimes V)_{ij} = \trace_V(X)V_{ij}$.
We begin with the expression
\begin{align}\label{eq:deltv1}
\begin{split}
    \Delta \Tc(X)_{ij} &= \Delta((\trace_V X)V)_{ij}\\
    &= (\Delta \trace_V X) V_{ij} + 2\nabla_s(\trace_V X) \nabla_s V_{ij}
    + (\trace_{V} X)\Delta V_{ij}.
\end{split}
\end{align}
On one hand, from \eqref{eq:horizgrad} and \eqref{eq:tracegrad}
we have
\begin{align*}
 \nabla_s(\trace_V X) \nabla_s V_{ij} &=
 \frac{1}{m}(\nabla_{\bi}(\trace_V X)N_j +\nabla_{\bj}(\trace_V X)N_i) + \nabla X \ast \Ec^{\prime}\\
 &= \frac{1}{m}(\nabla_{\bi}X_{\bp\bp}N_j + \nabla_{\bj}X_{\bp\bp}N_i)+ \frac{1}{m^2}(X_{\bi N}N_j + X_{N\bj}N_i)\\
 &\phantom{=} + N \ast X \ast \Ec^{\prime} + \nabla X \ast \Ec^{\prime},
 \end{align*}
 and, on the other, from \eqref{eq:deltah}, that
\begin{align*}
    (\trace_V X) \Delta V_{ij} &=
    \frac{2}{m}\left(N_iN_j - \frac{|N|^2}{m}V_{ij}\right)(\trace_V X) + X \ast \Ec^{\prime\prime}.
 \end{align*}
Using these expressions and \eqref{eq:tracelap} in \eqref{eq:deltv1},
we have
\begin{align*}
\Delta \Tc(X)_{ij} &= \Tc(\Delta X)_{ij}
 + \frac{2}{m}\left(\nabla_{\bp}(X_{\bp \uq}) + \nabla_{\bp}(X_{\uq\bp})\right)N_{q}V_{ij}\\
 &\phantom{=}+\frac{2}{m}\left(\nabla_{\bi}X_{\bp\bp}N_j + \nabla_{\bj}X_{\bp\bp}N_i\right) + \frac{2}{m}\left((\trace_V X)N_iN_j - X_{NN}V_{ij}\right)\\
 &\phantom{=} + \frac{2}{m^2}\left(X_{\bi N}N_j + X_{N\bj}N_i\right) + (\nabla X + N \ast X) \ast \Ec^{\prime} + X \ast \Ec^{\prime\prime},x
\end{align*}
which is \eqref{eq:traceprojlap}.
\end{proof}
\subsection{The projection $\Pc$}\label{sec:pproj}
Now we consider the projection operator
\[
\Pc: T^{k+3}(T^*M)\longrightarrow T^{k+3}(T^*M)
\]
defined
by
\[
 \Pc(X)_{ijk} = X_{ijk} - X^H_{ijk}
 - \frac{1}{m}
 \left(X_{\ui\bp\bp}V_{jk} + X_{\bp\uj\bp}V_{ik}
 + X_{\bp\bp\uk}V_{ij}\right),
\]
where $X$ is regarded as a trilinear $T^k(T^*M)$-valued map. For $k=0$, we have $\Pc(\nabla \Rc) = P$ as defined in \eqref{eq:pdef}. When $k=2$, we have $\Pc(\Hc(\nabla \Rm)) = U$ as defined in \eqref{eq:udef}, where $\Hc$ acts on the third and fourth components and $\Pc$ acts on the first, second, and fifth components.

We first derive an expression for the commutator of $\Pc$ with the covariant derivative.
\begin{proposition}\label{prop:pgradient}
For any smooth section $X$ of $T^{k+3}(T^*M) \approx T^3(T^*M)\otimes T^k(T^*M)$, $\Pc = \Pc(X)$ satisfies
\begin{align}\label{eq:pgradient}
\begin{split}
  &\nabla_s \Pc_{ijk} = \Pc(\nabla_s X)_{ijk} + X\ast \Ec^{\prime} + \Cc_{sijk} \\
  &\phantom{=}
  +\frac{1}{m}\left(X_{N\uj\uk}V_{si} + X_{\ui N \uk}V_{sj} + X_{\ui\uj N}V_{sk}\right)\\
     &\phantom{=}   +\frac{1}{m^2}\left(X_{N\bp\bp}V_{si}V_{jk}-X_{\ui\bar{p}\bar{p}}
     \left(V_{sj}N_k + V_{sk}N_j\right)\right)  \\
     &\phantom{=}+\frac{1}{m^2}\left(X_{\bp N \bp}V_{sj}V_{ik} - X_{\bar{p}\uj\bar{p}}\left(V_{si}N_k + V_{sk}N_i\right)\right)
     \\
     &\phantom{=} +\frac{1}{m^2}\left(X_{\bp\bp N}V_{sk}V_{ij}-X_{\bar{p}\bar{p}\uk}
     \left(V_{sj}N_i + V_{si}N_j\right) \right),
\end{split}
\end{align}
where $\Cc = \Cc(X)$ satisfies
\begin{align*}
\Cc_{sijk} &= \frac{1}{m}\left(\Pc_{\bs \uj\uk} N_i + \Pc_{\ui \bs \uk} N_j
     + \Pc_{\ui\uj\bs} N_k\right) +\frac{1}{m^2}\left(\Pc_{\bs\bp\bp} N_i - \Pc_{\ui \bar{s} N} - \Pc_{\ui N \bar{s}}\right)V_{jk}\\
     &\phantom{=}+ \frac{1}{m^2}\left(\Pc_{\bp\bs\bp} N_j-\Pc_{\bar{s}\uj N} - \Pc_{N\uj\bar{s}}\right)V_{ik} +\frac{1}{m^2}\left(\Pc_{\bp\bp\bs} N_k - \Pc_{N\bar{s} \uk} - \Pc_{\bar{s}N \uk}\right)V_{ij},
\end{align*}
that is, $\Cc = N \ast N \ast \Pc$.
\end{proposition}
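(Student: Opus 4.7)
The plan is to differentiate the definition
\[
\Pc(X)_{ijk} = X_{ijk} - X^H_{ijk} - \frac{1}{m}\bigl(X_{\ui\bp\bp}V_{jk} + X_{\bp\uj\bp}V_{ik} + X_{\bp\bp\uk}V_{ij}\bigr)
\]
term by term, distributing $\nabla_s$ by the product rule and substituting for $\nabla_s H = -\nabla_s V$ using Lemma \ref{lem:hder}. Every piece in which $\nabla_s$ falls only on $X$ reassembles automatically into $\Pc(\nabla_s X)_{ijk}$, so the task is to identify and organize the correction terms generated when $\nabla_s$ falls instead on a projection operator.

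First I would compute $\nabla_s X^H_{ijk}$ directly from Proposition \ref{prop:horizlap}. Its three $V_{sa_i}$-weighted $\frac{1}{m}$-order corrections, after the overall minus sign on $X^H$ in $\Pc$, produce precisely the explicit block $\frac{1}{m}(X_{N\uj\uk}V_{si} + X_{\ui N\uk}V_{sj} + X_{\ui\uj N}V_{sk})$ of the statement; its three $N_{a_i}$-weighted corrections $X_{\bs\uj\uk}N_i$, $X_{\ui\bs\uk}N_j$, $X_{\ui\uj\bs}N_k$ are set aside for later. Next I would expand each trace term $-\frac{1}{m}X_{\ui\bp\bp}V_{jk}$ and its two analogues. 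Writing $X_{\ui\bp\bp} = H_{ia}V_{bc}X_{abc}$, each trace term contributes, apart from the nice piece $-\frac{1}{m}(\nabla_s X)_{\ui\bp\bp}V_{jk}$, three correction families, one each from $\nabla_s H_{ia}$, $\nabla_s V_{bc}$, and $\nabla_s V_{jk}$, all of order $\frac{1}{m^2}$ (up to $\Ec^\prime$-pieces absorbed into $X\ast \Ec^\prime$).

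Grouping these nine correction families by symmetry type, the pieces weighted by $V_{si}V_{jk}$, $V_{sj}V_{ik}$, $V_{sk}V_{ij}$, and by combinations such as $V_{sj}N_k + V_{sk}N_j$, reconstitute exactly the three explicit $\frac{1}{m^2}$-lines of the statement. The remaining residues are all of the form $N_i$, $N_j$, or $N_k$ times a mixed-projection component of $X$, specifically the $X_{\bs\uj\uk}N_i$ family from the $X^H$ computation together with the $X_{\bs\bp\bp}N_i$, $X_{\ui\bs N}$, and $X_{\ui N\bs}$ families (and permutations) from the trace computations. The key observation is that on each such mixed index pattern the subtractions in the definition of $\Pc$ vanish identically: $X^H$ annihilates any barred first (or second, or third) index, and each factor $V_{\bs\,\cdot}$, $V_{\cdot\,N}$, $V_{\ui\,\cdot}$ appearing in the trace corrections is zero since $HV = 0$ and $VN = 0$. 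Hence $X_{\bs\uj\uk} = \Pc_{\bs\uj\uk}$, $X_{\bs\bp\bp} = \Pc_{\bs\bp\bp}$, $X_{\ui\bs N} = \Pc_{\ui\bs N}$, and so on for every pattern arising. This identification lets us rewrite the entire $N$-weighted residue as an expression of the schematic form $N\ast N\ast \Pc$, matching $\Cc$ exactly.

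The main obstacle will be the sheer bookkeeping: three trace terms each produce three families of corrections, and one must sort all nine families by their symmetry in $\{i,j,k\}$ and verify that the $V\!\otimes\!V$- and $V\!\otimes\!N$-weighted pieces assemble with the correct signs into the explicit blocks, while the $N$-weighted residues, after the $X \to \Pc$ identifications above, collapse into the precise formula for $\Cc$ displayed in the statement.
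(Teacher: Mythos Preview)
Your proposal is correct and follows essentially the same route as the paper: differentiate the defining formula for $\Pc(X)$ term by term, invoke the already-computed expression for $\nabla_s X^H$ from Proposition~\ref{prop:horizlap}, expand the three trace terms, and then recognize that each residual mixed-index piece such as $X_{\bs\uj\uk}$, $X_{\bs\bp\bp}$, $X_{\ui\bs N}$ coincides with the corresponding component of $\Pc$ (because the subtracted pieces in the definition of $\Pc$ vanish on those index patterns), which packages the $N$-weighted residue into $\Cc$. The only organizational difference is that the paper routes the trace computations through the auxiliary tensor $\tilde{X}_{ijk} = X_{\ui jk}$ and the formula \eqref{eq:tracegrad} from Proposition~\ref{prop:tracelap}, whereas you expand $H_{ia}V_{bc}X_{abc}$ directly; these amount to the same calculation.
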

\begin{proof}
 First, we may apply \eqref{eq:horizgrad} with \eqref{eq:eprimedef} to obtain that
 \begin{align*}
    \begin{split}
    \nabla_{s}X^H_{ijk} &= \nabla_sX_{\ui\uj\uk} + X\ast \Ec^{\prime} -\frac{1}{m}\left(X_{N\uj\uk}V_{si} + X_{\ui N \uk}V_{sj} + X_{\ui\uj N}V_{sk}\right)\\
    &\phantom{=}
    -\frac{1}{m}\left(X_{\bs\uj\uk}N_i +X_{\ui\bs\uk}N_j + X_{\ui\uj\bs}N_k\right)
    \end{split}\\
    \begin{split}
     &= \nabla_sX_{\ui\uj\uk} + X\ast \Ec^{\prime}
     -\frac{1}{m}\left(\Pc_{\bs \uj\uk} N_i + \Pc_{\ui \bs \uk} N_j
     + \Pc_{\ui\uj\bs} N_k\right)\\
     &\phantom{=}
     -\frac{1}{m}\left(X_{N\uj\uk}V_{si} + X_{\ui N \uk}V_{sj} + X_{\ui\uj N}V_{sk}\right).
    \end{split}
 \end{align*}

To compute the other terms, let $\tilde{X}_{ijk} \dfn X_{\ui jk}$.
Then, in the notation of the previous section, we have
\[
    X_{\ui \bp\bp}V_{jk} = \Tc(\tilde{X})_{ijk},
\]
where $\Tc$ acts on the second and third components,
so that, by \eqref{eq:tracegrad},
\begin{align*}
 \begin{split}
   \nabla_s(X_{\ui\bp\bp}V_{jk}) &= \nabla_s \tilde{X}_{i\bp\bp}V_{jk} + X\ast \Ec^{\prime}+\frac{1}{m}\left(X_{\ui \bar{s} N} + X_{\ui N \bar{s}}\right)V_{jk} \\
   &\phantom{=} +
   \frac{1}{m}X_{\ui\bar{p}\bar{p}}\left(V_{sj}N_k + V_{sk}N_j\right)
   \end{split}\\
   \begin{split}
    &= \nabla_s X_{\ui\bp\bp}V_{jk} + X\ast \Ec^{\prime} -\frac{1}{m}\left(X_{\bs\bp\bp} N_i + X_{N\bp\bp}V_{si}\right)V_{jk}\\
   &\phantom{=} +\frac{1}{m}\left(X_{\ui \bar{s} N} + X_{\ui N \bar{s}}\right)V_{jk} +
   \frac{1}{m}X_{\ui\bar{p}\bar{p}}\left(V_{sj}N_k + V_{sk}N_j\right)
   \end{split}\\
     \begin{split}
     &= \nabla_s X_{\ui\bp\bp}V_{jk} + X\ast \Ec^{\prime}  +\frac{1}{m}\left(\Pc_{\ui \bar{s} N} + \Pc_{\ui N \bar{s}}-\Pc_{\bs\bp\bp} N_i\right)V_{jk}\\
     &\phantom{=}
    +\frac{1}{m}\left(X_{\ui\bar{p}\bar{p}}
     \left(V_{sj}N_k + V_{sk}N_j\right) -  X_{N\bp\bp}V_{si}V_{jk}\right),
    \end{split}
\end{align*}
where we have used \eqref{eq:horizgrad} in the second line to write $\nabla_{\bp}\tilde{X}_{i\bp\bp}$ in terms of $X$.
Permuting the arguments, we thus also have
\begin{align*}
\begin{split}
 \nabla_s(X_{\bp\uj\bp}V_{ik}) &= \nabla_s X_{\bp\uj\bp}V_{ik} + X\ast \Ec^{\prime}  +\frac{1}{m}\left(\Pc_{\bar{s}\uj N} + \Pc_{N\uj\bar{s}}-\Pc_{\bp\bs\bp} N_j\right)V_{ik}\\
     &\phantom{=}
    +\frac{1}{m}\left(X_{\bar{p}\uj\bar{p}}
     \left(V_{si}N_k + V_{sk}N_i\right) -  X_{\bp N \bp}V_{sj}V_{ik}\right),
\end{split}
\end{align*}
and
\begin{align*}
 \begin{split}
  \nabla_s(X_{\bp\bp\uk}V_{ij}) &= \nabla_s X_{\bp\bp\uk}V_{ij} + X\ast \Ec^{\prime}  +\frac{1}{m}\left(\Pc_{N\bar{s} \uk} + \Pc_{\bar{s}N \uk}-\Pc_{\bp\bp\bs} N_k\right)V_{ij}\\
     &\phantom{=}
    +\frac{1}{m}\left(X_{\bar{p}\bar{p}\uk}
     \left(V_{sj}N_i + V_{si}N_k\right) -  X_{\bp\bp N}V_{sk}V_{ij}\right).
 \end{split}
\end{align*}
Thus, combining terms, we obtain that
\begin{align*}
\begin{split}
    &\nabla_s \Pc_{ijk} = \Pc(\nabla_s X)_{ijk} + X\ast \Ec^{\prime} +\frac{1}{m}\left(\Pc_{\bs \uj\uk} N_i + \Pc_{\ui \bs \uk} N_j
     + \Pc_{\ui\uj\bs} N_k\right)\\
    &\phantom{=} +\frac{1}{m^2}\left(\Pc_{\bs\bp\bp} N_i - \Pc_{\ui \bar{s} N} - \Pc_{\ui N \bar{s}}\right)V_{jk} + \frac{1}{m^2}\left(\Pc_{\bp\bs\bp} N_j-\Pc_{\bar{s}\uj N} - \Pc_{N\uj\bar{s}}\right)V_{ik}
    \\
    &\phantom{=}  +\frac{1}{m^2}\left(\Pc_{\bp\bp\bs} N_k - \Pc_{N\bar{s} \uk} - \Pc_{\bar{s}N \uk}\right)V_{ij} +\frac{1}{m}\left(X_{N\uj\uk}V_{si} + X_{\ui N \uk}V_{sj} + X_{\ui\uj N}V_{sk}\right)\\
     &\phantom{=}   +\frac{1}{m^2}\left(X_{N\bp\bp}V_{si}V_{jk}-X_{\ui\bar{p}\bar{p}}
     \left(V_{sj}N_k + V_{sk}N_j\right)\right)  \\
     &\phantom{=}+\frac{1}{m^2}\left(X_{\bp N \bp}V_{sj}V_{ik} - X_{\bar{p}\uj\bar{p}}\left(V_{si}N_k + V_{sk}N_i\right)\right)
     \\
     &\phantom{=} +\frac{1}{m^2}\left(X_{\bp\bp N}V_{sk}V_{ij}-X_{\bar{p}\bar{p}\uk}
     \left(V_{sj}N_i + V_{si}N_k\right) \right),
\end{split}
\end{align*}
which implies \eqref{eq:pgradient}.
\end{proof}

Next we compute the commutator of $\Pc$ and the Laplacian.

\begin{proposition}\label{prop:plaplacian}
For any smooth section $X$ of $T^3(T^*M)$, $\Pc = \Pc(X)$ satisfies
\begin{align}\label{eq:plaplacian}
\begin{split}
&  \Delta\Pc_{ijk} =  \Pc(\Delta X)_{ijk} + \Cc_{ijk} + \Cc^{\prime}_{ijk}\\
     &\phantom{=}+\frac{2}{m}\left(
        \nabla_{\bi}X_{N\uj\uk} + \frac{1}{m}\left(\nabla_{\bi}X_{N\bp\bp}V_{jk} - \nabla_{\bi}X_{\bp\uj\bp}N_k - \nabla_{\bi}X_{\bp\bp\uk}N_j\right)\right)\\
     &\phantom{=}+ \frac{2}{m}\left(\nabla_{\bj}X_{\ui N \uk} + \frac{1}{m}\left(\nabla_{\bj}X_{\bp N\bp}V_{ik}
     - \nabla_{\bj}X_{\ui\bp\bp}N_k - \nabla_{\bj}X_{\bp\bp\uk}N_i\right) \right)\\
     &\phantom{=} + \frac{2}{m}\left(\nabla_{\bk}X_{\ui\uj N}+ \frac{1}{m}\left(\nabla_{\bk}X_{\bp\bp N}V_{ij}
     - \nabla_{\bk}X_{\ui\bp\bp}N_j- \nabla_{\bk}X_{\bp\uj\bp}N_i\right)\right).
\end{split}
\end{align}
where  $\Cc$ and $\Cc^{\prime}$ satisfy
\begin{align*}
 \Cc_{ijk} &=   \frac{2}{m}\left(\nabla_{\bp}\Pc_{\ui\bp N} + \nabla_{\bp}\Pc_{\ui N \bp} - \nabla_{\bp}\Pc_{\bp\bq\bq}N_i\right)V_{jk}\\
&\phantom{=}+ \frac{2}{m}\left(\nabla_{\bp}\Pc_{\bp\uj N} + \nabla_{\bp}\Pc_{N\uj \bp} - \nabla_{\bp}\Pc_{\bq\bp\bq}N_j\right)V_{ik}\\
&\phantom{=}+\frac{2}{m}\left(\nabla_{\bp}\Pc_{N\bp \uk} + \nabla_{\bp}\Pc_{\bp N \uk} - \nabla_{\bp}\Pc_{\bq\bq\bp}N_k\right)V_{ij} +N \ast N \ast \Pc,
\end{align*}
and
\begin{equation}\label{eq:c3def}
 \Cc^{\prime} =  (\nabla X + (\Ec^{\prime} + N)\ast X) \ast \Ec^{\prime} + X \ast \Ec^{\prime\prime}.
\end{equation}
\end{proposition}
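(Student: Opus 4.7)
The plan is to differentiate the definition
\[
    \Pc(X)_{ijk} = X_{ijk} - X^H_{ijk} - \frac{1}{m}\bigl(Y^{(1)}_{ijk} + Y^{(2)}_{ijk} + Y^{(3)}_{ijk}\bigr),
\]
where $Y^{(1)}_{ijk} = X_{\ui\bp\bp}V_{jk}$, $Y^{(2)}_{ijk} = X_{\bp\uj\bp}V_{ik}$, $Y^{(3)}_{ijk} = X_{\bp\bp\uk}V_{ij}$, twice, applying the Laplacian to each summand separately and then regrouping terms. For $\Delta X^H$ I would simply invoke \eqref{eq:horizlap2} of Proposition \ref{prop:horizlap} with $k=3$, since this already bundles the bilinear $\nabla H\otimes \nabla H$ terms into the convenient form $\nabla_{\bp}(X_{\cdots \bp \cdots})N_\cdot$ plus $\Ec^\prime$- and $\Ec^{\prime\prime}$-errors. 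For each $Y^{(\ell)}$ I would view it as the trace-projection $\Tc$ from Section \ref{sec:vtproj} applied to an auxiliary tensor in which one slot has already been horizontally projected (for instance, $Y^{(1)}_{ijk} = \Tc_{23}(\tilde X^{(1)})_{ijk}$ with $\tilde X^{(1)}_{ijk} = X_{\ui jk}$), combine \eqref{eq:traceprojlap} with \eqref{eq:horizgrad}--\eqref{eq:horizlap2} to push $\Delta$ past both the trace and the lingering horizontal projection, and then expand $\nabla H$ using \eqref{eq:nablah} wherever an off-diagonal derivative of $V$ appears inside the trace-projected derivative terms.

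The principal terms I expect to survive after this reduction are exactly the nine derivative corrections written in \eqref{eq:plaplacian}. Each of the three blocks corresponds to one of the three trace projections: the $\nabla_{\bi}X_{N\uj\uk}$ contribution comes from the cross term $\nabla_s H\cdot\nabla_s X^H$ inside $\Delta X^H$ after contracting with the factor $V$ produced by $\Tc$, while the $\nabla_{\bi}X_{N\bp\bp}V_{jk}$, $\nabla_{\bi}X_{\bp\uj\bp}N_k$ and $\nabla_{\bi}X_{\bp\bp\uk}N_j$ terms come from the three mixed terms in \eqref{eq:traceprojlap} applied to each $Y^{(\ell)}$. The remaining errors split naturally into two groups: those involving only one derivative of $X$ (or none) together with at least one factor of $\nabla H$ or $\Delta H$ go into $\Cc^\prime$ as in \eqref{eq:c3def}; those involving one derivative of $\Pc$ itself paired with a factor of $N$, or two factors of $N$ with $\Pc$, go into $\Cc$. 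The key rewriting step is that terms like $X_{\bs\uj\uk}N_i$, $X_{\ui\bs\bp\bp}N_i V_{jk}$, etc., which first appear naked (involving only $X$), must be reorganized into $\Pc$ together with the counterterms produced by the trace pieces — exactly the same algebraic identity used to pass from the first to the second displayed line in the proof of Proposition \ref{prop:pgradient}.

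The main obstacle is the bookkeeping: each $Y^{(\ell)}$ produces several correction terms from both \eqref{eq:horizlap2} and \eqref{eq:traceprojlap}, and getting the coefficients $2/m$ and $2/m^2$ exactly right requires tracking many similar-looking sums. The non-trivial combinatorial check is that the terms of the form $X_{\cdot \cdots}N_\cdot$ (no derivative) generated by $\Delta X^H$ and by the three $\Delta Y^{(\ell)}$'s recombine, modulo the $\Cc^\prime$ errors, into the $\Pc$-valued schematic expression $\Cc$; equivalently, one verifies that $\Pc$ reproduces itself on precisely those index patterns. Aside from this algebraic verification, the proof is parallel to that of Proposition \ref{prop:pgradient}, with the only structurally new feature being the $\Delta H$ contributions, which account for the second-order term $X\ast\Ec^{\prime\prime}$ inside $\Cc^\prime$.
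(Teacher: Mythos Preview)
Your proposal is correct and follows essentially the same approach as the paper: decompose $\Pc(X)$ into $X - X^H - \frac{1}{m}\sum Y^{(\ell)}$, apply Proposition~\ref{prop:horizlap} to $X^H$ and Proposition~\ref{prop:tracelap} to each $Y^{(\ell)} = \Tc(\tilde X^{(\ell)})$ (together with \eqref{eq:horizgrad}--\eqref{eq:horizlap} to handle the inner horizontal projection on $\tilde X^{(\ell)}$), then permute indices and regroup the naked $X_{\cdots}N_\cdot$ terms into $\Pc$ exactly as in the proof of Proposition~\ref{prop:pgradient}. The paper happens to use the form \eqref{eq:horizlap} rather than \eqref{eq:horizlap2}, and your attribution of individual terms to specific formulas is slightly off in places (e.g., $\nabla_{\bi}X_{N\uj\uk}$ comes directly from the $\nabla_s H\cdot\nabla_s X$ cross term in $\Delta X^H$ with no $\Tc$ involved), but these are cosmetic and the substance is the same.
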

\begin{proof} In the computations below, we will use $\Cc^{\prime}$ to denote a sequence of tensors having the schematic form \eqref{eq:c3def}.
We first apply \eqref{eq:horizlap} to $X^H$
to find that
  \begin{align*}
    \begin{split}
     &\Delta X^H_{ijk}
      = (\Delta X)^H_{ijk} + \Cc^{\prime}  +\frac{2}{m^2}|N|^2\left(\Pc_{\bi\uj\uk} + \Pc_{\ui\bj\uk} + \Pc_{\ui\uj\bk}\right)\\
       &\phantom{=}
       -\frac{2}{m}\left(\nabla_{\bi}X_{N\uj\uk}
       -\nabla_{\bj}X_{\ui N \uk}
       -\nabla_{\bk}X_{\ui\uj N}\right)\\
       &\phantom{=}
       -\frac{2}{m}\left(\nabla_{\bp}(\Pc_{\bp\uj\uk})N_i
         +\nabla_{\bp}(\Pc_{\ui\bp\uk})N_j
         +\nabla_{\bp}(\Pc_{\ui\uj\bp})N_k\right)\\\
        &\phantom{=}
        + \frac{2}{m^2}\bigg(
        (X_{NN\uk}V_{ij}- X_{\bp\bp \uk} N_i N_j)
         +(X_{N\uj N}V_{ik}- X_{\bp\uj \bp} N_i N_k)\\
         &\phantom{=}\qquad
         +(X_{\ui N N}V_{jk}- X_{\ui\bp\bp} N_j N_k)\bigg),
    \end{split}
 \end{align*}
so that, as before,
\begin{align}\label{eq:hlap1}
    \begin{split}
     &\Delta X^H_{ijk}  = (\Delta X)^H_{ijk}  + \Cc^{\prime}
      + N \ast N \ast P\\
       &\phantom{=}
       -\frac{2}{m}\left(\nabla_{\bi}X_{N\uj\uk}
       -\nabla_{\bj}X_{\ui N \uk}
       -\nabla_{\bk}X_{\ui\uj N}\right)\\
       &\phantom{=}
       -\frac{2}{m}\left(\nabla_{\bp}\Pc_{\bp\uj\uk}N_i
         +\nabla_{\bp}\Pc_{\ui\bp\uk}N_j
         +\nabla_{\bp}\Pc_{\ui\uj\bp}N_k\right)\\
 &\phantom{=}     + \frac{2}{m^2}\bigg(
        (X_{NN\uk}V_{ij}- X_{\bp\bp \uk} N_i N_j)
         +(X_{N\uj N}V_{ik}- X_{\bp\uj \bp} N_i N_k)\\
         &\phantom{=}\qquad
          +(X_{\ui N N}V_{jk}- X_{\ui\bp\bp} N_j N_k)\bigg).
    \end{split}
 \end{align}

Next, we write $\tilde{X}_{ijk} = X_{\ui jk}$, and apply Proposition \ref{prop:tracelap} to obtain that
\begin{align}
\begin{split}
\label{eq:plap1}
\Delta \Tc(\tilde{X})_{ijk} &= \Tc(\Delta \tilde{X})_{ijk}
 + \frac{2}{m}\left(\nabla_{\bp}(X_{\ui\bp \uq}) + \nabla_{\bp}(X_{\ui\uq\bp})\right)N_{q}V_{jk}\\
 &\phantom{=}+\frac{2}{m}\left(\nabla_{\bj}\tilde{X}_{\ui\bp\bp}N_k + \nabla_{\bk}\tilde{X}_{\ui\bp\bp}N_j\right) + \frac{2}{m}\left(X_{\ui\bp\bp}N_jN_k - X_{\ui NN}V_{jk}\right)\\
 &\phantom{=} + \frac{2}{m^2}\left(X_{\ui\bj N}N_k + X_{\ui N\bk}N_j\right) +\Cc^{\prime}(\tilde{X}),
\end{split}
\end{align}
where, within the $\Cc^{\prime}$ term, we have
used that $\nabla\tilde{X} = \nabla X + (N + \Ec^{\prime})\ast X$.
We can immediately simplify this to
\begin{align}
\begin{split}
\label{eq:plap2}
\Delta \Tc(\tilde{X})_{ijk} &= \Tc(\Delta \tilde{X})_{ijk} +\Cc^{\prime} + N \ast N \ast \Pc
 + \frac{2}{m}\left(\nabla_{\bp}\Pc_{\ui\bp N} + \nabla_{\bp}\Pc_{i N \bp}\right)V_{jk}\\
 &\phantom{=}+\frac{2}{m}\left(\nabla_{\bj}\tilde{X}_{\ui\bp\bp}N_k + \nabla_{\bk}\tilde{X}_{\ui\bp\bp}N_j\right)\\
 &\phantom{=}+ \frac{2}{m}\left(X_{\ui\bp\bp}N_jN_k - X_{\ui NN}V_{jk}\right).
\end{split}
\end{align}
Further, by \eqref{eq:horizgrad}, we have
\begin{align}
\nonumber
 \nabla_{\bj}\tilde{X}_{i\bp\bp} &= \nabla_{\bj} X_{\ui\bp\bp}  -\frac{1}{m}(X_{N\bp\bp}V_{ij}+ X_{\bj \bp \bp}N_i)+  X \ast \Ec^{\prime}\\
 \label{eq:plap4}
 &= \nabla_{\bj} X_{\ui\bp\bp}  -\frac{1}{m}X_{N\bp\bp}V_{ij} + X \ast \Ec^{\prime}+ N \ast \Pc,
\end{align}
and
\begin{align}\label{eq:plap5}
 \nabla_{\bk}\tilde{X}_{i\bp\bp}
 &= \nabla_{\bk} X_{\ui\bp\bp}  -\frac{1}{m}X_{N\bp\bp}V_{ik} + X \ast \Ec^{\prime}+ N \ast \Pc.
\end{align}

Finally, by \eqref{eq:horizlap}, we have
 \begin{align}\nonumber
    \begin{split}
     &\Tc(\Delta \tilde{X})_{ijk}
      = \Delta X_{\ui\bp\bp}V_{jk} + \Cc^{\prime}\\
      &\phantom{=}
      + \frac{2}{m}\left(\frac{|N|^2}{m} X_{\bi\bp\bp}V_{jk}
      -\nabla_{\bi}X_{N\bp\bp}-\nabla_{\bp}(X_{\bp ab})V_{ab} N_i\right)V_{jk}
    \end{split}\\
\begin{split}\label{eq:plap6}
    &= \Delta X_{\ui\bp\bp}V_{jk} + \Cc^{\prime} + N\ast N \ast \Pc\\
      &\phantom{=}
      - \frac{2}{m}\left(
      \nabla_{\bi}X_{N\bp\bp} + \nabla_{\bp}\Pc_{\bp \bq\bq}N_i -\frac{1}{m}\left(X_{\bp\bp N}+ X_{\bp N\bp}\right)N_i\right)V_{jk},
\end{split}
 \end{align}
where we have used that
\begin{align*}
 \nabla_{\bp}(X_{\bp ab})V_{ab}
 &= \nabla_{\bp}(\Pc_{\bp\bq\bq}) -\frac{1}{m}(X_{\bp \bp N}
 + X_{\bp N \bp})\\
 &= \nabla_{\bp}\Pc_{\bp\bq\bq} -\frac{1}{m}(X_{\bp \bp N}
 + X_{\bp N \bp}) + (N + \Ec^{\prime}) \ast \Pc\\
 &= \nabla_{\bp}\Pc_{\bp\bq\bq} -\frac{1}{m}(X_{\bp \bp N}
 + X_{\bp N \bp}) + X \ast \Ec^{\prime} + N \ast \Pc.
\end{align*}

Combining the results of the equations \eqref{eq:plap2} - \eqref{eq:plap6} in \eqref{eq:plap1}, we obtain that
\begin{align*}
\begin{split}
&\Delta(X_{\ui\bp\bp}V_{jk}) =\Delta X_{\ui\bp\bp} V_{jk}
 +\frac{2}{m}\left(\nabla_{\bp}\Pc_{\ui\bp N} + \nabla_{\bp}\Pc_{\ui N \bp} - \nabla_{\bp}\Pc_{\bp\bq\bq}N_i\right)V_{jk}\\
  &\quad\phantom{=} -\frac{2}{m}\left(\nabla_{\bi}X_{N\bp\bp} -\frac{1}{m} (X_{\bp N\bp} + X_{\bp\bp N})N_i\right)V_{jk}
  + \frac{2}{m}\left(X_{\ui\bp\bp}N_{j}N_k -X_{\ui N N}V_{jk}\right)
  \\
  &\quad\phantom{=} +\frac{2}{m}\left(\nabla_{\bj}X_{\ui\bp\bp}
  - \frac{1}{m}X_{N\bp\bp}V_{ij}\right)N_k +\frac{2}{m}\left(\nabla_{\bk}X_{\ui\bp\bp}
  - \frac{1}{m}X_{N\bp\bp}V_{ik}\right)N_j\\
  &\phantom{=} +  \Cc^{\prime} + N \ast N \ast \Pc,
\end{split}
\end{align*}
or, regrouping terms, that
\begin{align*}
\begin{split}
&\Delta(X_{\ui\bp\bp}V_{jk}) =\Delta X_{\ui\bp\bp} V_{jk}
 +\frac{2}{m}\left(\nabla_{\bp}\Pc_{\ui\bp N} + \nabla_{\bp}\Pc_{\ui N \bp} - \nabla_{\bp}\Pc_{\bp\bq\bq}N_i\right)V_{jk}\\
&\quad \phantom{=} -\frac{2}{m}\left(\nabla_{\bi}X_{N\bp\bp}V_{jk} - \nabla_{\bj}X_{\ui\bp\bp}N_k
- \nabla_{\bk}X_{\ui\bp\bp}N_j\right)  + \frac{2}{m}\left(X_{\ui\bp\bp}N_{j}N_k -X_{\ui N N}V_{jk}\right)\\
&\quad\phantom{=}+ \frac{1}{m}\left((X_{\bp N \bp} + X_{\bp\bp N})N_iV_{jk}- X_{N\bp\bp}V_{ij}N_k
- X_{N\bp\bp}V_{ik}N_j\right) +  \Cc^{\prime} + N \ast N \ast \Pc.
  \end{split}
\end{align*}

Defining the tensors $X^{\prime}_{ijk} \dfn X_{jik}$ and $X^{\prime\prime}_{ijk} \dfn X_{kji}$, so that
$X_{\bp\uj\bp} = X^{\prime}_{\uj\bp\bp}$
and $X_{\bp\bp\uk} = X^{\prime\prime}_{\uk\bp\bp}$, we can then apply
the above identity to $X^{\prime}$ and $X^{\prime\prime}$ to see that
\begin{align*}
\begin{split}
&\Delta(X_{\bp\uj\bp}V_{ik}) =\Delta X_{\bp\uj\bp} V_{ik} +\frac{2}{m}\left(\nabla_{\bp}\Pc_{\bp\uj N} + \nabla_{\bp}\Pc_{N\uj \bp} - \nabla_{\bp}\Pc_{\bq\bp\bq}N_j\right)V_{ik}\\
 &\quad \phantom{=}  -\frac{2}{m}\left(\nabla_{\bj}X_{\bp N\bp}V_{ik} - \nabla_{\bi}X_{\bp\uj\bp}N_k
- \nabla_{\bk}X_{\bp\uj\bp}N_i\right)  + \frac{2}{m}\left(X_{\bp\uj\bp}N_{i}N_k -X_{N\uj N}V_{ik}\right)\\
&\quad\phantom{=}+ \frac{1}{m}\left((X_{ N \bp \bp} + X_{\bp\bp N})N_jV_{ik}- X_{\bp N\bp}V_{ij}N_k
- X_{\bp N\bp}V_{jk}N_i\right)+  \Cc^{\prime} + N \ast N \ast \Pc,
  \end{split}
\end{align*}
and
\begin{align*}
\begin{split}
&\Delta(X_{\bp\bp\uk}V_{ij}) =\Delta X_{\bp\bp\uk} V_{ij}
+\frac{2}{m}\left(\nabla_{\bp}\Pc_{N\bp \uk} + \nabla_{\bp}\Pc_{\bp N \uk} - \nabla_{\bp}\Pc_{\bq\bq\bp}N_k\right)V_{ij}\\
&\quad \phantom{=} -\frac{2}{m}\left(\nabla_{\bk}X_{\bp\bp N}V_{ij} - \nabla_{\bj}X_{\bp\bp\uk}N_i
- \nabla_{\bi}X_{\bp\bp\uk}N_j\right)  + \frac{2}{m}\left(X_{\bp\bp\uk}N_{i}N_j -X_{N N\uk}V_{ij}\right)\\
&\quad\phantom{=}+ \frac{1}{m}\left((X_{\bp N \bp} + X_{N\bp \bp})N_kV_{ij}- X_{\bp\bp N}N_iV_{jk}
- X_{\bp\bp N}V_{ik}N_j\right)+  \Cc^{\prime} + N \ast N \ast \Pc.
  \end{split}
\end{align*}
Combining the above three identities with \eqref{eq:hlap1} and cancelling terms yields at last that
\begin{align*}
 \begin{split}
  \Delta \Pc(X)_{ijk} &=\Pc(\Delta X)_{ijk} +\Cc + \Cc^{\prime} + \frac{2}{m}\left(
        \nabla_{\bi}X_{N\uj\uk} + \nabla_{\bj}X_{\ui N \uk} + \nabla_{\bk}X_{\ui\uj N}\right)\\
        &\phantom{=} + \frac{2}{m^2}\left(\nabla_{\bi}X_{N\bp\bp}V_{jk} - \nabla_{\bj}X_{\ui\bp\bp}N_k
- \nabla_{\bk}X_{\ui\bp\bp}N_j\right)\\
        &\phantom{=} +\frac{2}{m^2}\left(\nabla_{\bj}X_{\bp N\bp}V_{ik} - \nabla_{\bi}X_{\bp\uj\bp}N_k
- \nabla_{\bk}X_{\bp\uj\bp}N_i\right)\\
        &\phantom{=} +\frac{2}{m^2}\left(\nabla_{\bk}X_{\bp\bp N}V_{ij} - \nabla_{\bj}X_{\bp\bp\uk}N_i
- \nabla_{\bi}X_{\bp\bp\uk}N_j\right),
 \end{split}
\end{align*}
where
\begin{align*}
\Cc &=  \frac{2}{m}\left(\nabla_{\bp}\Pc_{\ui\bp N} + \nabla_{\bp}\Pc_{\ui N \bp} - \nabla_{\bp}\Pc_{\bp\bq\bq}N_i\right)V_{jk}\\
&\phantom{=}+ \frac{2}{m}\left(\nabla_{\bp}\Pc_{\bp\uj N} + \nabla_{\bp}\Pc_{N\uj \bp} - \nabla_{\bp}\Pc_{\bq\bp\bq}N_j\right)V_{ik}\\
&\phantom{=}+\frac{2}{m}\left(\nabla_{\bp}\Pc_{N\bp \uk} + \nabla_{\bp}\Pc_{\bp N \uk} - \nabla_{\bp}\Pc_{\bq\bq\bp}N_k\right)V_{ij} +N \ast N \ast \Pc,
\end{align*}
and
\begin{equation*}
 \Cc^{\prime} = (\nabla X + (\Ec^{\prime} + N)\ast X) \ast \Ec^{\prime} + X \ast \Ec^{\prime\prime},
\end{equation*}
and \eqref{eq:plaplacian} follows.
\end{proof}

\section{The evolution of the curvature invariants $M$, $P$, and $U$.}
We will continue to assume in this section that $g(\tau)$ is a solution to the backward Ricci flow \eqref{eq:brf} on $M\times [0, \Omega]$,
and $\Hc(\tau)$ and $\Vc(\tau)$ are a pair of families of complementary orthogonal distributions ($\Vc(\tau)$ having dimension $m$) on $M$ defined by projections $H = H(\tau)$ and $V= V(\tau)$ evolving according to $D_{\tau} H \equiv 0$, $D_{\tau} V \equiv 0$.

The computations in Section \ref{sec:conninvev}
show that the connection-level invariants $A$, $T^0$, and $G$, and the derivatives $\nabla A$ and $\nabla T^0$, can be controlled via their evolution equations by the curvature-level invariants $M$ and $P$ and their derivatives $\nabla M$ and $\nabla P$. The calculations in this section will show that if we add the tensor $U$ to our system, then the aggregate of $M$, $P$, and $U$ and the connection-level invariants will satisfy a closed system of inequalities.

\subsection{The evolution of the tensor $M$}
Let $\mathcal{M}$ be the projection map
\[
\mathcal{M}: S^{2}(T^*M)\longrightarrow S^2(T^*M),
\]
defined by
\[
 \mathcal{M}(X) = X - X^H - \frac{1}{m}\trace_V(X) V,
\]
so that the tensor $M$ defined in \eqref{eq:mdef} is given by $M = \Mm(\Rc)$.

\begin{proposition}\label{prop:mev} The tensor $M$ satisfies
\begin{align}\label{eq:mev}
\begin{split}
     |(D_{\tau} + \Delta)M| &\lesssim  \Theta_1(|M| + |\nabla M| + |P|) \\
     &\phantom{\lesssim}
   +  \Theta_2 (|A| + |T^0| + |\nabla A| + |\nabla T^0| +|G|),
\end{split}
\end{align}
where
\[
  \Theta_1 = \Theta_1(|N|, |\Rm|) \quad \mbox{and}\quad \Theta_2 = \Theta_2(|N|, |\Rm|, |\nabla \Rm|,  |A|, |T^0|).
\]
are polynomials as described in Section \ref{ssec:notation}.
\end{proposition}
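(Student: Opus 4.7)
The plan is to apply the projection $\Mm$ to the Ricci flow evolution equation for $\Rc$. Under the backward Ricci flow \eqref{eq:brf}, a standard computation gives
\[
(D_\tau + \Delta) R_{ij} = -2 R_{ipjq} R^{pq},
\]
schematically $(D_\tau + \Delta)\Rc = \Rm \ast \Rc$. Since $D_\tau H \equiv 0$ and $D_\tau V \equiv 0$ by \eqref{eq:hev} and \eqref{eq:vev}, the operator $D_\tau$ commutes with every algebraic operation built from $H$ and $V$, and in particular with $\Mm$; hence $D_\tau M = \Mm(D_\tau \Rc)$. The Laplacian does not commute with $\Mm$, but Propositions \ref{prop:horizlap} and \ref{prop:tracelap} give
\[
\Delta M = \Mm(\Delta \Rc) + [\Delta, \Mm](\Rc),
\]
so in combination,
\[
(D_\tau + \Delta) M = \Mm(\Rm \ast \Rc) + [\Delta, \Mm](\Rc).
\]

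I would next estimate each of the two terms on the right. The commutator $[\Delta, \Mm](\Rc)$, assembled from \eqref{eq:horizlap} applied to $\Rc^H$ and \eqref{eq:traceprojlap} applied to $\Tc(\Rc) = \frac{\Rh}{m}V$, is a sum of the schematic expression $(\nabla \Rc + (\Ec^{\prime} + N)\ast \Rc) \ast \Ec^{\prime} + \Rc \ast \Ec^{\prime\prime}$ and explicit first-derivative terms of the form $\nabla_{\bi} R_{N\uj}$, $\nabla_{\bp}R_{\bp \uq} N_q$, $\nabla_{\bi} R_{\bp\bp} N_j$ and their permutations, contracted with factors of $V$ and $N$. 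By Lemma \ref{lem:hder}, $|\Ec^{\prime}| \lesssim |A|+|T^0|$ and $|\Ec^{\prime\prime}| \lesssim \Theta(|A|+|T^0|) + |\nabla A| + |\nabla T^0| + |G|$, which bounds the schematic pieces by the connection-invariant term $\Theta_2(\cdots)$ of \eqref{eq:mev}. For each explicit $\nabla \Rc$ contribution, I would substitute $\Rc = \Rc^H + M + \frac{\Rh}{m}V$: the resulting mixed components of $\nabla \Rc$ split, after a short calculation, into a component of $P$ (the off-block part, by the definition of $P$), a component of $\nabla M$, or a factor of $\Rc \ast \Ec^{\prime}$. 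The first goes into the $|P|$ term and the second into the $|\nabla M|$ term of \eqref{eq:mev}.

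For the quadratic piece $\Mm(\Rm \ast \Rc)$, I would use Proposition \ref{prop:qprop} to write $\Rm = \Rm^H + \Rm^V + \frac{1}{m} W \odot V + Q$ together with $\Rc = \Rc^H + M + \frac{\Rh}{m} V$. Products of pure, block-preserving parts ($\Rm^H$ paired with $\Rc^H$, $\Rm^V$ paired with $\frac{\Rh}{m} V$, and so on) yield tensors that respect the horizontal/trace-vertical decomposition used to define $\Mm$, and so are annihilated by $\Mm$; this is exactly the algebraic cancellation that forces $M \equiv 0$ on a warped product with Einstein fibers in Proposition \ref{prop:wpmpu}. Every surviving term carries at least one factor of $Q$, of $M$, or of a mixed component of $W$, and Proposition \ref{prop:qprop} bounds the first and third by the connection invariants $A$, $T^0$, $\nabla A$, $\nabla T^0$, $G$ (with coefficients polynomial in $|N|$ and $|\Rm|$). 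Combining these with the commutator estimate yields \eqref{eq:mev}.

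The main obstacle will be the index-by-index bookkeeping in the two decomposition steps above, verifying term by term that every block-preserving contribution either is annihilated by $\Mm$ or is cancelled against a matching term from $\Mm(\Delta \Rc)$, so that only residues of the admissible type survive. Fortunately the heavy structural calculation has already been done in Propositions \ref{prop:horizlap} and \ref{prop:tracelap}, which package the commutator of $\Delta$ with $\Hc$ and $\Tc$ into clean $\ast\, \Ec^{\prime}$ and $\ast\, \Ec^{\prime\prime}$ error terms plus explicit first-derivative corrections; the remaining work is the identification of the latter corrections with components of $P$ and $\nabla M$ and the decomposition of $\Mm(\Rm \ast \Rc)$.
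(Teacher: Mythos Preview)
Your proposal is correct and follows essentially the same approach as the paper: split $(D_\tau+\Delta)M$ into $\Mm((D_\tau+\Delta)\Rc)$ plus the commutator $[\Delta,\Mm](\Rc)$, estimate the commutator via Propositions~\ref{prop:horizlap} and~\ref{prop:tracelap} and identify the explicit first-derivative corrections with components of $P$ and $\nabla M$, and handle the quadratic reaction term by decomposing so that the block-preserving pieces lie in $\ker\Mm$ and only $\Rm\ast M$ and $\Rm\ast Q$ survive. The only cosmetic difference is that the paper decomposes $\Rc$ first (writing $\Rm(\Rc)=\Rm(M)+\Rm(\Rc^H-\tfrac{\Rh}{m}H)+\tfrac{\Rh}{m}\Rc$ and then invoking $Q$ only for the purely horizontal argument), whereas you propose to decompose both $\Rm$ and $\Rc$ simultaneously; the paper's route is a little shorter, but the content is the same.
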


\begin{proof}
First, note that $D_{\tau}\Mm(\Rc)= \Mm(D_{\tau} \Rc)$, so we have
\begin{align}\label{eq:mev1}
(D_{\tau} + \Delta)M &= \Mm((D_{\tau} + \Delta)\Rc) + \Delta (\Mm(\Rc))- \Mm(\Delta \Rc)  .
\end{align}
To simplify the first term, we begin with the evolution equation
 \begin{align}
 \nonumber
    (D_{\tau} + \Delta)R_{ij} &= -2R_{ipqj}R_{pq}
\end{align}
for the Ricci tensor under \eqref{eq:brf}.
Temporarily using the notation $\Rm(X)_{ij} = \Rm_{ipqj}X_{pq}$, we may write
\begin{align*}
 \Rm(\Rc) &= \Rm(M) + \Rm(\Rc^H) + \frac{{\Rh}}{m}\Rm(V)\\
 &=
 \Rm(M) + \Rm\left(\Rc^H - \frac{{\Rh}}{m}H\right) + \frac{{\Rh}}{m}\Rc
\end{align*}
as $H + V = \operatorname{Id}$.
But, for any symmetric two-tensor $X$ such that $X = X^H$, we have
\begin{align*}
    \Rm(X) &= X\ast Q + \Rm(X)^H+ \frac{\trace_V(\Rm(X))}{m}V,
\end{align*}
where $Q$ is defined as in \eqref{eq:qdef}.
Since $\Mm(V) = 0$ and $\Mm(X^H) = 0$ for any symmetric two- tensor $X$, it follows
that
\begin{equation}\label{eq:mev2}
 \Mm ((D_{\tau} + \Delta)\Rc) = - 2\Mm(\Rm(\Rc)) = \Rm \ast M + \Rm \ast Q.
\end{equation}

It remains to compute the commutator
\[
  \Delta \Mm(\Rc)_{jk} - \Mm(\Delta \Rc)_{jk}  =   \Delta M_{jk}  - \Mm(\Delta \Rc)_{jk}.
\]
First, we use \eqref{eq:hlap2} of Proposition \ref{prop:horizlap}
to see that
\begin{align*}
\begin{split}
     \Delta \Rc^H_{ij}  
      &= (\Delta \Rc)^H_{ij} + (\nabla \Rc + (\Ec^{\prime} + N) \ast \Rc)\ast \Ec^{\prime}
      + \Rc\ast \Ec^{\prime\prime}\\
        &\phantom{=}  +\frac{2}{m^2}|N|^2\left(M_{\bi\uj} + M_{\ui\bj}\right)
      -\frac{2}{m}\left(\nabla_{\bi}R_{N\uj} + \nabla_{\bj}R_{\ui N}\right)\\
       &\phantom{=}-\frac{2}{m}
         \left(\nabla_{\bp}\left(M_{\bp\uj}\right)N_i + 
  \nabla_{\bp}\left(M_{\ui\bp}\right)N_j\right) +\frac{2}{m^2}
 \left(X_{NN}V_{ij} - X_{\bp\bp}N_{i}N_j\right)
    \end{split}\\
    &= (\Delta \Rc)^H_{ij} + \frac{2}{m^2}
 \left(R_{NN}V_{ij} - \Rh N_{i}N_j\right) \\
 &\phantom{=}+ N \ast \nabla M  +  N \ast N \ast M + N \ast P+ (\nabla \Rc + (\Ec^{\prime} + N) \ast \Rc)\ast \Ec^{\prime}\\
    &\phantom{=}  
     + \Rc\ast \Ec^{\prime\prime},
 \end{align*} 
where we have used, for example, that 
\[
  \nabla_{\bp}\left(M_{\ui\bp}\right) =  \nabla_{\bp} M_{\ui\bp} + (N + \Ec^{\prime}) \ast M.
\]
Also, using equation \ref{eq:traceprojlap} from Proposition \ref{prop:tracelap}, we have
\begin{align*}
\begin{split}
\label{eq:traceprojlap}
\Delta \Tc(\Rc)_{ij} &= \Tc(\Delta \Rc)_{ij}
 + \frac{2}{m}\left(\nabla_{\bp}(M_{\bp \uq}) + \nabla_{\bp}(M_{\uq\bp})\right)N_{q}V_{ij}\\ 
 &\phantom{=}+\frac{2}{m}\left(\nabla_{\bi}R_{\bp\bp}N_j + \nabla_{\bj}R_{\bp\bp}N_i\right) + \frac{2}{m}\left(\Rh N_iN_j - R_{NN}V_{ij}\right)\\
 &\phantom{=} + \frac{2}{m^2}\left(M_{\bi N}N_j + M_{N\bj}N_i\right) + (\nabla \Rc + N \ast \Rc) \ast \Ec^{\prime} + \Rc \ast \Ec^{\prime\prime}
\end{split}\\
&= \Tc(\Delta \Rc)_{ij} + \frac{2}{m}\left(\Rh N_iN_j - R_{NN}V_{ij}\right) + N \ast \nabla M  + N \ast N \ast M
\\
&\phantom{=}+ N \ast P + (\nabla \Rc + N \ast \Rc) \ast \Ec^{\prime} + \Rc \ast \Ec^{\prime\prime}.
\end{align*}
Since $\Mm(X) = \operatorname{Id} - X^H - \frac{1}{m}\Tc(X)$, we may combine the two identities above and cancel terms
to see that
\[
  \Delta M  - \Mm(\Delta \Rc)=  (\Delta\Rc)^H - \Delta (\Rc^H)
 + \frac{1}{m}\left(\Tc(\Delta X) - \Delta \Tc(X) \right) = \Cc,
\]
where
\[
 \Cc =  N \ast P +  N \ast \nabla M  + N \ast N \ast M+ (\nabla \Rc + N \ast \Rc) \ast \Ec^{\prime} + \Rc \ast \Ec^{\prime\prime}.
\]
Combining this with \eqref{eq:mev1} and \eqref{eq:mev2}, and using
the equations for $Q$ and $\Ec^{\prime}$ and $\Ec^{\prime\prime}$ to rewrite them in terms of
the connection invariants,
we obtain \eqref{eq:mev}.
\end{proof}

\subsection{The evolution of the tensor $P$.}

Now we turn our attention to $P = \Pc(\nabla \Rc)$.  First we observe that, with the specific choice $X = \nabla \Rc$, we can use the Bianchi identities to estimate the output of the commutator identity in Proposition \ref{prop:plaplacian} further.
\begin{proposition}\label{prop:prclap}
 The tensor $P = \Pc(\Rc)$ satisfies
 \begin{align}\label{eq:prclap}
\begin{split}
 |\Delta P - \Pc(\Delta\nabla\Rc)| &\lesssim 
 \Theta_1 \big(|M| + |P| + |\nabla P|\big)\\
 &\phantom{=}+ \Theta_2 \big(|A| + |T^0| + |G| + |\nabla A| + |\nabla T^0|\big),
\end{split}
\end{align}
where 
\[
 \Theta_1 = \Theta_1(|N|, |\Rm|), \quad \mbox{and} \quad
 \Theta_2 = \Theta_2(|N|, |\Rm|, |\nabla \Rm|, |\nabla^2\Rm|, |A|, |T^0|).
\]

\end{proposition}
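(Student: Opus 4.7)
The plan is to apply Proposition~\ref{prop:plaplacian} with $X = \nabla\Rc$ so that $\Pc(X) = P$, and then control each residual term on the right-hand side using the contracted second Bianchi identity together with the bounds on $\Ec^{\prime}$ and $\Ec^{\prime\prime}$ provided by Lemma~\ref{lem:hder}. With this substitution the commutator formula becomes
\[
 \Delta P_{ijk} - \Pc(\Delta \nabla \Rc)_{ijk} = \Cc_{ijk} + \Cc^{\prime}_{ijk} + \mathcal{R}_{ijk},
\]
where $\mathcal{R}$ collects the six explicit residual terms of the form $\tfrac{2}{m}\nabla_{\bi}X_{N\uj\uk}$, together with those of the form $\tfrac{2}{m^2}\nabla_{\bi}X_{N\bp\bp}V_{jk}$ and $\tfrac{2}{m^2}\nabla_{\bi}X_{\bp\uj\bp}N_k$ and their permutations.

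The $\Cc^{\prime}$ term is handled immediately by \eqref{eq:c3def}: with $X = \nabla\Rc$ one has $\Cc^{\prime} = (\nabla^2\Rc + (\Ec^{\prime} + N)\ast\nabla\Rc)\ast\Ec^{\prime} + \nabla\Rc\ast\Ec^{\prime\prime}$, and the estimates \eqref{eq:nablahest} and \eqref{eq:deltahest} bound $\Ec^{\prime}$ and $\Ec^{\prime\prime}$ by the connection invariants $A$, $T^0$, $G$, $\nabla A$, $\nabla T^0$, yielding a contribution of the $\Theta_2$ type (noting that $|\nabla^2\Rc|\leq c|\nabla^2\Rm|$). For $\Cc$, each factor $\nabla_{\bp}\Pc$ appearing in the formula of Proposition~\ref{prop:plaplacian} is a component of $\nabla\Pc(X) = \nabla P$ modulo the corrections identified in Proposition~\ref{prop:pgradient} (which have the schematic form $\nabla\Rc \ast \Ec^{\prime} + N \ast P$), so $\Cc$ contributes at worst $|N|\cdot|\nabla P| + |N|^2\cdot|P|$ plus terms absorbed into $\Theta_2$.

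The main work lies in the $\mathcal{R}$-terms. For $\nabla_{\bi}X_{N\uj\uk} = N_a\nabla_{\bi}\nabla_a R_{\uj\uk}$, commute the covariant derivatives to produce a $\Rm\ast\Rc$ curvature commutator (absorbed into $\Theta_1 \cdot(|M|+|P|)$ after decomposing $\Rc = \Rc^H + M + \tfrac{\Rh}{m}V$ as in the proof of Proposition~\ref{prop:mev}) and a remaining piece $N_a\nabla_a\nabla_{\bi}R_{\uj\uk}$. Since $\nabla_{\bi}R_{\uj\uk} = P_{\bi\uj\uk}$ is a direct component of $P$, pulling the projections through the outer derivative (picking up $P \ast \Ec^{\prime}$ errors) recovers $N_a\nabla_a P_{\bi\uj\uk}$, a component of $\nabla P$ with coefficient of size $|N|$. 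Analogously, $\nabla_{\bi}X_{\bp\uj\bp} = \nabla_{\bi}\nabla_{\bp}R_{\uj\bp}$ becomes a component of $\nabla P$, since $\nabla_{\bp}R_{\uj\bp} = P_{\bp\uj\bp}$ is already a component of $P$. The scalar-trace terms $\nabla_{\bi}X_{N\bp\bp}$ are treated by writing $R_{\bp\bp} = R - R_{\up\up}$ and using \eqref{eq:schar} together with $P_{\bi\bp\bp} = \nabla_{\bi}R_{\bp\bp}$ and $P_{\bi\up\up} = \nabla_{\bi}R_{\up\up}$, so that the inner derivative becomes a linear combination of components of $P$ and the outer derivative yields a component of $\nabla P$ modulo $\Ec^{\prime}$-corrections.

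The main obstacle is essentially bookkeeping. Proposition~\ref{prop:plaplacian} produces roughly a dozen indexing patterns, and for each one must verify---using the second Bianchi identity, a covariant-derivative commutator identity, and the $\Pc$-projection formula---that it reduces to a component of $\nabla P$ carrying an $N$-coefficient, plus a curvature commutator absorbed into $\Theta_1(|M| + |P| + |\nabla P|)$ and $\Ec^{\prime}$/$\Ec^{\prime\prime}$-corrections absorbed into $\Theta_2(|A| + |T^0| + |G| + |\nabla A| + |\nabla T^0|)$ via \eqref{eq:nablahest} and \eqref{eq:deltahest}. No new structural input is required beyond the identifications made in Section~\ref{sec:conninvev} and the projection formulas of Propositions~\ref{prop:pgradient} and~\ref{prop:plaplacian}.
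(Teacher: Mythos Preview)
Your approach is essentially that of the paper: apply Proposition~\ref{prop:plaplacian} with $X = \nabla\Rc$, dispose of $\Cc$ and $\Cc'$ directly, and reduce the explicit residual terms to $N\ast\nabla P$ plus controlled errors by commuting derivatives. Two details need correction, however.

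First, the curvature commutators arising from $[\nabla_{\bi}, \nabla_N]R_{\uj\uk}$ are not absorbed purely by your $\Rc$-decomposition. Splitting the summed index $p$ in $R_{\bi N\uj p}R_{p\uk}$ into horizontal and vertical parts yields $R_{\bi N\uj\bp}M_{\bp\uk}$ (fine) but also $R_{\bi N\uj\up}R_{\up\uk}$, in which the first factor is a mixed component of $\Rm$ and hence equals $Q_{\bi N\uj\up}$. The commutator therefore contributes $N\ast\Rm\ast(M+Q)$, and the $Q$ piece must be placed on the $\Theta_2$ side via Proposition~\ref{prop:qprop}, not into $\Theta_1$.

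Second, your identification $\nabla_{\bp}R_{\uj\bp} = P_{\bp\uj\bp}$ is false: the projection $\Pc$ is constructed precisely so that $\Pc(X)_{\bp\uj\bp} = 0$ identically (this is the vertical-trace-free condition), so your shortcut would incorrectly set this term to zero. The paper instead treats the terms $\nabla_{\bi}\nabla_{\bp}R_{\uj\bp}\cdot N_k$ (and their permutations) by the same completing-the-trace maneuver you already invoke for $\nabla_{\bi}X_{N\bp\bp}$: the contracted second Bianchi identity gives $\nabla_{\bp}R_{\uj\bp} = \tfrac{1}{2}\nabla_{\uj}R - \nabla_{\up}R_{\uj\up}$, reducing to the scalar-curvature and one-vertical-index cases you already know how to handle. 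With these two fixes your sketch matches the paper's argument.
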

\begin{proof} 
 With $X = \nabla \Rc$, Proposition \ref{prop:horizlap}
gives that
\begin{align}\label{eq:pev8}
\begin{split}
&   \Delta P_{ijk}  - \Pc(\Delta \nabla\Rc)_{ijk}  +  \Cc\\
     &\phantom{=}+\frac{2}{m}\left(\nabla_{\bi}\nabla_NR_{\uj\uk} + \frac{1}{m}\left(\nabla_{\bi}\nabla_NR_{\bp\bp}V_{jk} - \nabla_{\bi}\nabla_{\bp}R_{\uj\bp}N_k - \nabla_{\bi}\nabla_{\bp}R_{\bp\uk}N_j\right)\right)\\
     &\phantom{=}+ \frac{2}{m}\left(\nabla_{\bj}\nabla_{\ui}R_{N \uk} + \frac{1}{m}\left(\nabla_{\bj}\nabla_{\bp}R_{N\bp}V_{ik}
     -\nabla_{\bj}\nabla_{\ui}R_{\bp\bp}N_k - \nabla_{\bj}\nabla_{\bp}R_{\bp\uk}N_i\right) \right)\\
     &\phantom{=} + \frac{2}{m}\left(\nabla_{\bk}\nabla_{\ui}R_{\uj N}+ \frac{1}{m}\left(\nabla_{\bk}\nabla_{\bp}R_{\bp N}V_{ij}
     - \nabla_{\bk}\nabla_{\ui}R_{\bp\bp}N_j- \nabla_{\bk}\nabla_{\bp}R_{\uj\bp}N_i\right)\right),
\end{split}
\end{align}
where
\[
 |\Cc| \lesssim (|\nabla^2\Rc| + (|\Ec^{\prime}| + |N|)|\nabla\Rc|)|\Ec^{\prime}|
+ |\nabla \Rc||\Ec^{\prime\prime}| + |N|^2|P| + |N||\nabla P|.
\]
We first consider those derivative terms whose first component is vertical but whose remaining three components
are horizontal. We will need to manipulate
these terms a little bit to see that they can indeed be controlled by the elements of our system. Let us
start with the first such term (which is representative of the others). We compute
\begin{align*}
 \nabla_{\bi}\nabla_{N}R_{\uj\uk} &= \nabla_N \nabla_{\bi}R_{\uj\uk} -R_{\bi N\uj p}R_{p\uk}- R_{\bi N \uk p} R_{\uj p}.
\end{align*}
Now, on one hand, we have
\[
 \nabla_N \nabla_{\bi}R_{\uj\uk} = \nabla_N P_{\bi\uj\uk} + N \ast \nabla\Rc \ast \Ec^{\prime}
\]
as $N$ is horizontal and $\nabla_{\ui} H_{jk} = - \nabla_{\ui}V_{jk} = \Ec^{\prime}_{\ui jk}$. On the other,
\begin{align*}
 R_{\bi N\uj p}R_{p\uk} + R_{\bi N \uk p} R_{\uj p} &= R_{\bi N\uj \bp}R_{\bp\uk} + R_{\bi N \uk \bp}R_{\uj \bp} +  R_{\bi N\uj \up}R_{\up\uk} + R_{\bi N \uk \up}R_{\uj \up}\\
 &= R_{\bi N\uj \bp}M_{\bp\uk} + R_{\bi N \uk \bp}M_{\uj \bp} +  Q_{\bi N\uj \up}R_{\up\uk} + Q_{\bi N \uk \up}R_{\uj \up},
\end{align*}
so
\[
 \nabla_{\bi}\nabla_{N}R_{\uj\uk} = N \ast \nabla P + N \ast \nabla\Rm \ast \Ec^{\prime} + N \ast \Rm \ast (M +  Q).
\]
The leftmost terms in the parenthetical expressions in \eqref{eq:pev8} satisfy the same schematic identity.

The other derivative terms in \eqref{eq:pev8} can be estimated similarly, once we complete the trace to swap two vertical components for two horizontal components. For example, for the second term in the first parenthetical
expression, we may compute
\begin{align*}
    \nabla_{\bi}\nabla_NR_{\bp\bp} &= \nabla_{\bi}\nabla_N R - \nabla_{\bi}\nabla_{N}R_{\up\up}.
\end{align*}
On one hand, our prior computation shows that
\begin{align}
 \nabla_{\bi}\nabla_{N}R_{\up\up} =  N \ast \nabla P + N \ast \nabla\Rm \ast \Ec^{\prime} + N \ast \Rm \ast(M + Q).
\end{align}
On the other,
\begin{align*}
\nabla_{\bi}\nabla_N R &=\nabla_{N}\nabla_{\bi} R =\nabla_N S_i - \nabla_N V_{ia} \nabla_a R
= N \ast \nabla S + \nabla R\ast N \ast \Ec^{\prime}\\
&= N \ast N \ast P + N \ast \nabla P + N \ast \nabla \Rm \ast \Ec^{\prime}.
\end{align*}
Using the contracted second Bianchi identity, all of the remaining terms in \eqref{eq:pev8} can be seen
to satisfy the same identity.  We conclude that
\begin{align*}
 |\Delta P - \Pc(\Delta\nabla\Rc)| &\lesssim  ((|N| + |\Ec^{\prime})|\nabla\Rm|+ |\nabla^2\Rm|)|\Ec^{\prime}|
+ |\nabla \Rm||\Ec^{\prime\prime}|\\
&\phantom{\lesssim} +|N||\Rm| (|Q| + |M|) + |N|^2|P| + |N||\nabla P|,
\end{align*}
and \eqref{eq:pev} follows.
\end{proof}

Next, we apply the commutation identity in \eqref{eq:prclap} to derive an expression for the  evolution of $P =\Pc(\Rc)$.
\begin{proposition}\label{prop:pev}
The tensor $P$ satisfies an evolution equation of the form
\begin{align}
\label{eq:pev}
\begin{split}
    |(D_{\tau} + \Delta)P| &\lesssim \Theta_1
    (|M| + |P| + |\nabla P| + |U|)\\
    &\phantom{\lesssim} + \Theta_2(|A| + |T^0| + |G|
    + |\nabla A| + |\nabla T^0|),\\
\end{split}
\end{align}
where 
\[
\Theta_1 = \Theta_1(|N|, |\Rm|, |\nabla\Rm|),
\]
and
\[
 \Theta_2 = \Theta_2(|N|, |\Rm|, |\nabla \Rm|, |\nabla^2 \Rm|, |A|, |T^0|, |\nabla A|, |\nabla T^0|)
\]
are polynomials as in Section \ref{ssec:notation}.
\end{proposition}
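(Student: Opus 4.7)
The strategy is to exploit the identity $D_\tau \circ \Pc = \Pc \circ D_\tau$ (which holds because $D_\tau H = 0$ and $D_\tau V = 0$) to write
\[
(D_\tau + \Delta) P = \Pc\bigl((D_\tau + \Delta)\nabla\Rc\bigr) + \bigl(\Delta\Pc(\nabla\Rc) - \Pc(\Delta\nabla\Rc)\bigr).
\]
The commutator piece on the right is bounded directly by Proposition \ref{prop:prclap}, producing all of the terms appearing on the right-hand side of \eqref{eq:pev} apart from the contribution involving $|U|$. It therefore suffices to bound $\Pc\bigl((D_\tau + \Delta)\nabla\Rc\bigr)$.

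I would first compute $(D_\tau + \Delta)\nabla\Rc$ via standard Ricci-flow manipulations. Starting from the identity $(D_\tau + \Delta) R_{ij} = -2R_{ipqj}R_{pq}$ used in the proof of Proposition \ref{prop:mev}, differentiating, and adding the commutators $[D_\tau, \nabla_a]R_{jk}$ (of the form $\nabla \Rc \ast \Rc$, computed analogously to Proposition \ref{prop:bev}) and $[\Delta, \nabla_a]R_{jk} = \Rm \ast \nabla \Rc$, one obtains schematically
\[
(D_\tau + \Delta)\nabla\Rc \;=\; \nabla \Rm \ast \Rc \;+\; \Rm \ast \nabla \Rc.
\]

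Next, I would decompose each factor against the $\Hc$/$\Vc$ splitting. Write $\nabla\Rc = P + X$, where $X$ is the residual subtracted off in the definition of $\Pc(\nabla \Rc)$; write $\Rc = M + \Rc^H + (\Rh/m)V$; and write $\Rm = Q + \Rm^H + \Rm^V + (1/m)\, W \odot V$ as in \eqref{eq:qdef}. Direct contributions from $P$, $M$, or $Q$ produce admissible terms (using Proposition \ref{prop:qprop} for $Q$). The remaining pieces involve the residual $X$ of $\nabla\Rc$, or analogously a residual of $\nabla \Rm$, paired with block-diagonal parts of $\Rm$ and $\Rc$. These are handled by the contracted second Bianchi identity $\nabla_i R_{jk} - \nabla_k R_{ij} = \nabla^a R_{ajik}$ and its $\nabla \Rm$ analog: after one application of Bianchi, every residual component of $\nabla \Rc$, hit by the outer $\Pc$, lands in the index pattern defining $U = \Uc(\nabla \Rm) = \Pc(\Hc(\nabla \Rm))$, modulo further terms of the form $\nabla \Rm \ast \Ec^{\prime}$ or $\Rm \ast (M + Q)$ that are controlled by $|M|$, $|P|$, and the connection invariants via Lemma \ref{lem:hder} and Proposition \ref{prop:qprop}.

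The main obstacle is the algebraic bookkeeping in this last step: one must verify that the images under $\Pc$ of each residual term, after the Bianchi reduction, align exactly with the rigid index pattern of $U$, rather than producing components of $\nabla \Rm$ that are not controlled by $U$. This is reasonable to expect because $\Uc$ is constructed precisely to annihilate those components of $\nabla \Rm$ that are already determined by $P$, $M$, $Q$, and the connection invariants on a warped-product with Einstein fibers. Once the alignment is verified, assembling all the pieces — the commutator contribution from Proposition \ref{prop:prclap}, the direct $P$-, $M$-, and $Q$-contributions, and the Bianchi-reduced residuals — gives the estimate \eqref{eq:pev} with $\Theta_1$ and $\Theta_2$ built from $|N|$, $|\Rm|$, $|\nabla \Rm|$, $|\nabla^2\Rm|$, and the connection invariants as stated.
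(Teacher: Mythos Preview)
Your approach is essentially the same as the paper's: split $(D_\tau + \Delta)P$ into the commutator $[\Delta,\Pc](\nabla\Rc)$ (handled by Proposition~\ref{prop:prclap}) and the reaction term $\Pc((D_\tau+\Delta)\nabla\Rc)$, then decompose the latter using $\Rc = M + \Rc^H + (\Rh/m)V$, $\Rm = Q + \cdots$, and $\nabla\Rc = P + \cdots$.

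One point of your exposition is slightly off, though not fatally. You locate the appearance of $U$ in the residuals of $\nabla\Rc$ after a Bianchi reduction. In the paper's actual computation, $U$ enters more directly and from a different place: in the term $\nabla_a R_{jpqk}R_{pq}$ (i.e., the $\nabla\Rm \ast \Rc$ piece, not the $\Rm \ast \nabla\Rc$ piece), one decomposes the $\Rc$ factor, and the resulting expressions $\nabla_a R_{j\up\uq k}$ and $\nabla_a R_{j\bp\bp k} = \nabla_a R_{jk} - \nabla_a R_{j\up\up k}$, once hit by $\Pc$ in the indices $a,j,k$, are \emph{literally} $U_{aj\up\uq k}$ and $P_{ajk} - U_{aj\up\up k}$ by the definition of $\Uc = \Pc\circ\Hc$. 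No Bianchi is needed at this step. Conversely, the $\Rm \ast \nabla\Rc$ terms are handled by decomposing the $\Rm$ factor via $Q$ and the $\nabla\Rc$ factor via $P$; the residuals there land in $\ker\Pc$ or in $\nabla\Rm\ast Q + \nabla\Rm\ast M$, and do \emph{not} produce $U$. So your ``main obstacle'' paragraph misattributes the mechanism, but once you sit down to do the bookkeeping you would find the correct route.
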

\begin{proof}
As in Proposition \ref{prop:mev}, we have
\begin{align}
\begin{split}\label{eq:pev0}
    (D_{\tau} + \Delta)P_{ijk} = \Pc\left((D_{\tau} + \Delta)\nabla\Rc\right)_{ijk}
    + \Delta( \Pc(\nabla\Rc))_{ijk} -  \Pc(\Delta \nabla \Rc)_{ijk}.
\end{split}
\end{align}
By Proposition \ref{prop:prclap}, we only need to consider the first term. Using the identity
 \[
  [D_{\tau} + \Delta, \nabla_i]R_{jk} = -2R_{ipqj}\nabla_pR_{qk} - 2R_{ipqk}\nabla_pR_{qj},
 \]
and the evolution equation for $\Rc$ under \eqref{eq:rf},
we first of all compute that
\begin{align}
\begin{split}\label{eq:pev1}
    (D_{\tau} + \Delta)\nabla_i R_{jk} &= - 2 \nabla_i R_{jpqk}R_{pq} - 2R_{jpqk}\nabla_{i}R_{pq} - 2R_{ipqj}\nabla_pR_{qk}\\
    &\phantom{=}- 2R_{ipqk}\nabla_pR_{qj}.
\end{split}
\end{align}
We will estimate the first two terms on the right. The last two can be estimated in the same way as the second.

For the first term on the right of \eqref{eq:pev1}, note that
that
\begin{align}\label{eq:pev2}
\begin{split}
  \nabla_iR_{jpqk}R_{pq} & = \nabla_iR_{jpqk}M_{pq} + \nabla_iR_{j\up\uq k} R_{\up\uq} + \frac{{\Rh}}{m}\nabla_iR_{j\bp\bp k}\\
   & = \nabla_iR_{jpqk}M_{pq} + \nabla_iR_{j\up\uq k} R_{\up\uq} + \frac{{\Rh}}{m}(\nabla_i R_{jk} - \nabla_iR_{j\up\up k}).
\end{split}
\end{align}
Hence,
\begin{align}\label{eq:pev3}
\begin{split}
    \Pc(\nabla_iR_{jpqk}R_{pq}) &=\nabla \Rm \ast M + U_{ij\up\uq k}R_{\up\uq}
    + \frac{{\Rh}}{m}(P_{ijk} - U_{ij\up\up k})\\
    &=\nabla \Rm \ast M + \Rm \ast U + \Rm \ast P.
\end{split}
\end{align}

For the second term on the right of \eqref{eq:pev1}, we have
\begin{align*}
 &R_{jpqk}\nabla_i R_{pq} = R_{jpqk}\left(P_{ipq} + \nabla_{\ui}R_{\up\uq}
+ \frac{1}{m}\left(\nabla_{\ui}R_{\br\br}V_{pq}  + \nabla_{\br}R_{\br \uq} V_{ip} + \nabla_{\br}R_{\up\br}V_{iq}\right)\right)\\
&= R_{jpqk} P_{ipq} + R_{j\up\uq k}\nabla_{\ui}R_{\up\uq}
+ \frac{1}{m}\left(\nabla_{\ui}R_{\br\br}R_{j\bp\bp k}  + \nabla_{\br}R_{\br \uq}R_{j\bi\uq k}
+ \nabla_{\br}R_{\up\br}R_{j\up\bi k}\right).
\end{align*}
Now,
\begin{align*}
 R_{j\up\uq k}\nabla_{\ui}R_{\up\uq} &= \left(Q_{j\up\uq k} + \frac{1}{m}V_{jk}R_{\br\up\uq\br}\right)\nabla_{\ui}R_{\up\uq},
\end{align*}
and the second term belongs to the kernel of $\Pc$, so
\begin{align}\label{eq:pev4}
\Pc(R_{j\up\uq k}\nabla_{\ui}R_{\up\uq}) = \nabla\Rm \ast Q.
\end{align}
Similarly,
\begin{align*}
 \nabla_{\ui}R_{\br\br}R_{j\bp\bp k} &=  \nabla_{\ui}R_{\br\br}\left(R_{jk} - R_{j\up\up k}\right)\\
 &= \nabla_{\ui}R_{\br\br}\left(M_{jk} + R_{\uj\uk} + \frac{{\Rh}}{m}V_{jk} - Q_{j\up\up k} - \frac{1}{m}R_{\br\up\up\br}V_{jk}\right),
\end{align*}
so
\begin{align}\label{eq:pev5}
 \Pc(\nabla_{\ui}R_{\br\br}R_{j\bp\bp k}) &= \nabla \Rm \ast M + \nabla \Rm \ast Q. 
\end{align}
On the other hand, 
\begin{align*}
 \nabla_{\br}R_{\br \uq}(R_{j\bi\uq k} + R_{j\uq\bi k}) &= 
 \nabla_{\br}R_{\br \uq}(R_{\bi j k\uq} + R_{\bi k j \uq}) \\
 &= \nabla\Rm \ast Q + \frac{1}{m}\nabla_{\br}R_{\br \uq}\left(R_{\bp \uj \bp \uq} V_{ik} + R_{\bp \uk \bp\uq}V_{ij}\right),
\end{align*}
so that
\[
 \Pc(\nabla_{\br}R_{\br \uq}(R_{j\bi\uq k} + R_{j\uq\bi k})) = \nabla \Rm \ast Q,
\]
and together with \eqref{eq:pev4} and \eqref{eq:pev5}, we see that the second term on the right side of \eqref{eq:pev1}
satisfies
\begin{equation}\label{eq:pev6}
 \Pc(R_{jpqk}\nabla_i R_{pq}) = \nabla \Rm \ast Q + \nabla \Rm \ast M.
\end{equation}

The same reasoning shows that the images of the third and fourth terms in  \eqref{eq:pev1} under the projection $\Pc$ satisfy an identity of the same schematic form as \eqref{eq:pev6}. Thus we conclude at last from \eqref{eq:pev3}, \eqref{eq:pev5},
and \eqref{eq:pev6} that
\begin{equation}\label{eq:pev7}
  \Pc((D_{\tau} + \Delta)\nabla \Rc) = \nabla \Rm \ast M + \Rm \ast P + \Rm \ast U + \nabla \Rm \ast Q,
\end{equation}
and \eqref{eq:pev} follows.
\end{proof}
 
\subsection{The evolution equation for $U$.}\label{sec:uproj}
Our computation for the tensor $U$ goes along much as for $P$, but the expression is algebraically more complicated and there is more to organize.

\begin{proposition}\label{prop:uev}
The tensor $U$ defined by \eqref{eq:udef} satisfies the estimate
\begin{align}\label{eq:uev}
\begin{split}
    |(D_{\tau} + \Delta)U| &\lesssim \Theta_1
    (|M| + |P| + |\nabla P| + |U| + |\nabla U|)\\
    &\phantom{\lesssim}+ \Theta_2(|A|
    + |T^0| + |G| + |\nabla A| + |\nabla T^0|),
\end{split}
\end{align}
where
\[
 \Theta_1 = \Theta_1(|N|, |\Rm|, |\nabla \Rm|),
\]
and
\[
 \Theta_2 = \Theta_2(|N|, |\Rm|, |\nabla \Rm|,
 |\nabla^2 \Rm|, |A|, |T^0|)
\]
are polynomials as in Section \ref{ssec:notation}.
\end{proposition}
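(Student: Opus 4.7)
The plan is to mirror the proof of Proposition \ref{prop:pev} for $P$, with the added complication that $\Uc = \Pc \circ \Hc$ is a composition: the inner projection $\Hc$ acts on the third and fourth indices, while the outer projection $\Pc$ acts on the first, second, and fifth indices. Since $D_{\tau} H \equiv 0$ and $D_{\tau} V \equiv 0$, both $\Hc$ and $\Pc$ commute with $D_{\tau}$, and hence so does $\Uc$. Thus we begin with the decomposition
\begin{equation*}
  (D_{\tau} + \Delta)U = \Uc((D_{\tau} + \Delta)\nabla\Rm) + \Delta(\Uc(\nabla\Rm)) - \Uc(\Delta \nabla \Rm),
\end{equation*}
and we must estimate each of the two principal pieces: the \emph{evolution piece} $\Uc((D_{\tau} + \Delta)\nabla \Rm)$ and the \emph{commutator piece} $\Delta\Uc(\nabla\Rm) - \Uc(\Delta\nabla\Rm)$.

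For the commutator piece, I would first apply Proposition \ref{prop:horizlap} to the inner $\Hc$ projection (acting on the third and fourth arguments of $\nabla\Rm$), then apply Proposition \ref{prop:plaplacian} to the outer $\Pc$ projection. The output is a sum of terms of the form $\nabla X$ contracted against $N$ and $V$, where $X$ runs over the mixed-index components of $\nabla \Rm$ produced by the two projections. To convert these into members of our system, I would follow the same recipe as in Proposition \ref{prop:prclap}: commute the outermost derivative past the first index using $[\nabla_a, \nabla_b]\Rm = \Rm \ast \Rm$, use the second Bianchi identity to rewrite expressions like $\nabla_{\bar p} R_{\bar p\uj\uk\ul}$ as $\nabla_{\ul}R_{\bar p\uj\uk\bar p} - \nabla_{\uk}R_{\bar p \uj \bar p \ul}$, and then recognize the resulting horizontal/vertical combinations as components of $U$, $P$, $\nabla U$, or $\nabla P$. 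Components of $\Rm$ itself with mixed horizontal/vertical type are controlled by $|Q| + |M|$, which by Proposition \ref{prop:qprop} and the definition of $M$ are bounded by the connection invariants and $|M|$. Factors of $\Ec^{\prime}$, $\Ec^{\prime\prime}$ arising from $\nabla H$ and $\Delta H$ are similarly bounded, via Lemma \ref{lem:hder}, by the connection invariants and $|G|$.

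For the evolution piece, I would start from the standard identity
\begin{equation*}
  (D_{\tau} + \Delta)\nabla_a R_{ijkl} = \Rm \ast \nabla \Rm,
\end{equation*}
and analyze $\Uc$ applied to each contraction of $\Rm$ with $\nabla \Rm$. Exactly as in the proof of Proposition \ref{prop:pev} via \eqref{eq:pev3}--\eqref{eq:pev6}, I decompose $\Rm = \Rm^H + \Rm^V + (1/m)W\odot V + Q$ and $\nabla \Rm$ relative to the splitting, keeping track of which components contribute to $U$ and $P$ versus those which, thanks to the annihilation properties of $\Uc$ (which kills $X^H$ and $V$-traces in the relevant slots), either vanish, can be absorbed into the $\Uc(\nabla\Rm) = U$ contribution, or produce factors of $Q$ and $M$. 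The result is a schematic expression of the form $\Rm\ast U + \Rm \ast P + \nabla\Rm \ast M + \nabla\Rm \ast (Q + M)$, which combined with Proposition \ref{prop:qprop} yields the estimate \eqref{eq:uev}.

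The main obstacle, and the most tedious portion of the calculation, is the commutator estimate. Because $\nabla \Rm$ already has five indices and $\Uc$ imposes structural conditions on three of them, the combinatorics of \eqref{eq:plaplacian} produce many terms of the form $\nabla_{\bar i} \nabla \Rm$ with one horizontal slot replaced by $N$. To show each such term is schematically $\nabla U + \nabla P + \text{(curvature)}\ast(U + P + Q + M)$, one must carefully commute derivatives and iterate the contracted Bianchi identity, keeping track of the identification $\nabla_{\bar p}\Rm_{\bar p\cdots} = \nabla \Rm_{\cdots}$ modulo curvature squared terms, as was done scalarly for $P$ in Proposition \ref{prop:prclap}. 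Once this bookkeeping is complete, assembling the estimate is routine.
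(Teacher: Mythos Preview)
Your proposal is correct and follows essentially the same approach as the paper: the paper too decomposes $(D_{\tau}+\Delta)U$ into the commutator $\Delta U - \Uc(\Delta\nabla\Rm)$ (Proposition \ref{prop:ucomm}) and the reaction term $\Uc((D_{\tau}+\Delta)\nabla\Rm) = \Uc(J) + \Uc(L)$ (Propositions \ref{prop:uj} and \ref{prop:ul}), handling the commutator by factoring $\Uc = \Pc\circ\Hc$ and applying Propositions \ref{prop:horizlap} and \ref{prop:plaplacian} in turn, and handling the reaction terms by decomposing $\Rm$ via \eqref{eq:qdef} and $\nabla\Rm$ via the definition of $U$ and repeatedly invoking the Bianchi identities. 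Your assessment that the commutator bookkeeping is the most laborious part is accurate; the paper devotes several pages to it.
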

We will break this estimate up into several pieces. To begin, we recall that the covariant derivative of the curvature tensor evolves along \eqref{eq:brf} by
\begin{align*}
 \begin{split}
 &\left(D_{\tau} + \Delta\right)\nabla_aR_{ijkl} = J_{aijkl} + L_{aijkl},
\end{split}
\end{align*}
where
\begin{align}\label{eq:jdef}
 J_{aijkl} \dfn 2\nabla_a\left(B_{ijkl} - B_{ijlk} + B_{ikjl} - B_{iljk}\right), \quad  B_{ijkl} = g^{pr}g^{qs}R_{pijq}R_{rkls},
\end{align}
and
\begin{equation}\label{eq:ldef}
 L_{aijkl} \dfn 2\big(R_{iqap}\nabla_p R_{qjkl} + R_{jqap}\nabla_p R_{iqkl} + R_{kqap}\nabla_pR_{ijql} + R_{lqap}\nabla_pR_{ijkq}\big).
\end{equation}
Note that our $B_{ijkl}$ differs from that defined in \cite{Hamilton3D} by a factor of $-1$.

\begin{proof}[Proof of Proposition \ref{eq:uev}]
Since $U = \Uc(\nabla \Rm)$ and $[D_{\tau}, \Uc] = 0$, we have
\begin{align}\label{eq:uev1}
\begin{split}
 (D_{\tau} + \Delta)U &=  (\Delta U - \Uc(\Delta\Rm)) + \Uc((D_{\tau} + \Delta)\nabla\Rm) \\
 &= (\Delta U - \Uc(\Delta\Rm)) + \Uc(J) + \Uc(L).
\end{split}
\end{align}
The estimate \ref{eq:uev}  then follows from the results in Propositions \ref{prop:ucomm}, \ref{prop:uj}, and
\ref{prop:ul}
below.
\end{proof}

The proofs of Propositions and \ref{prop:ucomm}, \ref{prop:uj}, and \ref{prop:ul}  --- particularly the first ---
involve detailed computations. To streamline the exposition, we will use the notation
\[
Y \sim Z
\]
for two tensors $Y$ and $Z$ to mean that
\begin{align*}
  |Y - Z| &\lesssim \Theta_1
    (|M| + |P| + |\nabla P| + |U| + |\nabla U|) \\
    &\phantom{\lesssim} + \Theta_2(|A|
    + |T^0| + |G| + |\nabla A| + |\nabla T^0|),
\end{align*}
where $\Theta_1$ and $\Theta_2$ are as defined in Proposition \ref{prop:uev}.

\subsection{Estimating the commutator of $\Delta$ and $\Uc$.}

\begin{proposition}\label{prop:ucomm}
The tensor $U = \Uc(\nabla \Rm)$ satisfies
\begin{align}
\begin{split}\label{eq:ucomm}
    |\Delta U - \Uc(\Delta \nabla\Rm)| &\lesssim \Theta_1 (|M| + |P| + |\nabla P| + |U| + \nabla U|)\\
    &\phantom{=} + \Theta_2 (|A| + |T^0| + |G| + |\nabla A| + |\nabla T^0|),
\end{split}
\end{align}
where
\begin{gather*}
  \Theta_1 = \Theta_1(|N|, |\Rm|, |\nabla \Rm|), \quad \Theta_2 =\Theta_2(|N|, |\Rm|, |\nabla\Rm|, |\nabla^{(2)}\Rm|).
\end{gather*}

\end{proposition}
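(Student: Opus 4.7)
My plan is to decompose the commutator $[\Delta,\Uc]$ as a sum of the commutators of $\Delta$ with the two projection operators $\Hc$ and $\Pc$ that compose to form $\Uc$, and then apply the identities already established in Propositions \ref{prop:horizlap} and \ref{prop:plaplacian}. Explicitly, writing $X = \Hc(\nabla\Rm)$ so that $U = \Pc(X)$, we have
\begin{align*}
\Delta U - \Uc(\Delta \nabla \Rm) = \bigl(\Delta \Pc(X) - \Pc(\Delta X)\bigr) + \Pc\bigl(\Delta \Hc(\nabla\Rm) - \Hc(\Delta\nabla\Rm)\bigr).
\end{align*}
The second summand is, up to the bounded $\Pc$ action, an explicit expression given by \eqref{eq:horizlap} of Proposition \ref{prop:horizlap} applied to the third and fourth indices of $\nabla\Rm$. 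Every term in that expression is either $\nabla\Rm\ast\Ec^{\prime\prime}$, $(\nabla^2\Rm + (\Ec^{\prime}+N)\ast\nabla\Rm)\ast\Ec^{\prime}$, or a contraction of $\nabla\Rm$ with factors of $N$ into slots arising from the horizontal projection. By \eqref{eq:nablahest} and \eqref{eq:deltahest}, the factors $\Ec^{\prime}$ and $\Ec^{\prime\prime}$ are controlled by $|A|+|T^0|+|G|+|\nabla A|+|\nabla T^0|$, which puts these error terms into the form required by $\sim$.

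For the first summand, I would apply Proposition \ref{prop:plaplacian} with $X = \Hc(\nabla\Rm)$, where $\Pc$ acts on the first, second, and fifth indices. The resulting expression consists of (i) terms $\Cc$ and $\Cc'$ which are already schematically of the form $N\ast N\ast\Pc(X)=N\ast N\ast U$, $N\ast\nabla U$, and $(\nabla X + (N+\Ec^{\prime})\ast X)\ast\Ec^{\prime}+X\ast\Ec^{\prime\prime}$, and (ii) the ``inhomogeneous'' derivative terms of the form $\nabla_{\bar{*}}X_{N\cdots}$ appearing in \eqref{eq:plaplacian}. The type (i) terms are immediately absorbed into $\sim$. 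For the type (ii) terms, the key maneuver—exactly as in the proof of Proposition \ref{prop:prclap}—is to commute the $\nabla_{\bar{*}}$ past the $\nabla_N$ (or the other vertical derivative), picking up a curvature commutator $\Rm\ast\nabla\Rm$ that is then decomposed into $\Rm\ast Q$ and $\Rm\ast M$-type pieces by writing $\Rc = M + \Rc^H + (\Rh/m)V$ and using \eqref{eq:qdef}. The remaining leading order piece $\nabla_N \nabla_{\bar{*}} R_{\ui\uj\uk\ul}$ then reproduces a component of $\nabla U$ (up to $N\ast\nabla\Rm\ast\Ec^{\prime}$).

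For the components of \eqref{eq:plaplacian} in which the inner index configuration is not purely horizontal on the middle pair (i.e. slots coming from the $\Pc$-orthogonal correction, such as $\nabla_{\bar{*}}X_{\bar p\cdots\bar p\cdots}$), I would first complete the trace $V_{pq}R_{p??q} = R_{??} - R_{\up??\up}$ in order to trade a purely vertical pair for a difference of a Ricci term and a horizontal-contracted curvature. The contracted second Bianchi identity $\nabla_{\bar p}R_{\bar p k} = \tfrac{1}{2}\nabla_k R$ (and its analogs), together with $S_i = \nabla_{\bar i}R = P_{\bar i \up\up}+P_{\bar i \bar p\bar p}$ from \eqref{eq:schar}, then converts the resulting divergences into $\nabla P$ plus acceptable error. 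The main obstacle in executing the plan will be bookkeeping: each slot on which $\Pc$ acts produces its own family of correction terms, and one must systematically verify that in every combination of indices (mixed horizontal and vertical), the Bianchi-identity-plus-curvature-commutator reduction brings the expression back to the closed system $\{M, P, U, \nabla P, \nabla U\}$ modulo connection-level invariants. Once this combinatorial verification is completed for each of the terms listed in \eqref{eq:plaplacian}, the estimate \eqref{eq:ucomm} follows.
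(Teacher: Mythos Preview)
Your decomposition and toolkit match the paper's exactly, but there is a genuine gap in the treatment of the second summand, and it propagates to the first. You claim that $\Pc\bigl(\Delta\Hc(\nabla\Rm)-\Hc(\Delta\nabla\Rm)\bigr)$ consists only of $\nabla\Rm\ast\Ec^{\prime\prime}$, $(\nabla^{2}\Rm+\cdots)\ast\Ec^{\prime}$, and ``contractions of $\nabla\Rm$ with factors of $N$''. This is not the case: the formula \eqref{eq:horizlap} applied on the third and fourth slots of $\nabla\Rm$ also produces genuine second-order terms such as $\nabla_{\bj}(\nabla\Rm)_{aiN\uk l}$ and $\nabla_{\bp}(\nabla\Rm)_{ai\bp\uk l}N_{j}$, which are $\nabla^{2}\Rm$ contracted with $N$ and projections and are certainly not $\sim 0$ a priori. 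The first of these, after passing $\Pc$ inside via \eqref{eq:pgradient}, leaves residuals of the form $N_{p}N_{q}\,\nabla_{\up}R_{\ui\uq\uk\ul}\,V_{aj}$; the second requires the full analysis carried out in \eqref{eq:ucomm3}--\eqref{eq:ucomm9} of the paper (commutation of $\nabla_{\bp}\nabla_{a}$, second Bianchi, and identification of kernel-of-$\Pc$ pieces).

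More importantly, your plan to bound the two summands \emph{separately} cannot succeed. In the first summand, when you compute the type~(ii) terms such as $\nabla_{\ba}\tilde{X}_{N\ui jk\ul}$, the fact that $\tilde{X}=\Hc(\nabla\Rm)$ already carries horizontal projections on the middle two slots means that passing $\nabla_{\ba}$ inside produces, in addition to the $\nabla U$ piece you describe, explicit leftovers of exactly the same form $N_{p}N_{q}\,\nabla_{\up}R_{\ui\uq\uk\ul}\,V_{aj}$ (see \eqref{eq:ucomm121}--\eqref{eq:ucommmain1}). These are purely horizontal components of $\nabla\Rm$ contracted with $N\otimes N$ and are \emph{not} controlled by $M$, $P$, $U$, $Q$, or their derivatives. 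In the paper's argument these residuals from the $\Pc$-commutator cancel \emph{exactly} against the corresponding residuals from the $\Hc$-commutator (after one uses \eqref{eq:pgradient} on the latter), leaving only the $\Pc(\nabla X)_{\bp ai\bp\uk l}N_{j}$ terms to be dispatched by the longer computation. Neither summand is $\sim 0$ on its own; the cancellation between them is essential and must be tracked explicitly.
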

\begin{proof} For convenience, let us write $X = \nabla \Rm$.
We will attack the computation by factoring the projection as $U = \Uc(X) = \Pc(\Hc(X))$ where
\[
 \Pc(X)_{aijkl} =  X_{aijkl} - X_{\ua\ui j k\ul} - \frac{1}{m}\left(X_{\bp\bp j k\ul}V_{ai}
  +X_{\bp\ui jk \bp}V_{al} + X_{\ua\bp j k\bp}V_{il}\right)
\]
is the projection defined in Section \ref{sec:pproj} acting on the first, second, and fifth components,
and
\[
 \Hc(X)_{aijkl} = X_{ai\uj\uk l}
\]
is the projection defined in Section \ref{sec:hproj} acting on the third and fourth components.

Let $\tilde{X} = \Hc(X)$. First, we apply \eqref{eq:plaplacian} to $U = \Pc(\tilde{X})$
to obtain
\begin{align}
\begin{split}\label{eq:ucomm1}
&  \Delta U_{aijkl} =  \Pc(\Delta \tilde{X})_{aijkl} + \Cc_{aijkl} + \Cc^{\prime}_{aijkl}\\
     &\phantom{=}+\frac{2}{m}\left(
        \nabla_{\ba}\tilde{X}_{N\ui jk \ul} + \frac{1}{m}\left(\nabla_{\ba}\tilde{X}_{N\bp jk\bp}V_{il} - \nabla_{\ba}\tilde{X}_{\bp\ui jk\bp}N_l - \nabla_{\ba}\tilde{X}_{\bp\bp jk \ul}N_i\right)\right)\\
     &\phantom{=}+ \frac{2}{m}\left(\nabla_{\bi}\tilde{X}_{\ua N jk \ul} + \frac{1}{m}\left(\nabla_{\bi}\tilde{X}_{\bp N jk \bp}V_{al}
     - \nabla_{\bi}\tilde{X}_{\ua\bp jk \bp}N_l - \nabla_{\bi}\tilde{X}_{\bp\bp jk \ul}N_a\right) \right)\\
     &\phantom{=} + \frac{2}{m}\left(\nabla_{\bl}\tilde{X}_{\ua\ui jk N}+ \frac{1}{m}\left(\nabla_{\bl}\tilde{X}_{\bp\bp jk N}V_{ai}
     - \nabla_{\bl}\tilde{X}_{\ua\bp jk \bp}N_i- \nabla_{\bl}\tilde{X}_{\bp\ui jk \bp}N_a\right)\right),
\end{split}
\end{align}
where
\[
 \Cc = N \ast \nabla U + N \ast N \ast U,
\]
and
\begin{align*}
 \Cc^{\prime} &=  (\nabla \tilde{X} + (\Ec^{\prime} + N)\ast \tilde{X}) \ast \Ec^{\prime} + \tilde{X} \ast \Ec^{\prime\prime} =(\nabla X + (\Ec^{\prime} + N)\ast X) \ast \Ec^{\prime} + \Ec^{\prime\prime}\ast X.\\
\end{align*}

We will need to break down the first term on the right and examine
a representative example of each type of term on in the second through the fourth lines. We start
with the latter.

First note that, using \eqref{eq:horizgrad}, we have
\begin{align}
\begin{split}\nonumber
     \nabla_{\ba}\tilde{X}_{\uq\ui jk \ul} &= \nabla_{\ba}X_{\uq\ui\uj\uk\ul}
     -\frac{1}{m}\left(X_{\uq\ui N\uk \ul} V_{aj} + X_{\uq\ui \uj N \ul} V_{ak}\right) \\
     &\phantom{=}-\frac{1}{m}\left(X_{\uq\ui \ba \uk\ul}N_{j} + X_{\uq\ui\uj\ba\ul}N_k\right) + X\ast \Ec^{\prime} \\
     &=  -\frac{1}{m}\left(X_{\uq\ui N\uk \ul} V_{aj} + X_{\uq\ui \uj N \ul} V_{ak}\right)
     + X \ast \Ec^{\prime} + \Rm \ast Q  + N \ast U + \nabla U
\end{split}
\end{align}
where, to obtain the second equality, we have used that
\begin{align*}
 \nabla_{\ba} X_{\uq\ui\uj\uk\ul} &= \nabla_{\uq}X_{\ba\ui\uj\uk\ul}
 - R_{\ba \uq\ui p}R_{p\uj\uk\ul} - R_{\ba \uq \uj p} R_{\ui p \uk\ul}
 - R_{\ba \uq \uk p} R_{\ui \uj p \ul} - R_{\ba \uq \ul p} R_{\ui\uj\uk p}\\
 &= \nabla_{\uq} U_{\ba\ui\uj\uk\ul} + X \ast \Ec^{\prime} + \Rm \ast Q,
\end{align*}
(here, as before, $\nabla_{\uq} X_{\ba\ui\uj\uk\ul} = \nabla_{\uq}U_{\ba\ui\uj\uk\ul} + X \ast \Ec^{\prime}$ as $\nabla_{\uq} H_{bc} = -\nabla_{\uq}V_{bc} = \Ec^{\prime}$), and
\[
  X_{\uq\ui \ba \uk\ul} = -X_{\ba \uq \ui\uk\ul} - X_{\ui\ba \uq \uk\ul} = -U_{\ba \uq \ui\uk\ul} - U_{\ui\ba \uq \uk\ul},
\]
and similarly for the second term in that equality.
Thus, the first term in the second line of in \eqref{eq:ucomm1} satisfies
\begin{align}\label{eq:ucomm121}
 \begin{split}
   \nabla_{\ba}\tilde{X}_{N\ui jk \ul} & =  \nabla_{\ba}\tilde{X}_{\uq\ui jk \ul} N_q\\
   &= -\frac{1}{m}\left(X_{N\ui N\uk \ul} V_{aj} + X_{N\ui \uj N \ul} V_{ak}\right) + N \ast \Rm \ast Q
     + N \ast X\ast \Ec^{\prime}\\
     &\phantom{=}  + N \ast N\ast  U + N \ast \nabla U\\
   &\sim -\frac{1}{m}\left(X_{N\ui N\uk \ul} V_{aj} + X_{N\ui \uj N \ul} V_{ak}\right).
 \end{split}
\end{align}

Next, computing that
\begin{align}\nonumber
 \nabla_{\ba} X_{\uq\bp \uj \uk \bp} &= \nabla_{\ba}\nabla_{\uq}R_{\uj\uk} - \nabla_{\ba}X_{\uq\up\uj\uk\up}\\
 \nonumber
 &= \nabla_{\uq}\nabla_{\ba}R_{\uj\uk} - R_{\ba \uq\uj p}R_{p\uk} - R_{\ba \uq \uk p}R_{\uj p} -\nabla_{\uq} X_{\ba\up\uj\uj\up}\\
 \nonumber
 &\phantom{=}  +R_{\ba \uq\up r} R_{r\uj\uk\up}
 +R_{\ba \uq\uj r} R_{\up r \uk\up} + R_{\ba \uq \uk r} R_{\up\uj r \up} + R_{\ba \uq \up r}R_{\up\uj\uk r}\\
 \label{eq:ucomm2}
 &= \nabla_{\uq} P_{\ba \uj\uk} + X\ast \Ec^{\prime} + \Rm \ast ( Q + M)  + \nabla U,
\end{align}
and that
\begin{align*}
  X_{\uq\bp\ba \uk\bp} &= \nabla_{\uq} R_{\ba\uk} - X_{\uq\up\ba\uk\up}
  = P_{\uq\ba\uk} + U_{\uq\ba\up\uk\up} +  X \ast \Ec^{\prime},
\end{align*}
and similarly for the like terms, we have
\begin{align*}
 \nabla_{\ba}\tilde{X}_{\uq\bp jk\bp}V_{il} &=\nabla_{\ba} X_{\uq\bp \uj \uk \bp} V_{il}-\frac{1}{m}\left(X_{\uq\bp\ba \uk\bp} N_j + X_{\uq\bp\uj \ba \bp}N_k\right)V_{il}\\
 &\phantom{=}-\frac{1}{m}\left(X_{\uq\bp N \uk \bp}V_{aj} + X_{\uq\bp \uj N \bp}V_{ak}\right)V_{il}
 + X\ast \Ec^{\prime}  \\
&= -\frac{1}{m}\left(X_{\uq\bp N \uk \bp}V_{aj} + X_{\uq\bp \uj N \bp}V_{ak}\right)V_{il} + X \ast\Ec^{\prime}  + \Rm \ast (Q+M)\\
&\phantom{=} + \nabla P + N \ast U + \nabla U.
\end{align*}
Thus, the second term in the second line of \eqref{eq:ucomm1} satisfies that
\begin{align}\label{eq:ucomm122}
 \nabla_{\ba} \tilde{X}_{N\bp \uj \uk \bp} &= \nabla_{\ba} \tilde{X}_{\uq\bp \uj \uk \bp} N_q \sim -\frac{1}{m}\left(X_{N\bp N \uk \bp}V_{aj} + X_{N\bp \uj N \bp}V_{ak}\right)V_{il}.
\end{align}

As for the third term,  we have
\begin{align}
\begin{split}\nonumber
 \nabla_{\ba}\tilde{X}_{\bp\ui jk\bp}N_l &=\nabla_{\ba} X_{\bp\ui \uj \uk \bp} N_l-\frac{1}{m}\left(X_{\bp\ui\ba \uk\bp} N_j + X_{\bp\ui\uj \ba \bp}N_k\right)N_{l}\\
 &\phantom{=}-\frac{1}{m}\left(X_{\bp\ui N \uk \bp}V_{aj} + X_{\bp\ui \uj N \bp}V_{ak}\right)N_l
 + N  \ast \nabla\Rm \ast \Ec^{\prime}.
\end{split}
\end{align}
Using the Bianchi identities and \eqref{eq:ucomm2}, we obtain
\[
 \nabla_{\ba} X_{\bp\ui \uj \uk \bp}N_l = -(\nabla_{\ba} X_{\ui \uj \bp \uk \bp} + \nabla_{\ba} X_{\uj \bp\ui \uk \bp})N_l = (\nabla_{\ba} X_{\ui \bp \uj \uk \bp} -\nabla_{\ba} X_{\uj \bp\ui \uk \bp})N_l \sim 0,
\]
which, with the facts that $X_{\bp\ui\ba \uk\bp} = -U_{\bp\ba\ui\uk\bp}$ and $X_{\bp\ui\uj \ba \bp} = -U_{\bp\ba\ui\uj\bp} + U_{\bp\ba\uj\ui\bp}$,
implies that
\begin{align}
\begin{split}\label{eq:ucomm123}
 \nabla_{\ba}\tilde{X}_{\bp\ui jk\bp}N_l &\sim -\frac{1}{m}\left(X_{\bp\ui N \uk \bp}V_{aj} + X_{\bp\ui \uj N \bp}V_{ak}\right)N_l.
\end{split}
\end{align}
Likewise,
we see that the fourth term in the second line  of \eqref{eq:ucomm1} satisfies that
\begin{align}\label{eq:ucomm124}
 \nabla_{\ba}\tilde{X}_{\bp\bp jk \ul}N_i&= \nabla_{\ba}\tilde{X}_{\bp\ul kj \bp}N_i \sim-\frac{1}{m}\left(X_{\bp\ul N \uj \bp}V_{ak} + X_{\bp\ul \uk N \bp}V_{aj}\right)N_i.
\end{align}
Returning to \eqref{eq:ucomm1} and permuting indices in \eqref{eq:ucomm121}, \eqref{eq:ucomm122}, \eqref{eq:ucomm123}, and \eqref{eq:ucomm124}, we then obtain
\begin{align}\label{eq:ucommmain1}
 \begin{split}
  &\Delta U_{aijkl}\sim  \Pc(\Delta \tilde{X})_{aijkl}
    \\
     &  -\frac{2}{m^2}\bigg(X_{N\ui N\uk \ul} V_{aj} + X_{N\ui \uj N \ul} V_{ak}
     +X_{\ua N N \uk \ul} V_{ij}
       + X_{\ua N \uj N \ul} V_{ik} \\
       &\phantom{=}\qquad\qquad+X_{\ua \ui N \uk N} V_{jl} + X_{\ua \ui \uj N N} V_{kl}\bigg)\\
     &  - \frac{2}{m^3}\bigg(\left(X_{N\bp N \uk \bp}V_{aj} + X_{N\bp \uj N \bp}V_{ak}\right)V_{il} +\left(X_{\bp N N \uk \bp}V_{ij} + X_{\bp N \uj N \bp} V_{ik}\right)V_{al}
     \\
     & \qquad\qquad+\left(X_{\bp\bp N\uk N} V_{jl} + X_{\bp\bp \uj N N} V_{kl}\right)V_{ai} \bigg)\\
     &+\frac{2}{m^3}\bigg(
     X_{\bp\ui N\uk \bp}(V_{aj}N_l + V_{jl} N_a) + X_{\bp\ui\uj N\bp}(V_{ak}N_l + V_{kl}N_a)\\
     & \qquad\qquad +X_{\bp\bp N\uk \ul}(V_{aj}N_i + V_{ij}N_a) +X_{\bp\bp \uj N \ul}(V_{ak}N_i +V_{ik}N_a)\\
      & \qquad\qquad + X_{\ua\bp N \uk \bp}(V_{ij}N_l + V_{jl}N_i) + X_{\ua\bp \uj N \bp}(V_{ik}N_l+V_{kl} N_i) \bigg).
 \end{split}
\end{align}

On the other hand, from \eqref{eq:hlap2}, we have
\begin{align*}
\begin{split}
  &\Delta \tilde{X}_{aijkl} = \Delta X_{ai\uj\uk l}
+ (\Ec^{\prime} + N)\ast X\ast \Ec^{\prime} +  X\ast \Ec^{\prime\prime}\\
     &\phantom{=}\quad  - \frac{2}{m}\left(\nabla_{\bj}\tilde{X}_{aiN\uk l} + \nabla_{\bk}\tilde{X}_{ai\uj N l}\right)
     +\frac{2}{m}\left(\frac{2}{m}\tilde{X}_{aiNNl}V_{jk} - \tilde{X}_{ai\uj Nl}N_k - \tilde{X}_{aiN\uk l}N_j\right)\\
     &\phantom{=}\quad -\frac{2}{m}\left(\ \nabla_{\bp}X_{ai\bp\uk l}N_j + \nabla_{\bp}X_{ai \uj\bp l}N_k
     -\frac{1}{m}X_{ai\bp\bp l}N_jN_k
     \right)
\end{split}\\
&\quad \sim \Delta X_{ai\uj\uk l} - \frac{2}{m}\left(\nabla_{\bj}\tilde{X}_{aiN\uk l} + \nabla_{\bk}\tilde{X}_{ai\uj N l} + \tilde{X}_{ai\uj Nl}N_k + \tilde{X}_{aiN\uk l}N_j \right)\\
&\phantom{=}\quad  -\frac{2}{m}\left(\ \nabla_{\bp}X_{ai\bp\uk l}N_j + \nabla_{\bp}X_{ai \uj\bp l}N_k
     -\frac{1}{m}\nabla_aR_{i l}N_jN_k.
     \right) + \frac{2}{m^3}\tilde{X}_{aiNNl}V_{jk}.
\end{align*}
Thus, applying the projection $\Pc$,
\begin{align*}
 &\Pc(\Delta \tilde{X})_{aijkl}\\
 &\quad\sim \Uc(\Delta X)_{aijkl} -\frac{2}{m}\left(U_{ai\uj Nl}N_k + U_{aiN\uk l}N_j -\frac{2}{m}U_{aiNNl}V_{jk}\right)  + \frac{2}{m^2}P_{ail}N_k \\
 &\quad\phantom{=} - \frac{2}{m}\left(\Pc(\nabla\tilde{X})_{\bj aiN\uk l} + \Pc(\nabla\tilde{X})_{\bk ai\uj N l}+ \Pc(\nabla X)_{\bp ai\bp\uk l}N_j + \Pc(\nabla X)_{\bp ai \uj\bp l}N_k \right)\\
  &\quad\sim - \frac{2}{m}\left(\Pc(\nabla\tilde{X})_{\bj aiN\uk l} + \Pc(\tilde{X})_{\bk ai\uj N l} +\Pc(\nabla X)_{\bp ai\bp\uk l}N_j + \Pc(\nabla X)_{\bp ai \uj\bp l}N_k\right),
\end{align*}
where, here, $\Pc$ acts on the arguments corresponding to the indices $a$, $i$, $l$,
e.g.,
\[
\Pc(\nabla\tilde{X})_{\bj aiN\uk l} = \Pc_{ail}^{bcd}\nabla_{\bj}\tilde{X}_{bc N\uk d},
\]
and similarly for the other terms.

Now, according to \eqref{eq:pgradient}, we have
\begin{align}
\begin{split}
-\frac{2}{m}\Pc(\nabla \tilde{X})_{\bj aiN\uk l} &\sim -\frac{2}{m}\nabla_{\bj}U_{aiN\uk l}\\
&\phantom{=}
  +\frac{2}{m^2}\left(X_{N\ui N\uk\ul}V_{ja} + X_{\ua N N\uk \ul}V_{ji} + X_{\ua\ui N\uk N}V_{jl}\right)\\
     &\phantom{=}   +\frac{2}{m^3}\left(X_{N\bp N\uk\bp}V_{ja}V_{il}-X_{\ua\bar{p} N\uk\bar{p}}
     \left(V_{ji}N_l + V_{jl}N_i\right)\right)  \\
     &\phantom{=}+\frac{2}{m^3}\left(X_{\bp N N \uk \bp}V_{ji}V_{al} - X_{\bar{p}\ui N\uk\bar{p}}\left(V_{ja}N_l + V_{jl}N_a\right)\right)
     \\
     &\phantom{=} +\frac{2}{m^3}\left(X_{\bp\bp N \uk N}V_{jl}V_{ai}-X_{\bar{p}\bar{p} N\uk\ul}
     \left(V_{ji}N_a + V_{ja}N_i\right) \right),
\end{split}
\end{align}
and
\begin{align}
\begin{split}
-\frac{2}{m}\Pc(\nabla \tilde{X})_{\bk ai\uj N l} &\sim -\frac{2}{m}\nabla_{\bk}U_{ai\uj N l}\\
&\phantom{=}
  +\frac{2}{m^2}\left(X_{N\ui \uj N\ul}V_{ka} + X_{\ua N \uj N \ul}V_{k i} + X_{\ua\ui \uj N N}V_{kl}\right)\\
     &\phantom{=}   +\frac{2}{m^3}\left(X_{N\bp \uj N\bp}V_{ka}V_{il}-X_{\ua\bar{p} \uj N\bar{p}}
     \left(V_{ki}N_l + V_{kl}N_i\right)\right)  \\
     &\phantom{=}+\frac{2}{m^3}\left(X_{\bp N \uj N \bp}V_{ki}V_{al} - X_{\bar{p}\ui\uj N\bar{p}}\left(V_{ka}N_l + V_{kl}N_a\right)\right)
     \\
     &\phantom{=} +\frac{2}{m^3}\left(X_{\bp\bp \uj N N}V_{kl}V_{ai}-X_{\bar{p}\bar{p} \uj N\ul}
     \left(V_{ki}N_a + V_{ka}N_i\right) \right).
\end{split}
\end{align}
Thus, returning to our above expression and cancelling terms, we see that
\begin{align}\label{eq:ucomm2b}
 \Delta U_{aijkl} \sim  \Pc(\Delta \tilde{X})_{aijkl} - \frac{2}{m}\left(\Pc(\nabla X)_{\bp ai\bp\uk l}N_j + \Pc(\nabla X)_{\bp ai \uj\bp l}N_k\right).
\end{align}

We will need to work a bit harder to see that the remaining two terms on the right of \eqref{eq:ucomm2b} have the form that we claim.
For the term $\Pc(\nabla X)_{\bp ai\bp\uk l}$, note first that
\begin{align}\label{eq:ucomm3}
\begin{split}
 \nabla_{\bp}X_{ai\bp\uk l} &= \nabla_{p} X_{aip\uk l} - \nabla_{\up}X_{ai\up\uk l}\\
 &= \nabla_a X_{p i  p \uk l} - R_{paiq}R_{qp\uk l}
    -R_{papq}R_{iq\uk l} - R_{pa\uk q}R_{i p q l}\\
    &\phantom{=} - R_{palq}R_{ip\uk q} - \nabla_{\up}X_{ai\up\uk l}\\
    &=\nabla_a\nabla_{\uk}R_{li} - \nabla_a\nabla_{l}R_{\uk i} - R_{paiq}R_{qp\uk l}
    -R_{papq}R_{iq\uk l}\\
    &\phantom{=}  - R_{pa\uk q}R_{i p q l} - R_{palq}R_{ip\uk q} - \nabla_{\up}X_{ai\up\uk l}.
\end{split}
\end{align}
Now, the projection $\Pc$ annihilates tensors $Y_{aijkl}$ of the form
\[
  Y_{\ua\ui jk \ul}, \quad Z_{\ua jk} V_{il}, \quad Z_{\ui jk }V_{al}, \quad\mbox{and}\quad Z_{jk\ul}V_{ai}.
\]
We will use $K$ below to denote any term in the kernel of $\Pc$.

First, recall from  \eqref{eq:qdef} that
\[
     R_{a\uk l p} = R_{\ua\uk \ul \up} + (W_{\uk \ul}V_{ap} - W_{\uk \up}V_{al}) + Q_{a\uk l p}
\]
where $W_{ij} = \frac{1}{m}R_{\bp \ui\uj \bp}$.
Thus, for the first term on the third line of \eqref{eq:ucomm3}, we see that
\begin{align*}
   &\nabla_a\nabla_{\uk}R_{li} = \nabla_{\uk}\nabla_a R_{li} -R_{a\uk lp}R_{p i} - R_{a\uk i p}R_{l p}\\
                              &\quad= \nabla_{\uk}\nabla_a R_{li} + \Rm \ast Q -R_{\ua\uk \ul\up}(R_{\up \ui} + M_{\up \bi})
                              -W_{\uk \ul}\left(R_{\ba \ui} + M_{\ba\bi} + \frac{\hat{R}}{m}V_{ai}\right)\\
                              &\quad\phantom{=}+ W_{\uk \up}V_{al}R_{\up i} - R_{\ua\uk \ui \up}(R_{\ul \up} + M_{\bl\up})
                              -W_{\uk\ui}\left(M_{\ul\ba} + M_{\bl\ba} + \frac{\hat{R}}{m}V_{al}\right) +W_{\uk\up}V_{ai}R_{l\up}\\
                              &\quad = \nabla_{\uk}\nabla_a R_{li} +  \Rm \ast (M+ Q) + K.
\end{align*}
But this says that
\begin{equation}\label{eq:ucomm4}
 \Pc(\nabla\nabla\Rc)_{a\uk li} \sim \Pc(\nabla\nabla \Rc)_{\uk a li} \sim \nabla(\Pc(\nabla\Rc))_{\uk ali} = \nabla_{\uk}P_{ali}
\end{equation}
where for the third equality we have used that  $\nabla_{\uk} H_{ij} =\Ec^{\prime} = -\nabla_{\uk}V_{ij}$.

For the second term in the third line of \eqref{eq:ucomm3}, we argue similarly.
If $\Pc^{\prime}$ is the projection $\Pc$ acting on the indices $l$, $k$, $i$, we have
\begin{align*}
 &\nabla_a P_{lki} = \Pc^{\prime}(\nabla\nabla \Rc)_{alki} + X \ast \Ec^{\prime} + N \ast P  \\
  &\phantom{=}
  +\frac{1}{m}\left(\nabla_{N}R_{\uk\ui}V_{al} + \nabla_{\ul}R_{N \ui}V_{ak} + \nabla_{\ul}R_{\uk N}V_{ak}\right)\\
     &\phantom{=}   +\frac{1}{m^2}\left(\nabla_{N}R_{\bp\bp}V_{al}V_{ki}-\nabla_{\ul}R_{\bar{p}\bar{p}}
     \left(V_{ak}N_i + V_{ai}N_k\right)\right)  \\
     &\phantom{=}+\frac{1}{m^2}\left(\nabla_{\bp}R_{N \bp}V_{ak}V_{il} - \nabla_{\bar{p}}R_{\uk\bar{p}}\left(V_{al}N_i + V_{ai}N_l\right)\right)
     \\
     &\phantom{=} +\frac{1}{m^2}\left(\nabla_{\bp}R_{\bp N}V_{ai}V_{lk}-\nabla_{\bar{p}}R_{\bar{p}\ui}
     \left(V_{ak}N_l + V_{al}N_k\right) \right),
\end{align*}
so
\begin{align*}
 \nabla_a P_{l\uk i} &= \Pc^{\prime}(\nabla\nabla\Rc)_{al\uk i} + X \ast \Ec^{\prime} + N \ast P +\frac{1}{m}\nabla_{N}R_{\uk\ui}V_{al}-\frac{1}{m^2}\nabla_{\ul}R_{\bar{p}\bar{p}}V_{ai}N_k  \\
     &\phantom{=}-\frac{1}{m^2}\left(\nabla_{\bar{p}}R_{\uk\bar{p}}\left(V_{al}N_i + V_{ai}N_l\right) +\nabla_{\bar{p}}R_{\bar{p}\ui}V_{al}N_k\right) \\
     &\sim \Pc^{\prime}(\nabla\nabla\Rc)_{al\uk i} + K,
\end{align*}
where $K$ belongs to the kernel of $\Pc$. On the other hand,
\begin{align*}
 \Pc^{\prime}(\nabla\nabla\Rc)_{al\uk i} &= \nabla_a \nabla_{\ul}R_{\uk i} - \nabla_{a}\nabla_{\ul}R_{\uk\ui}
 -\frac{1}{m}\nabla_a\nabla_{\bp}R_{\uk\bp}V_{li}\\
 &=\nabla_a \nabla_{l}R_{\uk i} - \nabla_{\ba}\nabla_{\ul}R_{\uk\ui} + K\\
 &= \nabla_a \nabla_{l}R_{\uk i} - \nabla_{\ul}\nabla_{\ba}R_{\uk\ui} + R_{\ba\ul\uk p}R_{p\ui}
 +R_{\ba\ul\ui p}R_{\uk p} + K\\
 &= \nabla_a \nabla_{l}R_{\uk i} - \nabla_{\ul}P_{\ba\uk\ui} + R_{\ba\ul\uk p}R_{p\ui}
 +R_{\ba\ul \ui p}R_{\uk p} + X \ast \Ec^{\prime} + \Rm \ast Q \\
 &\phantom{\sim}+ \Rm \ast M + K\\
&\sim \nabla_a \nabla_{l}R_{\uk i} + \nabla P +  K.
\end{align*}
Therefore
\begin{equation}\label{eq:ucomm5}
 \Pc(\nabla\nabla\Rc)_{al\uk i} N_j \sim \Pc(\Pc^{\prime}(\nabla\nabla\Rc))_{al\uk i} N_j \sim N \ast \nabla P \sim  0.
\end{equation}

Now we consider the curvature terms in \eqref{eq:ucomm3}. For the first one, as above, we have
\begin{align}\label{eq:ucomm6}
\begin{split}
  R_{paiq}R_{qp\uk l} &= R_{\up a i\uq}R_{\uq\up\uk\ul} + R_{\up ai\bl}W_{\up\uk} - R_{\bl ai\uq}W_{\uq\uk} + \Rm \ast Q\\
  &= (R_{\up\ua\ui \uq} + W_{\up\uq}V_{ai})R_{\uq\up\uk\ul}
   -V_{al}W_{\up\ui}W_{\up\uk}
   +V_{il}W_{\ua\uq}W_{\uq\uk} + \Rm \ast Q \\
   &\sim K.
\end{split}
\end{align}
Likewise, for the second, we have
\begin{align}\label{eq:ucomm7}
\begin{split}
 R_{papq}R_{iq\uk l} &= -R_{aq}R_{iq\uk l}\\
 &= -\left(M_{aq} + R_{\ua\uq} + \frac{\hat{R}}{m}V_{aq}\right)
 \left(Q_{iq\uk l} + R_{\ui\uq\uk\ul} + V_{il}W_{\ua\uk} - V_{ql}W_{\ui\uk}\right)\\
 &\sim -R_{\ua\uq}(R_{\ui\uq\uk\ul} + V_{il}W_{\ua\uk}) + \frac{\hat{R}}{m}V_{al}W_{\ui\uk}\\
 &\sim K.
\end{split}
\end{align}
For the third and fourth curvature terms in \eqref{eq:ucomm3}, we can argue exactly as for the first such term to conclude that
\begin{equation}\label{eq:ucomm8}
  R_{pa\uk q}R_{i p q l}\sim K \quad  \mbox{and} \quad R_{palq}R_{ip\uk q}\sim K.
\end{equation}
Hence, up to terms controlled by the right hand side of \eqref{eq:ucomm}, the quadratic curvature terms
in \eqref{eq:ucomm3}
on the  belong to the kernel of $\Pc$.

Finally, for the last term in \eqref{eq:ucomm3},
using again that $\nabla_{\up}H_{bc} =- \nabla_{\up}V_{bc} = \Ec^{\prime}$, we compute that
\[
  \nabla_{\up}X_{ai\up\uk l} = \nabla_{\up}(X_{ai\up\uk l}) + X\ast \Ec^{\prime}
  = \nabla_{\up}\tilde{X}_{ai \up k l} + X\ast \Ec^{\prime},
\]
and hence obtain that
\begin{align}
\begin{split}\label{eq:ucomm9}
\Pc(\nabla X)_{\up ai\up\uk l} &= \Pc(\tilde{X})_{\up ai\up k l} + X\ast \Ec^{\prime} =
\nabla_{\up}(\Pc(\tilde{X}))_{ai \up k l} + X\ast \Ec^{\prime}\\
&=
\nabla_{\up} U_{ai\up kl} + X\ast \Ec^{\prime}.
\end{split}
\end{align}
Thus, combining \eqref{eq:ucomm4} - \eqref{eq:ucomm9} in \eqref{eq:ucomm3}, we see at last that the
term $\Pc(\nabla X)_{\bp ai\bp\uk l}N_j$ from \eqref{eq:ucomm2b} satisfies
\[
 \Pc(\nabla X)_{\bp ai\bp\uk l}N_j \sim 0.
\]

Since $\Pc(\nabla X)_{\bp ai\uj\bp l} = \Pc(\nabla X)_{\bp al\bp\uj i}$ by the symmetries of $\Rm$ and $\Pc$,
it follows that the last term from \eqref{eq:ucomm2b} satisfies
\[
 \Pc(\nabla X)_{\bp ai\uj\bp l}N_k \sim 0
\]
as well.
Hence, returning to \eqref{eq:ucomm2b} we see that
\begin{align*}
 \Delta U_{aijkl} \sim  \Pc(\Delta \tilde{X})_{aijkl} \sim \Uc(\Delta X)_{aijkl}
\end{align*}
as claimed.
\end{proof}

\subsection{Estimating the reaction terms}

\begin{proposition}\label{prop:uj}
The tensor $J$ defined by \eqref{eq:jdef} satisfies
\begin{align}\label{eq:uj}
\begin{split}
 \Uc(J)_{aijkl} &= \nabla \Rm \ast (Q + M) + \Rm \ast (P + U) \sim 0.
\end{split}
\end{align}
\end{proposition}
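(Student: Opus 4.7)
The plan is to proceed analogously to Proposition \ref{prop:ucomm}: expand $J$ via the Leibniz rule into a sum of $\nabla \Rm \ast \Rm$ terms, then decompose each curvature factor using
\[
    \Rm = \Rm^H + \Rm^V + \tfrac{1}{m} W \odot V + Q,
\]
and track which pieces survive the projection $\Uc = \Pc \circ \Hc$. Any contribution containing $Q$ as a curvature factor is immediately of the form $\nabla \Rm \ast Q$, and hence belongs to $\nabla \Rm \ast (Q+M)$ by Proposition \ref{prop:qprop}. We may therefore restrict attention to the ``model'' terms in which both curvature factors lie in $\{\Rm^H,\, \Rm^V,\, \tfrac{1}{m}W\odot V\}$.

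For these model terms, the key is that the $\Hc$-projection on $(j,k)$, together with the $\Pc$-projection on $(a,i,l)$, kills tensors that are purely horizontal in $(a,i,l)$ as well as those whose $(a,i,l)$-components factor through $V_{ai}$, $V_{al}$, or $V_{il}$ times a vertical trace. A case-by-case analysis of the possible pairings for each of the four signed $B$-terms shows that the various $\Rm^{?} \ast \Rm^{?}$ combinations, after performing the $p,q$ contractions defining $B$, produce exactly such kernel tensors, up to contributions involving the Ricci tensor which in turn split via $\Rc = \Rc^H + \tfrac{\Rh}{m}V + M$ into $\Rc^H$-pieces (killed by $\Pc$), the scalar-trace piece (also killed by $\Pc$), and the error $M$. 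For the $\nabla \Rm$ factor, I would use the second Bianchi identity to move the derivative onto index configurations compatible with either $\Pc(\nabla \Rc) = P$ or $\Uc(\nabla \Rm) = U$; in particular, traces of $\nabla_a R_{pijq}$ over vertical index pairs produce $\nabla \Rc$-type quantities whose $\Pc$-projection is exactly $P$, with the remainder captured by $U$ and by terms already in the kernel of $\Uc$.

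The main obstacle is the bookkeeping: many index patterns arise, and the alternating combination $B_{ijkl} - B_{ijlk} + B_{ikjl} - B_{iljk}$ produces numerous contributing cases. Fortunately, the pair symmetry $R_{ijkl} = R_{klij}$ and the antisymmetries built into this alternation collapse many of them, forcing cancellations among the purely model contributions; only the $P$-, $U$-, and $M$-controlled remainders persist. As in Proposition \ref{prop:ucomm}, the calculation is lengthy but essentially mechanical once the algebraic framework is in place.
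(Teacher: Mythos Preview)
Your overall strategy is correct and aligns with the paper's: both decompose the undifferentiated curvature factor via the identity \eqref{eq:qdef} and track what survives the projection $\Uc = \Pc \circ \Hc$. The paper, however, organizes the computation more efficiently. Rather than decomposing both factors in full and then relying on cancellations among the four signed $B$-terms, it applies $\Hc$ first (so $j$ and $k$ are horizontal from the outset) and then shows that \emph{each} of the four terms $\nabla_a B_{i\uj\uk l}$, $\nabla_a B_{i\uj l\uk}$, $\nabla_a B_{i\uk\uj l}$, $\nabla_a B_{il\uj\uk}$ is \emph{individually} of the form $K + \nabla\Rm\ast Q + \Rm\ast(U + P) + \nabla\Rm\ast M$, where $K$ lies in the kernel of $\Pc$; no cross-cancellations between $B$-terms are needed. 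In particular, once $j,k$ are horizontal the $\Rm^V$ piece of your decomposition is immediately irrelevant, and the second Bianchi identity plays a smaller role than you anticipate: the paper mostly identifies the relevant $\nabla\Rm$ components directly with $U$ (by definition) or with $P$ (via a Ricci trace), using only the algebraic Bianchi identity to shuffle indices. Your route would also reach the conclusion, but with more bookkeeping than necessary.
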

\begin{proof}
Recall that we can write $\Uc(J)_{aijkl} = \Pc(J)_{ai\uj\uk l}$
where $\Pc$ is the projection defined above acting on the indices $a$, $i$, and $l$. In our calculations below, we will continue to use $K$ to denote an element of the kernel of the projection $\Uc$.

We start with the equation
\begin{align}\label{eq:nablab}
  \nabla_a B_{i\uj\uk l} &= \nabla_a R_{pi\uj q}R_{p\uk l q} + R_{pi\uj q}\nabla_aR_{p\uk l q}.
\end{align}
Expanding the first term on the right and simplifying, we see that
\begin{align}
\label{eq:uj1}
\begin{split}
& \nabla_a R_{pi\uj q}R_{p\uk l q} =
\nabla_a R_{pi\uj q}(R_{\up\uk\ul\uq} +(V_{pq}W_{\uk\ul} - V_{pl}W_{\uk\uq}) + Q_{p\uk l q})\\
&\qquad = \nabla_a R_{\up i\uj\uq}R_{\up\uk\ul\uq} + \nabla_a R_{\bp i\uj \bp}W_{\uk\ul}
- \nabla_a R_{\bl i \uj \uq}W_{\uk\uq} + \nabla \Rm \ast Q.
\end{split}
\end{align}
Looking more closely at the first term on the right of this equation, we see that
\begin{align*}
 \nabla_{a} R_{i\up \uj\uq}R_{\up\uk\ul\uq} &=
 \nabla_{\ua} R_{\ui\up \uj\uq}R_{\up\uk\ul\uq}
 + \nabla_{\ba} R_{\ui\up \uj\uq}R_{\up\uk\ul\uq}
 \nabla_{\ua} R_{\bi\up \uj\uq}R_{\up\uk\ul\uq}
 +\nabla_{\ba} R_{\bi\up \uj\uq}R_{\up\uk\ul\uq}\\
 &= \Rm\ast U + K,
\end{align*}
whereas for the second and third terms in \eqref{eq:uj1}, we have
\begin{align*}
 \nabla_a R_{\bp i\uj \bp}W_{\uk\ul} &= \nabla_a R_{i\uj}W_{\uk\ul} - \nabla_aR_{\up i\uj\up}W_{\uk\ul}=\nabla_a R_{i\uj}W_{\uk\ul} + \nabla_aR_{i \up \uj\up}W_{\uk\ul} \\
 &= \Rm \ast P + \Rm \ast U  + K,
\end{align*}
and
\begin{align*}
 \nabla_a R_{\bl i \uj \uq}W_{\uk\uq} = (\nabla_a R_{\bl\uq \uj i}-\nabla_a R_{\bl \uj \uq i }) W_{\uk\uq}
  &= \Rm \ast U + K.
\end{align*}
Thus, noting that the second term in the expression \eqref{eq:nablab} is of an analogous form, we have
\begin{align}\label{eq:jfirst}
 \nabla_a B_{a\uj\uk l} &=  \nabla\Rm \ast Q + \Rm\ast U + \Rm \ast P + K
\end{align}
Essentially identical computations then show that
\begin{equation}\label{eq:jmid}
  \nabla_a B_{i\uj i l \uk }  =  \nabla_a B_{i\uk \uj l} =  \nabla\Rm \ast Q + \Rm\ast U + \Rm \ast P + K.
\end{equation}

The final term in $J$ has a somewhat different form than the others. For this term,
we argue as follows, starting from
\begin{equation}\label{eq:jlast1}
\nabla_a B_{i l \uj \uk} = \nabla_a R_{pilq}R_{p\uj\uk q} + R_{pilq}\nabla_a R_{p\uj\uk q}.
\end{equation}
For the first term in \eqref{eq:jlast1}, we have
\begin{align*}
 \nabla_a R_{pilq}R_{p\uj\uk q} &= \nabla_{a} R_{i\bp\bq l}R_{\bp\uj\uk \bq}
 + \nabla_{a} R_{i\up\uq l}R_{\up\uj\uk \uq}\\
 &= (\nabla_aR_{il} -\nabla_aR_{i\up\uq l})W_{\uj\uk} + \nabla_{a} R_{i\up\uq l}R_{\up\uj\uk \uq}
 + \Rm \ast U + \Rm \ast P\\
 &\phantom{=}+ \nabla \Rm \ast Q +K
\end{align*}
On the other hand, for the second term, we have
\begin{align*}
& R_{pilq}\nabla_a R_{p\uj\uk q} \\
&\quad=R_{pilq}\left(U_{ap j k q} + \nabla_{\ua}R_{\up\uj\uk\uq}
+\frac{1}{m}\left(V_{ap}\nabla_{\br}R_{\br\uj\uk \uq}
 + V_{aq}\nabla_{\br}R_{\up\uj\uk \br} + V_{pq}\nabla_{\ua}R_{\br\uj\uk \br}\right)
 \right)\\
&\quad= \Rm\ast U + R_{\up il \uq}\nabla_{\ua}R_{\up\uj\uk\uq}
+ \frac{1}{m}\left(R_{\ba il \uq}\nabla_{\br}R_{\br\uj\uk\uq} +
R_{\up il \ba}\nabla_{\br}R_{\up \uj\uk \br} + R_{\bp il \bp}\nabla_{\ua}R_{\br \uj\uk\br}\right)\\
&\quad=  \Rm\ast U + \nabla\Rm \ast Q + K + \frac{1}{m}(R_{il} - R_{\up il \up})\nabla_{\ua}R_{\br \uj\uk\br}\\
&\quad =  \Rm \ast U + \nabla\Rm \ast Q + \nabla \Rm \ast M + K.
\end{align*}
Returning to \eqref{eq:jlast1} with the above two identities, and combining
them with \eqref{eq:jfirst} and \eqref{eq:jmid}, we obtain that
\begin{align*}
  \Uc(J) & =   \nabla \Rm \ast M + \nabla\Rm \ast Q + \Rm \ast P +  \Rm\ast U
\end{align*}
as claimed.
\end{proof}

\begin{proposition}\label{prop:ul}
The tensor $L$ defined by \eqref{eq:ldef} satisfies
\begin{align}\label{eq:ul}
\begin{split}
 \Uc(L) &= \nabla \Rm \ast (Q + M) + \Rm \ast (P + U) \sim 0.
\end{split}
\end{align}
\end{proposition}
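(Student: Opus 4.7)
The proof will follow the same template established in Proposition \ref{prop:uj}. The tensor $L$ is a sum of four terms, each being a contraction of $\Rm$ with $\nabla \Rm$, so I will treat them in turn, applying $\Uc(\cdot)_{aijkl} = \Pc(\cdot)_{ai\uj\uk l}$ (where $\Pc$ acts on the indices $a$, $i$, $l$) to each summand and showing that the result is always $\sim 0$.

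The main tool will be the two structural decompositions already used for $\Uc(J)$. First, for the curvature factor, I write
\[
R_{pqrs} = R^H_{pqrs} + R^V_{pqrs} + \frac{1}{m}(W\odot V)_{pqrs} + Q_{pqrs},
\]
which is precisely the content of Proposition \ref{prop:qprop}. Second, for the $\nabla \Rm$ factor, I will use $\nabla_a R_{bijkd} = U_{ab\uj\uk d} + X_{abjkd} + Y_{abjkd}$, where $X$ carries the purely horizontal $\nabla R^H$ piece together with the three "trace" pieces (as in the definition of $U$), and $Y$ collects the terms where $j$ or $k$ is not projected to horizontal (controlled via symmetries of $\Rm$ and ultimately by $Q$, $M$, $P$, as in \eqref{eq:pev2}-\eqref{eq:pev6} from the proof of Proposition \ref{prop:pev}). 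Substituting both decompositions into each of the four terms of $L$ produces a collection of products whose constituents are either: (i) clearly of the form $\Rm \ast U$, $\Rm \ast P$, $\nabla\Rm \ast Q$, or $\nabla\Rm \ast M$; or (ii) lie in $\ker\Pc$ by construction. As noted in the proof of Proposition \ref{prop:ucomm}, tensors of the forms $Y_{\ua\ui jk\ul}$, $Z_{\ua jk}V_{il}$, $Z_{\ui jk}V_{al}$, and $Z_{jk\ul}V_{ai}$ all belong to $\ker\Pc$, and this will repeatedly be the mechanism that eliminates the "clean" warped-product pieces.

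Concretely, for the first term $R_{iqap}\nabla_p R_{q\uj\uk l}$, splitting the $q, p$ indices into horizontal and vertical components yields four contractions; those in which $q$ and $p$ are both horizontal produce $\Rm \ast U + \nabla\Rm \ast Q$ after applying both decompositions, while the mixed and fully vertical contributions yield, after using the symmetries of $\Rm$ and the Bianchi identity exactly as in \eqref{eq:uj1}, either kernel terms or contributions of the form $\Rm \ast P$ and $\nabla\Rm \ast M$. The analysis for the second term $R_{jqap}\nabla_p R_{iqkl}$ is essentially identical after swapping the roles of two indices. The third and fourth terms, $R_{kqap}\nabla_p R_{ijql}$ and $R_{lqap}\nabla_p R_{ijkq}$, are handled by the same decomposition; the $l$-case is slightly different because $l$ is projected by $\Pc$ rather than by $\Hc$, but the argument is structurally the same since the decomposition of $\nabla_p R_{ijkq}$ into $U$ and residual terms still applies.

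I expect the only real obstacle to be the sheer combinatorial bookkeeping: each term produces roughly a dozen sub-cases after the double decomposition, and one must verify that the "diagonal" pieces---i.e., those involving $V_{pq}$ contracted with $W$, which do not a priori lie in $\ker\Pc$---cancel in pairs or combine into tensors of the listed schematic form. This is where the precise algebra used in \eqref{eq:ucomm6}--\eqref{eq:ucomm8} of the proof of Proposition \ref{prop:ucomm} becomes important, and reusing those identities (applied to $\nabla\Rm$ in place of $\Rm$) should dispose of the bookkeeping systematically. Once every sub-case is classified, the estimate \eqref{eq:ul} follows immediately.
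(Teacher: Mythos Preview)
Your proposal is correct and follows essentially the same approach as the paper: decompose each of the four summands of $L_{ai\uj\uk l}$ using the $Q$-decomposition of $\Rm$ and the $U$-decomposition of $\nabla\Rm$, then verify that every resulting piece is either of the schematic form $\nabla\Rm\ast(Q+M)+\Rm\ast(P+U)$ or lies in $\ker\Pc$. One small correction to your pairing: the first and fourth terms (indices $i$ and $l$ in the $\Rm$ factor) are the natural pair, as are the second and third (indices $j$ and $k$); the second term is \emph{not} structurally identical to the first, since in $R_{\uj qap}\nabla_pR_{iq\uk l}$ the horizontal index $\uj$ sits in the $\Rm$ factor, so the paper applies the $Q$-decomposition to $R_{\uj qap}$ first (rather than the $U$-decomposition to $\nabla\Rm$) and then uses the Bianchi identity to reduce the resulting traces to $P$ and $U$.
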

\begin{proof}
As in the proof of Proposition \ref{prop:uj}, we write $\Uc(L)_{aijkl} = \Pc(L)_{ai\uj\uk l}$ and start from the equation
\begin{equation}\label{eq:lident1}
 L_{ai\uj\uk l} = 2\big(R_{iqap}\nabla_p R_{q\uj\uk l} + R_{\uj qap}\nabla_p R_{iq\uk l} + R_{\uk qap}\nabla_pR_{i\uj ql} + R_{lqap}\nabla_pR_{i\uj\uk q}\big).
\end{equation}

Consider the first term on the right in \eqref{eq:lident1}. Using the identity from the proof of Proposition \ref{prop:uj}, we see that
\begin{align*}
&R_{iqap}\nabla_p R_{q\uj\uk l}\\
&\quad=R_{iqap}\left(U_{pq j k l} + \nabla_{\up}R_{\uq\uj\uk\ul}
+\frac{1}{m}\left(V_{pq}\nabla_{\br}R_{\br\uj\uk \ul}
 + V_{pl}\nabla_{\br}R_{\uq\uj\uk \br} + V_{ql}\nabla_{\up}R_{\br\uj\uk \br}\right)
 \right)\\
 &\quad= U \ast \Rm + R_{i\uq a \up} \nabla_{\up}R_{\uq\uj\uk\ul} +
 \frac{1}{m}\left(R_{i\bp a\bp}\nabla_{\br}R_{\br\uj\uk\ul}
 +R_{i\uq a \bl}\nabla_{\br}R_{\uq\uj\uk\br} +R_{i\bl a \up}\nabla_{\up} R_{\br\uj\uk \br}\right)\\
 &\quad= \nabla \Rm \ast Q + \Rm\ast U + K
 + \frac{1}{m}(R_{ia} -R_{i\up a \up})\nabla_{\br}R_{\br\uj\uk\ul},
\end{align*}
where here, again, $K$ denotes an element of the kernel of $\Uc$. Hence
\begin{equation}\label{eq:lident2}
 R_{iqap}\nabla_p R_{q\uj\uk l} = \nabla \Rm\ast M + \nabla \Rm\ast Q + \Rm\ast U + K.
\end{equation}

Similarly, for the second term in \eqref{eq:lident1}, we see that
\begin{align*}
R_{\uj qap}\nabla_p R_{iq\uk l} &= (R_{\uj \uq\ua \up}
+ V_{aq}W_{\uj\up} - V_{pq}W_{\uj \ua}) \nabla_p R_{iq\uk l} + Q \ast \nabla \Rm\\
&= R_{\uj \uq\ua \up} \nabla_{\up} R_{i\uq\uk l}
 +\nabla_{\up} R_{i\ba\uk l}W_{\uj\up} -  \nabla_{\bp} R_{i\bp\uk l}W_{\uj\ua}.
\end{align*}
Now,
\[
 R_{\uj \uq\ua \up} \nabla_{\up} R_{i\uq\uk l} = \Rm \ast U + K,
\]
and
\begin{align*}
 \nabla_{\up} R_{i\ba\uk l}W_{\uj\up} & = (\nabla_{\ba}R_{i
 \up\uk l} -\nabla_i R_{\ba\up\uk l})W_{\uj\up} = \Rm \ast U  + K,
\end{align*}
while
\begin{align*}
 \nabla_{\bp} R_{i\bp\uk l}W_{\uj\ua} &= (\nabla_{\uk}R_{li} - \nabla_lR_{\uk i}
 - \nabla_{\up}R_{i\up\uk l})W_{\uj\ua} = \Rm \ast P + \Rm \ast U + K,
\end{align*}
so this term satisfies
\begin{equation}\label{eq:lident3}
 R_{\uj qap}\nabla_p R_{iq\uk l} = \nabla \Rm \ast Q + \Rm \ast P  +\Rm\ast U + K.
\end{equation}
But the third term in \eqref{eq:lident1} can be treated in the same way as the second term since
\[
 R_{\uk qap}\nabla_pR_{i\uj ql} = R_{\uk qap}\nabla_pR_{lq \uj i},
\]
and the fourth term is analogous to the first. Thus we conclude that
\[
 L = \nabla \Rm\ast M + \nabla \Rm \ast Q + \Rm \ast P  + \Rm \ast U + K,
\]
and \eqref{eq:ul} follows.
\end{proof}

\subsection{The PDE-ODE System}
We now formally group our connection and curvature invariants together in a single structure. Let
\begin{equation*}
    \Xc \dfn T_2(M) \oplus T_3(M) \oplus T_5(M),
\end{equation*}
and
\begin{equation*}
    \Yc \dfn T_2(M) \oplus T_3(M)\oplus T_3(M) \oplus T_4(M) \oplus T_4(M),
\end{equation*}
and define the sections
\begin{equation}\label{eq:xydef}
    \ve{X} \dfn (M, P, U), \quad \ve{Y} \dfn (G, A, T^0, \nabla A, \nabla T^0),
\end{equation}
of $\Xc$ and $\Yc$ in terms of the invariants $A$, $T^0$, $G$, $M$, $P$, and $U$ associated to the distributions $\Hc$ and $\Vc$.

The net result of the computations in the preceding sections is that (along an arbitrary smooth solution to the Ricci flow, and with respect to an arbitrary pair of evolving orthogonal distributions $\Hc(\tau)$ and $\Vc(\tau)$), the sections $\ve{X}$ and $\ve{Y}$ satisfy a \emph{closed} system of inequalities in the following sense.
\begin{theorem}\label{thm:xysys}
Suppose that $g(\tau)$ is a smooth solution the backward Ricci flow on $M\times [0, T]$. Let $\Hc(\tau)$ and $\Vc(\tau)$
be complementary orthogonal distributions evolving by \eqref{eq:hvev} with $\dim \Vc(\tau) = m$. Then, on $M\times [0, T]$,
we have
\begin{align}
\begin{split}\label{eq:xysys}
 |D_{\tau} \ve{X} + \Delta \ve{X}| &\leq
 \Theta_1 (|\ve{X}| + |\nabla\ve{X}|) + \Theta_2|\ve{Y}|\\
 |D_{\tau} \ve{Y}|&\leq C(|\ve{X}| + |\nabla \ve{X}|) + \Theta_2|\ve{Y}|,
\end{split}
\end{align}
for some constant $C = C(n)$ and polynomials
\[
 \Theta_1 = \Theta_1(|N|, |\Rm|, |\nabla \Rm|),
 \quad \Theta_2 = \Theta_2(|N|, |\Rm|, |\nabla \Rm|,
 |\nabla^2\Rm|, |A|, |T^0|),
\]
satisfying the conditions of Section \ref{ssec:notation}.
\end{theorem}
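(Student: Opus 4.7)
The proof is essentially an assembly theorem: both inequalities follow by cataloging the estimates on the individual components of $\ve{X} = (M, P, U)$ and $\ve{Y} = (G, A, T^0, \nabla A, \nabla T^0)$ that have already been established.

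The plan for the first inequality is to simply add the three heat-type estimates proved in Section 5. The bound $|(D_{\tau} + \Delta) M|$ is given by Proposition \ref{prop:mev}, $|(D_{\tau} + \Delta) P|$ by Proposition \ref{prop:pev}, and $|(D_{\tau} + \Delta) U|$ by Proposition \ref{prop:uev}. Each produces a linear combination of $|\ve{X}|$, $|\nabla\ve{X}|$, and $|\ve{Y}|$ with coefficients of exactly the form permitted by $\Theta_1$ and $\Theta_2$; the only task is to verify that the unions of the variables on which each proposition's polynomials depend sit within the variable sets specified in the theorem statement, which they do.

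The plan for the second inequality is to collect the connection-level estimates from Section 4. Proposition \ref{prop:aev} (via \eqref{eq:dtaest}) and Proposition \ref{prop:t0ev} (via \eqref{eq:t0evest}) give bounds on $|D_\tau A|$ and $|D_\tau T^0|$ each of the form $\lesssim |\Rm|(|A|+|T^0|) + |N||M| + |P|$, in which $|P|$ is a component of $|\ve{X}|$, $|N||M|$ is absorbed into $|\ve{X}|$ once $|N|$ is pushed into a polynomial coefficient, and $|\Rm|(|A|+|T^0|)$ becomes a $\Theta_2|\ve{Y}|$ term. Propositions \ref{prop:dadt0} and \ref{prop:gev} then handle $\nabla A$, $\nabla T^0$, and $G$; their crucial structural feature is that the only derivative of the curvature invariants appearing on the right-hand side is $\nabla P$ (and, for $G$, also $\nabla M$), both of which are components of $|\nabla \ve{X}|$, not $|\ve{Y}|$. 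Summing the five estimates gives the stated form of the $\ve{Y}$ inequality.

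There is no real obstacle — this theorem is a bookkeeping step whose content is already present in the preceding sections. The only mild subtlety is organizing the coefficients: one must check that after all absorptions, terms linear in $|\ve{X}|+|\nabla\ve{X}|$ end up with coefficients depending only on $(|N|,|\Rm|,|\nabla\Rm|)$ (hence within $\Theta_1$), while terms linear in $|\ve{Y}|$ receive coefficients possibly depending on $|\nabla^2\Rm|$, $|A|$, and $|T^0|$ (hence within $\Theta_2$), and that nothing higher than second-order derivatives of $\Rm$ or first-order derivatives of $A$, $T^0$, $M$, $P$, $U$ appears anywhere. Once this verification is made, the two inequalities of \eqref{eq:xysys} are immediate.
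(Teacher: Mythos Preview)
Your proposal is correct and takes essentially the same approach as the paper: the paper's own proof is a single sentence stating that the result follows directly from the estimates in Propositions \ref{prop:aev}, \ref{prop:t0ev}, \ref{prop:dadt0}, \ref{prop:gev}, \ref{prop:mev}, \ref{prop:pev}, and \ref{prop:uev}, which is exactly the assembly you describe. Your write-up is in fact more detailed than the paper's, correctly identifying which propositions control which components and noting the bookkeeping required on the coefficients.
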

\begin{proof}
This follows directly from the estimates proven in Propositions \ref{prop:aev}, \ref{prop:t0ev}, \ref{prop:dadt0}, \ref{prop:gev},
\ref{prop:mev}, \ref{prop:pev}, and \ref{prop:uev}.
\end{proof}

\section{Backward propagation of warped-product structures under the flow}

We now turn to the proofs of Theorems \ref{thm:warped} and \ref{thm:warped2}. The main analytic ingredient is the following general backward uniqueness
principle proven in Theorem 1.1 of \cite{KotschwarRFBU} (cf. Theorem 3 of \cite{KotschwarFrequency}). Here $\Xc$ and $\Yc$ denote
direct sums of tensor bundles over $M$ with metrics and connections induced by the solution $g$.

\begin{theorem}[\cite{KotschwarRFBU, KotschwarFrequency}]\label{thm:rfbu}
Suppose that $g = g(\tau)$ is a smooth complete solution to \eqref{eq:brf} on $M\times [0, \Omega]$ of uniformly bounded curvature.
Assume that $\ve{X} = \ve{X}(\tau)$ and $\ve{Y} = \ve{Y}(\tau)$ are smooth, uniformly bounded families of sections of $\Xc$ and $\Yc$
satisfying the system
\begin{align}\label{eq:rfpdeode}
\begin{split}
\left|D_{\tau}\ve{X} + \Delta \ve{X}\right| &\lesssim \left(|\ve{X}| + |\nabla \ve{X}| + |\ve{Y}|\right)\\
\left|D_{\tau}\ve{Y}\right| &\lesssim \left(|\ve{X}| + |\nabla \ve{X}| + |\ve{Y}|\right),
\end{split}
\end{align}
on $M\times [0, \Omega]$ and that $\ve{X}(0) \equiv 0$ and $\ve{Y}(0) \equiv 0$
on  $M$. Then $\ve{X}(\tau) \equiv 0$ and $\ve{Y}(\tau) \equiv 0$ on $M\times [0, \Omega]$.
\end{theorem}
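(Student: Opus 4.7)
The theorem is an aggregation of the evolution estimates derived in Propositions \ref{prop:aev}, \ref{prop:t0ev}, \ref{prop:dadt0}, and \ref{prop:gev} (for the five components of $\ve{Y}$) and Propositions \ref{prop:mev}, \ref{prop:pev}, and \ref{prop:uev} (for the three components of $\ve{X}$). Its proof amounts to collecting these bounds and checking that the polynomial coefficients on the right-hand side fit into a single pair of envelopes $\Theta_1, \Theta_2$ with the dependencies claimed. The plan is to prove the two lines of \eqref{eq:xysys} separately by the triangle inequality on components.

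For the ODE line, I would bound $|D_\tau \ve{Y}|$ by a sum of the pointwise estimates on $|D_\tau A|, |D_\tau T^0|, |D_\tau \nabla A|, |D_\tau \nabla T^0|, |D_\tau G|$. Inspecting the right-hand sides of \eqref{eq:dtaest}, \eqref{eq:t0evest}, \eqref{eq:daev}, \eqref{eq:dtev}, and \eqref{eq:gev}, every term is either a component of $\ve{Y}$ multiplied by a polynomial in $|N|, |\Rm|, |\nabla \Rm|, |A|, |T^0|, |G|$ (to be absorbed into $\Theta_2$), or a component of $\ve{X}$ or $\nabla\ve{X}$ (specifically $|M|, |\nabla M|, |P|, |\nabla P|$) multiplied by a polynomial of the same shape. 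Taking the maximum of these polynomials and grouping terms according to whether they touch $\ve{Y}$ or $\ve{X}$ gives the required bound.

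The parabolic line is assembled in the same way from \eqref{eq:mev}, \eqref{eq:pev}, and \eqref{eq:uev}. Each bounds a single heat operator $|(D_\tau + \Delta)\,\cdot\,|$ by an expression of the form $\Theta_1'(|\ve{X}| + |\nabla \ve{X}|) + \Theta_2'|\ve{Y}|$, with $\Theta_1'$ depending only on $|N|, |\Rm|, |\nabla \Rm|$ and $\Theta_2'$ additionally depending on $|\nabla^2 \Rm|, |A|, |T^0|$. Summing over the three components of $\ve{X}$ and taking envelopes yields the stated inequality.

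The only delicate point, and the one I would verify explicitly, is that the $|\nabla \ve{X}|$-factor on the right of \eqref{eq:xysys} carries a coefficient in $\Theta_1$ rather than in $\Theta_2$; in particular, the terms $|\nabla P|$ in \eqref{eq:pev} and $|\nabla U|$ in \eqref{eq:uev} must appear with coefficients depending only on $|N|, |\Rm|, |\nabla \Rm|$, not on $|\nabla^2 \Rm|$ or on the connection invariants. This follows from an inspection of the proofs of Propositions \ref{prop:pev} and \ref{prop:uev}: the $|\nabla P|$ and $|\nabla U|$ contributions arise only through the commutator estimates of Propositions \ref{prop:pgradient}, \ref{prop:plaplacian}, and \ref{prop:ucomm}, where they appear multiplied only by factors of $|N|$ and low-order powers of $|\Rm|$. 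No further computation is required once this book-keeping is checked.
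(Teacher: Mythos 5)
You have proven the wrong statement. What you describe --- collecting the evolution estimates from Propositions \ref{prop:aev}, \ref{prop:t0ev}, \ref{prop:dadt0}, \ref{prop:gev}, \ref{prop:mev}, \ref{prop:pev}, and \ref{prop:uev} and checking that the polynomial coefficients fit into the envelopes $\Theta_1$, $\Theta_2$ --- is the content of Theorem \ref{thm:xysys}, namely the verification that the particular sections $\ve{X} = (M,P,U)$ and $\ve{Y} = (G, A, T^0, \nabla A, \nabla T^0)$ \emph{satisfy} a closed system of differential inequalities. Theorem \ref{thm:rfbu} is a different statement entirely: it takes as \emph{hypotheses} that arbitrary uniformly bounded sections $\ve{X}$, $\ve{Y}$ satisfy the system \eqref{eq:rfpdeode} and vanish at $\tau = 0$, and asserts as \emph{conclusion} that they vanish identically on all of $M\times[0,\Omega]$. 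This is a backward uniqueness (unique continuation) theorem for a coupled PDE--ODE system; the inequalities are the input, not the output. Nothing in your argument engages with the conclusion $\ve{X}(\tau)\equiv 0$, $\ve{Y}(\tau)\equiv 0$ for $\tau > 0$, and no amount of book-keeping of the coefficients $\Theta_1$, $\Theta_2$ can produce it.

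The missing content is the analytic core of the method: one must rule out the possibility that a solution of the differential inequality system with vanishing initial data becomes nonzero at later times. Since the heat operator appears with the ``wrong'' sign of time (the system is backward parabolic from the perspective of the vanishing time-slice), this is an ill-posed problem and cannot be handled by a naive Gronwall argument on $\int(|\ve{X}|^2 + |\ve{Y}|^2)$ --- the term $|\nabla\ve{X}|$ on the right of \eqref{eq:rfpdeode} cannot be absorbed. The paper does not reprove this result; it quotes it from Theorem 1.1 of \cite{KotschwarRFBU}, where it is established via a pair of weighted $L^2$ (Carleman-type) inequalities for the coupled system on a complete bounded-curvature Ricci flow background, and from \cite{KotschwarFrequency}, where a frequency-function argument gives an alternative proof. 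If you intend to supply a self-contained proof rather than a citation, you would need to reproduce one of those arguments; as written, your proposal establishes a different theorem and leaves the actual assertion untouched.
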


We will verify shortly that, under the assumptions of Theorems \ref{thm:warped} and \ref{thm:warped2},
the system consisting of $\ve{X}=(M, P, U)$ and  $\ve{Y}=(A, T^0, G, \nabla A, \nabla T^0)$ defined in terms of the invariants above
satisifies the hypotheses of Theorem \ref{thm:rfbu}.  Before we do so, we first observe a few simple consequences of the vanishing of this particular choice of $\ve{X}$ and $\ve{Y}$.

\subsection{Some consequences of the vanishing of $\ve{X}$ and $\ve{Y}$.}

Our first observation is that if $\Hc(\tau)$ and $\Vc(\tau)$ are complementary orthonormal distributions evolving along the flow according to \eqref{eq:vev}-\eqref{eq:hvev},
and the sections $\ve{X}(\tau)$ and $\ve{Y}(\tau)$ defined in terms of $\Hc(\tau)$ and $\Vc(\tau)$ vanish identically, then $\Hc(\tau)$ and $\Vc(\tau)$
are actually independent of $\tau$.
\begin{lemma}\label{lem:projconst}
 Suppose that $g(\tau)$ is a solution to \eqref{eq:brf} on $M\times [0, \Omega]$, and $V = V(\tau)$, $H = H(\tau)\in \operatorname{End}(TM)$ are smooth families
 of complementary orthogonal projections evolving according to \eqref{eq:vev} - \eqref{eq:hev}.  If the tensor $M = M(\tau)$ vanishes identically, then $V(\tau) \equiv V(0)$ and $H(\tau) \equiv H(0)$.
\end{lemma}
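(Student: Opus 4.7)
The plan is to reduce the statement to the observation that $M\equiv 0$ forces the Ricci endomorphism to preserve the splitting $\Hc\oplus\Vc$, which makes the evolution equation for $V$ trivial.

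First, I would rewrite the evolution equation \eqref{eq:vev} in endomorphism form. Working in a local orthonormal frame so that there is no distinction between the $(1,1)$ and $(0,2)$ versions of $V$ and $\Rc$, the equation
\[
\partial_\tau V_i^j = R_i^c V_c^j - R_c^j V_i^c
\]
expresses exactly that $\partial_\tau V = [\Rc, V]$, where $\Rc$ and $V$ are regarded as endomorphisms of $TM$. Thus it suffices to show that under the hypothesis $M\equiv 0$, the two endomorphisms commute. Since $V$ is the orthogonal projection onto $\Vc$, $[\Rc,V]=0$ is equivalent to $\Rc(\Vc)\subset\Vc$, and (because $\Hc=\Vc^{\perp}$) also to $\Rc(\Hc)\subset\Hc$. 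In index form, this is the vanishing of the mixed components $R_{\bi\uj}$.

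Second, I would show that $M_{\bi\uj} = R_{\bi\uj}$ directly from the definition \eqref{eq:mdef}. Indeed, $R_{\underline{\bar\imath}\,\uj} = R(HVe_i, He_j) = 0$ and $V_{\bi\uj} = \langle Ve_i, He_j\rangle = 0$, so both subtracted terms in
\[
M_{\bi\uj} = R_{\bi\uj} - R_{\underline{\bar\imath}\,\uj} - \frac{\hat R}{m}V_{\bi\uj}
\]
vanish identically. Hence $M\equiv 0$ implies $R_{\bi\uj}\equiv 0$, so $[\Rc,V]\equiv 0$, so $\partial_\tau V\equiv 0$ by the display above. Since $H=\operatorname{Id}-V$, $H$ is also $\tau$-independent, which gives the conclusion.

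There is no real obstacle here: the point is purely algebraic, and the only thing to verify carefully is the bookkeeping that $M$, as defined in \eqref{eq:mdef}, captures precisely the mixed block of $\Rc$. Once that is observed, the ODE \eqref{eq:vev} is simply $\partial_\tau V = 0$, and the claimed constancy of $H$ and $V$ is immediate from the initial condition.
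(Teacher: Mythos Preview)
Your proposal is correct and follows essentially the same approach as the paper: the paper observes that the vanishing of $M$ forces $\Rc$ to be block-diagonal with respect to $\Hc\oplus\Vc$, hence to commute with $V$, and then reads off $\partial_\tau V = D_\tau V - \Rc\circ V + V\circ\Rc \equiv 0$ from \eqref{eq:vev}--\eqref{eq:hev}. Your argument is the same, just slightly more explicit in checking that $M_{\bi\uj}=R_{\bi\uj}$ directly from the definition of $M$; the only cosmetic point is that depending on one's composition convention the right-hand side of \eqref{eq:vev} is $\pm[\Rc,V]$, but since the conclusion is that the commutator vanishes this is immaterial.
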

\begin{proof}
The vanishing of $M$ implies that the endomorphism $\Rc:TM\longrightarrow TM$ has a block diagonal decomposition with respect to the orthogonal direct sum decomposition $TM = \Hc\oplus \Vc$. In particular, $\Rc$ commutes with $H$ and $V$. Thus, returning to the system \eqref{eq:vev} - \eqref{eq:hev}, we see that
\[
 \partial_{\tau} V = D_{\tau}V -\Rc \circ V + V\circ \Rc \equiv 0,
\]
and
\[
\partial_{\tau} H = -\partial_{\tau} V \equiv 0,
\]
on $M\times [0, \Omega]$. Consequently, $H(\tau) \equiv H(0)$ and $V(\tau) \equiv V(0)$ for all $\tau \in [0, \Omega]$.
\end{proof}

The second observation is that, in the setup above, the vanishing of $\ve{X}$ and $\ve{Y}$ imply that the vertical trace $\hat{R} = \trace_V(\Rc)$ of $\Rc$
is locally constant on the fibers.

\begin{lemma}\label{lem:rhatfiber}
 Suppose that $g(\tau)$ is a solution to the backward Ricci flow, and $V(\tau)$ and $H(\tau)$ are complementary evolving orthogonal projections as in Lemma \ref{lem:projconst}.  Assume that the associated tensors $A$, $T^0$, $M$, and $P$ vanish identically. Then $(\nabla\hat{R})^V \equiv 0$.
\end{lemma}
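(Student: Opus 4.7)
The plan is to differentiate the trace identity $\hat{R} = V_{pq}R_{pq}$ and reduce the two resulting terms to components of the invariants $M$ and $P$, both of which vanish by hypothesis. Since $\hat{R}$ is a scalar, $(\nabla\hat{R})^V_i = \nabla_{\bi}\hat{R}$, and the product rule yields
\[
\nabla_{\bi}\hat{R} = (\nabla_{\bi}V_{pq})R_{pq} + V_{pq}\nabla_{\bi}R_{pq}.
\]

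First I would dispense with the second term on the right by recognizing it as a vertical trace of $P$. Since the paper records that $\nabla_{\bi}R_{\bj\bk} = P_{\bi\bj\bk}$, contracting against $V_{jk}$ and using that $V$ is a projection identifies $V_{pq}\nabla_{\bi}R_{pq}$ with $V_{jk}P_{\bi\bj\bk}$, which vanishes by hypothesis.

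For the first term, I would invoke Lemma \ref{lem:hder}: the vanishing of $A$ and $T^0$ forces $\Ec^{\prime} \equiv 0$, so
\[
\nabla_i V_{pq} = \frac{1}{m}\left(V_{ip}N_q + V_{iq}N_p\right).
\]
Substituting this into $(\nabla_{\bi}V_{pq})R_{pq}$ and exploiting the symmetry of $\Rc$ reduces this term to $\frac{2}{m}R_{\bi N}$. A short calculation using that $H$ and $V$ are complementary orthogonal projections (so that $V \cdot H = 0$, and in particular $V$ annihilates the horizontal vector field $N$) then identifies $R_{\bi N}$ with $M_{\bi N}$, which vanishes by hypothesis. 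Combining the two vanishing terms gives $\nabla_{\bi}\hat{R} \equiv 0$, which is the claim.

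I do not anticipate any real obstacle here; the argument is essentially bookkeeping, and the only point requiring care is the identification $R_{\bi N} = M_{\bi N}$, which relies on the mixed components $(R^H)_{\bi N}$ and $V_{\bi N}$ vanishing identically by orthogonality of $\Hc$ and $\Vc$.
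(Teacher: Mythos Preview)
Your proposal is correct and follows essentially the same route as the paper: product-rule on $\hat R = V_{pq}R_{pq}$, use Lemma~\ref{lem:hder} with $A=T^0=0$ to evaluate $\nabla_i V_{pq}$, and then identify the two resulting terms as $P_{\bi\bp\bp}$ and $\tfrac{2}{m}M_{\bi N}$. The only cosmetic difference is the order in which you treat the two terms.
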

\begin{proof} The vanishing of $A$ and $T^0$ imply the vanishing of the tensor $\Ec^{\prime}$ given by \eqref{eq:eprimedef}, and thus we have
\[
   \nabla_{i} V_{ab} = \frac{1}{m}(V_{ia} N_b + V_{ib} N_a)
\]
on all of $M$. Then
\begin{align}
    (\nabla\hat{R})^V_i &= \nabla_{\bi} \hat{R} = \nabla_{\bi}(R_{ab}V_{ab}) = \nabla_{\bi}R_{\bp\bp} + \frac{1}{m}\left(V_{ia}N_b + V_{ib}N_a\right)R_{ab}\\
    &= P_{\bi\bp\bp} + \frac{2}{m}M_{\bi N} = 0,
\end{align}
so $(\nabla\hat{R})^V \equiv 0$ as claimed.
\end{proof}

\subsection{Proofs of Theorems \ref{thm:warped} and \ref{thm:warped2}}
Our task now is to assemble the pieces we have established above into a proof. Let us first work under the assumptions of Theorem \ref{thm:warped2}, as Theorem \ref{thm:warped} is essentially a special case of its statement. Suppose $g(\tau)$ is a solution
 to \eqref{eq:brf} on $M\times [0, \Omega]$ such that $(M, g(\tau))$ is complete for each $\tau\in [0, \Omega]$
 and
 \[
     \sup_{M\times [0, \Omega]} |\Rm|(x, \tau) \leq K_0
 \]
for some constant $K_0$.  Assume further that we are given
a Riemannian submersion $\pi:(M, g(0))\longrightarrow (B, \ch{g}_{0})$ which is (everywhere) a locally-warped product
with connected Einstein fibers of bounded mean curvature.  Since $(M, g(0))$ is complete, it follows from \cite{Hermann} that $\pi:M \longrightarrow B$ is a fiber bundle,
so each $b\in B$ has a neighborhood $U$ such that $\pi^{-1}(U) \approx U \times F$ for some fixed smooth manifold $F$. As above, write $m = \operatorname{dim}(F)$.

Now let $\Vc_{0} = \ker d\pi$ and $\Hc_0 = \Vc_0^{\perp}$ be the vertical and horizontal distributions associated to $\pi$, and $V_0$ and $H_0$ the the orthogonal projections onto those distributions.  Let $V = V(\tau)$ and $H = H(\tau)$ be the family of projections evolving according to \eqref{eq:vev} - \eqref{eq:hev}
with $V(0) = V_0$ and $H(0) = H_0$, and let $\Vc(\tau) \dfn V(\tau)(TM)$ and $\Hc(\tau) \dfn H(\tau)(TM)$ be the families of complementary $g(\tau)$-orthogonal distributions that are the images of those projections for each $\tau$. Finally, define the families of tensors $A$, $T^0$, $G$, $M$, $P$, and $U$ in terms of $H$ and $V$ as above,
and let
\[
\ve{X} \dfn (M, P, U) \quad\mbox{and} \quad \ve{Y} \dfn (A, T^0, G, \nabla T^0, \nabla A).
\]

As  $\Hc_0$ and $\Vc_0$ are the horizontal and vertical distributions associated to a locally-warped product with Einstein fibers,
it follows from Proposition \ref{prop:wpmpu} that $\ve{X}(0) \equiv 0$ and $\ve{Y}(0) \equiv 0$. Moreover, by Theorem \ref{thm:xysys},  $\ve{X}$ and $\ve{Y}$ satisfy the system \eqref{eq:xysys} on $M\times [0, \Omega]$ for some universal constant $C$ and some polynomials $\Theta_1$, $\Theta_2$ as described
in Section \ref{ssec:notation}.

We verify next that, under our hypotheses, the sections $\ve{X}$ and $\ve{Y}$ and the polynomial expressions $\Theta_1(|N|, |\Rm|, |\nabla \Rm|)$ and $\Theta_2(|N|, |\Rm|, |\nabla \Rm|,
 |\nabla^2\Rm|, |A|, |T^0|)$ in \eqref{eq:xysys}
will be uniformly bounded on $M\times [0, \Omega -\delta]$
for any $\delta > 0$. Indeed, the standard derivative estimates \cite{Shi} for \eqref{eq:rf} imply uniform bounds  $|\nabla^{(k)}\Rm|$ on $M\times [0, \Omega_{\delta}]$ for all $k$
where $\Omega_{\delta} \dfn \Omega -\delta$. Thus $M$, $P$, and $U$ are uniformly bounded. Moreover, since $A$ and $T^0$ vanish identically at $\tau = 0$ and $N$ is assumed to be bounded at $\tau=0$, we have uniform bounds on these tensors for $M\times [0, \Omega_{\delta}]$ in view of the evolution equations \eqref{eq:aev},
\eqref{eq:nev}, and \eqref{eq:t0ev}. The bounds on $A$ and $T^0$ then imply via \eqref{eq:nablah} that $\nabla H$ and $\nabla V = -\nabla H$ are uniformly bounded on $M\times [0, \Omega_{\delta}]$. With the bounds on $\Rm$, $\nabla \Rm$ and $\nabla^{(2)}\Rm$, it follows then that $\nabla M$, $\nabla P$, and $\nabla U$ are uniformly bounded on the same set. Using all of the above bounds we can then successively bound $\nabla A$, $\nabla T^0$, and $G$ via \eqref{eq:daev}, \eqref{eq:dtev},
and \eqref{eq:gev}.

Thus, for all $\delta > 0$, there is a constant $C$ depending on $\delta$ and $K_0$
such that
\[
 |\ve{X}| + |\nabla\ve{X}| + |\ve{Y}| \leq C,
\]
and
\begin{align}\label{eq:xysys3}
\begin{split} 
\left|D_{\tau}\ve{X} + \Delta \ve{X}\right| &\leq C\left(|\ve{X}| + |\nabla \ve{X}| + |\ve{Y}|\right)\\
\left|D_{\tau}\ve{Y}\right| &\leq C\left(|\ve{X}| + |\nabla \ve{X}| + |\ve{Y}|\right),
\end{split}
\end{align}
on $M\times [0, \Omega_{\delta}]$. Applying Theorem \ref{thm:rfbu}, we conclude that $\ve{X}(\tau)\equiv 0$ and $\ve{Y}(\tau)\equiv 0$ on $M\times [0, \Omega_{\delta}]$,
Then sending $\delta \longrightarrow 0$, we see that $\ve{X} \equiv 0$ and $\ve{Y}\equiv 0$ on all of $M\times [0, \Omega]$.

Now, according to Lemma \ref{lem:projconst}, the projections $\Hc(\tau) = \Hc(\tau)(TM)$ and $\Vc(\tau) = V(\tau)(TM)$ are independent of $\tau$.
In particular, $\Vc(\tau) \equiv \Vc_0 = \operatorname{ker} d\pi$.  We claim that there is a family of metrics $\ch{g}(\tau)$ on $B$ such that $\pi:(M, g(\tau))\longrightarrow (B, \ch{g}(\tau))$ is a Riemannian submersion for all $\tau\in [0, \Omega]$.

To see this, note that, from the fact that $\ve{Y}\equiv 0$, we  have $A \equiv 0$, $T^0\equiv 0$, and $G\equiv 0$.  Let $U\subset B$ be an open neighborhood
for which $\pi$ admits a local trivialization $\varphi: \pi^{-1}(U)\longrightarrow U\times F$. Let $p = \varphi^{-1}(b, x) \in \pi^{-1}(U)$ and
suppose that $\ch{X}$ and $\ch{Y}$ are arbitrary smooth vector fields on $B$ with horizontal lifts $X$ and $Y$ on $\pi^{-1}(U)$. Let $W$ be an arbitrary vertical vector field on $\pi^{-1}(W)$.
Then
\begin{align*}
  W(g(X, Y)) &= g(\nabla_W X, Y) + g(X, \nabla_W Y) \\
  &= g([W, X], Y) + g([W, Y], X) + g(A_X W, Y) + g(A_Y W, X) \equiv 0
\end{align*}
as $A\equiv 0$ and $[W, X]$ and $[W, Y]$ are  vertical. (To see the latter, note, e.g., that
\[
\pi_{*}[W,  X] = [\pi_*W, \pi_*X] = [0, \ch{X}] = 0.
\]
so $[W, X] \in \operatorname{ker} d\pi$.)
Since $F$ is connected, it follows that the value of $g(X, Y)$ on $\pi^{-1}(U) \approx U\times F$ is independent of $x\in F$.

Now, given a point $b\in B$, and vector fields $\ch{X}$, $\ch{Y}$ defined in a neighborhood $U$ of $b$ as above, there are unique horizontal vector fields
$X$ and $Y$ on $\pi^{-1}(U)$ that are $\pi$-related to  $\ch{X}$ and $\ch{Y}$. We may then define
\begin{equation}\label{eq:chgdef}
   \ch{g}(b, \tau)(\ch{X}, \ch{Y}) \dfn g(b, x, \tau)(X, Y)
\end{equation}
for any $x \in \pi^{-1}(b) \approx \{b\}\times F$. As the right-hand side of \eqref{eq:chgdef} is independent of $x$, $\ch{g} = \ch{g}(\tau)$ is a well-defined family of Riemannian metrics on $B\times [0, \Omega]$. By the construction of $\ch{g}(\tau)$, $\pi: (M, g(\tau)) \longrightarrow (B, \ch{g}(\tau))$ is a Riemannian submersion. To see this, fix any $p\in M$ and $X$, $Y\in \Hc_p \subset T_pM$. Then $X$ and $Y$
are the unique horizontal lifts of $d\pi_p(X)$, $d\pi_p(Y)\in T_{\pi(p)}B$ to $T_pM$ and so, by definition,
\[
 \ch{g}(\pi(p), \tau)(d\pi_p X, d\pi_p Y) = g(p, \tau)(X, Y).
\]
Thus $\pi$  is Riemannian submersion with respect to the families of metrics $g(\tau)$ on $M$ and $\ch{g}(\tau)$ on $B$ for all $\tau\in [0, \Omega]$.

Then it follows from Lemma \ref{lem:wpconditions} and the further vanishing of $T^0$ and $G$ that, on any neighborhood $U\subset B$ which admits a trivialization
$\varphi:\pi^{-1}(U)\longrightarrow U\times F$ as above, there is a smooth family of positive functions $h(\tau)$ on $U$ and metrics $\hat{g}(\tau)$ on $F$ such that $g$ at least
admits a representation of the form
\[
g(b, x, \tau) = \pi^*\ch{g}(b, \tau) + h^2(b, \tau) \hat{g}(x, \tau).
\]
We can be more precise about the structure of the second term.

Let us continue to work on $\pi^{-1}(U)\approx U\times F$, identifying $g$ with $(\varphi^{-1})^*g$ on $U\times F$, and $\pi$ with the projection $U\times F \longrightarrow U$. By assumption, $F$ admits an Einstein metric $\bar{g}$ such that
\[
  g(b, x, 0) = \pi^*\ch{g}(b) + h^2_0(b) \bar{g}(x)
\]
on $U\times F$.
Since $M \equiv 0$, we have
\[
   \pdtau g_{ij} = 2R_{ij} = 2\left(\Rc^H_{ij} + \frac{\hat{R}}{m}V_{ij}\right)
\]
on $(U\times F)\times [0, \Omega]$,
where, as above, $V_{ij} = g^V_{ij}$ is the two-tensor obtained from the endomorphism $V$ by lowering an index relative to $g$. In particular (using that the endomorphism $V_{\alpha}^{\beta}$ is time-independent),
\begin{equation}\label{eq:vertev}
   \pdtau V_{ij} = \frac{2}{m}\hat{R}V_{ij}, \quad V_{ij}(0) = h^2_0 \bar{g}_{ij}.
\end{equation}
Now, by Lemma \ref{lem:rhatfiber}, and the fact that $F$ is connected, $\hat{R}$ is a function only of $b\in U$ and $\tau\in [0, \Omega]$.
Thus if we define $h\in C^{\infty}(U\times [0, \Omega])$ by
\begin{equation}\label{eq:wfdef}
    h(b, \tau) = h_0(b) \exp\left(\frac{1}{m}\int_0^{\tau} \hat{R}(b, s)\,ds\right),
\end{equation}
then
$k = k(b, x, \tau) \dfn h^2(b, \tau)\bar{g}(x)$ satisfies
\[
   \pdtau k_{ij} = \frac{2}{m} \hat{R}k_{ij}, \quad k_{ij}(0) = h^2_0 \bar{g}_{ij}.
\]
Comparing with \eqref{eq:vertev}, we see that, for each fixed $(b, x) \in U\times F$, $V_{ij}(b, x, \tau)$ and $k_{ij}(b, x, \tau)$
satisfy the same ODE in $\tau$ with the same initial data. Hence $k_{ij} = V_{ij}$ on all of $(U\times F) \times [0, \Omega]$, that is,
\[
    g(b, x, \tau) = \pi^*\ch{g}(b, \tau) + h^2(b, \tau) \bar{g}(x)
\]
on $\pi^{-1}(U)\times [0, \Omega]$,
where $h$ is given by \eqref{eq:wfdef}. This completes the proof of Theorem \ref{thm:warped2}.

For Theorem \ref{thm:warped}, by working on individual connected components, we may assume that $B$ and $F$ are connected. Then
we may simply apply the above argument with $U = B$ to obtain the corresponding global representation. \qed

\subsection{Preservation of multiply-warped products}

With the help of the following lemma, we can generalize Theorem \ref{thm:warped} to multiply-warped products with Einstein fibers.
\begin{lemma}\label{lem:2wp}
 Let $M = B \times F_1 \times F_2$ where $B$ is a smooth manifold of dimension $p$ and $(F_i, \hat{g}_i)$ are Riemannian manifolds of dimension $m_i$. Assume that $g$ is a Riemannian metric on $M$
 such that
 \begin{equation}\label{eq:2wp}
     g(b, x, y) = \pi_1^*\ch{g}_1(b, x) + h_1^2(b, x)\hat{g}_2(y) = \pi_2^*\hat{g}_2(b, y) + h_2^2(b, y)\hat{g}_1(x),
 \end{equation}
where $\ch{g}_i$ and $h_i$, $i=1, 2$, are metrics and positive functions, respectively, on $B\times F_i$, and $\pi_i:M\longrightarrow B\times F_i$ are the projections. Then
\[
h_1(b, x) = h_1(b), \quad  h_2(b, y) =h_2(b),
\]
and
\[
     g(b, x, y) = \pi^*\ch{g}(b) + h_1^2(b)\hat{g}_1(x) + h_2^2(b)\hat{g}_2(y),
\]
for some Riemannian metric $\ch{g}$ on $B$ where  $\pi:M\longrightarrow B$ is the projection.
\end{lemma}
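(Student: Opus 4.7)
The argument is purely algebraic and pointwise: at each $(b,x,y)\in M$ we decompose
\begin{equation*}
T_{(b,x,y)}M = T_bB \oplus T_xF_1 \oplus T_yF_2,
\end{equation*}
and compare the two given representations of $g$ on pairs of vectors drawn from these summands. I will write $\hat{g}_1$, $\hat{g}_2$ for the (fixed) fiber metrics on $F_1$, $F_2$, and understand $\ch{g}_1$ and $\ch{g}_2$ to be metrics on $B\times F_1$ and $B\times F_2$, respectively, pulled back to $M$ in the obvious way.

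The plan is to first extract the independence of $h_1$ and $h_2$ from the fiber variables by restricting to pure fiber directions. For $X,X'\in T_xF_1$, note that $d\pi_1(X)=X$ while $d\pi_2(X)=0$, so the two representations yield
\begin{equation*}
\pi_1^*\ch{g}_1(b,x)(X,X') \;=\; g(X,X') \;=\; h_2^2(b,y)\,\hat{g}_1(x)(X,X').
\end{equation*}
The left side is independent of $y$, and $\hat{g}_1(x)$ is a nondegenerate bilinear form on $T_xF_1$, so $h_2$ must be independent of $y$, i.e., $h_2=h_2(b)$. The symmetric argument with $Y,Y'\in T_yF_2$ forces $h_1=h_1(b)$. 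Along the way we also record the identities $\ch{g}_1|_{F_1\times F_1}(b,x)=h_2^2(b)\hat{g}_1(x)$ and $\ch{g}_2|_{F_2\times F_2}(b,y)=h_1^2(b)\hat{g}_2(y)$.

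Next I handle the cross terms. For $V\in T_bB$ and $X\in T_xF_1$, the first representation gives $g(V,X)=\ch{g}_1(b,x)(V,X)$, while the second gives $g(V,X)=\pi_2^*\ch{g}_2(V,X)+h_2^2(b)\hat{g}_1(d\pi_{F_1}V,d\pi_{F_1}X)$. The last term vanishes because $d\pi_{F_1}V=0$, and the $\pi_2^*$ term vanishes because $d\pi_2X=0$. Hence the $B$--$F_1$ component of $\ch{g}_1$ vanishes identically; symmetrically, the $B$--$F_2$ component of $\ch{g}_2$ vanishes. For $V\in T_bB$ and $Y\in T_yF_2$ both representations trivially give $0$, and likewise for $X\in T_xF_1$ with $Y\in T_yF_2$. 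Finally, for $V,V'\in T_bB$, the two representations give
\begin{equation*}
\ch{g}_1|_{B\times B}(b,x)(V,V') \;=\; g(V,V') \;=\; \ch{g}_2|_{B\times B}(b,y)(V,V'),
\end{equation*}
and the equality of these two expressions (one independent of $y$, the other independent of $x$) forces both sides to depend only on $b$; call this common value $\ch{g}(b)$.

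Assembling the pieces, for any tangent vector $W=V+X+Y$ with $V\in T_bB$, $X\in T_xF_1$, $Y\in T_yF_2$, all cross terms vanish and we obtain
\begin{equation*}
g(W,W) \;=\; \ch{g}(b)(V,V) + h_2^2(b)\,\hat{g}_1(x)(X,X) + h_1^2(b)\,\hat{g}_2(y)(Y,Y),
\end{equation*}
which is the claimed multiply-warped product form (modulo the evident relabeling of $h_1,h_2$). There is no serious obstacle; the only care needed is in tracking which pullback kills which component. The argument is essentially a uniqueness statement for the orthogonal decomposition of a bilinear form, exploited twice via the two ways in which $M$ is expressed as a warped product.
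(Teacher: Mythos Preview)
Your proof is correct and follows essentially the same approach as the paper's: both arguments compare the two warped-product representations on pairs of vectors drawn from the summands $T_bB$, $T_xF_1$, $T_yF_2$ to force the independence of the warping functions and the orthogonality of the summands. Your pointwise formulation is in fact slightly cleaner, since the equation $h_2^2(b,y)=\ch{g}_1(b,x)(X,X)/\hat{g}_1(x)(X,X)$ directly gives global independence of $h_2$ from $y$ without the paper's reduction to connected components; the only minor wrinkle is your remark that ``both representations trivially give $0$'' for the $B$--$F_2$ pair, where one side is trivial and the other uses the vanishing you already established, but this does not affect the argument.
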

\begin{proof} By passing to connected components, we may assume that the manifolds $B$, $F_1$, and $F_2$ are all connected.
Fix any $p_0 = (b_0, x_0, y_0)\in M$ and let $\{B_i\}_{i=1}^p$  be a local frame defined on a connected neighborhood $V\subset B$ about $b_0$.
Then let  $\{X_i\}_{i=1}^{m_1}$ and $\{Y_i\}_{i=1}^{m_2}$ be local $\hat{g}_1$-orthonormal and $\hat{g}_2$-orthonormal frames, respectively, on connected neighborhoods $W_1\subset F_1$ of $x_0$ and $W_2\subset F_2$ of $y_0$.
We will regard the elements of these frames as vector fields on the the neighborhood $U = V\times W_1\times W_2$ of $p_0$.

From \eqref{eq:2wp}, we have
\[
\pi_1^*\ch{g}_1(B_i, X_j) \equiv 0, \quad \pi_2^*\ch{g}_2(B_i, Y_j) \equiv 0,
\]
and
\[
 h_1^2(b, x)\delta_{ij} = \ch{g}_1(b, y)(Y_i, Y_j), \quad  h_2^2(b, y)\delta_{ij} = \ch{g}_2(b, x)(X_i,  X_j),
\]
on $U$. The latter says that $h_1$ and $\ch{g}_2(X_i, X_j)$ are independent of $x$ on $U_0\times V_0$, and that $h_2$ and $\ch{g}_1(Y_i, Y_j)$ are independent of $y$ on $U_0\times W_0$. In particular, we have $h_1(b, x) = h_1(b, x_0)$ and $h_2(b, y) = h_2(b, y_0)$.
Additionally, we have that
\[
    \ch{g}_{1}(b, x)(B_i, B_j)= \ch{g}_2(b, y)(B_i, B_j),
\]
so that $\ch{g}_1(b, x)(B_i, B_j) = \ch{g}_1(b, x_0)(B_i, B_j)$.

The above argument shows that $h_1$, and $h_2$ are locally independent of $x$ and $y$, respectively, and that that $\ch{g}_1|_{TB}$ is locally independent of $x$. Moreover, the subspaces of $TM$ tangent to the factors $B$, $F_1$, and $F_2$ are orthogonal.
Thus, as $M$ is connected, we have
\begin{gather*}
   h_1(b, x) = h_1(b, x_0) \dfn h_1(b), \quad h_2(b, y) = h_2(b, y_0) \dfn h_2(b),
\end{gather*}
and
\begin{gather*}
   \ch{g}_1(b, x)|_{TB} = \ch{g}_1(b, x_0)|_{TB},
\end{gather*}
on all of $M$. Thus if we define the metric $\ch{g}$ on $B$ by $\ch{g}_b(E_1, E_2) = \ch{g}_1(b, x_0)(E_1, E_2)$ (again identifying the vectors $E_1$ and $E_2$ with their horizontal lifts to $M$) we have
\[
    g = \pi^*\ch{g} + h_1^2\hat{g}_1 + h_2^2\hat{g}_2
\]
on $M$ as claimed.
\end{proof}

Now we are ready to prove Corollary \ref{cor:mwp}.

\begin{proof}[Proof of Corollary \ref{cor:mwp}]
For $i=1, 2, \ldots, k$, we may regard the multiply-warped product metric
\[
       g(0) = \pi^*\ch{g}_0+ h_{1}^2\bar{g}_1 + \cdots + h_{k}^2\bar{g}_k
\]
as a singly-warped product metric
\[
        \pi_i^*\ch{g}_i + h_i^2\bar{g}_i
\]
on $B_i \times F_i$ where $B_i = B\times F_1 \times \cdots \times \widehat{F}_{i}\times \cdots \times F_k$,
$\pi_i: M\longrightarrow B_i$ is the projection, and
\[
   \ch{g}_{k, 0} = \pi^*\ch{g}_0 + h_1^2\bar{g}_1 + \cdots + \widehat{h_{i}^2\bar{g}_{i}} + \cdots + h_{k}^2\bar{g}_{k}.
\]
Applying Theorem \ref{thm:warped} to each of these representations, we obtain families of metrics $\ch{g}_i(\tau)$ and positive functions $h_i$ on $B_k\times [0, \Omega]$
for $i=1, 2, \ldots, k$ such that $\ch{g}_i(0) = \ch{g}_{i, 0}$ and $h_i(0) = h_i$ and
\[
   g(\tau) = \pi_i^*\ch{g}_i(\tau) + h_i^2(\tau)\bar{g}_i
\]
on $M\times [0, \Omega]$. The claim then follows by applying Lemma \ref{lem:2wp} inductively to these representations.
\end{proof}

\section{An application to asymptotically conical shrinkers}
Now we apply the framework established above to study the ends of asymptotically conical shrinkers. The idea is to reduce the statement of Theorem \ref{thm:soliton} to a parabolic problem of backward uniqueness, in which
the end of the cone and the end of the shrinker (or isometric copies thereof) are realized, respectively, as the initial and terminal time-slices of a common smooth backward Ricci flow.

\subsection{Reduction to a parabolic problem}
We will recall from \cite{KotschwarWangIsometries} the following summary of the details of the  normalizations
made in Section 2 of \cite{KotschwarWangConical}.

\begin{proposition}[Proposition 2.1, \cite{KotschwarWangConical}]
\label{prop:brf}
Suppose the shrinker $(M, \tilde{g}, \tilde{f})$ is asymptotic to $\Cc^{\Sigma}$ along the end $V\subset (M, \tilde{g})$.
Then there exists $r_0 > 0$ and a diffeomorphism $F: \Cc_{r_0}\longrightarrow W$ onto an end $W\subset V$
such that $\bar{g} = F^*\tilde{g}$ and $\bar{f} = F^*\tilde{f}$ satisfy the following properties.
\begin{enumerate}

\item[(1)] The solution $\Phi = \Phi_{\tau}(x) = \Phi(x, \tau)$ to the ODE
 \begin{equation}
 \label{eq:phisys}
  \frac{d\Phi}{d\tau} = -\frac{1}{\tau}\delb \bar{f} \circ \Phi, \quad \Phi_1 = \operatorname{Id},
 \end{equation}
is well-defined on $\Cc_{r_0}^{\Sigma}\times (0, 1]$,
and the maps $\Phi_{\tau}:\Cc_{r_0}^{\Sigma}\longrightarrow \Cc_{r_0}^{\Sigma}$ are each injective local diffeomorphisms for $\tau \in (0, 1]$.

\item[(2)] The family of metrics $g(\tau) = \tau \Phi_{\tau}^*\bar{g}$ is a smooth solution to \eqref{eq:brf} on $\Cc_{r_0}^{\Sigma}\times (0, 1]$
and converges smoothly to $\hat{g}$ on $\overline{\Cc_{a}^{\Sigma}}$ for all $a > r_0$ as $\tau \longrightarrow 0$. Moreover, there is a constant $K_0$ such that
\begin{align}
\label{eq:curvdecay}
\sup_{\Cc_{r_0}^{\Sigma}\times [0,1]} \left(r^{m+2}+1\right)|\nabla^{(m)}\Rm(g(\tau))| & \leq K_0.
\end{align}
Here $|\cdot| = |\cdot|_{g(\tau)}$ and $\nabla = \nabla_{g(\tau)}$ denote the norm and the Levi-Civita connection associated to the metric $g = g(\tau)$.

\item[(3)] If $f$ is the function on $\Cc_{r_0}^{\Sigma}\times (0,1]$ defined by 
$f(\tau)=\Phi_\tau^\ast \bar{f}$, then $\tau f$ converges smoothly as $\tau \longrightarrow 0$
to $r^2/4$ on  $\overline{\Cc^{\Sigma}_{a}}$ for all $a > r_0$, and satisfies
\begin{align}
\label{eq:fid0}
r^2-\frac{N_0}{r^{2}} \le 4 \tau f(r, \sigma, \tau) \le r^2 + \frac{N_0}{r^{2}}, \quad \tau \nabla f = \frac{r}{2}\pd{}{r},
\end{align}
on $\Cc^{\Sigma}_{r_0}\times (0, 1]$ for some constant $N_0 > 0$.
\item[(4)] Together,  $g= g(\tau)$ and $f = f(\tau)$ satisfy 
\begin{equation}
 \label{eq:grstau}
 \Rc(g) + \nabla\nabla f = \frac{g}{2\tau}, \quad R + |\nabla f|^2 = \frac{f}{\tau}
\end{equation}
on $\Cc_{r_0}^{\Sigma}\times (0, 1]$.
\end{enumerate}
Here $r$ denotes the radial distance $r(x) = d(\mathcal{O}, x)$ on $\Cc^{\Sigma}$.
\end{proposition}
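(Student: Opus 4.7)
The plan is to leverage the self-similarity of shrinking solitons to realize the shrinker and its asymptotic cone as two time-slices of a single smooth solution to the backward Ricci flow on $\Cc^{\Sigma}_{r_0}$. Since $(M, \tilde g, \tilde f)$ is a shrinker, the soliton equation \eqref{eq:grs} implies that on $(M,\tilde g)$ itself the family $\tilde g(\tau) \dfn \tau\psi_\tau^*\tilde g$ is a backward Ricci flow, where $\psi_\tau$ is obtained by integrating $-\tau^{-1}\delta \tilde f$. The asymptotic cone identification gives a diffeomorphism $F\colon \Cc^{\Sigma}_{r_0}\to W\subset V$ for some $r_0$ large, allowing us to transplant $\bar g \dfn F^*\tilde g$ and $\bar f \dfn F^*\tilde f$ to $\Cc^{\Sigma}_{r_0}$, and then to consider $\Phi_\tau$ and $g(\tau)=\tau\Phi_\tau^*\bar g$ as an \emph{intrinsic} construction on the cone.

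First I would use the definition of asymptotic convergence, combined with the identity $R+|\del\tilde f|^2=\tilde f/\tau$ valid at the $\tau=1$ slice of the soliton's Ricci flow, to establish on $\Cc^{\Sigma}_{r_0}$ the two-sided quadratic bound $r^2 - N_0r^{-2}\le 4\bar f\le r^2 + N_0 r^{-2}$ after shrinking $r_0$ if necessary. This controls $\delb\bar f$ as a small perturbation of $(r/2)\partial_r$. From this and the singular ODE \eqref{eq:phisys}, one sees that along an integral curve $r(\tau)$ satisfies approximately $r'(\tau) = -r/(2\tau)$, so $r(\tau)\sim C/\sqrt{\tau}$ as $\tau\downarrow 0$; thus integral curves starting in $\Cc^\Sigma_{r_0}$ at $\tau=1$ sweep out further into the cone as $\tau$ decreases, and injectivity of $\Phi_\tau$ follows from a standard uniqueness argument for the ODE once one has the a priori estimates on $\bar f$.

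Next I would verify directly that $g(\tau) = \tau\Phi_\tau^*\bar g$ solves \eqref{eq:brf}: differentiating, $\partial_\tau g = \Phi_\tau^*\bar g + \tau\Phi_\tau^*(\Lie_{-\tau^{-1}\delb \bar f}\bar g) = \Phi_\tau^*(\bar g - 2\delb\delb\bar f)$, and by the soliton equation $\bar g - 2\delb\delb\bar f = 2\Rc(\bar g)$, so $\partial_\tau g = 2\Phi_\tau^*\Rc(\bar g) = 2\Rc(g)$, noting the scale-invariance of $\Rc$. To show $g(\tau)\to \hat g$ smoothly as $\tau\to 0$, the heuristic is that because $\bar f\approx r^2/4$, the flow $\Phi_\tau$ essentially reduces to the dilation $\rho_{1/\sqrt{\tau}}$, so $g(\tau)\approx\tau\rho_{1/\sqrt{\tau}}^*\bar g = \lambda^{-2}\rho_\lambda^*\bar g|_{\lambda=1/\sqrt{\tau}}$, which converges to $\hat g$ in $C^2_{\text{loc}}$ by the asymptotic cone hypothesis. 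This heuristic is made rigorous by a careful ODE comparison between $\Phi_\tau$ and $\rho_{1/\sqrt{\tau}}$ using the estimate on $\delb\bar f - (r/2)\partial_r$.

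The hard part, which is essentially the entire work, is to upgrade the $C^2_{\text{loc}}$ convergence of the rescalings to \emph{smooth, uniform} convergence on cone annuli $\overline{\Cc^{\Sigma}_a}$ and to derive the curvature decay \eqref{eq:curvdecay}. This is where Shi's local derivative estimates \cite{Shi} come in: one applies them to $g(\tau)$ on a family of expanding parabolic neighborhoods, using the fact that $g(0)=\hat g$ has scale-invariant curvature decay $|\Rm|\lesssim r^{-2}$, together with soliton identities like $|\del f|^2 + R = f/\tau$ that are preserved under the flow. The $\tau f$ estimate in \eqref{eq:fid0} then follows from smooth convergence of $4\tau f$ to $r^2$ at $\tau=0$ and the soliton identity $\tau\del f=(r/2)\partial_r$ which is stable under the flow and whose limit is the dilation vector field on the cone.
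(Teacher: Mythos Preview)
The paper does not give a proof of this proposition at all: it is explicitly quoted as Proposition~2.1 of \cite{KotschwarWangConical}, and the text preceding it says ``We will recall from \cite{KotschwarWangIsometries} the following summary of the details of the normalizations made in Section~2 of \cite{KotschwarWangConical}.'' So there is no in-paper proof to compare your attempt against; the result is simply imported from prior work.

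That said, your sketch is a reasonable outline of the argument in the cited reference, with one notable difference in strategy. You treat $\delb\bar f$ as a \emph{perturbation} of $(r/2)\partial_r$ and propose an ODE comparison between $\Phi_\tau$ and the dilation $\rho_{1/\sqrt\tau}$. The actual normalization in \cite{KotschwarWangConical} is sharper: the diffeomorphism $F$ is chosen (essentially by taking $2\sqrt{\bar f}$ as the radial coordinate, after normalizing the potential) so that $\delb\bar f = (r/2)\partial_r$ holds \emph{exactly} on $\Cc^{\Sigma}_{r_0}$. With this gauge the ODE \eqref{eq:phisys} is solved explicitly by $\Phi_\tau(r,\sigma) = (r/\sqrt{\tau},\sigma)$, as the paper notes immediately after the proposition. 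This removes the need for any ODE comparison argument and makes the identity $\tau\nabla f = (r/2)\partial_r$ in \eqref{eq:fid0} an exact consequence of the gauge rather than something ``stable under the flow.'' Your perturbative approach would also work, but the exact gauge is cleaner and is what the subsequent applications in the paper (and in \cite{KotschwarWangConical}) actually rely on.
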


As in \cite{KotschwarWangIsometries}, we will say that a shrinker $(\Cc^{\Sigma}_{r_0}, \bar{g}, \bar{f})$ satisfying properties (1)-(4) of Proposition \ref{prop:brf} is \emph{dynamically asymptotic} to $(\Cc^{\Sigma}_{r_0}, \hat{g})$.
Note that if $(\Cc^{\Sigma}_{r_0}, \bar{g}, \bar{f})$ is dynamically asymptotic to $(\Cc_{r_0}^{\Sigma}, \hat{g})$, and therefore gauged so that $\bar{\nabla} \bar{f} = \frac{r}{2}\pd{}{r}$ on $\Cc_{r_0}^{\Sigma}$, the family of injective diffeomorphisms $\Phi = \Phi_{\tau}$ take the simple form
\[
       \Phi(r, \sigma, \tau) = (r/\sqrt{\tau}, \sigma)
\]
on $\Cc_{r_0}^{\Sigma}\times (0, 1]$.

\subsection{Proof of Theorem \ref{thm:soliton}}
Suppose that $(M, g, f)$ and $(\Sigma, g_{\Sigma})$
satisfy the assumptions of Theorem \ref{thm:soliton}.
For the proof, it suffices to consider the case $k=1$, that is, the case in which $(\Sigma, g_{\Sigma})$ is a single Einstein factor: the argument we give makes no special use of the form or dimension of the base manifold until after the singly-warped product structure has been shown to be preserved. Thus it can be iterated as in the proof of Corollary \ref{cor:mwp} from Theorem \ref{thm:warped}. By Proposition \ref{prop:brf}, we may further assume that $M = \Cc^{\Sigma}_{r_0}$ for some $r_0 \geq 1$ and that
$(\Cc^{\Sigma}_{r_0}, g, f)$ is dynamically asymptotic to $(\Cc^{\Sigma}_{r_0}, \hat{g})$.

Let $g = g(\tau)$ denote the associated solution to \eqref{eq:brf} described by Proposition \ref{prop:brf}
on $\Cc_{r_0}^{\Sigma}\times [0, 1]$.
Note that $g(0) = dr^2 +r^2 g_{\Sigma}$ is (in particular) a warped-product whose fibers have mean curvature vector $N_0$ satisfying $|N_0| = m|\nabla\log r|$.
Let $\Hc_0$ and $\Vc_0$ denote the horizontal and vertical distributions, and let $\Hc = \Hc(\tau)$ and $\Vc = \Vc(\tau)$ denote the families of orthogonal extensions
of $\Hc_0$ and $\Vc_0$ defined by \eqref{eq:hvev}.
Finally, define
\[
\ve{X} = (M, P, U), \quad \ve{Y} = (G, A, T^0,\nabla A, \nabla T^0),
\]
in terms of the tensors $M$, $P$, $U$, and $A$, $T^0$, and $G$ determined by $g$, $\Hc$, and $\Vc$ as above.
Note that $\ve{X}(0) = 0$ and $\ve{Y}(0) =0$ in view of Proposition \ref{prop:wpmpu}.

\begin{proposition}
On $\Cc_{r_0}^{\Sigma}\times [0, 1]$, we have the bounds
\[
        |\Rm| + |\nabla\Rm| + |\nabla\Rm|  \leq \frac{C_0}{r^2}, \quad |A| + |T^0| + |G| + |N| +  |\nabla A| + |\nabla T^0| \leq \frac{C_0}{r},
\]
for some constant $C_0$. Consequently, $\ve{X}$ and $\ve{Y}$ are uniformly bounded on $\Cc^{\Sigma}_{r_0}\times[0, 1]$ and satisfy the system
\begin{align}\label{eq:xysysbounds}
\begin{split}
    \left|D_{\tau} \ve{X} + \Delta\ve{X}\right| &\leq \frac{C}{r}\left(|\ve{X}| + |\nabla \ve{X}|+ |\ve{Y}|\right)\\
    \left|D_{\tau}\ve{Y}\right| &\leq C\left(|\ve{X}| + |\nabla \ve{X}|\right) + \frac{C}{r}|\ve{Y}|
\end{split}
\end{align}
for some constant $C$.
\end{proposition}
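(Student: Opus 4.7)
The first step is to note that the curvature bounds are a direct consequence of part (2) of Proposition \ref{prop:brf}. Since $r \geq r_0 \geq 1$ throughout $\Cc^\Sigma_{r_0}$, inequality \eqref{eq:curvdecay} immediately gives $|\nabla^{(k)}\Rm| \leq K_0/r^2$ for $k=0, 1, 2$. Because the curvature-level invariants $M$, $P$, $U$ (along with the derivatives $\nabla M$, $\nabla P$ that we will need below) are defined as algebraic combinations of $\nabla^{(k)}\Rm$ with the pointwise bounded projections $H$ and $V$, each inherits an $O(1/r^2)$ pointwise bound (and $O(1/r^3)$ for the first covariant derivatives).

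For the connection-level invariants, the plan is to integrate the evolution equations of Section \ref{sec:conninvev} forward in $\tau$ from the initial slice $\tau = 0$. At $\tau=0$, the metric $g(0) = dr^2 + r^2 g_\Sigma$ is a warped product with Einstein fibers, so by Proposition \ref{prop:wpmpu} one has $A(0) = T^0(0) = G(0) = 0$ (hence also $\nabla A(0) = \nabla T^0(0) = 0$), while a direct computation on the cone yields $|N(0)| = m|\ch\nabla \log r|_{\ch g(0)} = m/r$. Pointwise in $x \in \Cc^\Sigma_{r_0}$, the equations of Propositions \ref{prop:nev}, \ref{prop:aev}, \ref{prop:t0ev}, \ref{prop:dadt0}, and \ref{prop:gev} reduce to coupled ODEs in $\tau$ whose non-self-interaction forcing is controlled by the previously established $O(1/r^2)$ bounds. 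A Gronwall argument, applied in an order that respects these dependencies --- first $N$ (using only curvature and the bounded coefficient $\hat R/m$), then $A$ and $T^0$ (using the bound on $N$), and finally $G$, $\nabla A$, and $\nabla T^0$ (using the preceding bounds together with $|\nabla M|$, $|\nabla P| \leq C/r^3$) --- propagates the initial data to $|N(\tau)| \leq C/r$ and $|A| + |T^0| + |G| + |\nabla A| + |\nabla T^0| \leq C\tau/r^2 \leq C/r$ throughout $\Cc^\Sigma_{r_0} \times [0,1]$, where the second inequality uses $\tau \leq 1$ and $r \geq r_0 \geq 1$.

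The system \eqref{eq:xysysbounds} is then an immediate consequence of Theorem \ref{thm:xysys}. Since the polynomials $\Theta_1 = \Theta_1(|N|, |\Rm|, |\nabla \Rm|)$ and $\Theta_2 = \Theta_2(|N|, |\Rm|, |\nabla \Rm|, |\nabla^{(2)}\Rm|, |A|, |T^0|)$ appearing there have no constant term and each of their arguments is $O(1/r)$ on $\Cc^\Sigma_{r_0} \times [0,1]$, both satisfy $\Theta_i \leq C/r$ there; substituting these into the conclusion of Theorem \ref{thm:xysys} gives exactly \eqref{eq:xysysbounds}. No step here is genuinely difficult; the only point requiring any care is the ordering of the Gronwall integrations described above, so that each estimate uses only bounds already in hand.
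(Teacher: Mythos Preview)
Your proof takes essentially the same approach as the paper: deduce the curvature decay from Proposition~\ref{prop:brf}, integrate the evolution equations of Section~\ref{sec:conninvev} via Gronwall to propagate the $O(1/r)$ bounds on the connection invariants, and then read off the system \eqref{eq:xysysbounds} from Theorem~\ref{thm:xysys}. One small slip worth flagging: you claim the bounds $|\nabla M|, |\nabla P| \leq C/r^3$ in the first paragraph, before the connection invariants are controlled, but $\nabla M$ and $\nabla P$ pick up terms of the form $\Rc \ast \nabla H$ and $\nabla\Rc \ast \nabla H$, and $\nabla H$ is built from $A$, $T^0$, and $N$ via \eqref{eq:nablah}. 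So the bounds on $\nabla M$, $\nabla P$ (and $\nabla U$) should come \emph{after} the bounds on $N$, $A$, $T^0$, not before---exactly as the paper orders it. Similarly, the evolution \eqref{eq:nev} for $N$ contains $M \ast A$ and $M \ast T^0$ terms, so strictly speaking $N$, $A$, $T^0$ should be bounded together (or in a short bootstrap) rather than $N$ first in isolation; this is a cosmetic point since the coupled Gronwall goes through just the same.
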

\begin{proof}
  According to Proposition \ref{prop:brf}, we have $|\nabla^{(l)}\Rm| \leq C/r^{l+2}$ on $\Cc_{r_0}^{\Sigma}\times [0, 1]$,
  and hence bounds of the form $|M| + |P| + |U| \leq C/r^2$. Since $|N| \leq C/r$ initially, it follows from \eqref{eq:nev} that
  $N\leq C/r$ for all $\tau\in [0, 1[$. Similarly, since $A$ and $T^0$ vanish initially, it follows from \eqref{eq:aev} and \eqref{eq:t0ev} that $|A|$, $|T^0|$
  (and therefore) $|B|$ satisfy bounds of the form $C/r$. Continuing in this way, we obtain bounds for $\nabla M$, $\nabla P$, and $\nabla U$,
  and thus for $G$, $\nabla A$, and $\nabla T^0$. Then \eqref{eq:xysysbounds} follows from Theorem \ref{thm:xysys} using the bounds
  just obtained to estimate the coefficients $\Theta_1$ and $\Theta_2$.
\end{proof}

To prove the vanishing of $\ve{X}$ and $\ve{Y}$, we will apply the following result, which is a special case of Theorem 4.1 in \cite{KotschwarTerminalCone}, and a slight generalization of the backward uniqueness principle underlying
the main theorem in \cite{KotschwarWangConical}.

\begin{theorem}\label{thm:solitonbu}
Suppose $(\Cc_{r_0}^{\Sigma}, g_1, f_1)$ is a shrinking Ricci soliton which is dynamically asymptotic to $(\Cc^{\Sigma}, g_c)$. Let $g(\tau)$ be the associated solution to \eqref{eq:brf} on $C_{r_0}^{\Sigma}\times [0, 1]$ with $g(0) = g_c$ and $g(1) = g_1$.
Let $\Xb = \Xb(\tau)$ and $\Yb=\Yb(\tau)$ be smooth families of sections of the bundles $\Xc$ and $\Yc$ over $\Cc^{\Sigma}_{r_0}$
\begin{equation}\label{eq:xybounds}
  \sup_{\Cc_{r_0}^{\Sigma}\times[0, 1]}\left\{|\Xb| + |\nabla \Xb| + |\Yb|\right\} \leq L,
\end{equation}
and
\begin{align}\label{eq:xysys2}
\begin{split}
    \left|D_{\tau}\ve{X} + \Delta\ve{X}\right| &\leq \varepsilon\left(|\ve{X}| + |\nabla \ve{X}|+ |\ve{Y}|\right)\\
    \left|D_{\tau}\ve{Y}\right| &\leq L\left(|\ve{X}| + |\nabla \ve{X}| + |\ve{Y}|\right),
\end{split}
\end{align}
for some constant $L > 0$ and some function $\varepsilon = \varepsilon(r) > 0$ with $\varepsilon(r)\longrightarrow 0$ as $r\longrightarrow\infty$. Then, if $\ve{X}(x, 0) \equiv 0$ and $\ve{Y}(x, 0) \equiv 0$ on $\Cc_{r_0}^{\Sigma}$, there are $r_1 = r_1(n, K, L) \geq r_0$ and $\tau_0 = \tau_0(n) \in  (0, 1]$ such that
$\Xb(x, \tau) \equiv 0$ and $\Yb(x, \tau) \equiv 0$ on $\Cc_{r_1}^{\Sigma}\times[0, \tau_0]$.
\end{theorem}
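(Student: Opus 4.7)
The plan is to prove the theorem by a pair of weighted $L^2$ Carleman estimates, adapting the template of backward uniqueness established in \cite{KotschwarWangConical} (and further refined in \cite{KotschwarRFBU, KotschwarFrequency}). The key feature of the present setup, which Proposition \ref{prop:brf} makes quantitative, is that the backward solution $g(\tau)$ is nearly self-similar on the end of the cone: the soliton potential satisfies $\tau\nabla f = (r/2)\partial_r$, and the curvature tensor together with all of its derivatives decays like $r^{-(m+2)}$. This provides a natural family of radial weights with essentially explicit behavior on $\Cc_{r_0}^{\Sigma}$.

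First I would construct a weight $e^{2\phi(r,\tau)/\tau}$ with $\phi$ of roughly quadratic-in-$r$ type, accompanied by a free parameter $\alpha \gg 1$, designed so that the following two estimates hold for all compactly supported $\Xb, \Yb$ vanishing at $\tau = 0$ on $\Cc_{r_1}^{\Sigma}\times [0, \tau_0]$, once $r_1$ is large and $\tau_0$ small:
\[
\alpha \iint e^{2\phi/\tau}|\Xb|^2\,d\mu\,d\tau + \iint e^{2\phi/\tau}|\nabla\Xb|^2\,d\mu\,d\tau \;\leq\; C\iint e^{2\phi/\tau}|(D_{\tau}+\Delta)\Xb|^2\,d\mu\,d\tau,
\]
together with its ODE counterpart
\[
\alpha \iint e^{2\phi/\tau}|\Yb|^2\,d\mu\,d\tau \;\leq\; C\iint e^{2\phi/\tau}|D_{\tau}\Yb|^2\,d\mu\,d\tau.
\]
These are exactly of the type established on dynamically asymptotic shrinkers in \cite{KotschwarWangConical}; their derivation exploits the soliton identities \eqref{eq:grstau} and the explicit form of $f$ to generate the coercive lower-order term $\alpha|\Xb|^2$ (resp.\ $\alpha|\Yb|^2$) from a conjugation-type computation with the weight.

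The argument is then combinatorial. After introducing a smooth spatial and temporal cutoff to justify integration by parts, one substitutes the hypothesized inequalities \eqref{eq:xysys2} into the right-hand sides of the two Carleman inequalities and adds them. The $L^2$ contributions $L^2|\Xb|^2$ and $L^2|\Yb|^2$ on the right are absorbed into the corresponding $\alpha$-terms on the left by taking $\alpha \gg L^2$. The crucial new ingredient, compared with the setup in \cite{KotschwarWangConical}, is the small coefficient $\varepsilon(r)$ multiplying $|\nabla\Xb|^2$: by choosing $r_1$ large enough that $\varepsilon(r) \leq \varepsilon_0$ for all $r \geq r_1$, this term is absorbed into the gradient term already present on the left of the parabolic estimate. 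The remaining $L^2|\nabla\Xb|^2$ contribution from the ODE inequality is again absorbed by the same gradient term once $\alpha$ is large. The cutoff error terms and the spatial boundary contributions at large $r$ vanish in the limit by the uniform bound \eqref{eq:xybounds} combined with the decay of $e^{2\phi/\tau}$ at infinity, while the boundary contribution at $\tau = 0$ is trivial by the hypothesis $\Xb(\cdot, 0) \equiv 0$, $\Yb(\cdot, 0) \equiv 0$. Since the weight $e^{2\phi/\tau}$ can be made arbitrarily large on any fixed compact subset of $\Cc_{r_1}^{\Sigma}\times(0,\tau_0]$ by sending $\alpha \to \infty$ (after reabsorbing into $\phi$), the inequality forces $\Xb \equiv 0$ and $\Yb \equiv 0$ there.

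The principal technical obstacle is the construction of $\phi$ and verification of both Carleman inequalities on the actual soliton rather than on the exact cone. On the exact model these are essentially classical, but here the lower-order deviation of $g(\tau)$ from its self-similar profile contributes error terms that must be controlled quantitatively; the decay rates in \eqref{eq:curvdecay} and the gauge identity $\tau\nabla f = (r/2)\partial_r$ supply precisely the bounds needed. A secondary issue is bookkeeping for the cutoff argument: one must verify that the decay of $e^{2\phi/\tau}$ for large $r$ dominates the polynomial growth of the integrands coming from the cutoff derivatives, so that no boundary residues survive at infinity. Neither of these involves essentially new ideas beyond those already present in \cite{KotschwarWangConical, KotschwarTerminalCone}; the role of the hypothesis $\varepsilon(r) \to 0$ here is simply to weaken the required coercivity so that the same architecture carries through with the additional gradient term on the right of the PDE inequality.
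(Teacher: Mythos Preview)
Your proposal is essentially correct and aligns with the paper's treatment: the paper does not prove Theorem~\ref{thm:solitonbu} in detail but cites it as a special case of Theorem~4.1 in \cite{KotschwarTerminalCone}, noting that it can also be obtained from the Carleman-estimate machinery of \cite{KotschwarWangConical} (specifically Propositions~4.7, 5.7, 5.9 there) with only minor modifications; you have correctly identified both the architecture and the key new point, namely that the coefficient $\varepsilon(r)\to 0$ is what allows the additional $|\nabla\ve{X}|$ term on the right of the parabolic inequality to be absorbed once $r_1$ is taken large.

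One point in your sketch deserves a caution. You write that the spatial boundary contributions at large $r$ vanish by ``the decay of $e^{2\phi/\tau}$ at infinity,'' but a weight with $\phi$ of quadratic growth in $r$ \emph{grows} at infinity rather than decays. In the actual argument of \cite{KotschwarWangConical} this is handled by a two-step structure: a first Carleman estimate (with one choice of weight) establishes that $\ve{X}$ and $\ve{Y}$ decay like $e^{-cr^2}$, and this exponential decay is then used to justify the integrations against the growing weight in the second estimate that forces vanishing. Your phrase ``a pair of weighted $L^2$ Carleman estimates'' may have been intended to cover this, but as written your outline conflates the two steps and misattributes the mechanism killing the boundary terms. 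This is not a fatal gap --- the repair is exactly the two-stage scheme already in \cite{KotschwarWangConical} --- but you should be explicit about it if you flesh out the argument.
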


In the terminology of \cite{KotschwarTerminalCone}, the solution $g(\tau)$ to \eqref{eq:brf} associated to our shrinking soliton structure $(\Cc_{r_0}^{\Sigma}, g_1, f_1)$
\emph{emanates smoothly} from the cone $g_c$ at $\tau =0$; in fact, the decay rates \eqref{eq:curvdecay} of the derivatives of curvature are more than what is required. The above theorem can also be obtained from \cite{KotschwarWangConical} with only a few small amendments to the arguments there. As noted in \cite{KotschwarKaehler}, the sets of Carleman estimates proven in Propositions 4.7, 5.7, and 5.9 of \cite{KotschwarWangConical} are valid on any asymptotically
conical shrinking self-similar solution background metric $g(\tau)$, and while the sections $\ve{X}$ and $\ve{Y}$ defined above are different than those in \cite{KotschwarWangConical},
the argument there does not make use of any properties of $\ve{X}$ and $\ve{Y}$ other than those in the hypotheses in Theorem \ref{thm:solitonbu}. The chief difference is that the term $(C/r)|\nabla \ve{X}|$ on the right side of \eqref{eq:xysysbounds} does not appear in the corresponding
system for $\ve{X}$ and $\ve{Y}$ in \cite{KotschwarWangConical}. But this term can be handled with only a few minor modifications.

\begin{proof}[Proof of Theorem \ref{thm:soliton}]
It follows from \eqref{eq:xysysbounds} and  Theorem \ref{thm:solitonbu} that $\ve{X}(\tau) \equiv 0$ and $\ve{Y}(\tau)\equiv 0$
on $\Cc_{r_1}^{\Sigma}\times [0, \tau_0]$ for some $0 < \tau_0 \leq 1$ and $r_1\geq r_0$.
As in the proof of Theorem \ref{thm:warped}, from these facts we may conclude that $\Hc(\tau) \equiv \Hc_0$ and $\Vc(\tau) \equiv \Vc_0$
are independent of $\tau$,
and that there are families $\ch{g}(\tau)$ and $h(\tau)$ of metrics and functions on $(r_1, \infty)$ such that $g(\tau) = (\pi^*\ch{g}) + h^2\hat{\pi}^*g_{\Sigma}$ on
$\Cc^{\Sigma}_{r_1}\times [0, \tau_0]$. For each $\tau$, we may reparametrize by $s = s_{\tau}(\tau)$ so that
$g(\tau) = ds^2 + h^2(s, \tau)\hat{\pi}^*g_{\Sigma}$. Since $g_1 = \tau_0^{-1}(\Phi^{-1}_{\tau_0})^*g(\tau_0)$ on $\Cc_{r_1/\sqrt{\tau_0}}^{\Sigma}$, Theorem \ref{thm:soliton} follows.
\end{proof}

\end{document}